\newtheorem{lemma}{Lemma}
\newtheorem{proposition}{Proposition}
\newtheorem{theorem}{Theorem}
\theoremstyle{definition}
\newtheorem{definition}{Definition}
\newtheorem{example}{Example}
\newtheorem{notation}{Notation}
\newtheorem{remark}{Remark}
\newcommand{\noneg}{{\mathbb Z_{\geq 0}}}								
\newcommand{\diagra}{{\mathcal D}}										
\newcommand{\colim}{{\rm colim}}										
\renewcommand{\hom}[1]{{{\rm Hom}_{#1}}}								
\newcommand{\id}{{\rm Id}}											
\newcommand{\op}{{\rm op}}											
\newcommand{\sets}{{\rm Set}}											
\newcommand{\set}{{S}}												
\newcommand{\element}{{s}}											
\newcommand{\fsets}{{{\rm FSet}}}										
\newcommand{\finor}[1]{{\underline{#1}}}									
\newcommand{\fima}{{\mu}}											
\newcommand{\ssets}{{\rm SSet}}										
\newcommand{\simplex}[1]{{\Delta_{#1}}}									
\newcommand{\horn}[2]{{{\Lambda}_{{#1},{#2}}}}							
\newcommand{\fgrings}{{\rm C^\infty R}}									
\newcommand{\cring}{{A}}												
\newcommand{\celement}{{a}}											
\newcommand{\preimage}[1]{{\widetilde{#1}}}								
\newcommand{\ima}[1]{{\underline{#1}}}									
\newcommand{\charfun}{{f}}											
\newcommand{\ideal}{{\mathfrak A}}										
\newcommand{\kerim}[1]{{\mathfrak I_{#1}}}								
\newcommand{\gerim}[1]{{\mathfrak m^{\rm g}_{#1}}}						
\newcommand{\igen}[1]{{(#1)}}											
\newcommand{\kernel}[1]{{{\rm Ker}(#1)}}									
\newcommand{\nradical}[1]{{\sqrt[{\rm nil}]{#1}}}							
\newcommand{\iradical}[1]{{\sqrt[\infty]{#1}}}								
\newcommand{\zerogerm}[1]{{\mathfrak m^{\rm g}_{#1}}}						
\newcommand{\vset}[3]{{\{#1\in #2\,|\,#3\}}}								
\newcommand{\cinfty}{{\rm C^\infty}}										
\newcommand{\nil}{{\rm nil}}											
\newcommand{\redun}{{{\rm R}_{\nil}}}									
\newcommand{\redui}{{{\rm R}_\infty}}									
\newcommand{\reduiof}[1]{{(#1)_{\rm red}}}								
\newcommand{\csimplex}[1]{{\mathbf\Delta_{#1}}}							
\newcommand{\cske}[2]{{{\rm Sk}_{#1}({#2})}}								
\newcommand{\chorn}[2]{{{\mathbf \Lambda}_{{#1},{#2}}}}						
\newcommand{\flatt}[2]{{\Xi_{{#1},{#2}}}}									
\newcommand{\colla}{{\kappa}}											
\newcommand{\proco}[2]{{\pi_{{#1},{#2}}}}									
\newcommand{\ssim}{{\gamma}}										
\newcommand{\smon}[2]{{{ Hom}(\csimplex{#1},{#2})}}						
\newcommand{\bouns}[1]{{\partial\,{#1}}}									
\newcommand{\sphere}[1]{{{\rm S}^{#1}}}									
\newcommand{\resti}[3]{{{#1}\times\csimplex{{#3}}}}							
\newcommand{\pt}{{\rm pt}}											
\newcommand{\rpt}{{\rm p}}											
\newcommand{\rqt}{{\rm q}}											
\newcommand{\spec}{{\rm Spec}}										
\newcommand{\specof}[1]{{\spec(#1)}}									
\newcommand{\germof}[2]{{#1_{#2}}}									
\newcommand{\puncof}[2]{{\overset{\circ}{#2}}}									
\newcommand{\gima}[1]{{\widetilde{#1}}}									
\newcommand{\softchart}{{\mathcal W}}									
\newcommand{\manifold}{{\mathcal N}}									
\newcommand{\vmani}{{{\rm S}}}										
\newcommand{\smoup}{{\mathcal G}}									
\newcommand{\Ran}[1]{{{\rm Ran}_{#1}}}									
\newcommand{\produ}[2]{{{#1}^{#2}}}									
\newcommand{\diadia}[1]{{{#1}_\Delta}}									
\newcommand{\gra}[1]{{\Gamma_{#1}}}									
\newcommand{\gegra}[1]{{\dot\Gamma_{#1}}}								
\newcommand{\pugra}[1]{{\overset{\circ}{\Gamma}_{#1}}}						
\newcommand{\pgrass}[3]{{\overset{\circ}{\rm Gr}_{#3}({#1},{#2})}}				
\newcommand{\ggrass}[3]{{\dot{\rm Gr}_{#3}({#1},{#2})}}						
\newcommand{\grass}[2]{{{\rm Gr}_{#2}({#1})}}								
\newcommand{\abgro}{{\mathcal A}}										
\newcommand{\classin}[1]{{\mathbb B^{#1}\abgro}}							
\newcommand{\barco}[2]{{\mathcal B^{#1}{#2}}}							
\newcommand{\classi}[1]{{{\mathbb B}{#1}}}								
\newcommand{\gerco}{{\alpha^{\nabla}}}									
\newcommand{\gercos}[1]{{\alpha^\nabla_{#1}}}							
\newcommand{\eva}{{\rm ev}}											
\newcommand{\csite}{{\cinfty{\rm Sch}}}									
\newcommand{\msite}{{\mathfrak{M}}}									
\newcommand{\bsite}{{\mathfrak {M}^{c}}}									
\newcommand{\gsite}{{\mathfrak{AM}}}									
\newcommand{\ksite}{{\gsite_{{\rm l.c.}}}}									
\newcommand{\sheaf}{{\mathcal F}}										
\newcommand{\zaritop}{{\mathcal Z}}									
\newcommand{\zaritos}[1]{{{\mathcal Z}_{#1}}}								
\newcommand{\zaritoss}[1]{{\underline{\mathcal Z}_{#1}}}						
\newcommand{\infired}[1]{{#1_{\rm dR_\infty}}}								
\newcommand{\algered}[1]{{#1_{\rm dR}}}									
\newcommand{\cscheme}{{\mathcal X}}									
\newcommand{\opca}[1]{{\mathfrak{O}_{#1}}}								
\newcommand{\emptys}{{\varnothing}}									
\newcommand{\cmor}{{\Phi}}											
\newcommand{\cmorf}{{\phi}}											
\newcommand{\germ}[2]{{(#1)_{#2}}}										
\newcommand{\creali}{{\rho}}											
\newcommand{\crealio}[1]{{\rho(#1)}}										
\newcommand{\homs}{{\underline{\rm Hom}}}								
\newcommand{\homo}[1]{{\mathbf{Hom}_{#1}}}								
\newcommand{\Homo}[1]{{\underline{\mathbf{Hom}}_{#1}}}					
\newcommand{\paigro}[1]{{{\rm P}{#1}}}									
\newcommand{\fungro}[1]{{\Pi(#1)}}										
\newcommand{\trigro}[1]{{\underline{#1}}}									
\newcommand{\thingro}[2]{{\Theta^{#1}_{#2}}}								
\newcommand{\thinfo}[2]{{\mathfrak X^{#1}_{#2}}}							
\newcommand{\thinif}[2]{{\mathfrak Y^{#1}_{#2}}}							
\newcommand{\germgro}[1]{{{\mathbf \Delta}_{#1}}}							
\newcommand{\fordia}[1]{{\widehat{\Delta}_{#1}}}							
\newcommand{\infidia}[1]{{\widetilde{\Delta}_{#1}}}							
\newcommand{\openset}{{U}}											
\newcommand{\opensch}{{\mathcal U}}									
\newcommand{\fullex}[1]{{\widetilde{#1}}}									
\newcommand{\pullba}[2]{{{#1}^{-1}(#2)}}									
\newcommand{\closim}[2]{{\overline{{#1}({#2})}}}							
\newcommand{\clos}[1]{{\overline{#1}}}									
\newcommand{\comple}[2]{{#1-#2}}										
\newcommand{\prinop}[2]{{{#1}_{#2}}}									
\newcommand{\locin}[2]{{{#1}_{#2}}}										
\newcommand{\closedset}{{V}}											
\newcommand{\manifo}{{\mathcal M}}									
\newcommand{\regva}[1]{{{\rm Reg}(#1)}}									
\newcommand{\blue}{\color{blue}}
\newcommand{\hide}[1]{\ifbool{hidedetails}{}{{\blue #1}}}
\begin{document}

\title{Beyond perturbation 2: asymptotics and Beilinson--Drinfeld Grassmannians in differential geometry}
\author{\small\parbox{.5\linewidth}{Dennis Borisov\\ University of Windsor, Canada\\ 401 Sunset Ave, Windsor\\ dennis.borisov@uwindsor.ca}
\parbox{.5\linewidth}{Kobi Kremnizer\\ University of Oxford, UK\\ Woodstock Rd, Oxford OX2 6GG\\ yakov.kremnitzer@maths.ox.ac.uk}}
\date{\today}
\maketitle

\begin{abstract} We prove that $\forall k\geq 2$ given a smooth compact $k$-dimensional manifold $\manifold$ and a multiplicative $k-1$-gerbe on a Lie group $G$ together with an integrable connection, there is a line bundle on the Beilinson--Drinfeld Grassmannian $\grass{\manifold}{G}$ having the factorization property. We show that taking global sections of this line bundle we obtain a factorization algebra on $\manifold$.
\medskip

\noindent{\bf MSC codes:} 22E57, 22E67, 53C08, 57T10, 58A03, 58K55, 81R10

\noindent{\bf Keywords:} Beilinson--Drinfeld Grassmannians, non-Archimedean differential geometry, gerbes, differentiable cohomology, thin homotopy, loop groups, Brown--Gersten descent

\end{abstract}

\section*{Introduction}

There is a geometric description of vertex operator algebras (\cite{FBZ04} \S19-20): start with a complex curve $X$, consider the moduli space $\Ran{X}$ of unordered finite sets of points in $X$, and try to build an interesting sheaf of modules on $\Ran{X}$, that has the factorization property. This property reflects the fact that $\Ran{X}$ has the structure of a monoid, given by union of punctures.

This is of course the theory of chiral algebras or factorization algebras developed in \cite{BD04}, and the important class of examples of them comes from a particular {\it factorization space} (e.g.\@ \cite{FBZ04} \S20), i.e.\@ a geometric object 
	\begin{equation*}Y\longrightarrow\Ran{X}\end{equation*} 
satisfying the factorization condition. This geometric object is yet another moduli space: the Beilinson--Drinfeld Grassmannian $\grass{X}{G}$, where $G$ is some group of our choice. It is defined as the moduli space of principal $G$-bundles on $X$ with chosen trivializations outside finite sets of points.

It is easy to describe $\grass{X}{G}$ in terms of stacks (e.g.\@ \cite{Ga13} \S0.5.3), and it is rather clear that there is a canonical $\grass{X}{G}\rightarrow\Ran{X}$, realizing $\grass{X}{G}$ as a factorization space (e.g.\@ \cite{FBZ04} \S20.3). Then one usually considers a line bundle on $\grass{X}{G}$ and, pushing it forward to a sheaf on $\Ran{X}$, obtains a factorization algebra, or equivalently a chiral algebra.

\smallskip

In this paper we apply these techniques \emph{in differential geometry}. Specifically we show how to build principal $\abgro$-bundles on $\grass{X}{G}$ (considered as a stack on the usual site of $\cinfty$-manifolds) starting with any differentiable Deligne cohomology class of a Lie group $G$ with values in an abelian Lie group $\abgro$. The dimension of $X$ has to be 1 less than the dimension of the cohomology class. \emph{Other than that there is no restriction on dimensions.}

This is very different from algebraic geometry, where $X$ has to be a curve. The reason we can do this is that there is no Hartogs' extension theorem in differential geometry, and the moduli space $\grass{X}{G}$ can be non-trivial for $X$ of any dimension. The price that we pay for working with $\cinfty$-functions is that the polar behavior, which in the theory of vertex algebras is described by Laurent series, is considerably more complicated in differential geometry.

\subsubsection*{Asymptotic manifolds}

To deal with poles of $\cinfty$-functions we are forced to develop a geometric description of asymptotic behavior. By geometric description we mean using algebraic geometry of $\cinfty$-rings (e.g.\@ \cite{MR91}). When looking at $\cinfty(\mathbb R)$ from the point of view of an algebraic geometer, one immediately notices that the corresponding geometric object is much more than just $\mathbb R$. 

For example the ideal of compactly supported functions has to be contained in some maximal ideals, which cannot correspond to points of $\mathbb R$. These ``asymptotic points'' correspond to maximal filters of closed subsets of $\mathbb R$ (e.g.\@ \cite{MR86}). The set of all such filters parameterizes the ways to approach the two infinities in $\mathbb R$. The arithmetics of this non-Archimedean geometry of $\cinfty(\mathbb R)$ is rather complicated (e.g.\@ \cite{ADH17} and references therein).

\smallskip

Instead of doing algebraic geometry with all these complicated points, we choose to group them together into what we call {\it asymptotic manifolds}. Instead of maximal filters of closed subsets we take intersections of directed systems of open subsets. In terms of $\mathbb R$-points such intersections might be empty, but algebraically they correspond to non-trivial $\cinfty$-rings obtained as colimits of $\cinfty$-rings of functions on the open subsets.

A directed system can be finite, in which case the corresponding asymptotic manifold is just a usual manifold. Thus we obtain an enlargement of the category of manifolds to a bigger full subcategory of $\fgrings^{\op}$, where $\fgrings$ is the category of finitely generated $\cinfty$-rings.

\smallskip

We organize asymptotic manifolds into a site, using the Zariski topology, since we need $\cinfty$-rings representing germs of punctures. Our definitions allow us to develop the usual machinery of open, closed submanifolds, intersections, unions, complements, dimension theory, density structure etc.

In fact we manage to develop enough of the usual geometric techniques to be able to define and integrate connections around germs of punctures, and as a result to produce line bundles on Beilinson--Drinfeld Grassmannians by transgression, starting from differentiable Deligne cohomology classes on the group (e.g.\@ \cite{Br00}, \cite{Gaj}). Here germs of punctures play the same role as spheres do in the usual transgression constructions (e.g.\@ \cite{Br08}).

\smallskip

Asymptotic manifolds can be very different from the usual manifolds, and not only because they might have no $\mathbb R$-points at all. One of the important differences is in the notion of compactness. If one tries to capture the asymptotic behavior around a point, or any compact puncture, it is enough to consider \emph{sequences} of open neighbourhoods of the puncture. 

If the puncture is not compact, e.g.\@ a line in a plane, the germ cannot be computed by a sequence of open neighbourhoods, but one needs an uncountable directed system. This is a consequence of the very well known fact (e.g.\@ \cite{Ha1910}) that any countable set or orders of growth can be dominated by one order. Because of this asymptotic manifolds are not necessarily first-countable, a property we find very useful in proving Brown--Gersten descent theorems for thin homotopy groupoids.

\subsubsection*{Thin homotopy and connections}

The strategy of building a line bundle on the affine Grassmannian, starting with a gerbe on the group, is of course by using transgression. This means that we start not just with a gerbe, but also with a connection on it. As we would like to integrate connections around germs of punctures, it is not enough, in general, to work with the usual infinitesimal theory based on nilpotent elements. We use the much more powerful infinitesimal theory in the geometry of $\cinfty$-rings developed in \cite{BK}. It is based on $\infty$-nilpotents.

The difference between nilpotents and $\infty$-nilpotents, is that we obtain $0$ by evaluating a monomial of sufficiently high degree on the former, while for the latter, in order to obtain $0$, we might need a $\cinfty$-function that decays faster than any monomial. In particular $\infty$-infinitesimals are much more than first order infinitesimals, and this complicates the theory of connections. 

\smallskip

Whichever infinitesimals one chooses to use, defining a flat infinitesimal connection on some geometric object (e.g.\@ a morphism from $X$ into some classifying space $Y$) is equivalent to postulating insensitivity to dividing out infinitesimals. For example a flat connection on a morphism $X\rightarrow Y$ consists of a factorization through the de Rham space of $X$ (e.g.\@ \cite{GR14}). 

If one wants to allow connections that are not necessarily flat, one should not use the de Rham space but, for example, the free groupoid generated by $X$ over the de Rham space. However, then even connections along curves within $X$ will not be flat. Something similar happens when one wants to define higher order connections: one needs to postulate one dimensional flatness separately (e.g.\@ \cite{E09}). In terms of polynomial infinitesimals this construction of a free groupoid and subsequent flattening of curves was performed in \cite{Kap}.

\smallskip

In the case of $k-1$-gerbes there is no reason to stop with flatness along curves, it makes sense to require flatness all the way to dimension $k$. This means that we need to postulate that the morphism into the classifying space of the gerbe factors not through the de Rham space of $X$, but de Rham space of \emph{the $k$-thin homotopy groupoid of $X$}. 

The $k$-thin homotopy groupoid of $X$ consists of maps $\csimplex{n}\rightarrow X$ that locally (on $\csimplex{n}$) factor through something of dimension $\leq k$. The notion of thin homotopy is well known (e.g.\@ \cite{Ba91}, \cite{CP94}, \cite{BS05}, \cite{SW07}). In our setting we would like to compute the thin homotopy groupoids explicitly, i.e.\@ to find their fibrant representatives in the category of pre-sheaves of simplicial sets on the Zariski site of asymptotic manifolds. This is the reason we need to develop the theory of Brown--Gersten descent, which in turn requires the theory of dimension for asymptotic manifolds.

\bigskip

\underline{Here are the contents of the paper:} Asymptotic manifolds are defined in Section \ref{SectionCorners} and in Section \ref{SectionDimension} it is proved that they have well defined dimensions. Just as for the usual manifolds, there is the notion of regular values for functions on asymptotic manifolds, and Sard's theorem tells us that almost all values are regular. This allows us, in Section \ref{SectionSubmanifolds}, to define asymptotic submanifolds as solutions to equations and weak inequalities. 

In Section \ref{SectionCompactness} we single out compact and locally compact asymptotic manifolds. The full subcategory of $\fgrings^{\op}$ consisting of locally compact asymptotic manifolds, together with the Zariski topology, is the site we will be using. In order to prove descent over this site we need to measure dimensions of complements of open asymptotic submanifolds. Such complements are not asymptotic manifolds themselves, but they have enough structure to be given well defined dimensions. We call such complements asymptotic spaces and describe them in Section \ref{SectionSpaces}.

In Sections \ref{SectionDensity} and \ref{SectionBrownGersten} we prove that dimensions of asymptotic spaces, together with asymptotic submanifolds, give our site a bounded cd structure, allowing us to use the Brown--Gersten descent. This implies that to prove that a sheaf of Kan complexes is a homotopy sheaf, it is enough to show that it is soft, i.e.\@ inclusions of closed submanifolds translate into fibrations.

\smallskip

In Sections \ref{SectionSimplices} and \ref{SectionFunGro} we develop the machinery of smooth simplices and $k$-thin maps. The goal here is to obtain, for each $k\in\noneg$ and an asymptotic manifold $X$, a homotopy sheaf of smooth families of $k$-thin simplices in $X$. All these constructions are rather standard, but are quite tedious.

Then in Sections \ref{SectionInfiGroupoids}-\ref{SectionIntegrability} we use Brown--Gersten descent again to obtain infinitesimal $k$-thin groupoids, and moreover to give an explicit description of them. Connections and integrable connections are defined then as factorizations of morphisms. We also show that, if $X$ is nice enough, e.g.\@ compact and of dimension $\leq k$, integrable infinitesimal $k$-connections factor through germs of diagonals. This allows us later to integrate connections in a combinatorial way.

\smallskip

Section 3 contains the main results of this paper. In the first part we give the precise formulation of the problem of constructing line bundles on Beilinson--Drinfeld Grassmannians, and we solve this problem in the second part using asymptotic manifolds. The third part contains a simple observation that in the differential geometric setting the fibers of Beilinson--Drinfeld Grassmannians are affine, if we allow all convenient algebras in addition to $\cinfty$-rings. This immediately gives us a way to take global sections of factorizable line bundles and obtain factorization algebras.

\smallskip

The Appendix contains necessary technical facts concerning $\cinfty$-schemes.

\medskip

\underline{Some notation:} As we deal with pre-sheaves a lot, we need different notation for different Hom-functors: $\hom{}$ stands for the usual set of morphisms, $\homs{}$ denotes the simplicial set of morphisms in simplicial categories, $\homo{}$ is the internal Hom-functor in a category of pre-sheaves of sets, while $\Homo{}$ is the internal Hom-functor in a category of pre-sheaves of simplicial sets.

\tableofcontents

\section{Asymptotic manifolds and Brown--Gersten descent}

We denote by $\csite$ the opposite category of the category $\fgrings$ of finitely generated $\cinfty$-rings (e.g.\@ \cite{MR91} \S I). Objects of $\csite$ will be called {\it $\cinfty$-schemes}.\footnote{We require our $\cinfty$-schemes to be Hausdorff, implying that all of them are affine.} For $\cring\in\fgrings$, $\cscheme\in\csite$ we write $\specof{\cring}$, $\cinfty(\cscheme)$ to mean the corresponding objects in $\csite$ and $\fgrings$. The empty scheme will be denoted by $\emptys:=\specof{0}$.

\smallskip

We equip $\csite$ with the Zariski topology (e.g.\@ \cite{MR91} \S VI), and write $\zaritos{\csite}$, $\zaritoss{\csite}$ to mean the categories of pre-sheaves of sets and respectively simplicial sets on this site. We will work with several sites equipped with Zariski topology, thus we keep the site as part of the notation. 

We do not restrict our attention only to sheaves or to homotopy sheaves,\footnote{By \emph{homotopy sheaves} we mean pre-sheaves of simplicial sets that satisfy the conditions of hyper-descent (which we call \emph{homotopy descent}).} but we always consider categories of pre-sheaves together with the notion of local equivalence. In particular $\zaritoss{\csite}$ comes with two model structures: local projective and local injective (e.g.\@ \cite{DSI04}). Homotopy descent can be complicated, but everything is simplified, if one can use some descent theorems, e.g.\@ Brown--Gersten descent. This result requires existence of an appropriate theory of dimension producing a bounded density structure (e.g.\@ \cite{V10}). 

\smallskip

Because of the presence of fractals, the notion of dimension for $\cinfty$-schemes is more complicated than the one in algebraic geometry. Instead of dealing with fractal dimensions we choose to work with full sub-categories of $\csite$, consisting of $\cinfty$-schemes that satisfy some regularity conditions, ensuring integrality of the dimension.

We would like to stress that it is not enough for us to work only with manifolds or manifolds equipped with infinitesimal structure, since we would like to use germs of subschemes and germs of punctures, the latter not even having any $\mathbb R$-points. This leads us to $\cinfty$-schemes that we call {\it asymptotic manifolds}. We describe them in the first two parts of this section, and then prove Brown--Gersten descent for them.

\subsection{Asymptotic manifolds and spaces}

For us a (classical) $n$-dimensional manifold ($n\in\noneg$) is a non-empty, Hausdorff, second countable topological space with a chosen equivalence class of $\cinfty$-atlases consisting of $\mathbb R^n$-charts. \emph{The category of manifolds $\msite$} is a full subcategory of $\csite$ (e.g.\@ \cite{MR91} Thm.\@ I.2.8). We will arrive at asymptotic manifolds with corners by enlarging $\msite$ to a bigger full subcategory of $\csite$. First we add corners and infinitesimal structure.

\subsubsection{Definition of asymptotic manifolds with corners}\label{SectionCorners}

Since we would like to do algebraic geometry with $\cinfty$-schemes, as with any algebraic geometry over $\mathbb R$, we are naturally led to consider solutions to inequalities, both strict and weak. This means that, as a first step, we need to enlarge the category of manifolds to include manifolds with corners. Different from several definitions based on local models (e.g.\@ \cite{J16}), we use inequalities themselves as the basis for our definition. However, we need to impose some regularity conditions in order to have a well defined dimension. 

\begin{definition} Let $\manifo$ be a manifold, and let $\{\charfun_1,\ldots,\charfun_m\}\subseteq\cinfty(\manifo)$. We will say that $0$ is {\it a regular value for $\{\charfun_j\}_{j=1}^m$}, if $\forall j$ $0$ is regular value for $\charfun_j$ and also for restrictions $\charfun_j|_{\underset{i\in\set}\bigcap\,\locin{\manifo}{\charfun_i=0}}$ for each $\set\subseteq\{1,\ldots,\widehat{j},\ldots,m\}$.

A non-empty $\cinfty$-scheme $\cscheme$ is {\it an $n$-dimensional manifold with corners}, if there are an $n$-dimensional manifold $\manifo$ and a set $\{\charfun_1,\ldots,\charfun_m\}\subseteq\cinfty(\manifo)$ having $0$ as a regular value, s.t.\@ writing $\cap$ for $\times$ in $\csite/\manifo$ we have\footnote{Definitions of closures and solutions to inequalities are given in Def.\@ \ref{DefinitionLocallyClosed} and \ref{Inequalities}.}
	\begin{equation}\label{DefCorners}\cscheme\cong\underset{1\leq j\leq m}\bigcap\,\locin{\manifo}{\charfun_j\leq 0}=
	\clos{\underset{1\leq j\leq m}\bigcap\,\locin{\manifo}{\charfun_j< 0}}.\end{equation}
The empty $\cinfty$-scheme $\emptys$ is the $-1$-dimensional manifold with corners. The data $\{\manifo,\charfun_1,\ldots,\charfun_2\}$ will be called {\it a realization} of $\cscheme$. The full subcategory of $\csite$ consisting of manifolds with corners will be denoted by $\bsite$.\end{definition}
Notice that a choice of realization is not part of the structure of a manifold with corners. However, the ability to choose one will be used often. The requirement in (\ref{DefCorners}) that $\cscheme$ is the closure of the set of solutions to strict inequalities implies that for $n\geq 0$ every $n$-dimensional manifold with corners contains a dense subscheme, that is an $n$-dimensional manifold. As $\emptys$ is the only $-1$-dimensional manifold with corners, it is clear then that manifolds with corners have well defined dimensions.

\begin{definition}\label{RegularCorners} Let $\cscheme\in\bsite$, and let $\charfun\in\cinfty(\cscheme)$. An $r\in\mathbb R$ is {\it a regular value for $\charfun$}, if there is a realization $\{\manifo,\charfun_1,\ldots,\charfun_m\}$ of ${\cscheme}$ and an extension $\preimage{\charfun}\in\cinfty(\manifo)$ of $\charfun$, s.t.\@ $0$ is a regular value for $\{\charfun-r,\charfun_1,\ldots,\charfun_m\}$.\end{definition}

\begin{remark} Given a regular value $r$ for $\charfun$ on a manifold with corners $\cscheme$,  it is clear that, if $\locin{\cscheme}{\charfun\leq r}={\clos{\locin{\cscheme}{\charfun<r}}}$, then $\locin{\cscheme}{\charfun\leq r}\in\bsite$. Any non-empty open subscheme of $\cscheme\in\bsite$ is also a manifold with corners of the same dimension. Product (computed in $\csite$) of an $n_1$- and an $n_2$-dimensional manifolds with corners is an $n_1+n_2$-dimensional manifold with corners.\end{remark}

As we have discussed in the Introduction, instead of trying to describe asymptotic points separately, we would like to work with many of them at once, so that we can have some smooth structure available. Asymptotic points can be described as maximal filters of closed subsets. Consequently our asymptotic manifolds will be intersections of directed systems of open subsets. 

Recall (e.g.\@ \cite{AR94} \S I.1.A) that a partially ordered set is {\it directed}, if any two elements have a common upper bound. One can view partially ordered sets as categories, where morphisms go from smaller to larger elements. We will call such categories {\it directed}. These are special examples of filtered categories. We will say that a directed category $\diagra$ is {\it finite} or {\it infinite}, if it has finitely many or respectively infinitely many objects.

\begin{definition}\label{RegularSequence} Let $\cscheme\in\csite$, and let $\opca{\cscheme}$ be the category of non-empty open subschemes of $\cscheme$ and inclusions. {\it A regular system of open subschemes} of $\cscheme$ is given by a functor $\diagra^{\op}\rightarrow\opca{\cscheme}$ where $\diagra\neq\emptyset$ is a directed category, s.t.\@ for any $i\rightarrow j$ in $\diagra$ the corresponding inclusion $\opensch_j\hookrightarrow\opensch_i$ factors through $\clos{\opensch}_j\subseteq\opensch_i$. We define $\underset{i\in\diagra}\bigcap\;\opensch_i:=\underset{i}\lim\;\opensch_i$, where the limit is taken in $\csite$.\end{definition}
If the directed category  $\diagra$ is finite, it has a maximal element. Then $\underset{i\in\diagra}\bigcap\opensch_i$ is just a non-empty open subscheme of $\cscheme$. Also in the infinite case it is true that $\underset{i\in\diagra}\bigcap\opensch_i\ncong\emptys$, as the following simple lemma shows.

\begin{lemma}\label{SurjectiveLocalization} Let $\cscheme\in\csite$, let $\{\opensch_i\}_{i\in\diagra}$ be an infinite regular system of open subschemes of $\cscheme$. Then $\underset{i\in\diagra}\bigcap\opensch_i\ncong\emptys$ and $\cinfty(\cscheme)\rightarrow\cinfty(\underset{i\in\diagra}\bigcap\opensch_i)$ is surjective.\end{lemma}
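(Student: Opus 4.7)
The limit $\bigcap_i \opensch_i$ in $\csite$ translates to a filtered colimit $\cring := \colim_i \cinfty(\opensch_i)$ in $\fgrings$, the transition maps being the restrictions. My first step is to use the fact that the forgetful functor $\fgrings \to \sets$ commutes with filtered colimits, so every element of $\cring$ is represented by some $g \in \cinfty(\opensch_k)$ and two representatives are identified iff they become equal after restriction to some common further stage.

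Non-emptiness then follows immediately: $\bigcap_i \opensch_i \ncong \emptys$ amounts to $1 \neq 0$ in $\cring$. If instead $1 = 0$ in $\cring$, the filtered description forces $1 = 0$ already in some $\cinfty(\opensch_k)$, i.e.\@ $\opensch_k \cong \emptys$, contradicting non-emptiness of every $\opensch_i$.

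For surjectivity of $\cinfty(\cscheme) \to \cring$ I would proceed as follows. Given a class $[g] \in \cring$ represented by $g \in \cinfty(\opensch_k)$, I exploit infiniteness and directedness of $\diagra$ to find a strict chain $k \to k_1 \to k_2$; the regularity condition then yields the nested closures $\clos{\opensch_{k_2}} \subseteq \opensch_{k_1}$ and $\clos{\opensch_{k_1}} \subseteq \opensch_k$. Next I would construct a smooth bump function $\rho \in \cinfty(\cscheme)$ with $\rho \equiv 1$ on $\clos{\opensch_{k_2}}$ and $\rho \equiv 0$ on the open set $\cscheme \setminus \clos{\opensch_{k_1}}$, which in particular contains $\cscheme \setminus \opensch_k$. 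Then $\rho \cdot g$ is smooth on $\opensch_k$ and vanishes on an open neighbourhood of $\cscheme \setminus \opensch_k$, so extending by zero yields $\widetilde{g} \in \cinfty(\cscheme)$. Restricted to $\opensch_{k_2}$ this extension coincides with $g$, so $\widetilde{g}$ and $g$ represent the same class in $\cring$, supplying the required preimage.

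The main obstacle I expect is constructing the bump function $\rho$ for a general Hausdorff affine $\cinfty$-scheme $\cscheme$ rather than a classical manifold: one cannot invoke partitions of unity directly. The key tool is closure of $\fgrings$ under post-composition with smooth functions on Euclidean space, which lets one turn a separating ``almost characteristic'' function for $\clos{\opensch_{k_2}}$ and $\cscheme \setminus \opensch_{k_1}$ into a genuine smooth cutoff. The nested chain $\clos{\opensch_{k_2}} \subseteq \opensch_{k_1} \subseteq \clos{\opensch_{k_1}} \subseteq \opensch_k$ furnished by the regularity condition is precisely what provides the room needed to interpolate $\rho$, and it also guarantees that the zero-extension of $\rho \cdot g$ is smooth rather than merely continuous.
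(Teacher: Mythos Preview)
Your argument is correct, and the non-emptiness part matches the paper's exactly (both say $1\neq 0$ at every stage of a filtered colimit, hence in the colimit). For surjectivity, however, the paper takes a shorter route that sidesteps precisely the obstacle you flag. Rather than building a bump function and extending by zero, the paper interleaves the system $\{\opensch_i\}$ with the closures $\{\clos{\opensch}_i\}$: regularity makes the two systems mutually cofinal, so $\bigcap_i\opensch_i=\bigcap_i\clos{\opensch}_i$. Since each $\clos{\opensch}_i$ is by definition $\spec\bigl(\cinfty(\cscheme)/\ideal_i\bigr)$ with $\ideal_i=\ker\bigl(\cinfty(\cscheme)\to\cinfty(\opensch_i)\bigr)$, the colimit over the closures is simply $\cinfty(\cscheme)/\bigcup_i\ideal_i$, and surjectivity is immediate.

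Your proof can be shortened in the same spirit: given $g\in\cinfty(\opensch_k)$ and any $k_1>k$, restrict $g$ to $\clos{\opensch_{k_1}}\subseteq\opensch_k$; the result lies in $\cinfty(\cscheme)/\ideal_{k_1}$, so lifts to some $\tilde g\in\cinfty(\cscheme)$, and $\tilde g|_{\opensch_{k_1}}=g|_{\opensch_{k_1}}$ because $\cinfty(\clos{\opensch_{k_1}})\to\cinfty(\opensch_{k_1})$ is injective. Only one step $k<k_1$ is needed, and no cutoff function at all. What your bump-function construction buys is an explicit formula for $\tilde g$ in the manifold case, but for a general $\cinfty$-scheme the extension-by-zero step would still need the Zariski sheaf property and a Urysohn-type lemma (the paper's Prop.~\ref{ZariskiByFunction}) to be made rigorous, so the detour costs more than it saves.
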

\begin{proof} Consider another directed category $\diagra'$, that is obtained from $\diagra$ by splitting each $i\in\diagra$ into $i'\rightarrow i$. We define a functor from ${\diagra'}^{\op}$ to the category of all non-empty subschemes of $\cscheme$ and inclusions as follows: $i\mapsto\opensch_i$ and $i'\mapsto\clos{\opensch}_i$. This shows that $\underset{i\in\diagra}\bigcap\opensch_i=\underset{i'\in\diagra'\setminus\diagra}\bigcap\clos{\opensch}_{i'}$. For each $i\in\diagra$ let $\ideal_i:=\kernel{\cinfty(\cscheme)\rightarrow\cinfty(\clos{\opensch}_i)}$, then $\cinfty(\underset{i'\in\diagra'\setminus\diagra}\bigcap\clos{\opensch}_{i'})\cong\cinfty(\cscheme)/\underset{i\in\diagra}\bigcup\ideal_i$. As $\forall i$ $\clos{\opensch}_i\neq\emptys$, clearly $1\notin\underset{i\in\diagra}\bigcup\ideal_i$, i.e.\@ it is a proper ideal and $\underset{i\in\diagra}\bigcap\opensch_i\ncong\emptys$.\end{proof}%

\smallskip

We would like to have infinitesimal tools available, hence we need to work with non-reduced $\cinfty$-schemes. We would like to use the much richer infinitesimal structure from \cite{BK}. Given $\cscheme\in\csite$ the corresponding {\it reduced $\cinfty$-scheme} is $\reduiof{\cscheme}:=\specof{\cinfty(\cscheme)/\iradical{0}}$, where $\iradical{0}\leq\cinfty(\cscheme)$ consists of $\infty$-nilpotent elements (\cite{BK} Def.\@ 2).

\begin{definition}\label{SmoothCell} An $\cscheme\in\csite$ is {\it a reduced $n$-dimensional asymptotic manifold with corners} ($n\geq 0$), if there is an $n$-dimensional $\manifo\in\bsite$ and a regular system $\{\opensch_i\}_{i\in\diagra}$ of open subschemes of $\manifo$ (Def.\@ \ref{RegularSequence}), s.t.\@ $\cscheme\cong\underset{i\in\diagra}\bigcap\opensch_i$. An $\cscheme\in\csite$ is {\it an $n$-dimensional asymptotic manifold with corners} ($n\geq 0$), if $\reduiof{\cscheme}$ is a reduced $n$-dimensional asymptotic manifold with corners.\footnote{In Section \ref{SectionDimension} we show that dimensions of asymptotic manifolds are well defined.} 

{\it The $-1$-dimensional asymptotic manifold} is the empty $\cinfty$-scheme $\emptys$. The data $\{\manifo,\{\opensch_i\}_{i\in\diagra}\}$ will be called {\it a presentation of $\cscheme$}. If $\manifo$ can be chosen to be a (usual) manifold, we will say that $\cscheme$ is {\it an asymptotic manifold}. We denote by $\gsite\subset\csite$ the full subcategory consisting of asymptotic manifolds with corners.\end{definition}

\begin{remark} Notice that according to Lemma \ref{SurjectiveLocalization} an infinite regular system of open subschemes produces a surjective localization map. Since open subschemes of manifolds with corners are themselves manifolds with corners, it follows that every $\cscheme\in\gsite$ has a presentation $\{\manifo,\{\opensch_i\}_{i\in\diagra}\}$, s.t.\@ $\cinfty(\manifo)\rightarrow\cinfty(\reduiof{\cscheme})$ is surjective. \hide{%
The comment on open subschemes of manifolds with corners is needed to accommodate for the case of a finite regular system.}%
\end{remark}

Before considering examples we need the following lemma.

\begin{lemma}\label{OpenSubcells} Let  $\cscheme\in\csite$ be an $n$-dimensional asymptotic manifold with corners ($n\geq 0$), and let $\opensch\subseteq\cscheme$ be an open subscheme, $\opensch\ncong\emptys$. Then $\opensch$ is an $n$-dimensional asymptotic manifold with corners. \end{lemma}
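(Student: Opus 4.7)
The strategy is to lift the characteristic function defining $\opensch$ to a suitable ambient manifold with corners and build a new regular system whose intersection is $\opensch$.

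First, reduce to the case that $\cscheme$ is reduced: $\reduiof{\opensch}$ is an open subscheme of $\reduiof{\cscheme}$, and by Definition~\ref{SmoothCell} being an $n$-dimensional asymptotic manifold with corners is a condition on the reduced part. So assume $\cscheme$ is already reduced, fix a presentation $\cscheme \cong \underset{i \in \diagra}{\bigcap}\opensch_i$ inside an $n$-dimensional $\manifo\in\bsite$. By the remark following Definition~\ref{RegularCorners}, each $\opensch_i$ is itself an $n$-dimensional manifold with corners, and $\cinfty(\cscheme) \cong \underset{i}{\colim}\,\cinfty(\opensch_i)$.

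Since every Zariski-open subscheme of an affine $\cinfty$-scheme is basic (a consequence of the abundance of bump functions), write $\opensch = \{f \neq 0\}$ for some characteristic function $f \in \cinfty(\cscheme)$. By filteredness of the colimit $f$ lifts to some $\tilde{f} \in \cinfty(\opensch_{i_0})$; replacing $\diagra$ with the cofinal sub-poset $\diagra_{\geq i_0}$, assume $i_0$ is minimal. Then put
\[\manifo' \;:=\; \opensch_{i_0} \cap \{\tilde{f} \neq 0\}, \qquad \openset_i \;:=\; \opensch_i \cap \{\tilde{f} \neq 0\} \;\subseteq\; \manifo' \quad (i \in \diagra).\]
Since $\manifo'$ is an open subscheme of the $n$-dimensional manifold with corners $\opensch_{i_0}$, the cited remark makes it itself an $n$-dimensional manifold with corners. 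For any arrow $i\to j$ in $\diagra$ with $i\neq j$, closure in $\manifo'$ equals closure in $\opensch_{i_0}$ intersected with $\manifo'$, so
\[\clos{\openset_j}^{\manifo'} \;\subseteq\; \clos{\opensch_j}^{\opensch_{i_0}} \cap \{\tilde{f} \neq 0\} \;\subseteq\; \opensch_i \cap \{\tilde{f} \neq 0\} \;=\; \openset_i,\]
by regularity of the original system. Hence $\{\openset_i\}_{i\in\diagra}$ is a regular system in $\manifo'$, and on rings $\underset{i}{\colim}\,\cinfty(\openset_i) = \underset{i}{\colim}\,\cinfty(\opensch_i)[\tilde{f}^{-1}] = \cinfty(\cscheme)[f^{-1}] = \cinfty(\opensch)$, so $\underset{i}{\bigcap}\,\openset_i \cong \opensch$ in $\csite$, giving the required presentation of $\opensch$ as an $n$-dimensional asymptotic manifold with corners.

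The main point is choosing the right ambient manifold with corners: inside $\opensch_{i_0}$ itself, $\clos{\{\tilde{f}\neq 0\}}$ meets the vanishing locus of $\tilde{f}$, which would destroy the closure-containment condition for the naive system $\{\opensch_i \cap \{\tilde{f}\neq 0\}\}$; excising $\{\tilde{f}=0\}$ from the ambient restores it. The finite-$\diagra$ case is immediate, since then $\cscheme$ is already in $\bsite$ and the remark after Definition~\ref{RegularCorners} applies directly to $\opensch$.
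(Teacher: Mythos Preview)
Your proof is correct and follows essentially the same approach as the paper: lift the defining function of $\opensch$ to the ambient level, pass to the open locus where the lift is nonzero, and intersect with the original regular system. The only cosmetic difference is that the paper uses the surjectivity of $\cinfty(\manifo)\to\cinfty(\reduiof{\cscheme})$ (the remark after Definition~\ref{SmoothCell}) to lift $f$ directly to $\manifo$, whereas you lift to some $\opensch_{i_0}$ via the filtered colimit and then restrict to the cofinal $\diagra_{\geq i_0}$; your explicit verification of the closure condition is a nice addition that the paper leaves implicit.
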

\begin{proof} Let $\charfun\in\cinfty(\cscheme)$, s.t.\@ $\cinfty(\opensch)\cong\cinfty(\cscheme)\{\charfun^{-1}\}$, and let $\{\manifo,\{\opensch_i\}_{i\in\diagra}\}$ be a presentation of $\cscheme$, s.t.\@ $\cinfty(\manifo)\rightarrow\cinfty(\reduiof{\cscheme})$ is surjective. We choose a pre-image $\preimage{\charfun}\in\cinfty(\manifo)$ of the image of $\charfun$ in $\cinfty(\reduiof{\cscheme})$, and let $\manifo':=\spec(\cinfty(\manifo)\{\preimage{\charfun}^{-1}\})$. Clearly $\reduiof{\opensch}\cong\underset{i\in\diagra}\bigcap(\manifo'\cap\opensch_i)$. The system $\{\manifo'\cap\opensch_i\}_{i\in\diagra}$ is regular, since if $\exists k$ s.t.\@ $\manifo'\cap\opensch_k=\emptys$, then also $\opensch\cong\emptys$.\end{proof}%

\begin{example}\label{BasicExamples}\begin{enumerate}[label={ (\alph*)}] 
\item All manifolds with corners are asymptotic manifolds with corners.
\item Let $\manifo$ be a manifold with corners, let $\closedset\subseteq\manifo$ be a compact subset $\closedset\neq\emptyset$. We embed $\manifo\hookrightarrow\mathbb R^m$ and $\forall k\in\mathbb N$ let $\opensch_k\subseteq\manifo$ be the open subscheme consisting of points of distance  $<\frac{1}{k}$ from $\closedset$. Then $\underset{k\in\mathbb N}\bigcap\opensch_k$ is an asymptotic manifold with corners -- {\it the germ of $\manifo$ at $\closedset$}. 
\item\label{Compact} If $\closedset$ is not compact, we can write $\closedset=\underset{s\in\mathbb N}\bigcup\,\closedset_s$, where each $\closedset_s$ is compact. For each $\alpha\colon\mathbb N\rightarrow\mathbb N$ let $\opensch_\alpha\subseteq\manifo$ be the open subscheme consisting of $\rpt\in\manifo$ s.t.\@ $\forall s\in\mathbb N$ $\forall\rqt\in\closedset_s$ $\|\rpt-\rqt\|<\frac{1}{\alpha(s)}$. Then $\germof{\manifo}{\closedset}:=\underset{\alpha}\bigcap\,\opensch_\alpha$ is an asymptotic manifold with corners -- {\it the germ of $\manifo$ at $\closedset$}.
\item\label{NonCompact} Let $\charfun\in\cinfty(\manifo)$, s.t.\@ $\vset{\rpt}{\manifo}{\charfun=0}=\closedset$, and let $\ima{\charfun}\in\cinfty(\germof{\manifo}{\closedset})$ be the image of $\charfun$. {\it The germ of $\manifo$ at the puncture $\manifo\setminus\closedset$} is $\puncof{\manifo}{\closedset}:=\specof{\cinfty(\germof{\manifo}{\closedset})\{\ima{\charfun}^{-1}\}}$. This is an asymptotic manifold with corners.
\item\label{NonGerm} Not all asymptotic manifolds with corners are germs. Consider $\set:=\{f_k\}_{k\geq 1}\subseteq\cinfty(\mathbb R^2)$, s.t.\@ $f_k=0$ exactly on $\mathbb R^2\setminus((-\frac{1}{k},\frac{1}{k})\times\mathbb R)$. Then $\spec(\cinfty(\mathbb R^2)\{\set^{-1}\})$ is an asymptotic manifold, but it is not the germ of $\mathbb R^2$ at the $y$-axis (e.g.\@ \cite{MR91}, p.\@ 49). \hide{%
We can write $\cinfty(\mathbb R^2)\{\set^{-1}\}\cong\cinfty(\mathbb R^2)/\mathfrak m_\epsilon$, where $f\in\cinfty(\mathbb R^2)$ belongs to $\mathfrak m_\epsilon$, if $\exists\epsilon>0$, s.t.\@ $f$ vanishes on $(-\epsilon,\epsilon)\times\mathbb R$. Let $g\in\cinfty(\mathbb R^2)$ be any function that vanishes on the connected component of $0\in\mathbb R^2\setminus\{|x y|=1\}$, but does not vanish anywhere on the other components. Clearly $g\in\zerogerm{x=0}$ yet $g\notin\mathfrak m_\epsilon$.}%
\end{enumerate}\end{example}

Examples \ref{BasicExamples}.\ref{Compact}, \ref{BasicExamples}.\ref{NonCompact}, \ref{BasicExamples}.\ref{NonGerm} exhibit a general fact: asymptotic manifolds with corners built as germs around compact sets can be constructed using regular {\it sequences}, while non-compact sets require regular {\it systems} (in particular uncountable). We take a closer look at this in Section \ref{SectionCompactness}.

\subsubsection{Dimension of asymptotic manifolds with corners}\label{SectionDimension}

Even though dimension was part of the definition of asymptotic manifolds with corners, we have not shown yet that the same $\cinfty$-scheme cannot be an asymptotic manifold with corners of dimensions $m,n$ simultaneously, with $m\neq n$. We do this in this section. In fact we show something more: a locally closed subscheme\footnote{For locally closed subschemes see Def.\@ \ref{DefinitionLocallyClosed}.} of an $n$-dimensional asymptotic manifold with corners cannot be an asymptotic manifold with corners of higher dimension. 

\begin{proposition}\label{DimensionOfCell} Let $\cscheme'\in\csite$, and let $\cmor\colon\cscheme\hookrightarrow\cscheme'$ be a locally closed subscheme, s.t.\@ $\cscheme$ is an $m$-dimensional and  $\cscheme'$ is an $n$-dimensional asymptotic manifold with corners. Then $m\leq n$.\end{proposition}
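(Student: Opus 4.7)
First I would reduce $\cmor$ to a closed immersion between reduced asymptotic manifolds with corners. The locally closed $\cmor$ factors as $\cscheme\hookrightarrow\opensch\hookrightarrow\cscheme'$ with $\opensch$ open in $\cscheme'$ and $\cscheme$ closed in $\opensch$; by Lemma \ref{OpenSubcells}, $\opensch$ is itself an $n$-dimensional asymptotic manifold with corners, so we may replace $\cscheme'$ by $\opensch$. Taking $\iradical{}$-reductions preserves dimensions (by the very definition of asymptotic manifolds with corners) and sends closed immersions to closed immersions, so from here on assume that $\cscheme$, $\cscheme'$ are reduced and $\cinfty(\cscheme')\twoheadrightarrow\cinfty(\cscheme)$ is surjective. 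By the Remark after Definition \ref{SmoothCell} one may choose a presentation $\cscheme'\cong\bigcap_i\opensch_i$ together with a surjection $\cinfty(\manifo)\twoheadrightarrow\cinfty(\cscheme')$ from a classical $n$-dimensional manifold with corners $\manifo$; composing yields $\cinfty(\manifo)\twoheadrightarrow\cinfty(\cscheme)$, exhibiting $\cscheme$ as a closed subscheme of $\manifo$. In parallel, any presentation $\cscheme\cong\bigcap_j\opensch'_j$ furnishes a classical $m$-dimensional manifold with corners $\manifold$ and a surjection $\cinfty(\manifold)\twoheadrightarrow\cinfty(\cscheme)$.

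Next I would extract $m\leq n$ by a rank comparison on cotangent modules. Since $\cinfty$-K\"ahler differentials are compatible with localization and with filtered colimits, the $\manifold$-presentation identifies $\Omega^1_{\cinfty(\cscheme)}$ with the filtered colimit of localizations of the rank-$m$ cotangent module of the classical $m$-manifold $\manifold$, so the rank at a ``generic point'' of $\cscheme$ should be exactly $m$. The $\manifo$-side surjection instead presents $\Omega^1_{\cinfty(\cscheme)}$ as a quotient of the rank-$n$ cotangent module of $\manifo$ base-changed to $\cscheme$, bounding the rank at every point above by $n$. Combining the two estimates yields $m\leq n$.

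The principal obstacle is making the phrase ``generic point'' precise: $\cscheme$ may have no $\mathbb R$-points at all, as already happens for the puncture germs of Example \ref{BasicExamples}(\ref{NonCompact}). I would get around this by replacing ``$\mathbb R$-point'' throughout by an arbitrary maximal ideal $\mathfrak{m}$ of $\cinfty(\cscheme)$ (existent by Zorn's lemma), pulling $\mathfrak{m}$ back to a proper ideal in some $\cinfty(\opensch'_j)$ along the colimit cone map, and invoking classical local dimension theory for the $m$-dimensional manifold with corners $\opensch'_j$ at that ideal to secure the exact lower bound $m$; the upper bound $n$ follows uniformly from the $\manifo$-surjection and does not require any choice of point. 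A further care point is that the finite-regular-system (no $\infty$-localization) case should be handled separately by observing that both $\cscheme$ and $\cscheme'$ are then classical manifolds with corners, where the inequality is standard.
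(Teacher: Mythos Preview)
Your opening reduction matches the paper's: pass to a closed immersion of reduced schemes and pull in presentations $\{\manifold,\{\opensch'_j\}\}$, $\{\manifo,\{\opensch_i\}\}$ with surjections from classical manifolds with corners of dimensions $m$ and $n$.

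From there the two proofs diverge. The paper never touches cotangent modules. After disposing of the case where $\cscheme$ has an $\mathbb R$-point (where a cotangent comparison is exactly what is used), it supposes $m>n$ with $n$ minimal, constructs $\charfun\in\cinfty(\manifold)$ with $\locin{\manifold}{\charfun\neq 0}\cong\bigsqcup\mathbb R^m$ meeting every $\opensch'_j$, transports a linear coordinate $x$ to $\manifo$, and applies Sard's theorem there to pick a regular slice $\{x'=r\}$, producing a closed immersion of an $(m{-}1)$-dimensional asymptotic manifold into an $(n{-}1)$-dimensional one---contradicting minimality. So the paper's engine is Sard plus induction on $n$, not module theory.

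Your cotangent route is viable, but the justification you offer at the crux is not. After choosing a maximal $\mathfrak m\subset\cinfty(\cscheme)$ and pulling it back to $\mathfrak m'\subset\cinfty(\opensch'_j)$, you appeal to ``classical local dimension theory for the $m$-dimensional manifold with corners $\opensch'_j$ at that ideal''. There is no such theory: when $\cscheme$ has no $\mathbb R$-points, $\mathfrak m'$ is one of the exotic maximal ideals of a $\cinfty$-ring (maximal filters of closed sets, as the Introduction warns), and nothing like Krull-dimension or regular-local-ring arguments applies there. What you actually need is purely module-theoretic and does hold: the $\cinfty$-cotangent module $\Omega^1_{\manifold}$ is the module of sections of a rank-$m$ vector bundle, hence finitely generated projective of \emph{constant} rank $m$ at every prime of $\cinfty(\manifold)$ (a partition of unity subordinate to a trivializing cover gives some $f_i\notin\mathfrak m'$, so $\Omega^1_{\manifold}$ is free of rank $m$ after inverting $f_i$). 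Since $\cinfty(\manifold)\to\cinfty(\cscheme)$ is a filtered colimit of Zariski localizations, $\Omega^1_{\cscheme}\cong\Omega^1_{\manifold}\otimes_{\cinfty(\manifold)}\cinfty(\cscheme)$ is still projective of constant rank $m$, so its fibre at $\mathfrak m$ is $m$-dimensional over the residue field; the conormal surjection from the $\manifo$-side then bounds this by $n$. Two further points you must make explicit for this to go through: you mean $\cinfty$-differentials rather than algebraic K\"ahler differentials (the latter are infinite-rank already for $\cinfty(\mathbb R)$), and the constant-rank statement for $\Omega^1_{\manifold}$ needs the corners case as well (handled by realizing $\manifold$ inside its defining classical manifold of the same dimension).
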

\begin{proof} As asymptotic manifolds with corners are defined by putting conditions on their reduced parts, we can assume that both $\cscheme$ and $\cscheme'$ are reduced. 

According to Lemma \ref{SurjectiveLocalization} if $n\geq 0$, an $n$-dimensional asymptotic manifold with corners cannot be $\emptys$, thus we can assume $m,n\geq 0$. There is an open subscheme $\opensch\subseteq\cscheme'$, s.t.\@ $\cmor\colon\cscheme\rightarrow\opensch$ is a closed embedding. As $\opensch$ is an asymptotic manifold with corners of dimension $n$ ( Lemma \ref{OpenSubcells}) we can assume that $\cmor$ is a closed embedding. Let $\{\manifo,\{\opensch_i\}_{i\in\diagra}\}$, $\{\manifo',\{\opensch'_j\}_{j\in\diagra'}\}$ be some presentations of $\cscheme$ and $\cscheme'$ respectively. 

\smallskip

If there is at least one $\mathbb R$-point $\rpt$ in $\cscheme$, the claim trivially follows from comparing cotangent spaces. \hide{%
Indeed, since localizations commute, we have isomorphisms of germs of $\cinfty$-schemes
	\begin{equation*}\germ{\manifo}{\rpt}\cong\germ{\cscheme}{\rpt},\quad\germ{\cscheme'}{\rpt}\cong\germ{\manifo'}{\rpt},\end{equation*}
where we use the same notation $\rpt$ for the image of this $\mathbb R$-point in $\cscheme'$, $\manifo'$, $\manifo$. Thus $\cmor$ localizes to a closed embedding $\germ{\cmor}{\rpt}\colon\germ{\manifo}{\rpt}\rightarrow\germ{\manifo'}{\rpt}$, that induces a surjection $\mathfrak m'/\mathfrak m'^2\rightarrow\mathfrak m/\mathfrak m^2$, where $\mathfrak m\leq\cinfty(\germ{\cscheme}{\rpt})$, $\mathfrak m'\leq\cinfty(\germ{\cscheme'}{\rpt})$ are the maximal ideals. Hence $m\leq n$. }%
Therefore we can assume that $\cscheme$ does not have $\mathbb R$-points. Also in this case we proceed by comparing points, but this time ``asymptotic points'' rather than $\mathbb R$-points.

\smallskip

Let $n\in\noneg$ be the smallest with the property that $\exists m>n$ and a closed embedding $\cmor\colon\cscheme\rightarrow\cscheme'$ as above. I.e.\@ $\cscheme=\underset{i\in\diagra}\bigcap\opensch_i$, $\cscheme'=\underset{j\in\diagra'}\bigcap\opensch'_j$, and $\manifo$, $\manifo'$ are $m$- and $n$-dimensional manifolds with corners. We choose a well ordering of the set of objects in $\diagra$, and construct $\{\charfun_k\}\subseteq\cinfty(\manifo)$ inductively as follows: let $i\in\diagra$ be the first s.t.\@ $\opensch_i$ is disjoint from the union of supports of $\charfun_k$'s already constructed. Choose a function whose support is diffeomorphic to $\mathbb R^m$ and lies within $\opensch_i$, and add it to the end of the sequence $\{\charfun_k\}$. Using transfinite induction we obtain in the end $\charfun:=\underset{k}\sum\,\charfun_k$ s.t.\@
	\begin{equation*}\forall i\in\diagra\quad\locin{\manifo}{\charfun\neq 0}\cap\opensch_i\neq\emptyset,\end{equation*}
and $\locin{\manifo}{\charfun\neq 0}$ is a disjoint union of copies of $\mathbb R^m$.\footnote{The sum $\underset{k}\sum\,\charfun_k$ is countable (because $\manifo$ is second-countable), but it can be parameterized by an ordinal larger than $\mathbb N$.} Let $\ima{\charfun}\in\cinfty(\cscheme)$ be the image of $\charfun$ in $\cinfty(\cscheme)$, let $\ima{\charfun'}\in\cinfty(\cscheme')$ be any pre-image of $\ima{\charfun}$, and let $\charfun'\in\cinfty(\manifo')$ be any pre-image of $\ima{\charfun'}$. As $\charfun$ has non-zero values on every $\opensch_i$, clearly $\locin{\cscheme}{\ima{\charfun}\neq 0}\ncong\emptys$, hence $\locin{\cscheme'}{\ima{\charfun'}\neq 0}\ncong\emptys$, and they are $m$- and $n$-dimensional asymptotic manifolds respectively (Lemma \ref{OpenSubcells}). Explicitly
	\begin{equation*}\locin{\cscheme}{\ima{\charfun}\neq 0}\cong\underset{i\in\diagra}\bigcap(\locin{\manifo}{\charfun\neq 0}\cap\opensch_i),\quad
	\locin{\cscheme'}{\ima{\charfun'}\neq 0}\cong
	\underset{j\in\diagra'}\bigcap(\locin{\manifo'}{\charfun'\neq 0}\cap\opensch'_j).\end{equation*}
By construction $\locin{\manifo}{\charfun\neq 0}$ is a disjoint union of a countable set of copies of $\mathbb R^m$. Informally $\prinop{\cscheme}{\ima{\charfun}}$ is a bunch of ``asymptotic points'' in $\mathbb R^m$.

By assumption $m>n\geq 0$, so let $x\in\cinfty(\locin{\manifo}{\charfun\neq 0})$ be the function that restricts to $x_1$ on each copy of $\mathbb R^m$. Let $\ima{x}\in\cinfty(\locin{\cscheme}{\charfun\neq 0})$ be the image of $x$, $\ima{x'}\in\cinfty(\locin{\cscheme'}{\charfun'\neq 0})$ any pre-mage of $\ima{x}$,  and $x'\in\cinfty(\locin{\manifo'}{\charfun'\neq 0})$ any pre-image of $\ima{x'}$. For any $r\in\mathbb R$ the closed subscheme of $\locin{\cscheme}{\ima{\charfun}\neq 0}$ defined by $\ima{x}-r$ is not $\emptys$, therefore $x'\colon\locin{\manifo'}{\charfun'\neq 0}\rightarrow\mathbb R$ is surjective.

Since $\locin{\manifo'}{\charfun'\neq 0}$ is an $n$-dimensional manifold with corners, by Sard's theorem we can choose $r\in\mathbb R$, s.t.\@ $\locin{\manifo'}{\charfun'\neq 0,x'=r}$ is an $n-1$-dimensional manifold with corners. Since every value of $x$ on $\locin{\manifo}{\charfun\neq 0}$ is regular, clearly $\locin{\manifo}{\charfun\neq 0,x=r}$ is an $m-1$-dimensional manifold. Then we have
	\begin{equation}\label{ClosedSubscheme}\underset{i\in\diagra}\bigcap(\locin{\manifo}{\charfun\neq 0,x=r}\cap\opensch_i)\cong
	\spec(\cinfty(\locin{\cscheme}{\ima{\charfun}\neq 0})/\igen{\ima{x}-r}),\end{equation}
	\begin{equation}\label{AmbientScheme}\underset{j\in\diagra'}\bigcap(\locin{\manifo'}{\charfun'\neq 0,x'=r}\cap\opensch'_j)\cong
	\spec(\cinfty(\locin{\cscheme'}{\ima{\charfun'}\neq 0})/\igen{\ima{x'}-r}).\end{equation}
As (\ref{ClosedSubscheme}) is an $m-1$-dimensional asymptotic manifold, sitting as a closed subscheme in (\ref{AmbientScheme}), which is an $n-1$-dimensional asymptotic manifold with corners, this contradicts minimality of $n$.\end{proof}%

\begin{remark} Since identity is a closed embedding, Prop.\@ \ref{DimensionOfCell} shows that any asymptotic manifold with corners $\cscheme$ has a well defined dimension $\dim\cscheme$.\end{remark}

\subsubsection{Asymptotic submanifolds}\label{SectionSubmanifolds}

As with usual manifolds we would like to be able to define asymptotic submanifolds as solutions to equations, subject to some regularity conditions. Since we are working over $\mathbb R$, we would like to also have solutions to weak inequalities. We start with regularity conditions.

\begin{definition} Let $\cscheme\in\gsite$. For an $\charfun\in\cinfty(\cscheme)$ an $r\in\mathbb R$ is {\it a regular value}, if there is a presentation $\{\manifo,\{\opensch_i\}_{i\in\diagra}\}$ of $\cscheme$, s.t.\@ $r$ is a regular value for a pre-image in $\cinfty(\manifo)$ of the image of $\charfun$ in $\cinfty(\reduiof{\cscheme})$ (Def.\@ \ref{RegularCorners}). The set of regular values for $\charfun$ will be denoted by $\regva{\charfun}$.\end{definition}
The classical theorem of Sard immediately implies the following.

\begin{proposition}\label{SardManifold} For any $\cscheme\in\gsite$ and $\forall\charfun\in\cinfty(\cscheme)$ the set $\mathbb R\setminus\regva{\charfun}$  has Lebesgue measure $0$.\end{proposition}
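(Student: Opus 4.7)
The plan is to descend through the two layers of structure (asymptotic, then corners) and reduce to the classical Sard theorem on an ordinary manifold. Fix any presentation $\{\manifo,\{\opensch_i\}_{i\in\diagra}\}$ of $\cscheme$. By the remark following Def.\@ \ref{SmoothCell} we may assume the restriction map $\cinfty(\manifo)\rightarrow\cinfty(\reduiof{\cscheme})$ is surjective, so choose a pre-image $\preimage{\charfun}\in\cinfty(\manifo)$ of the image of $\charfun$ in $\cinfty(\reduiof{\cscheme})$. By the definition of $\regva{\charfun}$ on an asymptotic manifold, it is enough to show that the set of $r\in\mathbb R$ for which $r$ is a regular value of $\preimage{\charfun}$ on the manifold with corners $\manifo$, in the sense of Def.\@ \ref{RegularCorners}, has complement of Lebesgue measure zero.

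Next I would unfold Def.\@ \ref{RegularCorners} on $\manifo$: pick a realization $\{\manifo_0,\charfun_1,\ldots,\charfun_m\}$ of $\manifo$ (with $\manifo_0$ an ordinary manifold) and a further extension $\preimage{\charfun}_0\in\cinfty(\manifo_0)$ of $\preimage{\charfun}$. Because $0$ is already a regular value for $\{\charfun_1,\ldots,\charfun_m\}$, for every $\set\subseteq\{1,\ldots,m\}$ the intersection $N_\set:=\underset{i\in\set}\bigcap\locin{\manifo_0}{\charfun_i=0}$ is a smooth submanifold of $\manifo_0$ of codimension $|\set|$. The requirement that $0$ be a regular value of the enlarged family $\{\preimage{\charfun}_0-r,\charfun_1,\ldots,\charfun_m\}$ amounts to the joint transversality condition that at every point $p\in N_\set$ with $\preimage{\charfun}_0(p)=r$, the covectors $d\preimage{\charfun}_0$ and $\{d\charfun_i\}_{i\in\set}$ be linearly independent. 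A short linear-algebra argument (if $d\preimage{\charfun}_0$ lies in the span of $\{d\charfun_i\}_{i\in\set}$ at such a $p$, its restriction to $T_p N_\set$ vanishes) shows that this reduces to the single requirement that $r$ be a regular value of the restriction $\preimage{\charfun}_0|_{N_\set}$ for every $\set\subseteq\{1,\ldots,m\}$.

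The proof then finishes by classical Sard: each $N_\set$ is a smooth manifold and $\preimage{\charfun}_0|_{N_\set}$ is smooth, so its set of critical values $C_\set\subset\mathbb R$ has Lebesgue measure zero. Since there are only $2^m$ subsets $\set$, the finite union $C:=\underset{\set}\bigcup\,C_\set$ still has measure zero, and every $r\notin C$ satisfies the joint regularity condition and hence lies in $\regva{\charfun}$. The only non-routine step is the reduction of the recursive joint regularity condition of Def.\@ \ref{RegularCorners} to regularity of the individual restrictions $\preimage{\charfun}_0|_{N_\set}$; everything else is a straightforward translation of regularity between the three levels (asymptotic manifold, manifold with corners, classical manifold).
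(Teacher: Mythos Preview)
Your proof is correct and follows essentially the same route as the paper's own argument: lift $\charfun$ through a presentation to a manifold with corners $\manifo$, then through a realization $\{\manifo_0,\charfun_1,\ldots,\charfun_m\}$ to an ordinary manifold, and apply classical Sard to the finitely many strata $N_\set=\bigcap_{i\in\set}\{\charfun_i=0\}$. Your treatment is in fact slightly more explicit than the paper's, since you spell out the linear-algebra equivalence between the recursive joint-regularity condition of Def.~\ref{RegularCorners} and ordinary regularity of $\preimage{\charfun}_0|_{N_\set}$; the paper simply asserts this reduction.
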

\hide{%
\begin{proof} Let $\{\manifo,\{\opensch_i\}_{i\in\diagra}\}$ be a presentation of $\cscheme$ s.t.\@ $\cinfty(\manifo)\rightarrow\cinfty(\reduiof{\cscheme})$ is surjective. Let $\preimage{\charfun}\in\cinfty(\manifo)$ be any pre-image of $\charfun$. Choosing a realization $\{\manifo',\charfun_1,\ldots,\charfun_m\}$ of ${\manifo}$ we see that an $r\in\mathbb R$ is a regular value for $\charfun$, if it is a regular value for ${\charfun}$ on $\underset{1\leq k\leq m}\bigcap\locin{\manifo'}{\charfun_k<0}$ and on $\underset{k\in\set}\bigcap\locin{\manifo'}{\charfun_k=0}$ for each $\set\subseteq\{1,\ldots,k\}$. This is a finite set of manifolds of various dimensions, hence we can use Sard's theorem.\end{proof}

\smallskip}%

The following statement is only slightly more complicated.
\begin{proposition}\label{ClosureRegular} Let $\cscheme\in\gsite$ and let $r\in\regva{\charfun}$ for $\charfun\in\cinfty(\cscheme)$. Then $\clos{\locin{\cscheme}{\charfun<r}}$ is an asymptotic manifold with corners.\footnote{For the notion of closure and solutions to inequalities see Def.\@ \ref{DefinitionLocallyClosed} and \ref{Inequalities}.}\end{proposition}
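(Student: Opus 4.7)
By the convention that the property of being an asymptotic manifold with corners is imposed on the reduced part, I may replace $\cscheme$ by $\reduiof{\cscheme}$: the inclusions $\locin{\cscheme}{\charfun<r}\hookrightarrow\cscheme$ and its reduction commute with passage to $\reduiof{(-)}$, and closures are determined at the reduced level. If $\locin{\cscheme}{\charfun<r}\cong\emptys$ there is nothing to prove, so assume otherwise. Choose a presentation $\{\manifo,\{\opensch_i\}_{i\in\diagra}\}$ of $\reduiof{\cscheme}$ together with a pre-image $\preimage{\charfun}\in\cinfty(\manifo)$ of the image of $\charfun$ in $\cinfty(\reduiof{\cscheme})$ witnessing that $r$ is a regular value.

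Next I form the candidate presentation. Let $\manifo':=\clos{\locin{\manifo}{\preimage{\charfun}<r}}$; by the Remark following Def.\@ \ref{RegularCorners} this is a manifold with corners of the same dimension as $\manifo$, with realization obtained by adjoining $\preimage{\charfun}-r$ to the defining set of $\manifo$ (the regularity of $r$ for $\{\preimage{\charfun}-r,\charfun_1,\ldots,\charfun_m\}$ is exactly the regular-value hypothesis). Define $\opensch'_i:=\opensch_i\cap_{\manifo}\manifo'$, viewed as an open subscheme of $\manifo'$. Throwing away those $i$ for which $\opensch'_i\cong\emptys$ leaves a cofinal subsystem (since $\locin{\cscheme}{\charfun<r}\ncong\emptys$ forces cofinally many $\opensch_i$ to meet $\locin{\manifo}{\preimage{\charfun}<r}$, as otherwise the localization in the directed colimit would be zero on a cofinal tail).

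I then verify that $\{\opensch'_i\}$ is a regular system in $\manifo'$. For $i\leq j$ in $\diagra$ the inclusion $\opensch_j\hookrightarrow\opensch_i$ factors through $\clos{\opensch}_j\subseteq\opensch_i$ in $\manifo$; intersecting with $\manifo'$ (a closed subscheme of $\manifo$) gives the factorization $\opensch'_j\hookrightarrow\clos{\opensch'_j}_{\manifo'}\subseteq\clos{\opensch}_j\cap\manifo'\subseteq\opensch_i\cap\manifo'=\opensch'_i$, so regularity is preserved.

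Finally I identify the intersection. Since localization and closure commute with finite intersections in $\csite$, and since for each $\opensch_i$ (which is a manifold with corners, hence behaves classically) $r$ is still a regular value of $\preimage{\charfun}|_{\opensch_i}$, we have $\clos{\locin{\opensch_i}{\preimage{\charfun}<r}}=\opensch_i\cap\locin{\manifo}{\preimage{\charfun}\leq r}=\opensch'_i$. Passing to the cofiltered intersection, and using that $\locin{(-)}{\charfun<r}$ commutes with the cofiltered limit defining $\reduiof{\cscheme}=\underset{i}\bigcap\opensch_i$, I obtain
\begin{equation*}
\clos{\locin{\reduiof{\cscheme}}{\charfun<r}}\;\cong\;\underset{i\in\diagra}\bigcap\clos{\locin{\opensch_i}{\preimage{\charfun}<r}}\;=\;\underset{i\in\diagra}\bigcap\opensch'_i,
\end{equation*}
exhibiting $\clos{\locin{\cscheme}{\charfun<r}}$ as an asymptotic manifold with corners with presentation $\{\manifo',\{\opensch'_i\}_{i\in\diagra}\}$. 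The main obstacle I expect is the last identification: interchanging the closure (a quotient by an ideal of functions vanishing on an open subscheme) with the cofiltered intersection (a colimit of $\cinfty$-rings). This is where the regular-value hypothesis is essential, since on each $\opensch_i$ it guarantees the classical equality $\clos{\{\preimage{\charfun}<r\}}=\{\preimage{\charfun}\leq r\}$, and the compatibility of these with the transition maps $\opensch_j\hookrightarrow\opensch_i$ then propagates through the colimit computation of $\cinfty(\reduiof{\cscheme})$.
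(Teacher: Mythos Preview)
Your overall approach matches the paper's: reduce to the reduced case, take a presentation $\{\manifo,\{\opensch_i\}\}$ with a lift $\preimage{\charfun}$ for which $r$ is regular, form $\manifo':=\clos{\locin{\manifo}{\preimage{\charfun}<r}}$, and exhibit $\clos{\locin{\cscheme}{\charfun<r}}$ as $\underset{i}\bigcap(\opensch_i\cap\manifo')$. The colimit/closure identification you sketch is exactly what the paper computes with the ideals $\ideal$ and $\preimage{\ideal}$.

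There is, however, a genuine gap. You assert twice that regularity of $r$ forces $\clos{\locin{\manifo}{\preimage{\charfun}<r}}=\locin{\manifo}{\preimage{\charfun}\leq r}$ on a manifold with corners (once when you say $\manifo'$ has ``realization obtained by adjoining $\preimage{\charfun}-r$ to the defining set'', and again in the displayed chain $\clos{\locin{\opensch_i}{\preimage{\charfun}<r}}=\opensch_i\cap\locin{\manifo}{\preimage{\charfun}\leq r}=\opensch'_i$). This is false: at a corner of $\manifo$ where $\preimage{\charfun}$ takes the value $r$ but all nearby points in $\manifo$ satisfy $\preimage{\charfun}>r$, that corner lies in $\{\preimage{\charfun}\leq r\}$ but not in $\clos{\{\preimage{\charfun}<r\}}$. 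The Remark you invoke after Def.~\ref{RegularCorners} has the equality as a \emph{hypothesis}, not a consequence of regularity. The paper handles this by observing that the discrepancy $\locin{\manifo}{\preimage{\charfun}\leq r}\setminus\clos{\locin{\manifo}{\preimage{\charfun}<r}}$ is a closed subset of $\manifo$ (in fact a discrete set of corners), so removing it from $\manifo$ leaves a manifold with corners in which the closure equals the weak-inequality locus and the realization goes through. Once you insert this step, your argument is complete and coincides with the paper's.
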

\begin{proof} We can assume that $\cscheme$ is reduced. Let $\{\manifo,\{\opensch_i\}_{i\in\diagra}\}$ be a presentation of $\cscheme$, and let $\preimage{\charfun}\in\cinfty(\manifo)$ be a pre-image of $\charfun$. We can assume that $r$ is a regular value for $\preimage{\charfun}$. We denote $\ideal:=\kernel{\cinfty(\cscheme)\rightarrow\cinfty(\locin{\cscheme}{\charfun<r})}$ and $\preimage{\ideal}:=\kernel{\cinfty(\manifo)\rightarrow\cinfty(\locin{\manifo}{\preimage{\charfun}<r})}$. Then we have a commutative diagram
	\begin{equation*}\xymatrix{\cinfty(\manifo)\ar[d]\ar[rr] && \cinfty(\cscheme)\ar[d]\\
	\cinfty(\manifo)/\preimage{\ideal}\ar[d]\ar[rr] && \cinfty(\cscheme)/\ideal\ar[d]\\
	\cinfty(\locin{\manifo}{\preimage{\charfun}<r})\ar[rr] && \cinfty(\locin{\cscheme}{\charfun<r}).}\end{equation*}
An $\alpha\in\cinfty(\manifo)$ is mapped to $\ideal$, iff $\exists i$ $\alpha|_{\opensch_i\cap\locin{\manifo}{\preimage{\charfun}<r}}=0$. This means that $\cinfty(\cscheme)/\ideal\cong\underset{i\in\diagra}\colim\,\cinfty(\clos{\locin{\manifo}{\preimage{\charfun}<r}}\cap\opensch_i)$ and it is enough to prove that ${\clos{\locin{\manifo}{\preimage{\charfun}<r}}}$ is a manifold with corners. It might happen that $\locin{\manifo}{\preimage{\charfun}\leq r}\neq{\clos{\locin{\manifo}{\preimage{\charfun}<r}}}$, but since $\locin{\manifo}{\preimage{\charfun}\leq r}\setminus{\clos{\locin{\manifo}{\preimage{\charfun}<r}}}$ is a closed subset of ${\manifo}$, it is clear that ${\clos{\locin{\manifo}{\preimage{\charfun}<r}}}$ is a manifold with corners. \hide{%
To see that $\locin{\manifo}{\preimage{\charfun}\leq r}\setminus{\clos{\locin{\manifo}{\preimage{\charfun}<r}}}$ is closed notice that it consists only of isolated points -- the corners of $\manifo$. Any part of the boundary of $\manifo$ that has dimension $>0$ will intersect $\{\preimage{\charfun}=r\}$ transversely, and hence this intersection will be in ${\clos{\locin{\manifo}{\preimage{\charfun}<r}}}$.}%
\end{proof}%

\begin{remark}\label{ValueRegular} Given $\cscheme\in\gsite$ and $\charfun\in\cinfty(\cscheme)$, not for every $r\in\regva{\charfun}$ the subscheme $\locin{\cscheme}{\charfun=r}$ is an asymptotic manifold with corners. The problem might be as follows: using the notation from the proof of Prop.\@ \ref{ClosureRegular} we might have that $\preimage{\charfun}$ has value $r$ on a corner of $\manifo$, but nowhere else in the neighbourhood of this corner in $\manifo$. Since there are at most countably many corners in $\manifo$, it is clear that $\locin{\cscheme}{\charfun=r}$ is an asymptotic manifold for almost all $r\in\mathbb R$. \end{remark}

\begin{definition}\label{DefSubmanifolds} Subschemes $\cscheme'\subseteq\cscheme$ as in Prop.\@ \ref{ClosureRegular} and Rem.\@ \ref{ValueRegular} will be called {\it asymptotic submanifolds}.\end{definition}
Asymptotic manifolds with corners were defined using intersections of regular systems of open subschemes in manifolds with corners. One can ask what happens if we take such intersections within asymptotic manifolds with corners themselves. We will answer this question for regular {\it sequences} only, because these are the cases we will be using.

\begin{lemma}\label{SoftOnManifolds} Let $\cscheme\in\gsite$ and let $\{\manifo,\{\opensch_i\}_{i\in\diagra}\}$ be a presentation of $\cscheme$. Let $\{\opensch'_k\}_{k\in\mathbb N}$ be a regular sequence of open subschemes of $\cscheme$, and let $\cscheme':=\underset{k\in\mathbb N}\bigcap\opensch'_k$. There are a directed category $\diagra'$, a cofinal functor $\lambda\colon\diagra'\rightarrow\diagra$,\footnote{A functor $\lambda\colon\diagra'\rightarrow\diagra$ is cofinal, if $\forall i\in\diagra$ there is $i\rightarrow\lambda(j)$.} and a regular system $\{\opensch''_j\}_{j\in\diagra'}$ of open subschemes of $\manifo$, such that $\forall j\in\diagra'$ $\opensch''_j\subseteq\opensch_{\lambda(j)}$ and we have a commutative diagram
	\begin{equation*}\xymatrix{\reduiof{\cscheme'}\ar[rr]^\cong\ar[d] && \underset{j\in\diagra'}\bigcap\opensch''_j\ar[d]\\
	\reduiof{\cscheme}\ar[rr]^\cong && \underset{i\in\diagra}\bigcap\opensch_{i}.}\end{equation*}
In particular $\cscheme'$ is an asymptotic manifold with corners of dimension $\dim\cscheme$.
\end{lemma}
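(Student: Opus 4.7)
The plan is to construct $\diagra'$, $\lambda$, and $\opensch''_j$ by ``thickening'' the sequence $\{\opensch'_k\}$ to a regular sequence of open subschemes $\{\preimage{\opensch'}_k\}$ of $\manifo$ and then intersecting with the $\opensch_i$. Concretely, take $\diagra' := \mathbb{N} \times \diagra$ with the product order and $\lambda(k,i) := i$; this is cofinal because for every $i \in \diagra$ the identity is a morphism $i \to \lambda(1,i)$. Then set $\opensch''_{(k,i)} := \preimage{\opensch'}_k \cap \opensch_i$. Passing to $\reduiof{\cscheme}$ and choosing a presentation with $\cinfty(\manifo) \twoheadrightarrow \cinfty(\reduiof{\cscheme})$ surjective (Lemma \ref{SurjectiveLocalization} together with the remark following Def.\@ \ref{SmoothCell}) reduces everything to constructing the opens $\preimage{\opensch'}_k \subseteq \manifo$.

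The core construction is inductive. Using \emph{softness} of $\cinfty(\cscheme)$ and the hypothesis $\clos{\opensch'_{k+1}} \subseteq \opensch'_k$, pick bump functions $\tau_k \in \cinfty(\cscheme)$ with $\tau_k \equiv 1$ on $\opensch'_{k+1}$ and $\tau_k \equiv 0$ outside $\opensch'_k$. Fix once and for all a smooth $\eta \colon \mathbb{R} \to [0,1]$ vanishing on a neighbourhood of $0$ with $\eta(1) = 1$. Let $\preimage{\tau}_1 \in \cinfty(\manifo)$ be any lift of $\tau_1$; given lifts $\preimage{\tau}_1, \ldots, \preimage{\tau}_k$, pick any lift $\sigma_{k+1} \in \cinfty(\manifo)$ of $\tau_{k+1}$ and set $\preimage{\tau}_{k+1} := \eta(\preimage{\tau}_k) \cdot \sigma_{k+1}$. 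On $\cscheme$ the factor $\eta(\tau_k)$ equals $1$ on $\opensch'_{k+1} \supseteq \{\tau_{k+1} \neq 0\}$ while $\tau_{k+1}$ vanishes outside, so $\preimage{\tau}_{k+1}$ is still a lift of $\tau_{k+1}$; on $\manifo$ the support of $\eta(\preimage{\tau}_k)$, hence of $\preimage{\tau}_{k+1}$, is contained in $\{\preimage{\tau}_k \neq 0\}$. Setting $\preimage{\opensch'}_k := \{\preimage{\tau}_k \neq 0\}$ therefore yields a regular sequence in $\manifo$ with $\opensch'_{k+1} \subseteq \preimage{\opensch'}_k \cap \reduiof{\cscheme} \subseteq \opensch'_k$.

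The regular-system condition along a morphism $(k,i) \to (k',i')$ in $\diagra'$ now follows from $\clos{\opensch''_{(k',i')}} \subseteq \clos{\preimage{\opensch'}_{k'}} \cap \clos{\opensch_{i'}} \subseteq \preimage{\opensch'}_k \cap \opensch_i = \opensch''_{(k,i)}$, using the regularity of each factor. The intersection $\bigcap_{(k,i)} \opensch''_{(k,i)} = \bigcap_k \preimage{\opensch'}_k \cap \bigcap_i \opensch_i$ restricts on $\reduiof{\cscheme} \cong \bigcap_i \opensch_i$ to $\bigcap_k \opensch'_k = \reduiof{\cscheme'}$ by the squeeze above, giving the required commutative diagram.

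The main obstacle is the inductive closure control: the hypothesis $\clos{\opensch'_{k+1}} \subseteq \opensch'_k$ lives inside $\cscheme$, whereas an unmodified lift of $\tau_{k+1}$ to $\cinfty(\manifo)$ may easily have support leaking out of $\{\preimage{\tau}_k \neq 0\}$ in $\manifo$. The multiplicative correction $\eta(\preimage{\tau}_k)$ is the device that transfers a closure relation from $\cscheme$ to $\manifo$ without disturbing the value on $\cscheme$. Once the regular system is in place, the dimension claim is immediate from Def.\@ \ref{SmoothCell} and Prop.\@ \ref{DimensionOfCell}, since each $\opensch''_{(k,i)}$ is an open subscheme of the $\dim \cscheme$-dimensional manifold with corners $\manifo$.
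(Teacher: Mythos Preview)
Your construction is essentially the paper's: lift the sequence $\{\opensch'_k\}$ to a nested sequence $\{\preimage{\opensch'}_k\}$ of opens in $\manifo$, form a product-type directed category, and set $\opensch''_{(k,i)}=\preimage{\opensch'}_k\cap\opensch_i$. Your device for producing nested lifts (bump functions $\tau_k$ corrected by the cutoff $\eta$) is a clean geometric variant of what the paper does algebraically: there one lifts defining functions with $\preimage{\charfun}_k\mid\preimage{\charfun}_{k+1}$ and exploits the identity $\charfun_k\charfun'_k=1+\alpha_k$, lifted to $\manifo$ modulo an error $\beta_{i_k}$ vanishing on a suitable $\opensch_{i_k}$; this forces $\opensch''_{k+1,j}$ (for $j\geq i_{k+1}$) into the \emph{closed} set $\{\preimage{\charfun}_k\preimage{\charfun'}_k=1\}\subseteq\{\preimage{\charfun}_k\neq 0\}$, which is why the paper restricts $\diagra'$ to $\{(k,j):j\geq i_k\}$ rather than all of $\mathbb N\times\diagra$.

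There is one genuine slip in your regularity check. Your chain
\[
\clos{\opensch''_{(k',i')}}\ \subseteq\ \clos{\preimage{\opensch'}_{k'}}\cap\clos{\opensch_{i'}}\ \subseteq\ \preimage{\opensch'}_k\cap\opensch_i
\]
invokes $\clos{\preimage{\opensch'}_{k'}}\subseteq\preimage{\opensch'}_k$ and $\clos{\opensch_{i'}}\subseteq\opensch_i$, each of which needs a \emph{strict} inequality in the corresponding coordinate. In the full product order on $\mathbb N\times\diagra$ there are non-identity arrows $(k,i)\to(k',i)$ with $i'=i$ (and symmetrically), and for these nothing prevents $\clos{\preimage{\opensch'}_{k'}\cap\opensch_i}$ from meeting the boundary of $\opensch_i$ in $\manifo$; the system as written is not regular in the sense of Def.~\ref{RegularSequence}. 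The fix is immediate: either pass to the cofinal directed subset with the strict product order $(k,i)\prec(k',i')\Leftrightarrow k<k'$ and $i<i'$, or imitate the paper and restrict to pairs $(k,j)$ with $j$ at least some chosen $i_k$ that increases strictly with $k$. Your identification of the intersection with $\reduiof{\cscheme'}$ and the commutative diagram are unaffected.
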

\begin{proof} Since localization commutes with reduction, we can assume that $\cscheme$ is reduced. Let $\{\charfun_k\}_{k\in\mathbb N}\subseteq\cinfty(\cscheme)$ be s.t.\@$\forall k$ $\opensch'_k=\locin{\cscheme}{\charfun_k\neq 0}$ and $\charfun_{k}$ factors $\charfun_{k+1}$. For each $k$ denote $\ideal_k:=\kernel{\cinfty(\cscheme)\rightarrow\cinfty(\opensch'_k)}$. Regularity of $\{\opensch'_k\}_{k\in\mathbb N}$ implies that $\forall k$ there are $\charfun'_{k}\in\cinfty(\cscheme)$, $\alpha_{k}\in\ideal_{k+1}$ s.t.\@ $\charfun_k\charfun'_{k}=1+\alpha_{k}$. We choose pre-images $\{\preimage{\charfun}_k,\preimage{\charfun'}_{k},\preimage{\alpha}_{k}\}_{k\in\mathbb N}\subseteq\cinfty(\manifo)$ s.t.\@ $\forall k$ $\preimage{\charfun}_k$ factors $\preimage{\charfun}_{k+1}$. For an $i\in\diagra$ let $\ideal_i:=\kernel{\cinfty(\manifo)\rightarrow\cinfty(\opensch_i)}$, $\forall k\in\mathbb N$ let $i_k\in\diagra$, $\beta_{i_k}\in\ideal_{i_k}$ s.t.\@
	\begin{equation*}\preimage{\charfun}_k\preimage{\charfun'}_k=1+\preimage{\alpha}_k+\beta_{i_k},\quad k'>k\Rightarrow i_{k'}>i_{k},\quad
	\preimage{\alpha}_k|_{\locin{\manifo}{\preimage{\charfun}_{k+1}\neq 0}\cap\,\opensch_{i_{k+1}}}=0.\end{equation*}
We define $\diagra':=\{(k,j)\,|\,k\in\mathbb N, j\in\diagra\text{ s.t.\@ }j\geq i_k\}$ and $\opensch''_{k,j}:=\locin{\manifo}{\preimage{\charfun}_k\neq 0}\cap\,\opensch_j$. This is a regular system of open subschemes of $\manifo$. The image of the obvious projection $\diagra'\rightarrow\diagra$ is cofinal in $\diagra$, hence $\underset{(k,j)\in\diagra'}\bigcap\opensch''_{k,j}\hookrightarrow\manifo$ factors through $\cscheme$, and therefore $\underset{(k,j)\in\diagra'}\bigcap\opensch''_{k,j}\cong\cscheme'$.\end{proof}%

\subsubsection{Compactness and local compactness}\label{SectionCompactness}

Now we come to the distinction between objects in $\gsite$ defined as intersections of regular systems and as intersections of regular sequences (Ex.\@ \ref{BasicExamples}). In the world of asymptotic objects this distinction reflects the notion of compactness for the usual manifolds. First we recall some terminology (e.g.\@ \cite{MR91} \S I.4): a $\cinfty$-ring $\cring$ is {\it point determined}, if $\cring\cong\cinfty(\mathbb R^n)/\ideal$, and $\forall\charfun\in\cinfty(\mathbb R^n)$ that vanishes at the common zeroes of $\ideal$ belongs to $\ideal$.

\begin{definition} Let $\cscheme\in\gsite$. We will say that $\cscheme$ is {\it compact}, if there is a presentation $\{\manifo,\{\opensch_i\}_{i\in\diagra}\}$ of $\cscheme$ and a dense embedding $\manifo\hookrightarrow\cscheme'$, s.t.\@ ${\cscheme'}$ is point determined, and there is a compact $K\subseteq\cscheme'$ with $\reduiof{\cscheme}\cong\germof{\manifo}{K}$.\footnote{Recall (Def.\@ \ref{DefinitionOfGerm}) that $\germof{\manifo}{K}$ means the germ of $\manifo$ at $K$.} The data $\{\manifo,\cscheme',K\}$ will be called {\it a compact presentation of $\cscheme$}.\end{definition}

\begin{example}\label{CompactExamples}\begin{enumerate}[label={ (\alph*)}] 
\item Every manifold with corners, that is compact in the usual sense, is also compact according to our definition. 
\item Germs at compact subsets are clearly compact, as well as germs at such punctures. 
\item\label{UpperHalfPlane} The germ of the open upper half-plane at the $x$-axis is not compact. \hide{%
	Indeed, let $\cscheme$ be this germ, and let $\{\manifo,\cscheme',K\}$ be a compact presentation of $\cscheme$. As the open upper half-plane $\mathbb 	R_+^2$ is isomorphic to $\mathbb R^2$, the embedding $\cscheme\rightarrow\mathbb R^2_+$ can be extended to $\Phi\colon\manifo\rightarrow\mathbb 	R^2_+$. Closure of the image of $\Phi$ in $\mathbb R^2\supset\mathbb R^2_+$ has to contain the $x$-axis. Then the regular system of open subsets of 	$\mathbb R^2_+$ defining $\cscheme$ pulls back under $\Phi$ to a regular system that does not have a cofinal sub-sequence, but it has to, since germs 	at compact subsets do. }%
\end{enumerate}\end{example}
It is natural to expect the following Lemma.

\begin{lemma}\label{CompactSubmanifold} Let $\cscheme\in\gsite$ be compact, and let $\cscheme'\subseteq\cscheme$ be an asymptotic submanifold (Def.\@ \ref{DefSubmanifolds}). Then $\cscheme'$ is compact.\end{lemma}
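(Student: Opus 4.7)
The strategy is to manufacture a compact presentation of $\cscheme'$ from a given compact presentation of $\cscheme$. Since compactness only refers to $\reduiof{\cdot}$ I assume $\cscheme$ is reduced, and I treat the two cases of Def.\ \ref{DefSubmanifolds} in parallel: write $\charfun\in\cinfty(\cscheme)$ for the defining function of $\cscheme'$ and $r\in\regva{\charfun}$ for the regular value in question. To avoid a clash with the notation of the lemma, I denote a chosen compact presentation of $\cscheme$ by $\{\manifo,Z,K\}$, so that $\manifo\hookrightarrow Z$ is an open dense embedding, $Z$ is point determined, $K\subseteq Z$ is compact, and $\cscheme\cong\germof{\manifo}{K}$, computed as $\underset{V}\colim\cinfty(V\cap\manifo)$ with $V$ ranging over open neighbourhoods of $K$ in $Z$.

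First I would lift $\charfun$ to some $\preimage{\charfun}\in\cinfty(\manifo)$ and, invoking Prop.\ \ref{SardManifold} if necessary, arrange $r$ to be a regular value of $\preimage{\charfun}$ as a function on the manifold with corners $\manifo$. I then set $\manifo_0:=\clos{\locin{\manifo}{\preimage{\charfun}<r}}\subseteq\manifo$ and $Z_0:=\clos{\locin{\manifo}{\preimage{\charfun}<r}}\subseteq Z$ (in the second case of Def.\ \ref{DefSubmanifolds}, replace these by $\locin{\manifo}{\preimage{\charfun}=r}$ and its closure in $Z$; the argument is identical), and put $K_0:=K\cap Z_0$. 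By the proof of Prop.\ \ref{ClosureRegular}, $\manifo_0$ is a manifold with corners; closed subschemes of point-determined $\cinfty$-schemes remain point determined, so $Z_0$ is point determined; and $K_0$, being closed in the compact $K$, is compact. Since $\manifo$ is open in $Z$, the identity $\manifo_0=\manifo\cap Z_0$ shows $\manifo_0$ is open in $Z_0$, while density of $\manifo_0$ in $Z_0$ is built into the definition of $Z_0$ as the closure in $Z$ of a subset of $\manifo_0$.

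What remains, and is the main technical point, is to identify $\reduiof{\cscheme'}$ with $\germof{\manifo_0}{K_0}$. Unwinding, the ideal in $\cinfty(\cscheme)\cong\underset{V}\colim\cinfty(V\cap\manifo)$ defining $\cscheme'$ is the colimit of the vanishing ideals of $V\cap\manifo_0\subseteq V\cap\manifo$ (here one uses continuity of smooth functions to pass from vanishing on the open locus $V\cap\locin{\manifo}{\preimage{\charfun}<r}$ to vanishing on its closure $V\cap\manifo_0$). This yields $\cinfty(\cscheme')\cong\underset{V}\colim\cinfty(V\cap\manifo_0)$, and I then need this colimit to coincide with $\underset{W}\colim\cinfty(W\cap\manifo_0)$ where $W$ ranges over open neighbourhoods of $K_0$ in $Z_0$. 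One inclusion is clear since each $V\cap Z_0$ is such a $W$; the opposite cofinality is the main obstacle, for which I would use the following trick: given an open $W\subseteq Z_0$ with $K_0\subseteq W$, write $W=W'\cap Z_0$ for some open $W'\subseteq Z$ and set $V:=W'\cup(Z\setminus Z_0)$. Then $V$ is open in $Z$, contains $K=K_0\cup(K\setminus Z_0)$, and satisfies $V\cap Z_0=W$, whence $V\cap\manifo_0=W\cap\manifo_0$. This completes the identification and exhibits $\{\manifo_0,Z_0,K_0\}$ as a compact presentation of $\cscheme'$.
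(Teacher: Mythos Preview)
Your approach is essentially the same as the paper's: both construct the new compact presentation by cutting down $\manifo$, $Z$ and $K$ to the closed sublocus determined by $\charfun$. In fact your $K_0=K\cap Z_0$ coincides with the paper's $K\setminus K_1$, since a point of $K$ lies in the paper's $K_1$ exactly when it has a $Z$-neighbourhood disjoint from $\manifo_0$, i.e.\ when it lies outside $\clos{\manifo_0}^Z=Z_0$. Your cofinality trick $V:=W'\cup(Z\setminus Z_0)$ makes explicit the germ identification $\reduiof{\cscheme'}\cong\germof{\manifo_0}{K_0}$ that the paper records as ``easy to see''.

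One imprecision worth flagging: invoking Prop.~\ref{SardManifold} does not let you make a \emph{given} $r$ regular for a chosen lift $\preimage{\charfun}$; Sard only tells you almost all values are regular. What actually justifies this step is the definition of regularity on $\cscheme$ itself (there exists a presentation and a lift for which $r$ is regular), combined with the freedom to shrink $\manifo$ to a smaller open neighbourhood of $K$ without changing the germ. The paper's own proof does not spell this out either when it takes $\manifo\setminus\opensch$ as the new manifold with corners.
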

\begin{proof} Let $(\manifo,\cscheme'',K)$ be a compact presentation of $\cscheme$. Let $\charfun\in\cinfty(\cscheme)$ be s.t.\@ $\cscheme'=\clos{\locin{\cscheme}{\charfun<a}}$ or $\cscheme'=\locin{\cscheme}{\charfun=a}$ for a regular value $a$. There is an open $\opensch\subseteq\manifo$ s.t.\@ $\opensch\cap\reduiof{\cscheme}=\reduiof{\locin{\cscheme}{\charfun>a}}$ (respectively $\reduiof{\locin{\cscheme}{\charfun\neq a}}$). Let $K_1\subseteq K$ consist of $\rpt$, s.t.\@ there is an open $V\subseteq\cscheme''$ containing $\rpt$ with $V\cap\manifo\subseteq\opensch$. Clearly $K\setminus K_1$ is closed, and hence compact. It is easy to see that $\reduiof{\cscheme'}$ is the germ of $\manifo\setminus\opensch$ at $K\setminus K_1$.\end{proof}%

\smallskip

As examples \ref{BasicExamples}.\ref{NonGerm} and \ref{CompactExamples}.\ref{UpperHalfPlane} show there are plenty of asymptotic manifolds with corners, that are not compact, but can be broken into compact pieces. We would like to formalize this.

\begin{definition} An $\cscheme\in\gsite$ is {\it locally compact}, if we can find $m\in\mathbb N$ and $\{\charfun_i\}_{i=1}^m\subset\cinfty(\cscheme)$, s.t.\@ for any set of pairs $\{a_i,b_i\}_{i=1}^m\subset\mathbb R^m$ of regular values for $\{\charfun_i\}_{i=1}^m$ the asymptotic submanifold $\underset{1\leq i\leq m}\bigcap\clos{\locin{\cscheme}{a_i<\charfun_i<b_i}}$ is compact.\end{definition}
The most important example of an asymptotic manifold for us is $\germof{(\manifo\setminus\vmani)}{\vmani}$, which is the germ of a manifold $\manifo$ at a puncture, obtained by removing a finite union of submanifolds $\vmani\subseteq\manifo$. This is clearly a locally compact asymptotic manifold.

\begin{proposition}\label{PropLoc} Let $\ksite\subset\csite$ be the full subcategory consisting of locally compact asymptotic manifolds with corners. Then\begin{enumerate}[label={ (\alph*)}] 
\item\label{open} $\forall\cscheme\in\ksite$ any open subscheme of $\cscheme$ is also in $\ksite$,
\item\label{submanifold} $\forall\cscheme\in\ksite$ any asymptotic submanifold of $\cscheme$ is also in $\ksite$,
\item\label{product} $\forall\cscheme_1,\cscheme_2\in\ksite$ $\cscheme_1\times\cscheme_2$, computed in $\csite$, belongs to $\ksite$.\end{enumerate}\end{proposition}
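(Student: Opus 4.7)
The plan is to verify the three closure properties in turn, in each case exhibiting a witness family of functions on the new scheme built from the given witness $\{\charfun_i\}_{i=1}^m$ on the ambient (resp.\@ factor) scheme plus a small amount of auxiliary data, and reducing compactness of the resulting bounded asymptotic submanifold to Lemma~\ref{CompactSubmanifold} inside a slightly enlarged compact asymptotic submanifold of $\cscheme$. Prop.~\ref{SardManifold} (Sard) will be used to convert regular values on one asymptotic manifold into nearby regular values on a larger or smaller one.

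For (a), pick $g\in\cinfty(\cscheme)$ with $\opensch=\locin{\cscheme}{g\neq 0}$ and equip $\opensch$ with the family $\{\charfun_i|_\opensch\}_{i=1}^m\cup\{g|_\opensch,g^{-1}\}$; for any choice of regular values the two bounds $|g|\leq M_1$ and $|g^{-1}|\leq M_2$ confine $g$ to a compact subset of $\mathbb R\setminus\{0\}$, so the intersection lies entirely inside $\opensch$. By Prop.~\ref{SardManifold} one may slightly enlarge the first $m+1$ pairs of regular values to regular values on all of $\cscheme$, producing a compact asymptotic submanifold $K\subseteq\cscheme$ containing the intersection; the final $g^{-1}$-constraint then cuts out an asymptotic submanifold of $K$, compact by Lemma~\ref{CompactSubmanifold}. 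For (b), use the restricted family $\{\charfun_i|_{\cscheme'}\}_{i=1}^m$ and, given regular values on $\cscheme'$, invoke Prop.~\ref{SardManifold} to find nearby $(a_i',b_i')$ regular for $\charfun_i$ on $\cscheme$; the closure in $\cscheme'$ of $\locin{\cscheme'}{a_i<\charfun_i|_{\cscheme'}<b_i}$ lies inside $\cscheme'\cap\clos{\locin{\cscheme}{a_i'<\charfun_i<b_i'}}$, so the intersection on $\cscheme'$ sits as an asymptotic submanifold of the compact $\underset{i}\bigcap\clos{\locin{\cscheme}{a_i'<\charfun_i<b_i'}}\subseteq\cscheme$ and is compact by Lemma~\ref{CompactSubmanifold}. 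For (c), equip $\cscheme_1\times\cscheme_2$ with $\{\charfun_i^{(1)}\circ\pi_1\}_{i=1}^{m_1}\cup\{\charfun_j^{(2)}\circ\pi_2\}_{j=1}^{m_2}$, where $\pi_1,\pi_2$ are the projections: regular values on each factor pull back to regular values on the product (transversality along a product is fiberwise), and the resulting bounded asymptotic submanifold factors as a product of the two compact asymptotic submanifolds provided by local compactness of the factors; combining compact presentations $(\manifo_i,\cscheme_i'',K_i)$ of each factor into $(\manifo_1\times\manifo_2,\cscheme_1''\times\cscheme_2'',K_1\times K_2)$ then yields a compact presentation of the product, using that $\cscheme_1''\times\cscheme_2''$ remains point-determined and $K_1\times K_2$ compact.

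I expect the main obstacle to lie in (a): since $g^{-1}$ is defined only on $\opensch$ and has no extension to $\cscheme$, Sard's theorem applies directly only to the first $m+1$ members of the witness family, and one must verify carefully that, once intersected with the $g^{-1}$-constraint inside the enlarged compact $K$, the resulting object is genuinely an asymptotic submanifold of $K$ (rather than merely a closed subscheme) so that Lemma~\ref{CompactSubmanifold} can be invoked. Parts (b) and (c) are essentially bookkeeping with regular values and with compact presentations.
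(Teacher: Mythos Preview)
Your approach is broadly correct and shares the paper's architecture, but it differs noticeably in (a) and omits a step in (c).

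For (a), the paper takes a shorter route: instead of writing $\opensch=\{g\neq 0\}$ and adjoining both $g$ and $g^{-1}$ to the witness family, it writes $\opensch=\locin{\cscheme}{\charfun<0}$ (always possible, e.g.\ with $\charfun=-g^{2}$) and adjoins only $\charfun$. For regular $a<b<0$ the set $\clos{\locin{\cscheme}{a<\charfun<b}}$ is then already an asymptotic submanifold of $\cscheme$ itself, not merely of $\opensch$, so Lemma~\ref{CompactSubmanifold} applies directly inside the compact pieces of $\cscheme$ without any extension problem. The obstacle you anticipate --- that $g^{-1}$ lives only on $\opensch$ --- is a genuine complication of your formulation, but it simply does not arise once one uses a single function defining $\opensch$ by a strict inequality. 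Your version can be pushed through (the $g^{-1}$-constraint translates into a constraint on $g$ bounded away from $0$, hence into one or two ordinary asymptotic-submanifold constraints on $\cscheme$), but it is strictly more work for no gain.

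For (c), you omit a step the paper handles explicitly: before discussing compactness one must check that $\cscheme_1\times\cscheme_2$, computed in $\csite$, lies in $\gsite$ at all. The paper does this by taking presentations $\{\manifo',\{\opensch'_i\}\}$, $\{\manifo'',\{\opensch''_j\}\}$ of the reduced factors, forming $\{\manifo'\times\manifo'',\{\opensch'_i\times\opensch''_j\}\}$, and verifying via the universal property (using that manifolds are finitely presented) that this really presents the reduced product. Your combination of compact presentations $(\manifo_i,\cscheme''_i,K_i)$ into $(\manifo_1\times\manifo_2,\cscheme''_1\times\cscheme''_2,K_1\times K_2)$ is then correct and matches the paper's conclusion, but presupposes this verification. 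Part (b) is essentially identical in both treatments.
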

\begin{proof}\begin{enumerate}[label={ (\alph*)}] 
\item We choose $\charfun\in\cinfty(\cscheme)$ s.t.\@ $\opensch=\locin{\cscheme}{\charfun<0}$. Then for any regular values $a<b<0$ we see that $\clos{\locin{\cscheme}{a<\charfun<b}}$ intersected with the compact pieces of $\cscheme$ is compact (Lemma \ref{CompactSubmanifold}).
\item Follows from Lemma \ref{CompactSubmanifold}.
\item We can assume that $\cscheme_1$, $\cscheme_2$ are reduced. Let $\{\manifo',\{\opensch'_i\}_{i\in\diagra'}\}$ and $\{\manifo'',\{\opensch''_j\}_{j\in\diagra''}\}$ be some presentations of $\cscheme_1$, $\cscheme_2$ respectively. Consider $\{\manifo'\times\manifo'',\{\opensch'_i\times\opensch''_j\}_{(i,j)\in\diagra'\times\diagra''}\}$, where $\manifo'\times\manifo''$ is computed in $\csite$. This clearly defines an object in $\gsite$. Suppose we are given another reduced $\cscheme\in\gsite$ with a presentation $\{\manifo,\{\opensch_k\}_{k\in\diagra}\}$, and let $\Phi_1\colon\cscheme\rightarrow\cscheme_1$, $\Phi_2\colon\cscheme\rightarrow\cscheme_2$ be any morphisms. As manifolds are finitely presented, both $\Phi_1$ and $\Phi_2$ lift to $\manifo\rightarrow\manifo'$, $\manifo\rightarrow\manifo''$, and then the corresponding $\manifo\rightarrow\manifo'\times\manifo''$ maps $\cscheme$ to $\underset{(i,j)\in\diagra'\times\diagra''}\bigcap(\opensch'_i\times\opensch''_j)$. This shows that product of objects in $\gsite$, computed in $\csite$, belongs to $\gsite$. Starting with objects of $\ksite$, means that we can decompose $\manifo'$, $\manifo''$ into pieces, that are germs at compacts. Since direct products of compact point determined $\cinfty$-schemes are again compact, this finishes the proof.\end{enumerate}\end{proof}%

\smallskip

Plenty of objects in $\gsite$ are not locally compact. 
\begin{example} Let $\cscheme$ be the germ of $\mathbb R^2\setminus 0$ at $0$. The $y$-axis defines an asymptotic submanifold $\cscheme'\subset\cscheme$. Let $\cscheme''\subset\cscheme$ be the germ of $\cscheme$ at $\cscheme'$. It is easy to see that $\cscheme''$ is not locally compact. \end{example}
The previous example shows an important fact: even locally compact asymptotic manifolds with corners are not first countable, meaning that germs at closed subschemes do not have to have a fundamental system of open neighbourhoods. The following lemma shows something more: quite often intersection of any regular sequence of open neighbourhoods contains another open neighbourhood.

\begin{lemma}\label{OpenInSoft} For a compact $\cscheme\in\ksite$ without $\mathbb R$-points let $\cscheme'\subseteq\cscheme$ be an asymptotic submanifold. Then for any regular sequence $\{\opensch'_i\}_{i\in\mathbb N}$ of open subschemes of $\cscheme$, s.t.\@ $\cscheme'\subseteq\underset{i\in\mathbb N}\bigcap\opensch'_i$, there is an open subscheme $\opensch\subseteq\cscheme$, s.t.\@ $\cscheme'\subseteq\opensch\subseteq\underset{i\in\mathbb N}\bigcap\opensch'_i$.\end{lemma}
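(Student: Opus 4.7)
The strategy is to build the open subscheme $\opensch$ as a smoothly shrinking tubular neighbourhood of the classical submanifold underlying $\cscheme'$, combining the compactness of its defining compact set with the monotonicity supplied by the regular-sequence hypothesis.

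First I fix a compact presentation $(\manifo,\cscheme'',K)$ of $\cscheme$; since $\cscheme$ has no $\mathbb R$-points, $K\cap\manifo=\emptyset$. By Lemma \ref{CompactSubmanifold} I may take a compact sub-presentation $(\manifo',\cscheme'',K')$ of $\cscheme'$ with $\manifo'\subseteq\manifo$ a closed submanifold-with-corners (of the form $\{\preimage{\charfun}\leq a\}$, or $\{\preimage{\charfun}=a\}$ in the equality case) and $K'\subseteq K$ compact. Fix decreasing fundamental sequences $W_m\supseteq W'_m$ of open neighbourhoods of $K,K'$ in $\cscheme''$, and set $U_m:=W_m\cap\manifo$. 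For each $i$ lift $\charfun_i$ to $\preimage{\charfun}_i\in\cinfty(\manifo)$ and put $V_i:=\{\preimage{\charfun}_i\ne 0\}$. The containment $\cscheme'\subseteq\opensch'_i$ gives an $a_i\in\mathbb N$ with $\manifo'\cap W'_{a_i}\subseteq V_i$; the regularity $\clos{\opensch'_{i+1}}\subseteq\opensch'_i$ (applied as in the proof of Lemma \ref{SoftOnManifolds}) supplies a $b_i\in\mathbb N$ with $V_{i+1}\cap U_{b_i}\subseteq V_i$. Setting $M_m:=\max(b_1,\dots,b_{m-1})$ and iterating, one obtains the key monotonicity $V_m\cap U_{M_m}\subseteq V_j$ for every $j\leq m-1$.

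Next I assemble a smooth tubular profile. A tubular neighbourhood of $\manifo'$ in $\manifo$ exists because $\manifo'\subseteq\manifo$ is a (relatively) closed submanifold-with-corners. For each $i$, continuity of $\preimage{\charfun}_i$ on $\manifo'\cap W'_{a_i}$ (where it is pointwise nonzero) yields a smooth positive function $r_i\colon\manifo'\cap W'_{a_i}\to(0,\infty)$ such that the normal $r_i(p)$-disc at $p$ lies inside $V_i$. A smooth partition of unity on $\manifo'\cap W'_{a_1}$ subordinate to the shells $\manifo'\cap(U_{M_m}\setminus\clos{U_{M_{m+1}}})$ combines the $r_i$'s into a single smooth positive function $r\colon\manifo'\cap W'_{a_1}\to(0,\infty)$ satisfying $r(p)\leq r_m(p)$ whenever $p\in U_{M_m}$. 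The resulting open tube
\[
T := \{(p,v)\mid p\in\manifo'\cap W'_{a_1},\ v\in N_p\manifo',\ |v|<r(p)\}\subseteq\manifo
\]
then satisfies $T\cap U_{M_m}\subseteq V_m$ by construction, and combined with the iterated monotonicity above this gives $T\cap U_{M_m}\subseteq V_j$ for all $j\leq m$.

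Finally I produce $\preimage{\psi}\in\cinfty(\manifo)$ whose nonvanishing locus equals $T$, by multiplying a bump function on $\cscheme''$ supported in $W'_{a_1}$ by a smooth cutoff (e.g.\@ using $e^{-1/t}$) of $r(p)^2-|v|^2$ in tubular coordinates; let $\psi\in\cinfty(\cscheme)$ be its image and $\opensch:=\specof{\cinfty(\cscheme)\{\psi^{-1}\}}$. Because $T$ contains $\manifo'\cap W'_{a_1+1}$ through the zero section, $\psi$ is invertible in $\cinfty(\cscheme')$, so $\cscheme'\subseteq\opensch$. For each $i$ the inclusion $T\cap U_{M_{i+1}}\subseteq V_i$ makes $\preimage{\charfun}_i$ nonvanishing on $T\cap U_{M_{i+1}}$, so $\charfun_i$ is invertible in the colimit $\cinfty(\opensch)=\colim_m\cinfty(T\cap U_m)$, giving $\opensch\subseteq\opensch'_i$. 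The main obstacle is precisely the smooth construction of the profile $r$ and of $\preimage{\psi}$: one must carefully paste the tubular data across the filtration $\{U_{M_m}\}$, smoothly approximate the two-sided tube needed when $\cscheme'=\locin{\cscheme}{\charfun=a}$, and ensure that the bump cutoffs remain smooth along the varying boundary of $T$. The no-$\mathbb R$-points hypothesis is essential: it guarantees that $\manifo'$ retreats entirely into the asymptotic regime as one approaches $K'$, so that $r(p)$ may be allowed to shrink to $0$ without the tube collapsing onto a classical point of $\cscheme$.
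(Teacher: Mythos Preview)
Your approach is sound in outline but takes a genuinely different route from the paper's. You build $\opensch$ as a smoothly shrinking tubular neighbourhood of the classical locus $\manifo'$ underlying $\cscheme'$, patching a radius profile $r(p)$ from local radii $r_i$ by a partition of unity over the shells of the compact presentation. The paper instead avoids tubular neighbourhoods entirely: after lifting $\{\opensch'_i\}$ to a regular sequence $\{\opensch''_i\}$ in $\manifo$ via Lemma~\ref{SoftOnManifolds} and reindexing so that $V\cap\opensch_i\subseteq\opensch''_i$ (where $V\subseteq\manifo$ is the full-dimensional lift $\clos{\{\preimage{\charfun}<r\}}$ of $\cscheme'$), it simply sets $\charfun:=\sum_i\charfun_i^2{\charfun'_i}^2$, with $\{\charfun_i\neq0\}=\opensch''_i$ and $\charfun'_i$ a bump supported on the shell $\opensch_i\setminus\clos{\opensch}_{i+2}$, and defines $\opensch:=\cscheme\cap\{\charfun\neq0\}$. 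The no-$\mathbb R$-points hypothesis enters very cleanly there: it forces every point of $V$ to fall in \emph{some} shell, so $\charfun$ is nowhere zero on $V$; and conversely any point with $\charfun\neq0$ lying in $\opensch_{i+1}$ must sit in a shell of index $\geq i$, hence in $\opensch''_i$. This sum-of-squares construction sidesteps the technicalities you flag, in particular the awkward codimension-$0$ case $\cscheme'=\clos{\locin{\cscheme}{\charfun<a}}$, where your normal-disc picture degenerates (the normal bundle has rank $0$) and you must really take a one-sided collar of $\partial\manifo'$ glued to $\mathrm{int}\,\manifo'$. Your argument gives a more vivid geometric description of $\opensch$; the paper's is shorter, uniform in both submanifold types, and requires no tubular machinery. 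One small bookkeeping point in your write-up: $M_m$ should also dominate $a_m$, so that $r_m$ is actually defined on the region where you demand $r\leq r_m$.
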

\begin{proof} We can assume that $\cscheme$ is reduced. Let $\{\manifo,\{\opensch_i\}_{i\in\mathbb N}\}$ be a presentation of $\cscheme$. Let $\charfun\in\cinfty(\cscheme)$ be s.t.\@ $\cscheme'=\clos{\locin{\cscheme}{\charfun< r}}$ for a regular $r\in\mathbb R$. Let $\preimage{\charfun}\in\manifo$ be any pre-image of $\charfun$, and let $\closedset=\clos{\locin{\manifo}{\preimage{\charfun}<r}}$. Using Lemma \ref{SoftOnManifolds} we can assume that there is a regular sequence $\{\opensch''_i\}_{i\in\mathbb N}$ of open subscheme in $\manifo$, s.t.\@ $\forall i$ $\opensch''_i\subseteq\opensch_i$ and $\underset{i\in\mathbb N}\bigcap\opensch'_i\cong\underset{i\in\mathbb N}\bigcap\,\opensch''_i$. Choosing a subsequence, if necessary, we can assume that $\forall i$ $\opensch_i\cap V\subseteq\opensch''_i$.

We choose $\{\charfun_i\}_{i\in\mathbb N}\subset\cinfty(\manifo)$ s.t.\@ $\forall i$ $\locin{\manifo}{\charfun_i\neq 0}=\opensch''_i$ and $\{\charfun'_i\}_{i\in\mathbb N}\subset\cinfty(\manifo)$ s.t.\@ $\charfun'_i$ vanishes exactly on the complement of $\opensch_i\setminus\clos{\opensch}_{i+2}$. The sequence $\{\charfun_i^2{\charfun'}_i^2\}_{i\in\mathbb N}$ is locally finite and $\charfun:=\underset{i\in\mathbb N}\sum\charfun_i^2{\charfun'}_i^2$ does not vanish anywhere on $V$. On the other hand $\forall i\in\mathbb N$ $\locin{\manifo}{\charfun\neq 0}\cap\opensch_{i+1}\subseteq\opensch''_i$. Therefore $\opensch:=\cscheme\cap\locin{\manifo}{\charfun\neq 0}$ is the open subscheme of $\cscheme$ that we have wanted.\end{proof}%

\subsubsection{Asymptotic spaces}\label{SectionSpaces}
The category $\ksite$, together with the Zariski topology, will be our site, where we will be defining connections and performing transgression. We would like to use Brown--Gersten descent, which requires a density structure based on the notion of dimension. 

This means that we need to be able to measure the dimension of a complement of an open subscheme within an asymptotic manifold with corners. Such complements do not have to belong to $\ksite$, for example  the union of intersecting lines is not a manifold with corners. In this section we enlarge $\gsite$ to include {\it asymptotic spaces}, which will be such complements. 

Not all complements will be allowed though. The defining property is existence of stratification by objects of $\gsite$. Our intuition comes from cell complexes in topology. We will use asymptotic spaces only to define the density structure, i.e.\@ we need them only to count dimensions. In particular, all of the asymptotic spaces will be reduced.

\begin{definition}\label{CellComplexes} A reduced $\cscheme\in\csite$ is {\it an asymptotic space}, if there are closed embeddings 
	\begin{equation*}\cscheme=\cscheme_0\hookleftarrow\cscheme_1\hookleftarrow\ldots\hookleftarrow\cscheme_t\cong\emptys,\end{equation*} 
s.t.\@ $\forall i\geq 1$ $\cscheme_i\cong\comple{\cscheme_{i-1}}{\opensch_i}$,\footnote{The operation of subtraction is described in Def.\@ \ref{ClosedComplement}.} where $\opensch_i\subseteq\cscheme_{i-1}$ is open and belongs to $\gsite$. The data $\{\{\cscheme_i\},\,\{\opensch_i\}\}$ will be called {\it a $\cinfty$-cell decomposition of $\cscheme$}.\end{definition}
The choice of a $\cinfty$-cell decomposition is not part of the structure, only existence thereof. \hide{%
Notice that each $\cscheme_i$ is automatically reduced due to our definition of the subtraction operation.\footnote{Open subschemes of reduced schemes are reduced themselves. Indeed, if inverting a function on $\opensch$ kills $\cinfty(\opensch)$, there is a function on all of $\cinfty(\cscheme)$ that is $\infty$-nilpotent. (\cite{MR86} Thm.\@ 1.4)} }%
We do not require dimensions of $\opensch_i$'s to satisfy any inequalities. For example a disjoint union of a finite set of manifolds is an asymptotic space, and we can choose a $\cinfty$-cell decomposition for any ordering of their dimensions. However, our first goal is to define dimensions of asymptotic spaces as the maximum of the dimensions of asymptotic manifolds involved. For this we have the following lemma.

\begin{lemma}\label{DimSpaWell} Let $\cscheme$ be an asymptotic space, let $\{\{\cscheme_i\},\{\opensch_i\}\}$, $\{\{\cscheme'_j\},\{\opensch'_j\}\}$ be any two $\cinfty$-cell decompositions. Then $\underset{i}\max\dim\,\opensch_i=\underset{j}\max\dim\,\opensch'_j$.\end{lemma}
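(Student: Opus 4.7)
By symmetry it suffices to prove $\underset{i}\max\dim\,\opensch_i\leq\underset{j}\max\dim\,\opensch'_j$. Fix $i^*$ with $\dim\opensch_{i^*}=:n=\underset{i}\max\dim\,\opensch_i$. The plan is to locate an index $j^*$ such that $\opensch_{i^*}\cap\opensch'_{j^*}$ is a non-empty open subscheme of $\opensch_{i^*}$. Once this is done, Lemma \ref{OpenSubcells} ensures that this intersection is an $n$-dimensional asymptotic manifold with corners, and it embeds as a locally closed subscheme of $\opensch'_{j^*}$ (being locally closed in $\cscheme$ as an intersection of two locally closed subschemes, and contained in $\opensch'_{j^*}$). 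Proposition \ref{DimensionOfCell} then forces $\dim\opensch'_{j^*}\geq n$.

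To produce $j^*$, I will show by induction on $j$ that one of the following holds: either some $j^*\leq j$ already satisfies the condition above, or $\opensch_{i^*}\subseteq\cscheme'_j$ as (reduced) subschemes of $\cscheme$. The base case $j=0$ is $\opensch_{i^*}\subseteq\cscheme'_0=\cscheme$. For the inductive step, assume the second alternative at stage $j-1$, so $\opensch_{i^*}\subseteq\cscheme'_{j-1}$. Since $\opensch'_j$ is open in $\cscheme'_{j-1}$, the pullback $\opensch_{i^*}\cap\opensch'_j$ is an open subscheme of $\opensch_{i^*}$. If it is non-empty we take $j^*:=j$; if it is empty, then $\opensch_{i^*}$ lands in the closed complement $\comple{\cscheme'_{j-1}}{\opensch'_j}=\cscheme'_j$, maintaining the second alternative.

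Since $\cscheme'_s\cong\emptys$ but $\opensch_{i^*}$ is a non-empty $n$-dimensional asymptotic manifold with corners (and hence $\ncong\emptys$ by Lemma \ref{SurjectiveLocalization}), the second alternative must fail at some stage $j\leq s$, which produces the required $j^*$. This finishes the argument.

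The main technical point to be careful with is the scheme-theoretic meaning of $\opensch_{i^*}\subseteq\cscheme'_{j-1}$ and of the intersection $\opensch_{i^*}\cap\opensch'_j$: since all the pieces in a $\cinfty$-cell decomposition are reduced, the containment is equivalent to the failure of $\opensch_{i^*}$ to meet any of $\opensch'_1,\ldots,\opensch'_{j-1}$, and the intersection is computed by localizing $\cinfty(\opensch_{i^*})$ along (the restriction of) any function whose non-vanishing locus cuts out $\opensch'_j$ inside $\cscheme'_{j-1}$. Apart from this bookkeeping, the proof is an immediate consequence of Proposition \ref{DimensionOfCell} and Lemma \ref{OpenSubcells}.
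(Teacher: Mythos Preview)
Your proof is correct and follows essentially the same strategy as the paper: intersect a cell from one decomposition with the successive strata of the other, using Lemma~\ref{AvoidingCell} (implicitly in your case) to descend when the intersection is empty, and Lemma~\ref{OpenSubcells} together with Proposition~\ref{DimensionOfCell} to extract the dimension bound when it is not. The only cosmetic difference is that the paper fixes an arbitrary $\opensch'_j$ and walks it down the $\{\cscheme_i\}$ filtration (thus bounding every $\dim\opensch'_j$), while you fix the maximal $\opensch_{i^*}$ and walk it down $\{\cscheme'_j\}$; the paper also makes the locally-closed step explicit via the maximal open extension of Proposition~\ref{BasicFacts}, which is the precise mechanism behind your parenthetical remark.
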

\begin{proof} For any $j$ consider $\opensch'_j\cap\opensch_1$, where $\cap$ is the fiber product over $\cscheme$. This is an open subscheme of $\opensch'_j$, hence $\opensch'_j\cap\opensch_1\cong\emptys$ or $\opensch'_j\cap\opensch_1$ is an asymptotic manifold with corners of dimension $\dim\,\opensch'_j$ (Lemma \ref{OpenSubcells}). In the latter case let $\fullex{\opensch'_j}$ be the maximal open extension of $\opensch'_j$ in $\cscheme$ (Prop.\@ \ref{BasicFacts}), then $\opensch'_j\cap\opensch_1=\fullex{\opensch'_j}\cap\opensch_1\cap\cscheme'_{j-1}$, and $\opensch'_j\cap\opensch_1\subseteq\opensch_1$ is locally closed. Hence $\dim\,\opensch'_j\leq\dim\,\opensch_1$ (Prop.\@ \ref{DimensionOfCell}).

In the former case $\opensch'_j\rightarrow\cscheme$ factors through $\cscheme_1$ (Lemma \ref{AvoidingCell}). Then $\opensch'_j\cap\opensch_2$ is an open subscheme of $\opensch'_j$ (here $\cap$ means fiber product over $\cscheme_1$). As before we have either $\dim\opensch'_j\leq\dim\opensch_2$, or $\opensch'_j\rightarrow\cscheme_1$ factors through $\cscheme_2$. After a finite number of steps we stop and conclude that $\dim\opensch'_j\leq\underset{i}\max\dim\,\opensch_i$.\end{proof}%

\begin{definition}\label{DimSpaDef} Let $\cscheme$ be an asymptotic space, and let $\{\{\cscheme_i\},\{\opensch_i\}\}$ be a $\cinfty$-cell decomposition of $\cscheme$. {\it The dimension of $\cscheme$} is $\dim\,\cscheme:=\underset{i}\max\dim\,\opensch_i$.\end{definition}
We would like to glue asymptotic spaces and obtain asymptotic spaces again. The following two lemmas are essential for this.

\begin{lemma}\label{OpenSubComplexes} Let $\cscheme$ be an asymptotic space, and let $\{\cscheme_i,\opensch_i\}$ be a $\cinfty$-cell decomposition. Let $\opensch\subseteq\cscheme$ be an open subscheme, then $\{\opensch\cap\cscheme_i,\opensch\cap\opensch_i\}$ is a $\cinfty$-cell decomposition of $\opensch$, in particular $\opensch$ is an asymptotic space and $\dim\,\opensch\leq\dim\,\cscheme$.\end{lemma}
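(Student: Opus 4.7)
The plan is to show that the proposed decomposition $\{\opensch\cap\cscheme_i,\opensch\cap\opensch_i\}$ satisfies all the conditions of Definition \ref{CellComplexes}. First I would note that, since open embeddings are preserved under pullback, each $\opensch\cap\opensch_i$ (formed as $\opensch\times_\cscheme\opensch_i$, equivalently $\opensch\times_{\cscheme_{i-1}}\opensch_i$) is an open subscheme of $\opensch\cap\cscheme_{i-1}$. If $\opensch\cap\opensch_i$ is non-empty, it is also a non-empty open subscheme of $\opensch_i\in\gsite$, hence lies in $\gsite$ by Lemma \ref{OpenSubcells}. If it happens to be empty, I simply omit that stage; the identity $\opensch\cap\cscheme_i\cong\opensch\cap\cscheme_{i-1}$ holds in this case (see below), so collapsing the step preserves the decomposition.

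The second step is to verify the key compatibility
\[\opensch\cap\cscheme_i\;\cong\;(\opensch\cap\cscheme_{i-1})\setminus(\opensch\cap\opensch_i),\]
where $\setminus$ is the subtraction of Def.\ \ref{ClosedComplement}. Using the description of subtraction via the kernel of the localization map, this reduces to the standard fact that taking closed complements of open subschemes commutes with base change along open embeddings: for $\opensch_i\hookrightarrow\cscheme_{i-1}$ open with closed complement $\cscheme_i$ and any open $\opensch\hookrightarrow\cscheme$, the fiber product $\opensch\times_\cscheme\cscheme_i$ is canonically isomorphic to the closed complement of $\opensch\times_\cscheme\opensch_i$ inside $\opensch\times_\cscheme\cscheme_{i-1}$. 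Iterating over $i$, the chain $\opensch\cap\cscheme_0\hookleftarrow\opensch\cap\cscheme_1\hookleftarrow\cdots\hookleftarrow\opensch\cap\cscheme_t\cong\emptys$ becomes a valid $\cinfty$-cell decomposition of $\opensch$.

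Finally, for the dimension bound, each non-empty $\opensch\cap\opensch_i$ is an open subscheme of $\opensch_i\in\gsite$; hence by Lemma \ref{OpenSubcells} it has the same dimension as $\opensch_i$. Combining with Definition \ref{DimSpaDef} and Lemma \ref{DimSpaWell} (which guarantees independence of the chosen decomposition), I conclude
\[\dim\opensch\;=\;\max_i\dim(\opensch\cap\opensch_i)\;\leq\;\max_i\dim\opensch_i\;=\;\dim\cscheme.\]
The only real technical point is the compatibility of the subtraction operation with pullback along open embeddings; this should follow at once from the explicit description of subtraction through kernel ideals in the Appendix, so I do not expect a substantial obstacle.
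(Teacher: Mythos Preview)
Your proposal is correct and follows essentially the same approach as the paper. The paper's proof is a terse three-line induction: it notes $\opensch\cap\opensch_1\in\gsite$ with $\dim\leq\dim\opensch_1$, invokes Prop.\ \ref{BasicFacts} part 2 to get $\comple{\opensch}{(\opensch\cap\opensch_1)}=\opensch\cap\cscheme_1$, and iterates. The compatibility of subtraction with pullback along open embeddings that you identify as ``the only real technical point'' is precisely Prop.\ \ref{BasicFacts}(2), so you should cite it directly rather than appealing to the kernel-ideal description in the Appendix; also, since $\emptys\in\gsite$ (as the $-1$-dimensional asymptotic manifold) and all asymptotic spaces are reduced, your separate handling of the empty case is unnecessary.
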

\begin{proof} Clearly $\opensch_1\cap\opensch$ is an asymptotic manifold with corners (of dimension $\leq\dim\,\opensch_1$). Since $\comple{\opensch}{\opensch\cap\opensch_1}=\opensch\cap\cscheme_1$ (Prop.\@ \ref{BasicFacts}) we can continue within $\cscheme_1$. After a finite number of steps we stop.\end{proof}%

\begin{lemma}\label{ComplexCharacterization} A reduced non-empty $\cscheme\in\csite$ is an asymptotic space, if and only if there are open subschemes
	\begin{equation}\label{OpenCharacterization}\emptys=\opensch_t\subseteq\ldots\subseteq\opensch_1\subseteq\opensch_0=\cscheme,\end{equation}
s.t.\@ $\forall  i\geq 1$ ${(\comple{\cscheme}{\opensch_i})\cap\opensch_{i-1}}$ is an asymptotic manifold with corners.\footnote{Recall that $\cap$ means fiber product over $\cscheme$.}\end{lemma}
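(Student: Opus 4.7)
The plan is to make the two characterisations into each other's complements inside $\cscheme$. Namely, given a $\cinfty$-cell decomposition $\{\{\cscheme_i\},\{\opensch_i\}\}$, I would set $\opensch'_j:=\comple{\cscheme}{\cscheme_{t-j}}$ for $j=0,\ldots,t$, while given an open filtration $\emptys=\opensch'_t\subseteq\ldots\subseteq\opensch'_0=\cscheme$, I would set $\cscheme_i:=\comple{\cscheme}{\opensch'_{t-i}}$ and $\opensch_i:=\cscheme_{i-1}\cap\opensch'_{t-i}$ (fiber product over $\cscheme$). The only non-trivial preliminary observation is that each $\cscheme_i$ in a $\cinfty$-cell decomposition is closed not merely in $\cscheme_{i-1}$ but, by composition of closed embeddings, in $\cscheme$ itself, so that $\comple{\cscheme}{\cscheme_i}$ is a well-defined open subscheme of $\cscheme$.

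For the forward direction one checks that $\opensch'_0=\cscheme$, $\opensch'_t=\emptys$, and that the inclusions $\cscheme_{t-j+1}\subseteq\cscheme_{t-j}$ give $\opensch'_j\subseteq\opensch'_{j-1}$. The essential computation, which uses Prop.\@ \ref{BasicFacts} to handle compatibility of complements with fiber products over $\cscheme$, is
\begin{equation*}(\comple{\cscheme}{\opensch'_j})\cap\opensch'_{j-1}\cong\cscheme_{t-j}\cap(\comple{\cscheme}{\cscheme_{t-j+1}})\cong\opensch_{t-j+1},\end{equation*}
which is by hypothesis an asymptotic manifold with corners.

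For the converse, given the open filtration each $\cscheme_i$ is closed in $\cscheme$ with $\cscheme_0=\cscheme$ and $\cscheme_t=\emptys$, and $\opensch_i=(\comple{\cscheme}{\opensch'_{t-i+1}})\cap\opensch'_{t-i}$ is an open subscheme of $\cscheme_{i-1}$ which lies in $\gsite$ by hypothesis. The identity $\cscheme_i=\comple{\cscheme_{i-1}}{\opensch_i}$ then reduces to the inclusion $\opensch'_{t-i+1}\subseteq\opensch'_{t-i}$, since inside $\cscheme_{i-1}=\comple{\cscheme}{\opensch'_{t-i+1}}$ the complement of $\opensch'_{t-i}$ is the same as the complement of $\opensch'_{t-i}$ in $\cscheme$. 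The main obstacle in either direction is purely bookkeeping: one must consistently invoke the defining properties of the subtraction operation (Def.\@ \ref{ClosedComplement}) together with its compatibility with restriction to closed subschemes and with pullbacks of open subschemes, both of which are packaged in Prop.\@ \ref{BasicFacts}; once these formal properties are in hand, the equivalence reduces to the routine index manipulation above.
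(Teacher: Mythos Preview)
Your argument is correct and is essentially the same as the paper's. Where the paper builds the open filtration from the maximal extensions $\fullex{\opensch_i}$ of the cells, you build it from the complements $\comple{\cscheme}{\cscheme_{t-j}}$; by Rem.~\ref{DifDif} one has $\cscheme_i=\comple{\cscheme}{\fullex{\opensch_i}}$ and hence $\comple{\cscheme}{\cscheme_i}=\fullex{\opensch_i}$, so the two constructions coincide. One small point worth making explicit: in Prop.~\ref{BasicFacts} the operation $\comple{\cscheme}{\cscheme'}$ is defined only for \emph{principal} closed subschemes, so ``closed by composition'' is not quite enough; you should note (by induction, using Rem.~\ref{DifDif}) that each $\cscheme_i$ is in fact principal closed in $\cscheme$.
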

\begin{proof} Given (\ref{OpenCharacterization}) $\forall i$ we define $\cscheme_i:=\comple{\cscheme}{\opensch_{t-i}}$, and choose $\{\charfun_i\}_{i=0}^t\subseteq\cinfty(\cscheme)$ s.t.\@ $\{\opensch_i=\locin{\cscheme}{\charfun_i\neq 0}\}_{i=0}^t$. Since $\opensch_i\subseteq\opensch_{i-1}$, $\iradical{\igen{\charfun_i}}\leq\iradical{\igen{\charfun_{i-1}}}$ (Lemma \ref{OpensRadicals}), and we have closed embeddings $\emptys=\cscheme_t\hookrightarrow\ldots\hookrightarrow\cscheme_1\hookrightarrow\cscheme_0=\cscheme$. Let $\ima{\charfun_{t-i}}\in\cinfty(\cscheme_{i-1})$ be the image of $\charfun_{t-i}$, then $\cscheme_i=\comple{\cscheme_{i-1}}{\prinop{(\cscheme_{i-1})}{\ima{\charfun_{t-i}}}}$, and $\prinop{(\cscheme_{i-1})}{\ima{\charfun_{t-i}}}=\cscheme_{i-1}\cap\opensch_{t-i}=(\comple{\cscheme}{\opensch_{t-i+1}})\cap\opensch_{t-i}$ is an asymptotic manifold.

Conversely, let $\{\cscheme_i,\opensch_i\}_{i=1}^t$ be a $\cinfty$-cell decomposition. Let $\{\fullex{\opensch_i}\}_{i=1}^t$ be the maximal extensions of $\{\opensch_i\}_{i=1}^t$ to open subschemes of $\cscheme$ (Prop.\@ \ref{BasicFacts}). We have $\cscheme_i=\comple{\cscheme}{\fullex{\opensch_i}}$, in particular $\forall i$ $\fullex{\opensch_i}\subseteq\fullex{\opensch}_{i+1}$. Then $(\comple{\cscheme}{\fullex{\opensch}_{i-1}})\cap\fullex{\opensch}_{i}\cong\opensch_{i}$ (Rem.\@ \ref{DifDif}) and taking $\{\emptys\}\cup\{\fullex{\opensch}_i\}_{i=1}^t$ with the opposite order we get (\ref{OpenCharacterization}).\end{proof}%

\smallskip

As with asymptotic manifolds with corners, also on asymptotic spaces we would like to consider regular values of functions.

\begin{definition} Let $\cscheme$ be an asymptotic space and let $\charfun\in\cinfty(\cscheme)$. We say that $r\in\mathbb R$ is {\it a regular value} for $\charfun$, if there is a $\cinfty$-cell decomposition $\{\{\cscheme_i\},\,\{\opensch_i\}\}$ of $\cscheme$, s.t.\@ $r$ is a regular value for restriction of $\charfun$ to each $\opensch_i$. The set of regular values will be denoted by $\regva{\charfun}$.\end{definition}
As finite unions of sets of Lebesgue measure $0$ also have Lebesgue measure $0$, it is clear that almost all $r\in\mathbb R$ are regular for a given $\charfun$. However, solving equations and inequalities, given by regular values of functions, does not always give us asymptotic submanifolds (Prop.\@ \ref{ClosureRegular}, Rem.\@ \ref{ValueRegular}). Similarly for asymptotic spaces. Hence we need the following proposition.

\begin{proposition}\label{RegularValues}  Let $\cscheme$ be an asymptotic space and $\charfun\in\cinfty(\cscheme)$. Let $\set\subseteq\mathbb R$ consist of $r$ s.t.\@ $\locin{\cscheme}{\charfun\leq r}=\emptys$ or is an asymptotic space of dimension $\dim\cscheme$ and $\locin{\cscheme}{\charfun=r}=\emptys$ or is an asymptotic space of dimension $\dim\cscheme-1$. Then $\mathbb R\setminus\set$ has Lebesgue measure $0$.\end{proposition}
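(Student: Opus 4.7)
The strategy is to fix a $\cinfty$-cell decomposition of $\cscheme$ and reduce the statement to cell-by-cell Sard-type estimates, then reassemble the pieces into $\cinfty$-cell decompositions for the sublevel and level sets.

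Fix a $\cinfty$-cell decomposition $\{\cscheme_i,\opensch_i\}_{i=1}^t$ of $\cscheme$ (Def.~\ref{CellComplexes}), so each $\opensch_i\in\gsite$ and $\dim\cscheme=\underset{i}\max\,\dim\opensch_i$ by Lemma~\ref{DimSpaWell}. Apply Prop.~\ref{SardManifold} to $\charfun|_{\opensch_i}\in\cinfty(\opensch_i)$: the set $N_i\subseteq\mathbb R$ of non-regular values has Lebesgue measure $0$. By Rem.~\ref{ValueRegular} the further exceptional set $N'_i$ of regular $r$ where $\locin{\opensch_i}{\charfun=r}$ fails to be an asymptotic manifold (the corner-of-$\opensch_i$ obstruction) is countable. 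Let $\set:=\mathbb R\setminus\underset{i=1}{\overset{t}\bigcup}(N_i\cup N'_i)$; its complement is a finite union of null sets, hence null. For each $r\in\set$, Prop.~\ref{ClosureRegular} gives that $\locin{\opensch_i}{\charfun\leq r}=\clos{\locin{\opensch_i}{\charfun<r}}$ is either empty or an asymptotic submanifold of $\opensch_i$ of dimension $\dim\opensch_i$, and similarly $\locin{\opensch_i}{\charfun=r}$ is empty or an asymptotic submanifold of dimension $\dim\opensch_i-1$.

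Now assemble. Fix $r\in\set$ and write $\cscheme':=\locin{\cscheme}{\charfun\leq r}$, $\cscheme''=\locin{\cscheme}{\charfun=r}$. Define $\cscheme'_i:=\cscheme'\times_{\cscheme}\cscheme_i$ and $\opensch'_i:=\cscheme'\times_{\cscheme}\opensch_i$; then $\opensch'_i\cong\locin{\opensch_i}{\charfun\leq r}\in\gsite$, and openness of $\opensch_i\hookrightarrow\cscheme_{i-1}$ is preserved by base change. One checks that complement formation commutes with base change by a closed embedding (the kernel of $\cinfty(\cscheme_{i-1})\twoheadrightarrow\cinfty(\opensch_i)$ pulls back to the kernel of $\cinfty(\cscheme'_{i-1})\twoheadrightarrow\cinfty(\opensch'_i)$; compare the Appendix, e.g.\@ Prop.~\ref{BasicFacts}), giving $\cscheme'_i\cong\comple{\cscheme'_{i-1}}{\opensch'_i}$. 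Hence $\{\cscheme'_i,\opensch'_i\}$ is a $\cinfty$-cell decomposition of $\cscheme'$. Running the same argument with $\locin{\opensch_i}{\charfun=r}$ in place of $\locin{\opensch_i}{\charfun\leq r}$ produces a $\cinfty$-cell decomposition of $\cscheme''$.

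The last step is dimension counting via Def.~\ref{DimSpaDef}: $\dim\cscheme'=\underset{i:\,\opensch'_i\ncong\emptys}\max\dim\opensch_i$ and $\dim\cscheme''=\underset{i:\,\opensch''_i\ncong\emptys}\max(\dim\opensch_i-1)$. Trivially $\dim\cscheme'\leq\dim\cscheme$ and $\dim\cscheme''\leq\dim\cscheme-1$. The subtle part---and the main obstacle---is upgrading ``$\leq$'' to equality whenever the respective scheme is non-empty; this forces one to exclude a further measure-zero set of $r$ on which all top-dimensional cells $\opensch_i$ (with $\dim\opensch_i=\dim\cscheme$) have their sublevel set empty while a lower-dimensional cell contributes a non-empty stratum. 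Excluding this set---for instance by restricting, on each top-dimensional cell $\opensch_i$, to $r$ in the image-interior of $\charfun|_{\opensch_i}$ whenever that image is non-trivial---and combining with the previous null sets gives the final null complement.
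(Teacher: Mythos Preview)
Your treatment of the level set $\locin{\cscheme}{\charfun=r}$ matches the paper's: intersect the given cell decomposition with $\{\charfun=r\}$, exclude Sard null sets and the countably many corner values (Rem.~\ref{ValueRegular}), and observe that the resulting $\{\cscheme_i\cap\{\charfun=r\},\opensch_i\cap\{\charfun=r\}\}$ is a $\cinfty$-cell decomposition.

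For the sublevel set $\locin{\cscheme}{\charfun\leq r}$ you diverge from the paper, and there is a gap. You take $\opensch'_i=\locin{\opensch_i}{\charfun\leq r}$ as the new cells and assert that Prop.~\ref{ClosureRegular} gives $\locin{\opensch_i}{\charfun\leq r}=\clos{\locin{\opensch_i}{\charfun<r}}$. Proposition~\ref{ClosureRegular} does \emph{not} say this: it shows only that the closure $\clos{\locin{\opensch_i}{\charfun<r}}$ is an asymptotic manifold with corners, and its proof explicitly notes that $\locin{\manifo}{\preimage\charfun\leq r}$ can properly contain $\clos{\locin{\manifo}{\preimage\charfun<r}}$. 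So nothing you have cited guarantees $\opensch'_i\in\gsite$, which is required for $\{\cscheme'_i,\opensch'_i\}$ to be a $\cinfty$-cell decomposition. (The equality does hold once the finitely many corner values are excluded, but you would need to argue this separately; your $N'_i$ is phrased only in terms of the level set.) The paper sidesteps the issue entirely by using a finer, alternating decomposition
\[
\cscheme'_i:=\cscheme_i\cap\{\charfun\leq r\},\qquad \cscheme''_i:=(\cscheme_i\cap\{\charfun=r\})\cup\cscheme'_{i+1},
\]
whose cells are $\opensch_{i+1}\cap\{\charfun<r\}$ (open in $\opensch_{i+1}$, hence in $\gsite$ by Lemma~\ref{OpenSubcells}) and $\opensch_i\cap\{\charfun=r\}$ (handled by Rem.~\ref{ValueRegular}). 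This avoids any appeal to $\{\charfun\leq r\}$ on individual cells.

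Your final paragraph about upgrading the dimension inequalities to equalities is misdirected. Neither the paper's proof nor the application in Lemma~\ref{ReducingLemma} uses or establishes equality; what is needed (and what the constructed decompositions give) is $\dim\locin{\cscheme}{\charfun=r}\leq\dim\cscheme-1$ and $\dim\locin{\cscheme}{\charfun\leq r}\leq\dim\cscheme$. Indeed, equality can fail on a set of positive measure (take $\cscheme$ a disjoint union of cells of different dimensions with $\charfun$ bounded below on the top-dimensional one), so the extra exclusions you propose cannot work in general.
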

\begin{proof} Let $\{\{\cscheme_i\},\,\{\opensch_i\}\}$ be a $\cinfty$-cell decomposition of $\cscheme$, let $\{\charfun_i\}$ be the restrictions of $\charfun$ to $\{\opensch_i\}$. Let $r\in\underset{i}\bigcap\,\regva{\charfun_i}$ and consider
	\begin{equation*}\locin{\cscheme}{\charfun=r}\hookleftarrow\cscheme_1\cap\locin{\cscheme}{\charfun=r}\hookleftarrow\ldots
	\hookleftarrow\cscheme_{t-1}\cap\locin{\cscheme}{\charfun=r}\hookleftarrow\emptys.\end{equation*}
Clearly $\locin{\cscheme}{\charfun=r}\cap\cscheme_i=\comple{(\locin{\cscheme}{\charfun=r}\cap\cscheme_{i-1})}{(\locin{\cscheme}{\charfun=r}\cap\opensch_i)}$, i.e.\@ this would be a $\cinfty$-cell decomposition, if each $\locin{\cscheme}{\charfun=r}\cap\opensch_i$ was an asymptotic manifold with corners. It can be that $\locin{\cscheme}{\charfun=r}\cap\opensch_i$ is not an asymptotic manifold with corners: let $\{\manifo,\{\opensch_j\}_{j\in\diagra}\}$ be a presentation of $\opensch_i$, and let $\preimage{\charfun}\in\cinfty(\manifo)$ be a pre-image of $\charfun$, s.t.\@ $r$ is a regular value for $\charfun$ on the interior of $\manifo$ and on each piece of the boundary. Then the submanifold $\locin{\manifo}{\preimage{\charfun}=r}$ can have intersections with the boundary of $\manifo$ that are isolated. 

Since there are at most countably many such corners, excluding such values from $\underset{i}\bigcap\,\regva{\charfun_i}$ we get an $\set$ s.t.\@ $\mathbb R\setminus\set$ has Lebesgue measure $0$. We claim that $\forall r\in\set$ also $\locin{\cscheme}{\charfun\leq r}$ is an asymptotic space. We define a new sequence of closed subschemes of $\cscheme$: 
	\begin{equation*}\forall i\quad\cscheme'_i:=\cscheme_i\cap\locin{\cscheme}{\charfun\leq r},\quad
	\cscheme''_i:=(\cscheme_i\cap\locin{\cscheme}{\charfun=r})\cup\cscheme'_{i+1}.\end{equation*} 
Then $\cscheme''_i=\comple{\cscheme'_i}{(\opensch_{i+1}\cap\locin{\cscheme}{\charfun<r})}$, $\cscheme'_i=\comple{\cscheme''_{i-1}}{(\opensch_i\cap\locin{\cscheme}{\charfun=r})}$. Therefore
	\begin{equation*}\cscheme\hookleftarrow\cscheme'_0\hookleftarrow\cscheme''_0\hookleftarrow\ldots
	\hookleftarrow\cscheme'_t\hookleftarrow\cscheme''_t\end{equation*}
is a $\cinfty$-cell decomposition of $\cscheme$.\end{proof}%

\subsection{Density structure and Brown--Gersten descent}

As a subcategory of $\csite$, $\ksite$ inherits the Zariski topology. According to Prop.\@ \ref{PropLoc} open subschemes of an $\cscheme\in\ksite$ are themselves in $\ksite$, hence finite open covers constitute a basis for this topology on $\ksite$. Let $\zaritos{\ksite}$, $\zaritoss{\ksite}$ be the categories of pre-sheaves of sets and of simplicial sets on $\ksite$ respectively. 

Recall (e.g.\@ \cite{DSI04}) that the global projective model structure on $\zaritoss{\ksite}$ is given by objectwise weak equivalences and fibrations. The local projective model structure is given by global cofibrations and local weak equivalences, i.e.\@ morphisms that induce isomorphisms on {\it sheaves} of homotopy groups.

We would like to be able to characterize fibrant objects in the local projective model structure, which we call {\it homotopy sheaves}. Also we would like to explicitly compute homotopy pullbacks in $\zaritoss{\ksite}$. For this we use Brown--Gersten descent. To prove Brown--Gersten descent in $\zaritoss{\ksite}$ we need to introduce cd and density structures.

\subsubsection{Density and cd structure}\label{SectionDensity}

\begin{definition}\label{DefSquares} A pullback square
	\begin{equation*}\xymatrix{\cscheme_1\cap\cscheme_2\ar[d]\ar[rr] && \cscheme_2\ar[d]\\
	\cscheme_1\ar[rr] && \cscheme}\end{equation*}
in $\ksite$ will be called\begin{enumerate}
\item {\it open}, if each $\cscheme_1\rightarrow\cscheme$, $\cscheme_2\rightarrow\cscheme$ is an open subscheme and $\cscheme=\cscheme_1\cup\cscheme_2$,
\item {\it closed}, if $\cscheme_1\rightarrow\cscheme$ is an open subscheme and $\cscheme_2\rightarrow\cscheme$ is an asymptotic submanifold (Def.\@ \ref{DefSubmanifolds}), s.t.\@ there is an open subscheme $\cscheme'_2\rightarrow\cscheme$ factoring through $\cscheme_2$, and $\cscheme=\cscheme_1\cup\cscheme'_2$.\end{enumerate}\end{definition}
Clearly each of the two kinds of squares is stable with respect to isomorphisms of square diagrams. Hence we have {\it two cd structures} (\cite{V10} Def.\@ 2.1): the open and the closed.

\begin{proposition} Both of the cd structures on $\ksite$ are complete and regular, and each generates the Zariski topology.\end{proposition}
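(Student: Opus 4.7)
The plan is to verify, for each of the two cd structures on $\ksite$, the three conditions separately, using Voevodsky's standard characterizations from \cite{V10} and leaning on Proposition \ref{PropLoc} throughout so that the objects produced during the argument stay inside $\ksite$.

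Begin with generation of the Zariski topology. For the open cd structure every distinguished square is already a Zariski cover of $\cscheme$ by two open subschemes, and any finite open Zariski cover $\{\opensch_i\}_{i=1}^n$ can be split iteratively as $\{\opensch_1,\,\opensch_2\cup\cdots\cup\opensch_n\}$, with finite unions of opens remaining open. For the closed cd structure, each distinguished square comes with the Zariski cover $\{\cscheme_1,\cscheme'_2\}$ built into its definition, which refines the two-arrow covering associated to the square; conversely, any open Zariski cover $\{\opensch,\opensch'\}$ itself becomes a closed square on taking $\cscheme_2:=\cscheme$ (trivially an asymptotic submanifold of itself) together with $\cscheme'_2:=\opensch'$. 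So in both cases the generated topology is precisely Zariski.

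For completeness, the plan is to check that distinguished squares are stable under arbitrary base change $\cscheme'\to\cscheme$. In the open case this is immediate, since preimages of open subschemes are open and joint coverage is preserved. In the closed case, the open piece pulls back to an open subscheme, and the asymptotic submanifold $\cscheme_2\hookrightarrow\cscheme$ pulls back to an asymptotic submanifold of $\cscheme'$: by Def.\ \ref{DefSubmanifolds} the embedding is locally cut out by a regular value of a function, and pulling the function back to $\cscheme'$ gives the same description after (if necessary) perturbing the value within the full-measure set of regular values (Prop.\ \ref{SardManifold}). The open extension $\cscheme'_2$ also pulls back to an open subscheme, preserving the covering condition $\cscheme_1\cup\cscheme'_2=\cscheme$.

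For regularity I must verify (i) each square is a pushout in the relevant category of presheaves, (ii) one leg of each square is a monomorphism, and (iii) a certain diagonal square is distinguished. Condition (ii) is immediate: open inclusions are monomorphisms in $\csite$ (they correspond to $\cinfty$-ring epimorphisms), and asymptotic submanifold embeddings come from surjective maps of $\cinfty$-rings. In the open case (i) is Zariski sheafhood of $\cinfty$-rings, and (iii) holds because the self-intersection of an open remains open. In the closed case, (iii) reduces to the observation that $\cscheme_1\cap\cscheme_2\subseteq\cscheme_2$ is open by construction. The main obstacle I anticipate is condition (i) for the closed cd structure: an asymptotic submanifold is not open, so Zariski descent does not directly give the pushout property. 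The plan is to reduce it to the genuine Zariski cover $\{\cscheme_1,\cscheme'_2\}$, using surjectivity of $\cinfty(\cscheme)\to\cinfty(\cscheme_2)$ (Lemma \ref{SurjectiveLocalization} applied to the relevant presentation) to produce existence of a gluing, and density of $\cscheme'_2$ inside $\cscheme_2$ (which holds in the only non-trivial case $\cscheme_2=\clos{\locin{\cscheme}{\charfun<r}}$, since $\locin{\cscheme}{\charfun<r}\subseteq\cscheme'_2$ is open dense in $\cscheme_2$) to secure uniqueness of the compatibility with the $\cinfty(\cscheme_2)$-factor.
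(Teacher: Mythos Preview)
Your argument has two genuine gaps, both concerning the closed cd structure.

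\textbf{Generation of the Zariski topology.} Your ``converse'' step for closed squares does not do what you want. Taking $\cscheme_2:=\cscheme$ and $\cscheme'_2:=\opensch'$ does produce a distinguished closed square, but the sieve it generates is the sieve containing $\id_\cscheme$, i.e.\ the maximal sieve. This tells you nothing about whether the sieve generated by $\{\opensch,\opensch'\}$ is covering in the closed cd topology. What you must produce is a closed distinguished square whose \emph{two legs both factor through} the given open cover. The paper does exactly this: from $\{\opensch_1,\opensch_2\}$ it uses Prop.~\ref{ZariskiByFunction} to write $\opensch_1=\locin{\cscheme}{\charfun<r_1}$, $\opensch_2=\locin{\cscheme}{\charfun>r_2}$, then picks a regular $r\in(r_2,r_1)$ via Prop.~\ref{SardManifold} so that $\clos{\locin{\cscheme}{\charfun<r}}$ (or its complement) is a genuine asymptotic submanifold sitting inside one of the $\opensch_i$ (Prop.~\ref{ClosureRegular}). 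That is the missing idea.

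\textbf{Completeness for closed squares.} Your claim that an asymptotic submanifold pulls back to an asymptotic submanifold is false in general: if $\cscheme_2=\clos{\locin{\cscheme}{\charfun<r}}$ and you base change along $\Phi\colon\cscheme'\to\cscheme$, the value $r$ need not be regular for $\charfun\circ\Phi$ (e.g.\ $\Phi$ could land entirely in the level set), and ``perturbing $r$'' changes the square rather than the pullback of the given one. The paper avoids this entirely by invoking Voevodsky's Lemma~2.4: it suffices that the pullback of a distinguished square be \emph{refinable} by a distinguished square. A closed square is refined by the open square $\{\cscheme_1,\cscheme'_2\}$; open squares \emph{are} stable under pullback; and the pulled-back open square is then refined by a closed one using the Urysohn--Sard step above. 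This zig-zag through the open cd structure is the correct mechanism.

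\textbf{Regularity.} Your hands-on verification of the pushout condition for closed squares may be completable along the density line you sketch, but it is unnecessary. The paper observes that both legs of every distinguished square are monomorphisms in $\ksite$ (open immersions are localizations, asymptotic submanifolds correspond to surjections of $\cinfty$-rings, and both are epimorphisms in $\fgrings$), and this is exactly the hypothesis of \cite[Lemma~2.11]{V10}, which yields regularity directly.
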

\begin{proof} The open cd structure generates Zariski topology by definition of this topology. Since closed squares can be refined by open squares, it is clear that the topology generated by closed squares is not stronger than the Zariski one. To prove the opposite inequality consider a Zariski covering: $\cscheme=\opensch_1\cup\opensch_2$. 

According to Prop.\@ \ref{ZariskiByFunction} $\exists\charfun\in\cinfty(\cscheme)$ and $r_2<r_1\in\mathbb R$, s.t.\@ $\opensch_1=\locin{\cscheme}{\charfun<r_1}$, $\opensch_2=\locin{\cscheme}{\charfun>r_2}$. According to Prop.\@ \ref{SardManifold} we can choose $r_1<r<r_1$ that is a regular value for $\charfun$. This and Prop.\@ \ref{ClosureRegular} imply that the topology generated by closed squares is not weaker than Zariski topology. 

\smallskip

According to Lemma 2.4 in \cite{V10} to prove that a cd structure is complete it is enough to show that every pullback of a distinguished square can be refined by another distinguished square. Since open squares are stable under pullbacks and we have seen that they can be refined by closed squares, we conclude that both of the cd structures are complete.

Localizations and surjective morphisms in $\fgrings$ are epimorphisms, hence distinguished squares in either of the two cd structures satisfy the conditions of Lemma 2.11 in \cite{V10}, i.e.\@ these cd structures are regular.\end{proof}%

\smallskip

Now we define the density structure on $\ksite$ (\cite{V10} Def.\@ 2.20).

\begin{definition}\label{DefinitionDensity} For an $\cscheme\in\ksite$ an open subscheme $\opensch\subseteq\cscheme$ is {\it $m$-dense}, if $\comple{\cscheme}{\opensch}$ is an asymptotic space of dimension $\leq\max(-1,\dim(\cscheme)-m)$.\end{definition}
For Brown--Gersten descent the most important property of a density structure is whether it is reducing for a given cd structure. Even before that we need to show that Def.\@ \ref{DefinitionDensity} does give us a density structure that is locally of finite dimension. This is the purpose of parts \ref{FirstDensity}-\ref{ThirdDensity} the following lemma.

\begin{lemma}\label{DensityStructure} Let $\cscheme\in\ksite$ be of dimension $n\geq-1$. Then\begin{enumerate}
\item\label{FirstDensity} $\emptys\hookrightarrow\cscheme$ is $0$-dense; if $\opensch\hookrightarrow\cscheme$ is $m+1$-dense, it is also $m$-dense;
\item $\opensch\hookrightarrow\cscheme$ is $n+1$-dense, iff it is an isomorphism;
\item\label{ThirdDensity} if $\opensch\hookrightarrow\cscheme$ and $\opensch'\hookrightarrow\opensch$ are $m$-dense, the composite $\opensch'\hookrightarrow\cscheme$ is $m$-dense;
\item let $\opensch_1\subseteq\cscheme$ be an $m$-dense open subscheme, and let $\opensch_2\subseteq\cscheme$ be any open subscheme, then $\opensch_1\cap\opensch_2\hookrightarrow\opensch_2$ is $m$-dense;
\item if $\opensch_1,\opensch_2\hookrightarrow\cscheme$ are $m$-dense open subschemes, so is $\opensch_1\cap\opensch_2\hookrightarrow\cscheme$.\end{enumerate}\end{lemma}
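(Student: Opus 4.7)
I would prove the five parts in the order stated, with each later part using the earlier ones. The two technical inputs I would rely on most heavily are Lemma \ref{OpenSubcells} (open subschemes of asymptotic manifolds with corners have the same dimension) and Lemma \ref{OpenSubComplexes} (open subschemes of asymptotic spaces are asymptotic spaces of no larger dimension), together with the set-theoretic identity $\comple{\cscheme}{\opensch'}=\comple{\cscheme}{\opensch}\sqcup\comple{\opensch}{\opensch'}$ whenever $\opensch'\subseteq\opensch\subseteq\cscheme$ are open, which lets one splice $\cinfty$-cell decompositions.

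Parts (1) and (2) I would dispatch immediately from Definition \ref{DefinitionDensity}: for (1), $\comple{\cscheme}{\emptys}=\cscheme$ has dimension $n\leq\max(-1,n)$, and $\max(-1,n-m-1)\leq\max(-1,n-m)$ forces $m+1$-density to imply $m$-density; for (2), $n+1$-density means $\dim\comple{\cscheme}{\opensch}\leq\max(-1,-1)=-1$, so the complement is $\emptys$ and $\opensch\cong\cscheme$.

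For (3), I would note that $\dim\opensch=\dim\cscheme=n$ (since the $m$-density of $\opensch\hookrightarrow\cscheme$ for any $m\geq 0$ forces $\opensch\ncong\emptys$, and Lemma \ref{OpenSubcells} applies stratum-by-stratum via a $\cinfty$-cell decomposition, or else $\opensch=\emptys$ only if $\cscheme=\emptys$). Using the identity displayed above, I would glue a $\cinfty$-cell decomposition of $\comple{\cscheme}{\opensch}$ (of dimension $\leq\max(-1,n-m)$) with one of $\comple{\opensch}{\opensch'}$ (of dimension $\leq\max(-1,n-m)$) by running through the cells of the former first and then those of the latter; the pieces are disjoint because the second lies in $\opensch$, so the result is a $\cinfty$-cell decomposition of $\comple{\cscheme}{\opensch'}$ of dimension $\leq\max(-1,n-m)$. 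Lemma \ref{ComplexCharacterization} gives a cleaner way to package this gluing if one prefers to argue with nested opens and strata rather than with the inductive definition.

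Part (4) follows because $\comple{\opensch_2}{(\opensch_1\cap\opensch_2)}\cong\opensch_2\cap\comple{\cscheme}{\opensch_1}$ as an open subscheme of $\comple{\cscheme}{\opensch_1}$, so Lemma \ref{OpenSubComplexes} makes it an asymptotic space of dimension at most $\dim\comple{\cscheme}{\opensch_1}\leq\max(-1,n-m)$; since $\dim\opensch_2=n$, this is exactly $m$-density of $\opensch_1\cap\opensch_2\hookrightarrow\opensch_2$. Finally, (5) is an immediate combination of (4) and (3): $\opensch_1\cap\opensch_2\hookrightarrow\opensch_1$ is $m$-dense by (4), $\opensch_1\hookrightarrow\cscheme$ is $m$-dense by hypothesis, so (3) composes them.

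The main obstacle I expect is the scheme-theoretic content of (3): one must verify that the disjoint union decomposition really does combine $\cinfty$-cell decompositions into a $\cinfty$-cell decomposition of the bigger complement, respecting the closed-embedding/open-subscheme structure built into Definition \ref{CellComplexes}. This is essentially bookkeeping, but the cleanest route is through Lemma \ref{ComplexCharacterization}: express $\comple{\cscheme}{\opensch}$ by a nested chain of opens in $\cscheme$ with asymptotic-manifold strata, extend the chain by a nested chain of opens in $\opensch$ realizing $\comple{\opensch}{\opensch'}$, and observe that the resulting chain in $\cscheme$ realizes $\comple{\cscheme}{\opensch'}$ with strata of dimension bounded by $\max(-1,n-m)$.
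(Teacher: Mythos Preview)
Your proposal is correct and matches the paper's proof essentially line by line: parts (1)--(2) are declared obvious, part (3) is handled via Lemma \ref{ComplexCharacterization} (the paper goes there directly rather than first attempting the disjoint-union picture, using Prop.\@ \ref{BasicFacts} to extend the opens of $\comple{\cscheme}{\opensch}$ maximally into $\cscheme$), part (4) uses the identity $\comple{\opensch_2}{(\opensch_1\cap\opensch_2)}=\opensch_2\cap(\comple{\cscheme}{\opensch_1})$ from Prop.\@ \ref{BasicFacts} together with Lemma \ref{OpenSubComplexes}, and part (5) follows from (3) and (4). The only cosmetic difference is that for the dimension comparison in (3) the paper invokes $\dim\opensch\leq\dim\cscheme$ via Lemma \ref{OpenSubComplexes} rather than equality via Lemma \ref{OpenSubcells}; either suffices.
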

\begin{proof} Parts 1.\@ and 2.\@ are obvious. \hide{%
For part 2.\@ we need to show that, if $\comple{\cscheme}{\opensch}=\emptys$, then $\opensch=\cscheme$. Let $\charfun\in\cinfty(\cscheme)$ be a defining function for $\opensch$. By assumption $1\in\iradical{\igen{\charfun}}$, but then the image of $1$ in $\cinfty(\cscheme)/\igen{\charfun}$ is in $\iradical{0}$, i.e.\@ $1\in\igen{\charfun}$, and hence $\charfun$ is invertible. }%
For part 3.\@ let $\{\opensch_i\}$ be a sequence of open subschemes of $\comple{\cscheme}{\opensch}$ as in Lemma \ref{ComplexCharacterization}, and let $\{\fullex{\opensch}_i\}$ be the maximal extensions to open subschemes of $\cscheme$ (Prop.\@ \ref{BasicFacts}), in particular $\forall i$ $\opensch\subseteq\fullex{\opensch_i}$. Let $\{\opensch'_j\}$ be a sequence of open subschemes of $\comple{\opensch}{\opensch'}$ as in Lemma \ref{ComplexCharacterization}. Then $\{\{\opensch'_j\},\{\fullex{\opensch}_i\cap(\comple{\cscheme}{\opensch'})\}\}$ is a sequence of open subschemes as in Lemma \ref{ComplexCharacterization}, realizing $\comple{\cscheme}{\opensch'}$ as an asymptotic space. 

The asymptotic manifolds with corners in the $\cinfty$-cell decomposition we have constructed are the same as in decompositions of $\comple{\cscheme}{\opensch}$ and $\comple{\opensch}{\opensch'}$. Therefore, since $\dim(\cscheme)\geq\dim(\opensch)$ (Lemma \ref{OpenSubComplexes}) we finish the proof of 3.

\smallskip

To prove 4.\@ we note that $\comple{\opensch_2}{(\opensch_1\cap\opensch_2)}=\opensch_2\cap(\comple{\cscheme}{\opensch_1})$ (Prop.\@ \ref{BasicFacts}), and open subschemes of asymptotic spaces are asymptotic spaces of smaller or equal dimension (Lemma \ref{OpenSubComplexes}). Part 5.\@ follows from 3.\@ and 4.\end{proof}%

\subsubsection{Brown--Gersten descent}\label{SectionBrownGersten}

In this section we show that the density structure from Def.\@ \ref{DefinitionDensity} is reducing (\cite{V10} Def.\@ 2.22) for both of our cd structures. As in the beginning of the proof of Prop.\@ 2.10 in \cite{V10b} we start with the following lemma.

\begin{lemma}\label{OnlyIntersection} Let $\cscheme\in\ksite$ and let $\cscheme_1,\cscheme_2\rightarrow\cscheme$ in $\ksite$ be asymptotic submanifolds (Def.\@ \ref{DefSubmanifolds}) or open subschemes containing an open cover $\{\opensch_1,\opensch_2\}$ of $\cscheme$. Let $\softchart_1\hookrightarrow\cscheme_1$, $\softchart_2\hookrightarrow\cscheme_2$ be $m$-dense open subschemes. There is an $m$-dense open subscheme $\opensch\subseteq\cscheme$ and an open cover $\opensch'_1,\opensch'_2\rightarrow\opensch$, s.t.\@ $\opensch'_1\subseteq\opensch_1\cap\softchart_1$, $\opensch'_2\subseteq\opensch_2\cap\softchart_2$.\end{lemma}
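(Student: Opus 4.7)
The plan is to define $\opensch'_i$ as an open subscheme of $\cscheme$ refining $\opensch_i\cap\softchart_i$, set $\opensch:=\opensch'_1\cup\opensch'_2$, and verify that $\opensch$ is $m$-dense in $\cscheme$. In both cases of the hypothesis, $\opensch_i\cap\softchart_i$ turns out to be open in $\cscheme$: if $\cscheme_i$ is itself open in $\cscheme$ this is automatic. If $\cscheme_i$ is an asymptotic submanifold, I would pick $h_i\in\cinfty(\cscheme_i)$ with $\softchart_i=\locin{\cscheme_i}{h_i\neq 0}$ and use the fact that $\cinfty(\cscheme)\twoheadrightarrow\cinfty(\cscheme_i)$ is surjective (asymptotic submanifolds in Prop.\ \ref{ClosureRegular} and Rem.\ \ref{ValueRegular} are constructed as quotient $\cinfty$-rings) to lift $h_i$ to $\widetilde h_i\in\cinfty(\cscheme)$. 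Letting $\widetilde\softchart_i:=\locin{\cscheme}{\widetilde h_i\neq 0}$, an open subscheme of $\cscheme$, we get $\widetilde\softchart_i\cap\cscheme_i=\softchart_i$, and since $\opensch_i\subseteq\cscheme_i$ we obtain $\opensch_i\cap\widetilde\softchart_i=\opensch_i\cap\softchart_i$. In either case I define $\opensch'_i:=\opensch_i\cap\softchart_i$ as an open subscheme of $\cscheme$ and $\opensch:=\opensch'_1\cup\opensch'_2$; the required inclusions and the open cover of $\opensch$ by $\{\opensch'_1,\opensch'_2\}$ hold by construction.

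The remaining content is to show that $\opensch$ is $m$-dense in $\cscheme$, i.e.\ that $\comple{\cscheme}{\opensch}$ is an asymptotic space of dimension at most $\max(-1,\dim\cscheme-m)$. For the dimensional bound, any $p\in\comple{\cscheme}{\opensch}$ lies in some $\opensch_i$ (as $\{\opensch_1,\opensch_2\}$ covers $\cscheme$), and from $p\notin\opensch'_i$ one deduces $p\in\opensch_i\cap(\cscheme_i\setminus\softchart_i)=:\opensch_i\cap Z_i$. By $m$-density of $\softchart_i$ in $\cscheme_i$, $Z_i$ is an asymptotic space of dimension $\leq\dim\cscheme_i-m\leq\dim\cscheme-m$ (using Prop.\ \ref{DimensionOfCell}), and $\opensch_i\cap Z_i$ is an open subscheme of $Z_i$, hence an asymptotic space of the same dimensional bound by Lemma \ref{OpenSubComplexes}. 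Thus set-theoretically $\comple{\cscheme}{\opensch}\subseteq(\opensch_1\cap Z_1)\cup(\opensch_2\cap Z_2)$, yielding the required dimensional estimate provided $\comple{\cscheme}{\opensch}$ is itself an asymptotic space.

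To equip $\comple{\cscheme}{\opensch}$ with an asymptotic-space structure I would invoke Lemma \ref{ComplexCharacterization}: pick open filtrations $\emptys=V_i^{(t_i)}\subseteq\dots\subseteq V_i^{(0)}=\cscheme_i$ witnessing $Z_i$ as an asymptotic space, lift the associated defining functions from $\cscheme_i$ to $\cscheme$ (tautologically when $\cscheme_i$ is open, via surjective restriction when $\cscheme_i$ is a submanifold) to produce open subschemes $\widetilde V_i^{(k)}\subseteq\cscheme$ with $\widetilde V_i^{(k)}\cap\cscheme_i=V_i^{(k)}$, and assemble a combined filtration of $\comple{\cscheme}{\opensch}$ by intersecting with the $\widetilde V_1^{(k)}$'s and $\widetilde V_2^{(l)}$'s in a linear order refining the product of the two indexing orders. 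The successive complements in this combined filtration should be, up to regularity, open subschemes of the asymptotic manifolds appearing in the original filtrations of $Z_1$ and $Z_2$; applying Sard's theorem (Prop.\ \ref{SardManifold}, Prop.\ \ref{RegularValues}) to the lifts $\widetilde h_i$ and to the defining functions of the $\widetilde V_i^{(k)}$ should guarantee that each such intersection is in fact an asymptotic manifold with corners, verifying the criterion of Lemma \ref{ComplexCharacterization}.

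The main obstacle will be this last step: the two filtrations point in ``different directions'' inside $\cscheme$, and their pieces interact non-trivially in the overlap $\opensch_1\cap\opensch_2\cap\{\widetilde h_1=\widetilde h_2=0\}$, so the linear ordering of the combined filtration must be chosen carefully, and the lifts $\widetilde h_i$ may need to be perturbed (or the defining functions of the $\widetilde V_i^{(k)}$ modified by a Sard-type argument) to ensure each successive difference is a genuine asymptotic manifold rather than merely an asymptotic space. Once this combinatorial and regularity step is settled, the rest follows formally from Lemma \ref{DensityStructure} and the properties of asymptotic spaces established earlier.
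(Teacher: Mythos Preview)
Your overall plan is natural, but the step you flag as the ``main obstacle'' is in fact a genuine gap that your proposed fix does not close. With $\opensch'_i=\opensch_i\cap\softchart_i$ and $\opensch=\opensch'_1\cup\opensch'_2$, the complement $\cscheme\setminus\opensch$ decomposes (over the cover $\{\opensch_1,\opensch_2\}$) into pieces one of which is $\opensch_1\cap\opensch_2\cap Z_1\cap Z_2$. Nothing in the paper guarantees that an intersection of two arbitrary asymptotic spaces is again an asymptotic space, and your Sard argument cannot help: $Z_i=\cscheme_i\setminus\softchart_i$ is \emph{fixed} by the hypothesis, so there is no free parameter to which Prop.~\ref{SardManifold} or Prop.~\ref{RegularValues} applies. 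Perturbing the lifts $\widetilde h_i$ does not change $\opensch'_i=\opensch_i\cap\softchart_i$, and choosing different cell decompositions of $Z_1,Z_2$ gives you no control over how their cells meet each other.

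The paper sidesteps this interaction problem entirely by \emph{introducing} a free parameter. Using Prop.~\ref{ZariskiByFunction} one writes $\opensch_1=\locin{\cscheme}{\charfun<r_1}$, $\opensch_2=\locin{\cscheme}{\charfun>r_2}$ for a single $\charfun\in\cinfty(\cscheme)$, and then applies Prop.~\ref{RegularValues} to the restriction of $\charfun$ to each asymptotic space $Z_i$ to pick regular thresholds $r_2<r'_2<r'_1<r_1$. The closed pieces $Z_1\cap\locin{(\cscheme_1)}{\charfun\leq r'_1}$ and $Z_2\cap\locin{(\cscheme_2)}{\charfun\geq r'_2}$ are then \emph{each individually} asymptotic spaces of dimension $\leq\dim\cscheme-m$, and each is closed in all of $\cscheme$ (since $\{\charfun\leq r'_1\}\subseteq\opensch_1\subseteq\cscheme_1$, etc.). Hence each of their complements in $\cscheme$ is an $m$-dense open subscheme, and one takes $\opensch$ to be the intersection of these two complements, which is $m$-dense by Lemma~\ref{DensityStructure}.5 (whose proof only needs the \emph{nested} combination of asymptotic-space structures, already handled in part~3). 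Finally $\opensch'_1:=\locin{\opensch}{\charfun<r'_1}$ and $\opensch'_2:=\locin{\opensch}{\charfun>r'_2}$ give the required cover. The point is that the paper never needs $Z_1$ and $Z_2$ to interact well: it trades that problem for the much easier one of slicing each $Z_i$ by a regular value of $\charfun$.
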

\begin{proof} According to Prop.\@ \ref{ZariskiByFunction} we can find $\charfun\in\cinfty(\cscheme)$ and $r_2<r_1\in\mathbb R$ s.t.\@ $\opensch_1=\locin{\cscheme}{\charfun<r_1}$, $\opensch_2=\locin{\cscheme}{\charfun>r_2}$. By assumption $\comple{\cscheme_1}{\softchart_1}$, $\comple{\cscheme_2}{\softchart_2}$ are asymptotic spaces, hence we can find $r_2<r'_2<r'_1<r_1$ s.t.\@ $(\comple{\cscheme_1}{\softchart_1})\cap\locin{(\cscheme_1)}{\charfun\leq r'_1}$ and $(\comple{\cscheme_2}{\softchart_2})\cap\locin{(\cscheme_2)}{\charfun\geq r'_2}$ are asymptotic spaces as well. We define
	\begin{equation*}\opensch:=(\comple{\cscheme}{(\comple{(\cscheme_1}{\softchart_1})\cap\locin{(\cscheme_1)}{\charfun\leq r'_1}}))\cap
	(\comple{\cscheme}{(\comple{(\cscheme_1}{\softchart_2})\cap\locin{(\cscheme_2)}{\charfun\geq r'_2}})).\end{equation*}
Being an intersection of two $m$-dense open subschemes of $\cscheme$, also $\opensch\hookrightarrow\cscheme$ is $m$-dense. We define $\opensch'_1:=\locin{\opensch}{\charfun<r'_1}$, $\opensch'_2:=\locin{\opensch}{\charfun>r'_2}$.\end{proof}%

\smallskip

The previous lemma, together with the following one, shows that the density structure is reducing for the open cd structure. 

\begin{lemma}\label{ReducingLemma} Consider a cartesian diagram of open subschemes in $\ksite$:
	\begin{equation*}\xymatrix{\opensch_{1}\cap\opensch_2\ar[r]\ar[d] & \opensch_2\ar[d]\\ \opensch_1\ar[r] & \cscheme.}\end{equation*}
Let $\opensch\subseteq\opensch_{1}\cap\opensch_2$ be an $m$-dense open subscheme. There is an $m+1$-dense open subscheme $\opensch'\subseteq\cscheme$ and an open cover $\opensch'_1,\opensch'_2\rightarrow\opensch'$ s.t.\@ $\opensch'_1\subseteq\opensch_1$, $\opensch'_2\subseteq\opensch_2$ and $\opensch'_1\cap\opensch'_2\subseteq\opensch$.\end{lemma}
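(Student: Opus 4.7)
The plan is to exploit the separating function for the open cover $\{\opensch_1,\opensch_2\}$ of $\cscheme$ and cut $\cscheme$ along a cleverly chosen regular level set so that the two halves of the cut can be pasted with $\opensch$ across a codimension $m{+}1$ discard. Using Prop.\@ \ref{ZariskiByFunction} from the appendix, choose $\charfun\in\cinfty(\cscheme)$ and $r_2<r_1\in\mathbb R$ with $\opensch_1=\locin{\cscheme}{\charfun<r_1}$ and $\opensch_2=\locin{\cscheme}{\charfun>r_2}$. Let $Z:=\comple{(\opensch_1\cap\opensch_2)}{\opensch}$, which by hypothesis and Def.\@ \ref{DefinitionDensity} is an asymptotic space of dimension at most $\dim(\opensch_1\cap\opensch_2)-m\leq\dim(\cscheme)-m$. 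By Prop.\@ \ref{SardManifold} and Prop.\@ \ref{RegularValues} (both applied to $\charfun$ on $\cscheme$ and to its restriction to $Z$) the complement in $\mathbb R$ of the set of values that are simultaneously regular has Lebesgue measure zero, so we may pick $r\in(r_2,r_1)$ for which $\locin{\cscheme}{\charfun=r}$ is an asymptotic submanifold of $\cscheme$ of dimension $\dim(\cscheme)-1$ and for which $Z\cap\locin{\cscheme}{\charfun=r}$ is either empty or an asymptotic space of dimension at most $\dim(\cscheme)-m-1$.

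Then I would define
	\begin{equation*}\opensch'_2:=\locin{\cscheme}{\charfun>r},\quad \opensch'_1:=\locin{\cscheme}{\charfun<r}\cup\opensch,\quad \opensch':=\opensch'_1\cup\opensch'_2=\locin{\cscheme}{\charfun\neq r}\cup\opensch.\end{equation*}
Since $r<r_1$, $\opensch'_1\subseteq\opensch_1$; since $r>r_2$, $\opensch'_2\subseteq\opensch_2$; and $\opensch\subseteq\opensch_1\cap\opensch_2\subseteq\opensch_1$ makes $\opensch'_1$ well defined as an open subscheme of $\opensch_1$. A direct computation gives
	\begin{equation*}\opensch'_1\cap\opensch'_2=(\locin{\cscheme}{\charfun<r}\cup\opensch)\cap\locin{\cscheme}{\charfun>r}=\opensch\cap\locin{\cscheme}{\charfun>r}\subseteq\opensch,\end{equation*}
because $\locin{\cscheme}{\charfun<r}\cap\locin{\cscheme}{\charfun>r}=\emptys$.

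It remains to bound the dimension of $\comple{\cscheme}{\opensch'}$. Choose $\chi\in\cinfty(\cscheme)$ with $\opensch=\locin{\cscheme}{\chi\neq 0}$ (which exists because $\opensch$ is open in $\cscheme$), so that $\opensch'=\locin{\cscheme}{(\charfun-r)^2+\chi^2\neq 0}$, and hence $\comple{\cscheme}{\opensch'}$ is canonically a closed subscheme of the asymptotic submanifold $\locin{\cscheme}{\charfun=r}$. Because $r\in(r_2,r_1)$, $\locin{\cscheme}{\charfun=r}\subseteq\opensch_1\cap\opensch_2$, and therefore
	\begin{equation*}\comple{\cscheme}{\opensch'}\cong\comple{\locin{\cscheme}{\charfun=r}}{(\opensch\cap\locin{\cscheme}{\charfun=r})}\cong Z\cap\locin{\cscheme}{\charfun=r},\end{equation*}
so $\dim\comple{\cscheme}{\opensch'}\leq\dim(\cscheme)-m-1$ by the choice of $r$, which is exactly the condition that $\opensch'$ be $m{+}1$-dense in $\cscheme$.

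The only nontrivial obstacle will be bookkeeping the difference between set-theoretic and scheme-theoretic operations, so that the identification of $\comple{\cscheme}{\opensch'}$ with $Z\cap\locin{\cscheme}{\charfun=r}$ is genuinely an isomorphism of asymptotic spaces (not merely of underlying point sets) and so that the dimension bound from Prop.\@ \ref{RegularValues} can legitimately be transported across this isomorphism. This should follow from the appendix facts on closed complements behaving well under restriction to open subschemes combined with Lemma \ref{OpenSubComplexes}.
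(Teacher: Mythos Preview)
Your proof is correct and follows the same strategy as the paper: pick a separating function $\charfun$ via Prop.~\ref{ZariskiByFunction}, intersect the ``bad'' locus $Z=\comple{(\opensch_1\cap\opensch_2)}{\opensch}$ with a generic level set $\{\charfun=r\}$ to drop its dimension by one (Prop.~\ref{RegularValues}), and take $\opensch'$ to be the complement of $Z\cap\{\charfun=r\}$ in $\cscheme$. Your $\opensch'=\{\charfun\neq r\}\cup\opensch$ is in fact the same open subscheme as the paper's $\opensch':=\comple{\cscheme}{\cscheme''}$ (both are the maximal open avoiding $\cscheme''=Z\cap\{\charfun=r\}$, since $\{\charfun=r\}\subseteq\opensch_1\cap\opensch_2$ forces $\{\charfun=r\}\cap(\comple{\cscheme}{\opensch})=\{\charfun=r\}\cap Z$).

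The only genuine difference is the choice of the covering pieces. The paper takes $\opensch'_1=\locin{\opensch'}{\charfun<r}=\{\charfun<r\}$ and $\opensch'_2=\comple{\opensch_2}{(Z\cap\{\charfun\leq r\})}$, while you take $\opensch'_1=\{\charfun<r\}\cup\opensch$ and $\opensch'_2=\{\charfun>r\}$. Both are valid; your choice is arguably cleaner, as it avoids the asymmetric construction of $\opensch'_2$ and makes the verification of $\opensch'_1\cap\opensch'_2\subseteq\opensch$ a one-line computation. The bookkeeping you flag at the end (identifying $\comple{\cscheme}{\opensch'}$ with $Z\cap\{\charfun=r\}$ scheme-theoretically) is indeed handled by Prop.~\ref{BasicFacts} and the fact that complements are reduced by construction; the paper sidesteps this by defining $\opensch'$ directly as $\comple{\cscheme}{\cscheme''}$ rather than as a union, which is a minor presentational shortcut.
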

\begin{proof} According to Prop.\@ \ref{ZariskiByFunction} we can find $\charfun\in\cinfty(\cscheme)$ and $r_2<r_1\in\mathbb R$, s.t.\@ $\opensch_1=\locin{\cscheme}{\charfun_1<r_1}$, $\opensch_2=\locin{\cscheme}{\charfun_2>r_2}$. Let $\cscheme':=\comple{(\opensch_1\cap\opensch_2)}{\opensch}$. According to Prop.\@ \ref{RegularValues} there is $r\in(r_2,r_1)$, s.t.\@ $\cscheme'':=\cscheme'\cap\locin{\cscheme}{\charfun=r}$ is an asymptotic space of dimension $<\dim(\cscheme')$. Since $\cscheme'\subseteq\opensch_1\cap\opensch_2$ is a principal closed subscheme and $\locin{\cscheme}{\charfun=r}\subseteq\opensch_1\cap\opensch_2$, $\cscheme''\subseteq\cscheme$ is also principal, and clearly $\opensch':=\comple{\cscheme}{\cscheme''}\subseteq\cscheme$ is an $m+1$-dense open subscheme. We define $\opensch'_1:=\locin{\opensch'}{\charfun<r}$, $\opensch'_2:=\comple{\opensch_2}{(\cscheme'\cap\locin{\cscheme}{\charfun\leq r})}$ ($\locin{\cscheme}{\charfun\leq r}\cap\opensch_2$ is principal closed in $\opensch_2$, and hence so is $\cscheme'\cap\locin{\cscheme}{\charfun\leq r}$).\end{proof}%

\begin{theorem}\label{ThmBrownGersten} The open and the closed cd structures on $\ksite$ are bounded.\end{theorem}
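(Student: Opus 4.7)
The plan is to deduce the theorem from Voevodsky's abstract criterion (\cite{V10} Cor.\@ 2.26), which says that a complete regular cd structure is bounded as soon as it admits a reducing density structure that is locally of finite dimension. Completeness and regularity for both cd structures were established in the proposition just above Def.\@ \ref{DefSquares}, so only the two properties of the density structure from Def.\@ \ref{DefinitionDensity} remain. Local finite-dimensionality is the content of Lemma \ref{DensityStructure}: its parts \ref{FirstDensity}--\ref{ThirdDensity} verify the abstract density axioms, and the $m$-density index is bounded by $\dim\cscheme+1$, which is finite for every $\cscheme\in\ksite$.

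For the open cd structure the reducing property amounts to two ingredients, and these are packaged exactly in Lemmas \ref{OnlyIntersection} and \ref{ReducingLemma}. Given an open distinguished square with $\cscheme=\opensch_1\cup\opensch_2$ and $m$-dense open subschemes $\softchart_i\subseteq\opensch_i$, Lemma \ref{OnlyIntersection} produces an $m$-dense $\opensch\subseteq\cscheme$ together with an open cover $\opensch'_1,\opensch'_2\rightarrow\opensch$ factoring through $\softchart_1,\softchart_2$. Given an $m$-dense $\opensch\subseteq\opensch_1\cap\opensch_2$, Lemma \ref{ReducingLemma} raises the density index by one and produces the required distinguished subsquare whose pullback factors through $\opensch$. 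Together these are precisely the conditions of \cite{V10} Def.\@ 2.22 for the open cd structure.

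For the closed cd structure the first step is already taken care of by Lemma \ref{OnlyIntersection}, whose statement was phrased uniformly for $\cscheme_i$ being either an open subscheme or an asymptotic submanifold. The second step is an analog of Lemma \ref{ReducingLemma} for a closed square: given $\cscheme_1\subseteq\cscheme$ open and $\cscheme_2\subseteq\cscheme$ an asymptotic submanifold with $\cscheme=\cscheme_1\cup\cscheme'_2$ for some open $\cscheme'_2\subseteq\cscheme_2$, and an $m$-dense open $\opensch\subseteq\cscheme_1\cap\cscheme_2$, we must produce an $(m{+}1)$-dense open $\opensch'\subseteq\cscheme$ together with a distinguished closed subsquare of the correct type. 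The argument mirrors that of Lemma \ref{ReducingLemma}: write $\cscheme_1=\locin{\cscheme}{\charfun<r_1}$ using Prop.\@ \ref{ZariskiByFunction}, consider the asymptotic space $\comple{(\cscheme_1\cap\cscheme_2)}{\opensch}$, and apply Prop.\@ \ref{RegularValues} to find a regular value $r\in(r_2,r_1)$ of $\charfun$ whose level set intersected with the complement has strictly smaller dimension. The resulting principal closed subscheme $\cscheme''\subseteq\cscheme$ determines an $(m{+}1)$-dense $\opensch':=\comple{\cscheme}{\cscheme''}$, and one sets $\opensch'_1:=\locin{\opensch'}{\charfun<r}$, $\opensch'_2:=\comple{\cscheme_2}{(\cdots)}$ exactly as in the open case.

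The main obstacle is verifying that in this closed-square analog the subscheme cut out inside $\cscheme_2$ is again an asymptotic submanifold of the ambient $\cscheme$, so that the refined square is genuinely of closed type. This requires that the defining function $\charfun$ of $\cscheme_1$ restricts to a function on $\cscheme_2$ with compatible regular values, which in turn forces us to invoke Prop.\@ \ref{DimensionOfCell} to control the dimension drop and Prop.\@ \ref{ClosureRegular} (together with Rem.\@ \ref{ValueRegular}) to ensure that the level set $\locin{\cscheme_2}{\charfun=r}$ qualifies as an asymptotic submanifold. Once this is checked, boundedness in the sense of \cite{V10} Def.\@ 2.22 follows for both cd structures and the theorem is proved.
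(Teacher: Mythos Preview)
Your treatment of the open cd structure matches the paper's: Lemma \ref{OnlyIntersection} handles the two sides, Lemma \ref{ReducingLemma} raises the density index at the pullback corner, and boundedness follows from Voevodsky's criterion.

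For the closed cd structure, however, you take a detour that the paper avoids, and the detour is where your self-identified ``main obstacle'' lives. You try to prove a closed-square analogue of Lemma \ref{ReducingLemma}, and then you have to argue that your proposed $\opensch'_2=\comple{\cscheme_2}{(\cdots)}$ is an asymptotic submanifold of $\opensch'$. This is genuinely awkward: an open subscheme of an asymptotic submanifold is not, by Def.\@ \ref{DefSubmanifolds}, an asymptotic submanifold of the ambient space, and the propositions you cite (\ref{DimensionOfCell}, \ref{ClosureRegular}, Rem.\@ \ref{ValueRegular}) concern level sets and closures of half-spaces, not this kind of excision. Your sketch does not close the gap.

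The paper's proof sidesteps this entirely by noting a point you overlook: the \emph{output} of Lemma \ref{OnlyIntersection} is always an open cover $\opensch'_1,\opensch'_2$ of an $m$-dense $\opensch\subseteq\cscheme$, regardless of whether the input sides were open or asymptotic submanifolds. So after applying Lemma \ref{OnlyIntersection} to a closed square, one already has an open square and can invoke Lemma \ref{ReducingLemma} directly. The only remaining step, for the closed cd structure, is to convert the resulting open distinguished square on $\cscheme'$ back into a closed one, which the paper does in one line: shrink one side to $\clos{\locin{\cscheme'}{\charfun>r}}$ for a regular value $r$ between the two cutoffs, using Prop.\@ \ref{ClosureRegular}. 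No closed-square version of Lemma \ref{ReducingLemma} is needed.
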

\begin{proof} We need to show that every distinguished square
	\begin{equation*}\xymatrix{\opensch\cap\softchart\ar[r]\ar[d] & \softchart\ar[d]\\ \opensch\ar[r] & \cscheme,}\end{equation*}
where $\opensch$ is open and $\softchart$ is either open or an asymptotic submanifold, is reducing with respect to the density structure (\cite{V10} Def.\@ 2.21). According to Lemma \ref{OnlyIntersection} we can assume $\softchart\hookrightarrow\cscheme$ to be open and only need to show that, for any $m$-dense open subscheme $\opensch'\subseteq\opensch\cap\softchart$, there is an $m+1$-dense open $\cscheme'\subseteq\cscheme$ and a distinguished square on $\cscheme'$ that is contained in $\opensch$, $\softchart$ and $\opensch'$. Lemma \ref{ReducingLemma} gives us this $\cscheme'$ and a distinguished open square on it. Making one part slightly smaller, we obtain a closed distinguished square.\end{proof}%

\smallskip

Theorem \ref{ThmBrownGersten} allows us to use Brown--Gersten descent (e.g.\@ \cite{V10}): one declares a pre-sheaf of simplicial sets to be {\it flasque}, if it turns every distinguished square into a homotopy limit square. Since the usual pullback of simplicial sets is a homotopy pullback, if every corner is fibrant and at least one of the original arrows is a fibration, it is natural in our situation to switch from flasque to soft pre-sheaves.

\begin{definition}\label{DefSoftSheaf} An $\sheaf\in\zaritoss{\ksite}$ is {\it a soft sheaf}, if it is a sheaf of Kan complexes and $\forall\cscheme\in\ksite$ and for any asymptotic submanifold (Def.\@ \ref{DefSubmanifolds}) $\cscheme'\hookrightarrow\cscheme$ the map $\sheaf(\cscheme)\rightarrow\sheaf(\cscheme')$ is a fibration.\end{definition}

\begin{theorem}\label{BrownGersten} Any soft $\sheaf\in\zaritoss{\ksite}$ is a homotopy sheaf.

Consider a diagram of soft sheaves in $\zaritoss{\ksite}$
	\begin{equation}\label{CartesianSoft}\xymatrix{& \sheaf_2\ar[d]\\ \sheaf_1\ar[r] & \sheaf}\end{equation}
s.t.\@ for any $\cscheme_1\rightarrow\cscheme_2$ in $\ksite$ the map $\sheaf_2(\cscheme_2)\rightarrow\sheaf(\cscheme)$ is a fibration. Then the usual limit of (\ref{CartesianSoft}) is also a homotopy limit in $\zaritoss{\ksite}$.\end{theorem}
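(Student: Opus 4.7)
The plan is to apply Voevodsky's descent criterion from \cite{V10}: for a bounded, complete, regular cd structure, a pre-sheaf of simplicial sets is a homotopy sheaf if and only if it is flasque, meaning that it sends every distinguished square to a homotopy pullback square of simplicial sets. Theorem \ref{ThmBrownGersten} together with the preceding results supplies exactly these hypotheses for the closed cd structure on $\ksite$, so the statement reduces to verifying flasqueness for soft sheaves.

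For the first part, let $\sheaf$ be a soft sheaf and fix a closed distinguished square with $\cscheme_1 \hookrightarrow \cscheme$ open and $\cscheme_2 \hookrightarrow \cscheme$ an asymptotic submanifold. Completeness of the closed cd structure combined with the fact that $\sheaf$ is a Zariski sheaf gives the strict pullback identification $\sheaf(\cscheme) \cong \sheaf(\cscheme_1) \times_{\sheaf(\cscheme_1\cap\cscheme_2)} \sheaf(\cscheme_2)$ via \cite{V10} Prop.\@ 2.15. The key geometric observation is that base-changing along the open $\cscheme_1 \hookrightarrow \cscheme$ preserves asymptotic submanifolds: if $\cscheme_2$ is cut out by $\charfun = r$ or by $\clos{\locin{\cscheme}{\charfun<r}}$ for a regular value $r$, then the restriction of $\charfun$ to $\cscheme_1$ retains $r$ as a regular value, so $\cscheme_1 \cap \cscheme_2 \hookrightarrow \cscheme_1$ is again an asymptotic submanifold (of $\cscheme_1$). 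By softness of $\sheaf$, the map $\sheaf(\cscheme_1) \to \sheaf(\cscheme_1\cap\cscheme_2)$ is then a fibration of Kan complexes, so the strict pullback coincides with the homotopy pullback, yielding flasqueness.

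For the second part, write $\sheaf' := \sheaf_1 \times_\sheaf \sheaf_2$ for the strict limit of (\ref{CartesianSoft}). I would show that $\sheaf'$ is itself a soft sheaf and then invoke part 1. The section $\sheaf'(\cscheme) = \sheaf_1(\cscheme) \times_{\sheaf(\cscheme)} \sheaf_2(\cscheme)$ is Kan because, applying the hypothesis to the identity arrow on $\cscheme$, the map $\sheaf_2(\cscheme) \to \sheaf(\cscheme)$ is a fibration; the Zariski sheaf condition passes to the strict limit. For the softness clause, fix an asymptotic submanifold $\cscheme' \hookrightarrow \cscheme$ and consider the restriction $\sheaf'(\cscheme) \to \sheaf'(\cscheme')$ between the two strict pullbacks. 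All three vertical restriction maps $\sheaf_1(\cscheme) \to \sheaf_1(\cscheme')$, $\sheaf(\cscheme) \to \sheaf(\cscheme')$ and $\sheaf_2(\cscheme) \to \sheaf_2(\cscheme')$ are fibrations by softness, and both horizontal maps $\sheaf_2(\cscheme) \to \sheaf(\cscheme)$ and $\sheaf_2(\cscheme') \to \sheaf(\cscheme')$ are fibrations by hypothesis. Right properness of the Kan model structure, together with the standard fact that pullback along a fibration is a right Quillen functor on arrow categories, then forces the induced map of strict pullbacks to be a fibration. Hence $\sheaf'$ is soft, so by part 1 it is a homotopy sheaf. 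Finally, since objectwise the strict pullback equals the homotopy pullback (because $\sheaf_2(\cscheme) \to \sheaf(\cscheme)$ is a fibration of Kan complexes) and all three of $\sheaf, \sheaf_1, \sheaf_2, \sheaf'$ are homotopy sheaves, $\sheaf'$ represents the honest homotopy limit of (\ref{CartesianSoft}) in $\zaritoss{\ksite}$.

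The principal technical point is the stability of asymptotic submanifolds under pullback along open inclusions that underwrites flasqueness in the first part; this is the genuinely geometric content of Brown--Gersten descent for $\ksite$, and it rests on regularity being a local condition in the sense of Prop.\@ \ref{SardManifold} and Prop.\@ \ref{ClosureRegular}. The cube-chase in the second part is, by comparison, routine and reduces to classical right properness of simplicial sets applied sectionwise.
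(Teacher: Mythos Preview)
Your argument is correct, and the first part matches the paper's proof closely: both invoke the flasqueness criterion (the paper via \cite{Blander03} Lemma~4.1, you via Voevodsky's equivalent formulation) and use softness to identify the strict pullback over a distinguished square with the homotopy pullback. Your explicit observation that $\cscheme_1\cap\cscheme_2\hookrightarrow\cscheme_1$ remains an asymptotic submanifold is precisely the content hidden in the paper's terse ``softness allows us to identify usual pullbacks with homotopy pullbacks.''

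For the second part you take a different route. The paper replaces $\sheaf_2\to\sheaf$ by a local-projective fibration and then invokes \cite{V10} Lemma~3.5 (a local weak equivalence between flasque presheaves is an objectwise weak equivalence) to conclude that the strict limit is unchanged up to global weak equivalence. You instead verify directly that the strict pullback $\sheaf'$ is itself soft, hence a homotopy sheaf, and then use that an objectwise fibration between locally fibrant objects is already a fibration in the local projective model structure (the standard Bousfield-localization fact). Your route is more self-contained and avoids citing Lemma~3.5, at the price of the cube argument for fibrations; the paper's route is shorter but leans on the Voevodsky lemma. Both are valid. The one point worth tightening in your write-up is the final sentence: the implication from ``objectwise homotopy pullback and all four are homotopy sheaves'' to ``honest homotopy limit in $\zaritoss{\ksite}$'' is exactly the Bousfield-localization fact just mentioned, and it would be cleaner to name it rather than leave it implicit.
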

\begin{proof} According to \cite{Blander03} Lemma 4.1, to prove that a pre-sheaf is a homotopy sheaf it is enough to show that it takes values in Kan complexes and turns distinguished squares into homotopy pullbacks. According to Def.\@ \ref{DefSoftSheaf} soft sheaves do take values in Kan complexes, and moreover softness allows us to identify usual pullbacks with homotopy pullbacks. Finally, being sheaves they turn every distinguished square into a pullback square.

To compute a homotopy pullback of (\ref{CartesianSoft}) it is enough to substitute $\sheaf_2\rightarrow\sheaf$ with a fibrant replacement and take the usual limit. According to \cite{V10} Lemma 3.5 a local weak equivalence between soft sheaves is necessarily a global weak equivalence, hence, since limits of pre-sheaves are computed object-wise, the usual limit of (\ref{CartesianSoft}) is also a homotopy limit.\end{proof}%

\section{Groupoids and connections}



As we have discussed in the Introduction, when working with higher order differential operators there is no automatic flatness of connections even along curves. On the other hand, sometimes we would like to have say $2$-dimensional flatness, but not necessarily the $3$-dimensional one. 

This means we need to distinguish parts of manifolds according to their dimension. In terms of fundamental groupoids this amounts to looking at {\it thin} maps. We use $\cinfty$-realizations of the standard topological simplices to measure thinness of a morphism. To organize these simplices into an $\infty$-groupoid we need to smoothen them at the edges and corners. This occupies the first part of this section. Then we define fundamental groupoid up to a given level of thinness and finally construct the infinitesimal versions of everything by taking infinitesimal neighbourhoods of the diagonals.

\subsection{Groupoids}

\subsubsection{Smooth simplices}\label{SectionSimplices}
Recall (Def.\@ \ref{DefinitionOfGerm}) that given $\cscheme\in\csite$ and a subscheme $\cscheme'\subseteq\cscheme$ the ideal $\gerim{\cscheme,\cscheme'}\leq\cinfty(\cscheme)$ consists of functions that have $0$-germs at $\cscheme$.
\begin{definition}\label{SmoothSimplices} For $n\in\mathbb Z_{\geq 0}$ let $\csimplex{n}\in\ksite$ be defined as follows: 
	\begin{equation*}\csimplex{n}:=\specof{\cinfty(\mathbb R^{n+1})/\ideal},\quad
	\ideal=\igen{1-\underset{0\leq i\leq n}\sum\,x_i}+\gerim{\mathbb R^{n+1},\mathbb R_{\geq 0}^{n+1}},\end{equation*} 
i.e.\@ $\ideal$ is generated by functions that vanish on the hyper-plane $\underset{0\leq i\leq n}\sum\,x_i=1$, together with functions that have $0$-germs at $\vset{\rpt}{\mathbb R^{n+1}}{\{x_i\geq 0\}_{i=0}^n}$.\end{definition}
\noindent Notice that $\csimplex{0}\cong\specof{\mathbb R}$, while for $n>0$ $\csimplex{n}$ is the germ of $\mathbb R^n$ at $\simplex{n}\subsetneq\mathbb R^n$.

\begin{remark} Definition \ref{SmoothSimplices} gives us a functor $\finor{n}\mapsto\csimplex{n}$ from the category of finite non-empty ordinals and weakly order preserving maps to $\ksite$. \hide{%
Indeed, for any monoid $M$ we have the canonical cosimplicial object $\{M^{\times^k}\}_{k\geq 1}$, where the structure maps are given by inserting the unit and composing adjacent elements.\footnote{In other words we view the category of finite ordinals as a non-symmetric PROP (with ordered disjoint union as the monoidal structure), and then any monoid is just a representation of this PROP. For the cosimplicial object we omit the empty ordinal.} Composing everything in each cosimplicial dimension gives us $\mu\colon\{M^{\times^k}\}_{k\geq 1}\rightarrow\{M\}$ (the constant cosimplicial diagram on $M$). 

Applying this to $(\mathbb R,+)$ and taking $\mu^{-1}(1)$ we get the cosimplicial diagram of hyper-planes. We can do the same with $(\mathbb R_{\geq 0},+)$. The sequence $\{\csimplex{n}\}_{n\geq 0}$ is the germ of the hyper-planes in $\{\mathbb R^{\times^k}\}_{k\geq 1}$ at the hyper-planes in $\{\mathbb R_{\geq 0}^{\times^k}\}_{k\geq 1}$. }%
Composing with the Yoneda embedding $\ksite\rightarrow\zaritos{\ksite}$ we obtain {\it the $\cinfty$-realization functor} $\creali\colon\ssets\rightarrow\zaritos{\ksite}$ as a left Kan extension of $\finor{n}\mapsto\csimplex{n}$ along the canonical embedding $\finor{n}\mapsto\simplex{n}\in\ssets$.\end{remark}
\noindent Notice that $\creali$ does not preserve direct products, e.g.\@ $\crealio{\simplex{1}}\times\crealio{\simplex{1}}\subset\mathbb R^2$, while $\crealio{\simplex{1}\times\simplex{1}}$ is only piece-wise smooth. To compensate for this we need to smoothen our simplices at the edges. The standard way to do it is by using collarings. First some notation: $\forall\cscheme\in\ksite$ and $\forall\fima\colon\finor{k}\hookrightarrow\finor{n}$ we write $\resti{\cscheme}{n}{\fima}$ to mean the corresponding image of $\cscheme\times\csimplex{k}$ in $\cscheme\times\csimplex{n}$.

\begin{definition}\label{DefinitionCollaring} Let $\sheaf\in\zaritos{\ksite}$, $n\in\noneg$ and $\cscheme\in\ksite$. {\it A collaring} on $\ssim\colon\cscheme\times\csimplex{n}\rightarrow\sheaf$ is given by the following data: for any $k'<k\leq n$, $\fima\colon\finor{k}\hookrightarrow\finor{n}$ and $\fima'\colon\finor{k'}\hookrightarrow\finor{k}$ an isomorphism in $(\resti{\cscheme}{n}{\fima\circ\fima'})/\ksite/\cscheme$
	\begin{equation}\label{collaring}\colla_{\fima,\fima'}\colon\germof{(\resti{\cscheme}{n}{\fima})}{(\resti{\cscheme}{n}{\fima\circ\fima'})}
	\overset{\cong}\longrightarrow
	\resti{\cscheme}{n}{\fima\circ\fima'}\times\germof{\mathbb R^{k-k'}}{0},\end{equation}
where $\germof{\mathbb R^{k-k'}}{0}$ is the germ of $\mathbb R^{k-k'}$ at the origin. The data $\{\colla_{\fima,\fima'}\}$ should satisfy the following conditions\begin{enumerate}
\item {\bf invariance of $\ssim$}: we have commutative diagrams
	\begin{equation}\label{invariance}\xymatrix{\germof{(\resti{\cscheme}{n}{\fima})}{(\resti{\cscheme}{n}{\fima\circ\fima'})}
	\ar@{^(->}[r]\ar[d]_{\proco{\fima}{\fima'}} & 
	\cscheme\times\csimplex{n}\ar[r]^\ssim & \sheaf\\ 
	\resti{\cscheme}{n}{\fima\circ\fima'}\ar@{^(->}[r] & \cscheme\times\csimplex{n}\ar[ru]_\ssim, &}\end{equation}
	where $\proco{\fima}{\fima'}$ is given by the obvious projection on the r.h.s.\@ of (\ref{collaring});
\item {\bf associativity}: for any $k''<k'<k\leq n$ and $\fima''\colon\finor{k''}\hookrightarrow\finor{k'}$, we have
	\begin{equation}\label{compatibility}\xymatrix{\germof{(\resti{\cscheme}{n}{\fima})}{(\resti{\cscheme}{n}{\fima\circ\fima'\circ\fima''})}
	\ar[d]_{\colla_{\fima,\fima'}}\ar[rr]^{\colla_{\fima,\fima'\circ\fima''}}
	&& \resti{\cscheme}{n}{\fima\circ\fima'\circ\fima''}\times\germof{\mathbb R^{k-k''}}{0}\\
	\germof{(\resti{\cscheme}{n}{\fima\circ\fima'})}{(\resti{\cscheme}{n}{\fima\circ\fima'\circ\fima''})}\times\germof{\mathbb R^{k-k'}}{0}
	\ar[rru]_{\colla_{\fima\circ\fima',\fima''}\times\id}. &&}\end{equation}
\end{enumerate}\end{definition}

\begin{remark}\label{ReasoningCollaring} Here is the reasoning behind Def.\@ \ref{DefinitionCollaring}: $\forall\fima\colon\finor{k}\rightarrow\finor{n}$ we can choose an open neighbourhood $\opensch_{\fima}\subseteq\cscheme\times\csimplex{n}$ of $\resti{\cscheme}{n}{\fima}$, and a projection $\pi_{\fima}\colon\opensch_{\fima}\rightarrow\resti{\cscheme}{n}{\fima}$ representing $\proco{\id}{\fima}$. Let $\opensch:=\underset{\fima}\bigcup\,\opensch_\fima$, then associativity implies that, making $\opensch$ smaller if necessary, we can find an $\iota\colon\cscheme\times\sphere{n-1}\hookrightarrow\opensch$ and $\pi\colon\opensch\rightarrow\iota(\cscheme\times\sphere{n-1})$, s.t.\@ $\pi\circ\iota$ contracts an open neighbourhood of $\pi(\cscheme\times\cske{n-2}{\csimplex{n}})$ in $\iota(\cscheme\times\sphere{n-1})$ and is the identity away from this neighbourhood. \hide{%
Using $\{\pi_{\fima}\}_{\fima\colon\finor{n-1}\hookrightarrow\finor{n}}$ we can cut $\iota(\cscheme\times\sphere{n-1})$ into a union of copies of $\cscheme\times\csimplex{n-1}$. Pre-images in $\cscheme\times\bouns{\csimplex{n}}$ of these copies are disjoint. Then for all $k'<n-1$ and $\fima'\colon\finor{k'}\hookrightarrow\finor{n}$ we pre-compose with contractions of open neighbourhoods of $\{\resti{\cscheme}{n}{\fima'}\}$ in these pre-images in accordance to $\{\colla_{\fima,\fima'}\}$. Then the projections extend to all of $\opensch$. }%
 Moreover, if $\sheaf$ is represented by $\cscheme'\in\csite$, invariance of $\ssim$ with respect to contracting germs extends to invariance with respect to contracting open neighbourhoods, and $\ssim|_{\opensch}=\ssim|_{\iota(\cscheme\times\sphere{n-1})}\circ\pi$.\end{remark}
 
 \begin{definition}\label{DefSmoothFamilies} Let $\sheaf\in\zaritos{\ksite}$, $n\in\noneg$. For any $\cscheme\in\ksite$ {\it a smooth $\cscheme$-family of $n$-simplices in $\sheaf$} is any $\ssim\colon\cscheme\times\csimplex{n}\rightarrow\sheaf$ that admits a collaring (Def.\@ \ref{DefinitionCollaring}). The pre-sheaf of smooth $n$-simplices in $\sheaf$ will be denoted by $\smon{n}{\sheaf}$.\end{definition}
 Notice that a choice of collaring is not part of the definition, only existence thereof. Thus $\smon{n}{\sheaf}$ is a sub-pre-sheaf of $\homo{\zaritop}(\csimplex{n},\sheaf)$. If $\sheaf$ is a sheaf, also $\homo{\zaritop}(\csimplex{n},\sheaf)$ is a sheaf, and then $\smon{n}{\sheaf}$ is automatically a mono-pre-sheaf. To show that $\smon{n}{\sheaf}$ is in fact a sheaf, we need to prove that collarings can be glued. 
 
 \begin{proposition}\label{GluingCollarings} Let $\sheaf\in\zaritos{\ksite}$, $\cscheme\in\ksite$, $n\in\noneg$ and $\ssim\colon\cscheme\times\csimplex{n}\rightarrow\sheaf$. Given an open covering $\cscheme=\opensch_1\cup\opensch_2$, s.t.\@ $\ssim|_{\opensch_1}$, $\ssim|_{\opensch_2}$ have collarings, there is a collaring on $\ssim$, whose restrictions to $\comple{\cscheme}{\opensch_1}$, $\comple{\cscheme}{\opensch_2}$ equal restrictions of the collarings on $\ssim|_{\opensch_2}$ and $\ssim|_{\opensch_1}$ respectively.\end{proposition}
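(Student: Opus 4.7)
The plan is to glue the two collarings by means of a partition of unity on $\cscheme$ subordinate to $\{\opensch_1,\opensch_2\}$, interpolating only the codimension-$1$ pieces of the data and then propagating to higher codimensions via the associativity condition (\ref{compatibility}). First, I would pick $\tau_1,\tau_2\in\cinfty(\cscheme)$ with $\tau_1+\tau_2=1$, $\mathrm{supp}(\tau_i)\subseteq\opensch_i$, $\tau_2\equiv 1$ on $\comple{\cscheme}{\opensch_1}$, and $\tau_1\equiv 1$ on $\comple{\cscheme}{\opensch_2}$. Existence of such a partition of unity reduces to the existence of a defining function for the cover (Prop.\@ \ref{ZariskiByFunction}) followed by a standard bump-function construction in $\cinfty$-ring theory.

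Next, for each pair $\fima\colon\finor{k}\hookrightarrow\finor{n}$ and $\fima'\colon\finor{k-1}\hookrightarrow\finor{k}$, the collarings $\colla^i_{\fima,\fima'}$ trivialize the germ $\germof{(\resti{\opensch_i}{n}{\fima})}{(\resti{\opensch_i}{n}{\fima\circ\fima'})}$ as $\resti{\opensch_i}{n}{\fima\circ\fima'}\times\germof{\mathbb R}{0}$, which is equivalent data to a vector field $V^i_{\fima,\fima'}$ on the germ whose flow realizes the trivialization (tangent to the germ, transverse to the face, and pulling back $\partial/\partial t$). Since vector fields form a $\cinfty(\cdot)$-module, I can form the globally defined field $V_{\fima,\fima'}:=\tau_1 V^1_{\fima,\fima'}+\tau_2 V^2_{\fima,\fima'}$ on the germ over $\cscheme$ (each summand is extended by zero outside $\opensch_i$). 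Being a convex combination of two fields that are transverse to the face with the same orientation, $V_{\fima,\fima'}$ is itself transverse, so its flow defines a trivialization $\colla_{\fima,\fima'}$. On $\comple{\cscheme}{\opensch_2}$ one has $V_{\fima,\fima'}=V^1_{\fima,\fima'}$, so $\colla_{\fima,\fima'}$ restricts to $\colla^1_{\fima,\fima'}$ there; symmetrically on $\comple{\cscheme}{\opensch_1}$. Invariance of $\ssim$ in diagram (\ref{invariance}) is preserved because the new retraction $\proco{\fima}{\fima'}$ still lands in the same face $\resti{\cscheme}{n}{\fima\circ\fima'}$, and $\ssim$ is invariant under any retraction onto that face (which is witnessed by either of the input collarings pointwise).

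For higher-codimension pairs $\fima'\colon\finor{k'}\hookrightarrow\finor{k}$ with $k'<k-1$, I define $\colla_{\fima,\fima'}$ by the iterated composite forced by (\ref{compatibility}) from the codimension-$1$ collarings already glued. Independence of this composite on the chain of intermediate faces is exactly the associativity compatibility for the new collaring, and it is inherited from the fact that each input $\colla^i$ satisfies (\ref{compatibility}), together with the fact that the interpolation was carried out coherently for every codimension-$1$ pair. Restriction to $\comple{\cscheme}{\opensch_{3-i}}$ then automatically yields $\colla^i_{\fima,\fima'}$ because it does so at the codimension-$1$ level. The main obstacle will be the passage between a collaring and its generating vector field in the asymptotic-manifold setting, where the germ source is not a classical manifold-with-boundary; I would handle this by working in a presentation of the germ (Def.\@ \ref{SmoothCell}) and checking that the flow of a smooth vector field tangent to a regular system of open subschemes stays inside the germ, which is a direct consequence of Lemma \ref{SoftOnManifolds}.
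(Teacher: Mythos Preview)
Your convex-combination-of-vector-fields idea is natural, but the invariance step has a real gap. You assert that ``$\ssim$ is invariant under any retraction onto that face'', yet the hypotheses only give invariance under the two \emph{specific} retractions $\pi^1,\pi^2$ coming from $\colla^1,\colla^2$. On the overlap this reads $\ssim|_{\mathrm{germ}}=g\circ\pi^1=g\circ\pi^2$ with $g=\ssim|_{\mathrm{face}}$; you need $g\circ\pi^3=g\circ\pi^1$ for the retraction $\pi^3$ produced by the flow of $\tau_1V^1+\tau_2V^2$. If $\sheaf$ were representable by a manifold you could differentiate ($d\ssim(V^i)=0\Rightarrow d\ssim(\tau_1V^1+\tau_2V^2)=0$) and conclude. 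But the proposition is stated for an arbitrary $\sheaf\in\zaritos{\ksite}$, where no tangent calculus is available, and the flow of a convex combination is not a composite of the individual flows, so the equality cannot be deduced from functoriality of $\sheaf$ alone. Your ``pointwise'' remark does not help: at a point of $\opensch_1\cap\opensch_2$ where $0<\tau_1<1$ the retraction $\pi^3$ coincides with neither $\pi^1$ nor $\pi^2$. The associativity claim (that chain-independence of the iterated codimension-$1$ composites ``is inherited'') has the same defect: flows of convex combinations do not compose compatibly with the way the individual $\colla^i$ do.

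The paper sidesteps this by interpolating the trivializations rather than their generators. Writing $\Phi=\colla^2\circ(\colla^1)^{-1}$ for the transition automorphism of $\mathbb R^k\times\mathbb R^{n-k}$, it uses the explicit homotopy $\Theta_t(\rpt,\rqt)=(\Phi_k(\rpt,t\rqt),\tfrac{1}{t}\Phi_{n-k}(\rpt,t\rqt))$, which factors as $m_{1/t}\circ\Phi\circ m_t$ with $m_s(\rpt,\rqt)=(\rpt,s\rqt)$. Read back on the germ this is a move along $\pi^1$-fibres, then one along $\pi^2$-fibres, then one along $\pi^1$-fibres again; each factor preserves $\ssim$ by the invariance hypotheses on the input collarings, hence so does the composite --- and this uses only that $\sheaf$ is a functor. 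A bump function $f\colon\cscheme\to[0,1]$ selects the parameter $t$, and the construction is run inductively over the skeleta of $\csimplex{n}$ rather than by reducing everything to codimension~$1$.
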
 
 \begin{proof} We argue inductively on the simplicial dimension, starting with dimension $0$. The beginning of the induction and each step are based on the following: for $k<n$ let $\Phi\colon\mathbb R^n\rightarrow\mathbb R^n$ be an automorphism s.t.\@ $\Phi|_{\mathbb R^k\times 0}=\id$, where we choose $\mathbb R^n=\mathbb R^k\times\mathbb R^{n-k}$; then there is $\Theta\colon[0,1]\times\mathbb R^n\longrightarrow\mathbb R^n$, s.t.\@ $\forall t\in[0,1]$ $\Theta_t$ is an isomorphism fixing $\mathbb R^k\times 0$, $\Theta_1=\Phi$ and $\Theta_0$ is an automorphism over $\mathbb R^k$. This $\Theta$ is constructed as follows: denoting a point in $\mathbb R^n$ by $(\rpt,\rqt)\in\mathbb R^k\times\mathbb R^{n-k}$, and correspondingly $\Phi=(\Phi_k,\Phi_{n-k})$ we define
 	\begin{equation}\label{Homotopy}\Theta(t,\rpt,\rqt):=(\Phi_k(\rpt,t\rqt),\frac{1}{t}\Phi_{n-k}(\rpt,t\rqt)).\end{equation}
Since $\Phi_{n-k}$ vanishes on $\mathbb R^k\times 0$, (\ref{Homotopy}) is defined also for $t=0$, where it equals $(\rpt,(\partial_\rqt\Phi_{n-k})(\rpt,0))$. It is easy to see that (\ref{Homotopy}) is smooth and satisfies all the conditions we wanted. Moreover, if ${\rm d}\Phi|_{\mathbb R^k\times 0}$ is orientation preserving, it is clear that $\Theta|_0$ is $\cinfty$-homotopic to $\id_{\mathbb R^n}$. 

Coming back to $\ssim\colon\cscheme\times\csimplex{n}\rightarrow\sheaf$ we choose $\charfun\in\cinfty(\cscheme,[0,1])$, s.t.\@ $\locin{\cscheme}{\charfun<1}\subseteq\opensch_1$ and $\locin{\cscheme}{\charfun>0}\subseteq\opensch_2$. Starting with $k=0$ we use (\ref{Homotopy}) and $\charfun$ to construct a collaring around $\cscheme\times\cske{k}{\csimplex{n}}$, s.t.\@ restrictions of this collaring to $\locin{\cscheme}{\charfun\leq\frac{1}{3}}$, $\locin{\cscheme}{\charfun\geq\frac{2}{3}}$ equal the collarings from $\opensch_1$ and $\opensch_2$ respectively. Going from $\cscheme\times\cske{n}{\csimplex{n}}$ to $\cscheme\times\cske{k+1}{\csimplex{n}}$ is also by using (\ref{Homotopy}), since $\cske{k+1}{\csimplex{n}}$ away from $\cske{k}{\csimplex{n}}$ is a disjoint union of germs of $\mathbb R^n$ at $\mathbb R^{k+1}$.

The resulting collaring on $\ssim$ satisfies the invariance condition, since (\ref{Homotopy}) is constructed by moving along the fibers of one collaring, then the other collaring, and finally the first one again. Each of these three steps produces an automorphism of $\ssim$, and hence so does their composition.\end{proof}%

\smallskip

Proposition \ref{GluingCollarings} immediately implies that $\forall n\in\noneg$ $\smon{n}{\sheaf}$ is a sheaf, if $\sheaf$ itself is a sheaf. We would like to argue that smooth simplices abound. The following simple proposition shows that any map $\csimplex{n}\rightarrow\sheaf$ can be reparameterized into a smooth simplex.

\begin{proposition} For any $n\in\mathbb N$ there is a smooth $\cmor\colon\csimplex{n}\rightarrow\csimplex{n}$, s.t.\@ it is the identity morphism outside an open neighbourhood of the boundary, and it defines a surjective map between the underlying topological spaces. \end{proposition}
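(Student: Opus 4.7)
The plan is to construct $\cmor$ in the barycentric coordinates on $\csimplex{n}$ by applying a single smooth damping function to each coordinate and renormalizing. Fix $\epsilon\in(0,1/(n+1))$ and choose a smooth non-decreasing $\phi\colon\mathbb R\to\mathbb R_{\geq 0}$ with $\phi(t)=0$ for $t\leq 0$ and $\phi(t)=t$ for $t\geq\epsilon$ (for instance $\phi(t):=t\,\rho(t/\epsilon)$ for a standard $[0,1]$-valued cutoff $\rho$ with $\rho(s)=0$ for $s\leq 0$, $\rho(s)=1$ for $s\geq 1$). Then specify $\cmor$ by declaring its pullbacks on the coordinates $x_0,\ldots,x_n$ of $\csimplex{n}$:
\begin{equation*}
\cmor^*(x_i)\;:=\;\frac{\phi(x_i)}{\sum_{j=0}^{n}\phi(x_j)}.
\end{equation*}

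The first step is to verify that this expression genuinely defines a morphism $\csimplex{n}\to\csimplex{n}$. On $\csimplex{n}$ we have $\sum_j x_j=1$, so at every $\mathbb R$-point at least one $x_j\geq 1/(n+1)>\epsilon$; hence $\sum_j\phi(x_j)\geq 1/(n+1)$ is a unit in $\cinfty(\csimplex{n})$, and the $\cmor^*(x_i)$ lie in that ring and sum to $1$. The resulting $\cinfty$-homomorphism $\cinfty(\mathbb R^{n+1})\to\cinfty(\csimplex{n})$ kills $1-\sum_i x_i$ tautologically, and because $\phi\geq 0$ implies the image of $\cmor$ at $\mathbb R$-points lies in $\simplex{n}\subset\mathbb R^{n+1}_{\geq 0}$, it also kills the germ ideal $\gerim{\mathbb R^{n+1},\mathbb R^{n+1}_{\geq 0}}$. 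It therefore descends to $\cinfty(\csimplex{n})$ and yields the desired $\cmor$.

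The ``identity outside a neighbourhood of the boundary'' clause is then immediate: on the open subscheme $\locin{\csimplex{n}}{x_0>\epsilon,\ldots,x_n>\epsilon}$, which is precisely the complement of an $\epsilon$-neighbourhood of $\bouns{\csimplex{n}}$, every $\phi(x_i)=x_i$ and $\sum_j\phi(x_j)=1$, so $\cmor^*(x_i)=x_i$. For surjectivity on underlying topological spaces I would restrict to $\mathbb R$-points, where $\csimplex{n}$ is identified with $\simplex{n}$. Because $\phi(0)=0$, the map preserves each face $F_S:=\{x_i=0\,:\,i\in S\}$ of $\simplex{n}$ and restricts to a map of exactly the same form on that face (with $n$ replaced by $n-|S|$). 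I would then induct downward on the face dimension: vertices are fixed since $\phi(1)=1$ and $\phi(0)=0$; for the inductive step, $\cmor|_{F_S}$ is the identity on the inner sub-simplex $\{x_i>\epsilon:i\notin S\}\cap F_S$ and already hits every proper sub-face of $F_S$, so by continuity and compactness of $F_S$ the image equals $F_S$.

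The only real content is the first step --- that the formal expression for $\cmor^*$ descends through the germ. The key input is the uniform lower bound $\sum_j\phi(x_j)\geq 1/(n+1)$, which is built into the very definition of $\csimplex{n}$ via $\sum_j x_j=1$; without the constraint on the hyperplane one would have to treat the locus where $\phi\equiv 0$ on all coordinates separately. The remaining verifications are routine once that invertibility is in place.
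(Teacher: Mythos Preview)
Your construction produces a valid $\cinfty$-morphism, but it misses the point of the word ``smooth'' in the statement. In this section a \emph{smooth} $n$-simplex means one that admits a collaring (Def.\ \ref{DefinitionCollaring}, Def.\ \ref{DefSmoothFamilies}); the paper says so explicitly in its own proof, invoking the associativity clause of Def.\ \ref{DefinitionCollaring}. The purpose of the proposition is precisely that precomposition with $\cmor$ turns an arbitrary map into one with a collaring, so verifying the collaring on $\cmor$ is the entire content.

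Your $\cmor$ does not admit a collaring. The invariance condition (\ref{invariance}) forces $\cmor$ restricted to the germ at a face $\{x_i=0\}$ to factor as $\cmor|_{\{x_i=0\}}\circ\pi$ for some projection $\pi$; since $\cmor|_{\{x_i=0\}}$ lands in $\{x_i=0\}$, so must $\cmor$ on that germ. But with your $\phi$ (for which $\phi(t)>0$ once $t>0$), one has $\cmor^*(x_i)=\phi(x_i)/\Sigma>0$ for every small $x_i>0$, so $\cmor$ never lands in the face on any neighbourhood of it. Replacing $\phi$ by one vanishing on $(-\infty,\delta]$ removes this obstruction, but you would still have to produce the projections $\pi$ and verify the associativity (\ref{compatibility}); this is not automatic, because your renormalization $\sum_j\phi(x_j)$ couples all barycentric coordinates, so moving in the transverse direction $x_i$ changes the remaining $x_j$'s and hence the image in the face.

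The paper takes a different route: it chooses, compatibly for all faces, local coordinates in which the adjacent $(n{-}1)$-faces are coordinate hyperplanes, and then builds $\cmor$ inductively by contracting a neighbourhood of each $k$-skeleton onto it via these coordinate projections. The collaring isomorphisms are then the coordinate projections themselves, and associativity is immediate because projections onto nested coordinate planes commute.
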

\begin{proof} We have defined $\csimplex{n}$ as the germ of $\mathbb R^n$ at a linear embedding $\simplex{n}\subseteq\mathbb R^n$. For any $k<n$ and $\fima\colon\finor{k}\hookrightarrow\finor{n}$ we can choose coordinates in an open neighbourhood of $\fima(\csimplex{k})$ s.t.\@ all $n-1$-faces of $\csimplex{n}$ that intersect in $\fima(\csimplex{k})$ lie in coordinate hyperplanes. Moreover, we can make these choices in a compatible way, i.e.\@ $\forall k'<k$ and $\forall\fima'\colon\finor{k'}\hookrightarrow\finor{k}$ the chosen coordinates for $\fima(\csimplex{k})$ restrict to the chosen coordinates for $\fima'(\csimplex{k'})$. 

Now we use induction on dimension of the faces, and starting from $k=0$ we construct maps $\csimplex{n}\rightarrow\csimplex{n}$ that contract neighbourhoods of faces of dimensions $\leq k$ onto these faces, and are identities aways from these neighbourhoods. We define these contractions using projections on coordinates planes, hence we obtain associativity (Def.\@ \ref{DefinitionCollaring}).\end{proof}%

\subsubsection{Fundamental and thin groupoids}\label{SectionFunGro}

\begin{definition}{\it The fundamental $\infty$-groupoid} of a sheaf $\sheaf\in\zaritos{\ksite}$ is 
	\begin{equation*}\fungro{\sheaf}:=\{\smon{n}{\sheaf}\}_{n\in\noneg}.\end{equation*}
\end{definition}
Since we smoothen by pre-composing with contractions, the fundamental $\infty$-groupoid is functorial in $\sheaf$. To justify calling $\fungro{\sheaf}$ an $\infty$-groupoid, we need to show that smooth horns extend to smooth simplices.
 
\begin{notation}\label{Flattening} For any $0\leq k<n$, applying the $\cinfty$-realization functor, we obtain {\it the $k$-th $\cinfty$-horn} in $\zaritos{\ksite}$, that we denote by $\chorn{n}{k}\subset\csimplex{n}$.

Realizing $\csimplex{n}$ in $\mathbb R^n$ s.t.\@ the center of the $k$-face is the origin, the $k$-th face itself lies within $\mathbb R^{n-1}=\{x_k=0\}$, and the $k$-th vertex is the point $x_k=1$, $x_{\neq k}=0$ we have {\it the flattening} $\flatt{n}{k}\colon\csimplex{n}\rightarrow\csimplex{n-1}$ given by the projection $\mathbb R^n\rightarrow\mathbb R^{n-1}$. In particular we have $\flatt{n}{k}\colon\chorn{n}{k}\rightarrow\csimplex{n-1}$.\end{notation}

Our definition of collarings uses germs of boundary, but to extend smooth horns we will need to approximate them by simplices. This requires extending collarings from germs to open neighbourhoods. In some cases this extension is always possible. 

\begin{lemma}\label{GluingToCell} Let $\sheaf\in\zaritos{\ksite}$ s.t.\@ for any finite set $\set$ and any $\{\cscheme_\element\rightarrow\sheaf\}_{\element\in\set}$ there is $\cscheme'\in\csite$, a morphism of pre-sheaves $\cscheme'\rightarrow\sheaf$ and $\forall\element\in\set$ a factorization $\cscheme_\element\rightarrow\cscheme'\rightarrow\sheaf$. Then for any $n>k\in\noneg$, $\cscheme\in\ksite$, if the restriction of $\ssim$ to each $\cscheme\times\csimplex{n-1}$ in $\cscheme\times\chorn{n}{k}$ is smooth, there is a factorization 
	\begin{equation*}\xymatrix{\cscheme\times\chorn{n}{k}\ar[rr]^{\ssim}\ar[rd]_{\id\times\flatt{n}{k}} && \sheaf\\ 
	& \cscheme\times\csimplex{n-1}\ar[ru]^{\ssim'}, &}\end{equation*}
s.t.\@ $\ssim'$ is smooth.\end{lemma}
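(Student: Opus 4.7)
The plan is to build $\ssim'$ by inverting the flattening face-by-face and patching with the given collarings. In the coordinates of Notation \ref{Flattening}, label the faces of $\csimplex{n}$ that constitute $\chorn{n}{k}$ by $F_i$ for $i\neq k$; the restriction of $\flatt{n}{k}$ to $F_i$ is a linear projection giving an isomorphism of $\cinfty$-schemes onto a closed sub-simplex $R_i\subseteq\csimplex{n-1}$, namely the cone from the origin over the $(n-2)$-face $F_i\cap F_k\subseteq\{x_k=0\}$. These $R_i$ cover $\csimplex{n-1}$, and $R_i\cap R_j$ is the image under flattening of $F_i\cap F_j$. Using the representability hypothesis on $\sheaf$, I factor the finite family $\{\ssim|_{\cscheme\times F_i}\}_{i\neq k}$ through a single $\cscheme'\in\csite$, and on each $\cscheme\times R_i$ define
\begin{equation*}
\ssim'_i\;:=\;\ssim|_{\cscheme\times F_i}\circ\bigl(\id\times(\flatt{n}{k}|_{F_i})^{-1}\bigr)\colon\cscheme\times R_i\longrightarrow\cscheme'.
\end{equation*}
On an overlap, $(\flatt{n}{k}|_{F_i})^{-1}$ and $(\flatt{n}{k}|_{F_j})^{-1}$ restrict to the same isomorphism $R_i\cap R_j\cong F_i\cap F_j$, so the $\ssim'_i$'s agree there and glue to a map $\ssim'\colon\cscheme\times\csimplex{n-1}\to\cscheme'\to\sheaf$ satisfying $\ssim'\circ(\id\times\flatt{n}{k})=\ssim$ on $\cscheme\times\chorn{n}{k}$.

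The essential task is then to show that $\ssim'$ admits a collaring. The invariance axiom (\ref{invariance}) for the given collaring on $\ssim|_{\cscheme\times F_i}$ says that on the germ of $\cscheme\times F_i$ at any sub-face $\cscheme\times(F_i\cap F_{i'})$ the map $\ssim|_{\cscheme\times F_i}$ factors through the projection onto that sub-face, and associativity (\ref{compatibility}) makes these factorizations compatible along all lower-dimensional sub-faces. Transporting through $(\flatt{n}{k}|_{F_i})^{-1}$ converts these into factorizations of $\ssim'$ on the germ of $\cscheme\times\csimplex{n-1}$ at each flattened sub-face, which is precisely the data of a germ-level collaring. By the representability hypothesis together with Remark \ref{ReasoningCollaring}, this germ-level invariance upgrades to invariance on honest open neighborhoods, simultaneously yielding the smoothness of $\ssim'$ across the overlaps and the collaring data required by Def.\ \ref{DefinitionCollaring}.

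The main obstacle is the coherent assembly of the collaring on $\ssim'$ near the origin of $\csimplex{n-1}$, where all $n$ sub-simplices $R_i$ come together: the collarings transported from the various $F_i$'s must mesh into a single coherent collaring of $\ssim'$ rather than an uncorrelated family of pointwise factorizations. I expect to handle this by induction on the codimension of the sub-face, at each step invoking the associativity condition (\ref{compatibility}) on the given horn data and the open-neighborhood extension supplied by Remark \ref{ReasoningCollaring} to glue the pieces.
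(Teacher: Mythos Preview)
Your approach is essentially the paper's, made explicit. The paper compresses steps 1--3 of your argument into one line: since the hypothesis on $\sheaf$ lets $\ssim$ land in a single $\cscheme'\in\csite$, the map $\ssim$ factors through the colimit $\underset{\simplex{m}\hookrightarrow\horn{n}{k}}\colim(\cscheme\times\csimplex{m})$ computed in $\csite$ rather than in $\zaritos{\ksite}$; the collarings then let this factor through the flattening (``gluing over germs''). Your $R_i$-decomposition is exactly this colimit written out by hand, and your verification that $(\flatt{n}{k}|_{F_i})^{-1}$ and $(\flatt{n}{k}|_{F_j})^{-1}$ agree on $R_i\cap R_j$ is the compatibility that makes the colimit description work. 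Both are fine; the paper's formulation is shorter, yours is more transparent about where the collarings enter, and in fact you address the smoothness of $\ssim'$ more carefully than the paper does.

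One point to correct: your ``main obstacle'' is mislocated. The origin of $\csimplex{n-1}$ is an \emph{interior} point (it is the image of the $k$-th vertex under $\flatt{n}{k}$, i.e.\ the barycenter of the base face), so no collaring data for $\ssim'$ lives there---collarings are data at the boundary faces $F_k\cap F_i$ of $\csimplex{n-1}$. What is needed at the origin is only that $\ssim'$ be a morphism of $\cinfty$-schemes, and this is immediate: the collaring of each $\ssim|_{F_i}$ at the $k$-vertex (the $\fima'$ with $k'=0$ in Def.~\ref{DefinitionCollaring}) forces $\ssim'_i$ to be constant, equal to $\ssim(\text{$k$-vertex})$, on an open neighbourhood of the origin in $R_i$ (via Rem.~\ref{ReasoningCollaring}), so the pieces glue to a constant there. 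The genuine work is at the codimension-one walls $R_i\cap R_j$ away from the origin, where you must check that the two transported collarings (from $F_i$ and from $F_j$) give the same $\cinfty$-map on a full open neighbourhood in $\csimplex{n-1}$; this is exactly what the invariance axiom buys you, and your invocation of Rem.~\ref{ReasoningCollaring} is the right tool. The actual collaring on $\ssim'$ at a face $F_k\cap F_i$ of $\csimplex{n-1}$ comes directly from the collaring of $\ssim|_{F_i}$ at its face $F_i\cap F_k$, transported through the isomorphism $(\flatt{n}{k}|_{F_i})^{-1}$---no further assembly is required, since the germ of $\csimplex{n-1}$ at the interior of $F_k\cap F_i$ lies entirely inside $R_i$.
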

\begin{proof} By definition $\chorn{n}{k}$ is a colimit of $\cinfty$-simplices computed within $\zaritos{\ksite}$. Since $\ssim$ factors through $\cscheme\times\chorn{n}{k}\rightarrow\cscheme'\hookrightarrow\sheaf$, with $\cscheme'\in\csite$, $\ssim$ factors through $\underset{\simplex{m}\hookrightarrow\horn{n}{k}}\colim(\cscheme\times\csimplex{m})$ with the colimit computed within $\csite$. The assumption that each $\ssim|_{\cscheme\times\csimplex{n-1}}$ has a collaring immediately implies then that $\ssim$ factors through $\id\times\flatt{n}{k}$ (gluing over germs). \hide{%
Indeed, since $\ssim$ is defined on $\cscheme\times\chorn{n}{k}$, $\forall m<n-1$ restrictions of $\ssim$ to each $\cscheme\times\csimplex{m}$ are independent of the choice of the ambient $\cscheme\times\csimplex{n-1}$. Existence of collaring on each $\ssim|_{\cscheme\times\csimplex{n-1}}$ implies that $\ssim$ factors through $\cscheme\times\chorn{n}{k}\rightarrow\underset{k<n,\csimplex{k}\hookrightarrow\chorn{n}{k}}\colim\germof{(\cscheme\times\csimplex{n-1})}{(\cscheme\times\csimplex{k})}$. This colimit is $\cscheme\times\csimplex{n-1}$. Identifying every $\cscheme\times\csimplex{n-1}$ in $\cscheme\times\chorn{n}{k}$ with $\cscheme\times\flatt{n}{k}(\csimplex{n-1})$ we have a factorization
	\begin{equation*}\xymatrix{\cscheme\times\chorn{n}{k}\ar[r]\ar[d]_= &
	\underset{k<n,\csimplex{k}\hookrightarrow\chorn{n}{k}}\colim\germof{(\cscheme\times\csimplex{n-1})}{(\cscheme\times\csimplex{k})}
	\ar[r] & \cscheme\times\csimplex{n-1}\\
	\cscheme\times\chorn{n}{k}\ar[rru]_{\id\times\flatt{n}{k}}.}\end{equation*}}%
\end{proof}%

\begin{proposition}\label{PropKanComplexes} Let $\sheaf\in\zaritos{\ksite}$ be as in Lemma \ref{GluingToCell}. Then $\fungro{\sheaf}$ is a sheaf of Kan complexes.\end{proposition}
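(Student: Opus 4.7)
To prove $\fungro{\sheaf}$ is a sheaf of Kan complexes I must verify two properties: each $\smon{n}{\sheaf}$ is a sheaf, and for every $\cscheme \in \ksite$ the simplicial set $\smon{\bullet}{\sheaf}(\cscheme)$ fills every horn. The first is immediate: $\smon{n}{\sheaf}$ sits inside the sheaf $\homo{\zaritop}(\csimplex{n}, \sheaf)$ as the sub-pre-sheaf of morphisms admitting a collaring, and Prop.~\ref{GluingCollarings} shows that admitting a collaring is a local property on $\cscheme$, so the subsheaf is itself a sheaf.

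For horn-filling, fix $0 \leq k \leq n$ and a smooth horn $\ssim \colon \cscheme \times \chorn{n}{k} \to \sheaf$, meaning the restriction to each codimension-one face of $\chorn{n}{k}$ is smooth. The hypothesis on $\sheaf$ allows us to invoke Lemma~\ref{GluingToCell} to produce a smooth $\ssim' \colon \cscheme \times \csimplex{n-1} \to \sheaf$ with $\ssim = \ssim' \circ (\id \times \flatt{n}{k}|_{\chorn{n}{k}})$. (For $k \in \{0, n\}$ I would use the evident symmetric variants of $\flatt{n}{k}$, collapsing onto the face opposite the appropriate vertex.) The candidate filler is then the composition with the flattening of the entire simplex,
$$\tilde\ssim := \ssim' \circ (\id \times \flatt{n}{k}) \colon \cscheme \times \csimplex{n} \to \sheaf,$$
and since $\flatt{n}{k}|_{\chorn{n}{k}}$ is the restriction of $\flatt{n}{k}$, the map $\tilde\ssim$ extends $\ssim$ on the horn.

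What remains is to construct a collaring on $\tilde\ssim$. I would combine two ingredients. First, since $\tilde\ssim$ factors through $\id \times \flatt{n}{k}$, it is constant along the fibers of $\flatt{n}{k}$, i.e.\ along the $x_k$-direction; hence at any face of $\csimplex{n}$ containing the $k$-th vertex the germ splits as a product along $x_k$ under which $\tilde\ssim$ is automatically invariant. Second, at the face of $\csimplex{n}$ opposite the $k$-th vertex, $\flatt{n}{k}$ is an isomorphism onto $\csimplex{n-1}$, so we can pull back the collaring of $\ssim'$ verbatim. For a general face $\fima$ of $\csimplex{n}$ the required isomorphism $\colla_{\fima, \fima'}$ combines the $x_k$-product structure with the pulled-back collaring of $\ssim'$ at the image face in $\csimplex{n-1}$. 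The invariance condition (\ref{invariance}) for $\tilde\ssim$ is clear from invariance of $\ssim'$ together with constancy of $\tilde\ssim$ along $\flatt{n}{k}$-fibers.

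The main technical obstacle is verifying the associativity condition (\ref{compatibility}) for this assembled collaring: one must check that the two sources of structure (the $x_k$-product splitting and the pulled-back data) mesh coherently across iterated face inclusions $\finor{k''} \hookrightarrow \finor{k'} \hookrightarrow \finor{k} \hookrightarrow \finor{n}$. This reduces to the fact that $\flatt{n}{k}$ is an affine projection compatible with the simplicial face stratification, so that the associativity already enjoyed by the collaring of $\ssim'$ on $\csimplex{n-1}$ propagates under pullback to a collaring on $\tilde\ssim$ over $\csimplex{n}$.
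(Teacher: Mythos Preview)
Your proof is correct and follows essentially the same approach as the paper: use Lemma~\ref{GluingToCell} to factor the horn through a smooth $\ssim'\colon\cscheme\times\csimplex{n-1}\to\sheaf$, then precompose with the flattening $\flatt{n}{k}$ on the full simplex. The paper's proof is a one-liner that treats the existence of a collaring on $\ssim'\circ(\id\times\flatt{n}{k})$ as evident, whereas you spell out how to assemble it from the collaring on $\ssim'$ together with the product splitting along the $x_k$-fiber direction; your extra care with associativity is justified but the paper regards it as routine, and your explicit handling of the sheaf property (already dealt with just after Prop.~\ref{GluingCollarings}) and of the boundary case $k=n$ are reasonable additions.
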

\begin{proof} From Lemma \ref{GluingToCell} we immediately conclude that every $\cscheme\times\chorn{n}{k}\rightarrow\sheaf$, whose restriction on each $\cscheme\times\csimplex{n-1}$ is smooth, can be extended to a smooth $\cscheme\times\csimplex{n}\rightarrow\sheaf$ (just compose $\cscheme\times\csimplex{n-1}\rightarrow\sheaf$ with the flattening morphism).\end{proof}%

\smallskip

We would like to prove that for some $\sheaf\in\zaritos{\ksite}$ of interest to us, the pre-sheaf of simplicial sets $\fungro{\sheaf}$ is a soft sheaf (Def.\@ \ref{DefSoftSheaf}). Of course the main tool in proving such results is by using a partition of unity. The following lemma formalizes this procedure.

\begin{lemma}\label{BasicExtension} Let $\cscheme\in\ksite$, $\opensch,\opensch'\subseteq\cscheme$ open subschemes, s.t.\@ $\clos{\opensch}\subseteq\opensch'$. Suppose we have smooth $\ssim\colon\cscheme\times\chorn{n}{k}\rightarrow\sheaf$, $\ssim'\colon\opensch'\times\csimplex{n}\rightarrow\sheaf$, s.t.\@
	\begin{equation*}\ssim|_{\opensch'\times\chorn{n}{k}}=\ssim'|_{\opensch'\times\chorn{n}{k}}.\end{equation*}
There is a smooth $\ssim''\colon\cscheme\times\csimplex{n}\rightarrow\sheaf$ s.t.\@ $\ssim''|_{\cscheme\times\chorn{n}{k}}=\ssim$, $\ssim''|_{\clos{\opensch}\times\csimplex{n}}=\ssim'|_{\clos{\opensch}\times\csimplex{n}}$.\end{lemma}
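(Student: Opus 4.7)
The plan is to glue the horn data $\ssim$, given globally on $\cscheme$ but only on $\chorn{n}{k}$, with the simplex $\ssim'$, defined on the whole of $\csimplex{n}$ but only over $\opensch'$, by combining a bump function on $\cscheme$ with a smooth deformation of $\csimplex{n}$ that shrinks it into a collar of the horn. The interpolation will take place entirely in the simplex direction, so that no algebraic structure on the target $\sheaf$ is required.

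First I would pick an intermediate open subscheme $\opensch''$ with $\clos\opensch\subseteq\opensch''\subseteq\clos{\opensch''}\subseteq\opensch'$ together with a bump function $\charfun\in\cinfty(\cscheme,[0,1])$ satisfying $\charfun|_{\clos\opensch}=1$ and $\charfun|_{\cscheme\setminus\opensch''}=0$. Applying Rem.\@~\ref{ReasoningCollaring} to the collaring on $\ssim'$, I would extract an open neighbourhood $\mathcal V\subseteq\csimplex{n}$ of $\chorn{n}{k}$ and a smooth retraction $\pi\colon\mathcal V\to\chorn{n}{k}$ such that $\ssim'|_{\opensch'\times\mathcal V}=\ssim'|_{\opensch'\times\chorn{n}{k}}\circ(\id\times\pi)$. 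Because $\ssim$ and $\ssim'$ agree on $\opensch'\times\chorn{n}{k}$, the formula $(\rpt,x)\mapsto\ssim(\rpt,\pi(x))$ defines a smooth map $\cscheme\times\mathcal V\to\sheaf$ that simultaneously extends $\ssim$ off the horn into $\mathcal V$ and coincides with $\ssim'$ on $\opensch'\times\mathcal V$.

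Next I would construct a smooth $H\colon[0,1]\times\csimplex{n}\to\csimplex{n}$ with $H_1=\id$, $H_t|_{\chorn{n}{k}}=\id$ for every $t$, and $H_0(\csimplex{n})\subseteq\mathcal V$, built by iterating the coordinate-wise shrinking device~(\ref{Homotopy}) from the proof of Prop.\@~\ref{GluingCollarings} stratum by stratum, so that each $H_t$ preserves the simplicial stratification of $\csimplex{n}$. With this in hand I would define $\ssim''$ piecewise: for $\rpt\in\opensch'$ set $\ssim''(\rpt,x):=\ssim'(\rpt,H_{\charfun(\rpt)}(x))$, and for $\rpt\in\cscheme\setminus\clos{\opensch''}$ set $\ssim''(\rpt,x):=\ssim(\rpt,\pi(H_0(x)))$. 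On the overlap one has $\charfun(\rpt)=0$, so the first formula becomes $\ssim'(\rpt,H_0(x))$; since $H_0(x)\in\mathcal V$, the collaring identity for $\ssim'$ followed by $\ssim=\ssim'$ on the horn converts this into $\ssim(\rpt,\pi(H_0(x)))$, matching the second formula. The required boundary values are immediate: $H_t|_{\chorn{n}{k}}=\id$ and $\pi|_{\chorn{n}{k}}=\id$ give $\ssim''|_{\cscheme\times\chorn{n}{k}}=\ssim$, while $\charfun|_{\clos\opensch}=1$ and $H_1=\id$ give $\ssim''|_{\clos\opensch\times\csimplex{n}}=\ssim'|_{\clos\opensch\times\csimplex{n}}$.

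The main obstacle is verifying that $\ssim''$ itself admits a collaring in the sense of Def.\@~\ref{DefSmoothFamilies}. This requires arranging $H$ so that, for every face inclusion $\fima\colon\finor k\hookrightarrow\finor n$, $H_t$ sends the image of $\csimplex{k}$ into itself and commutes in germs with the collaring projections $\proco{\id}{\fima}$ of $\ssim'$ and $\ssim$; granting this, the collaring of $\ssim'$ pulled back through $H_{\charfun(\rpt)}$ gives a collaring for $\ssim''$ over $\opensch'$, the collaring of $\ssim$ pulled back through $\pi\circ H_0$ gives one over $\cscheme\setminus\clos{\opensch''}$, and these two agree on the overlap by the same identity used for $\ssim''$ itself, so Prop.\@~\ref{GluingCollarings} glues them into a single collaring on all of $\cscheme\times\csimplex{n}$.
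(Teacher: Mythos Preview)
Your overall strategy---a bump function in the $\cscheme$-direction combined with a deformation of $\csimplex{n}$ into a collar of the horn---is exactly the paper's. The gap is in the retraction $\pi\colon\mathcal V\to\chorn{n}{k}$. The object $\chorn{n}{k}$ is not in $\ksite$; it is the colimit of its face simplices computed in $\zaritos{\ksite}$, so by Yoneda any morphism from a representable $\mathcal V$ into $\chorn{n}{k}$ factors through a single face inclusion $\csimplex{m}\hookrightarrow\chorn{n}{k}$. A retraction of a connected open neighbourhood of the whole horn onto $\chorn{n}{k}$ therefore cannot exist, and Rem.~\ref{ReasoningCollaring} does not produce one: what it yields is a projection onto an embedded smoothed copy of $\sphere{n-1}$ (or, adapted to the horn, an embedded $\csimplex{n-1}$), not onto the pre-sheaf $\chorn{n}{k}$ itself. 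Consequently your formula $\ssim(\rpt,\pi(x))$, which needs $\pi(x)$ to lie in the domain $\cscheme\times\chorn{n}{k}$ of $\ssim$, cannot be made sense of over all of $\cscheme$.

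The paper's proof fixes precisely this point. It first invokes Lemma~\ref{GluingToCell} (this is where the hypothesis on $\sheaf$---that finite families of scheme-maps factor through a common scheme---enters, tacitly assumed here) to factor the horn datum $\ssim$ through the flattening $\flatt{n}{k}\colon\cscheme\times\chorn{n}{k}\to\cscheme\times\csimplex{n-1}$. Now the ``horn'' has been replaced by the representable $\csimplex{n-1}$: the collaring on $\ssim'$ furnishes an embedding $\iota\colon\opensch'\times\csimplex{n-1}\hookrightarrow\opensch'\times\csimplex{n}$ together with a projection $\pi$ onto its image, and the homotopy $h$ (obtained from convexity of $\csimplex{n}$, rather than from~(\ref{Homotopy})) interpolates between the identity and $\iota\circ\flatt{n}{k}$. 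With this substitution in place the rest of your gluing argument goes through unchanged.
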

\begin{proof} Making $\opensch'$ smaller, if necessary, we can assume that we can choose collarings on $\ssim$ and $\ssim'$ that are equal where they are both defined. Using Lemma \ref{GluingToCell} we see that $\ssim$ factors through the flattening morphism
	\begin{equation}\label{flattening}\flatt{n}{m}\colon\cscheme\times\chorn{n}{m}\hookrightarrow\cscheme\times\csimplex{n}
	\rightarrow\cscheme\times\csimplex{n-1}.\end{equation}
As in Rem.\@ \ref{ReasoningCollaring} we can find an open neighbourhood $\overline{\opensch'\times\chorn{n}{m}}$ of $\opensch'\times\chorn{n}{m}$ in $\opensch'\times\csimplex{n}$ and $\iota\colon\opensch'\times\csimplex{n-1}\hookrightarrow\overline{\opensch'\times\chorn{n}{m}}$, s.t.\@ $\iota(\opensch'\times\csimplex{n-1})\cap(\opensch'\times\chorn{n}{m})=\opensch'\times\bouns{\chorn{n}{m}}$, and the collaring on $\ssim'|_{\overline{\opensch'\times\chorn{n}{m}}}$ defines a projection
	\begin{equation*}\pi\colon\overline{\opensch'\times\chorn{n}{m}}\longrightarrow\iota(\opensch'\times\csimplex{n-1}),\end{equation*}	
s.t.\@ $\ssim'|_{\overline{\opensch'\times\chorn{n}{m}}}=\ssim'|_{\iota(\opensch'\times\csimplex{n-1})}\circ\pi$. Since $\flatt{n}{m}|_{\opensch'}$ and $\pi|_{\opensch'\times\chorn{n}{m}}$ are defined using the same collaring on $\ssim'$, we have a factorization
	\begin{equation*}\xymatrix{\opensch'\times\chorn{n}{m}\ar[rr]^{\pi}\ar[d]_{\flatt{n}{m}} && \iota(\opensch'\times\csimplex{n-1})\\
	\opensch'\times\csimplex{n-1}.\ar[rru] &&}\end{equation*}
Therefore, applying $\flatt{n}{m}$ to all of $\opensch'\times\csimplex{n}$ we have an embedding 
	\begin{equation*}\alpha\colon\opensch'\times\csimplex{n}\rightarrow\iota(\opensch'\times\csimplex{n-1})
	\hookrightarrow\opensch'\times\csimplex{n}.\end{equation*}
Using convexity of $\csimplex{n}$ we can find a homotopy $h\colon[0,1]\times\opensch'\times\csimplex{n}\rightarrow\opensch'\times\csimplex{n}$ between $\alpha$ (at $0$) and $\id_{\opensch'\times\csimplex{n}}$ (at $1$). Let $\opensch''\subseteq\cscheme$ be an open subscheme, s.t.\@ $\opensch''\cup\opensch'=\cscheme$ and $\clos{\opensch''}\cap\clos{\opensch}=\emptys$. We can choose $\charfun\in\cinfty(\cscheme,[0,1])$, s.t.\@ $\clos{\opensch''}\subseteq\locin{\cscheme}{\charfun=0}$, $\clos{\opensch}\subseteq\locin{\cscheme}{\charfun=1}$. Now we define a composite morphism
	\begin{equation*}\nu\colon\opensch'\times\csimplex{n}\longrightarrow[0,1]\times\opensch'\times\csimplex{n}\longrightarrow
	\opensch'\times\csimplex{n},\end{equation*}
where the first arrow is $\charfun\times\id\times\id$ and the second arrow is $h$. It is clear that $\nu|_{\opensch''\cap\opensch'}=\alpha|_{\opensch''\cap\opensch'}$, while $\mu|_{\clos{\opensch}}=\id$. By construction $\ssim'|_{\opensch''\cap\opensch'}\circ\nu|_{\opensch''\cap\opensch'}$ equals $(\opensch''\cap\opensch')\times\csimplex{n}\rightarrow\sheaf$ obtained from $\ssim$ using (\ref{flattening}). Therefore we have an extension of $\ssim'\circ\nu$ to $\ssim''\colon\cscheme\times\csimplex{n}\rightarrow\sheaf$ s.t.\@ $\ssim''|_{\cscheme\times\chorn{n}{m}}=\ssim$.\end{proof}%

\smallskip

In order to use Lemma \ref{BasicExtension}, we need to show that for certain $\sheaf\in\zaritos{\ksite}$ a smooth $\cscheme'$-family of $n$-simplices can be extended to a smooth $\opensch$-family where $\cscheme'\subseteq\cscheme$ is an asymptotic submanifold and $\opensch\supseteq\cscheme$ is an open neighbourhood. 

If $\sheaf$ is represented by a manifold $\manifo$, we can choose {\it a good covering}  $\manifo=\underset{\element\in\set}\bigcup\opensch_\element$, which is a locally finite open covering, s.t.\@ each non-empty intersection is isomorphic to $\mathbb R^k$, $k=\dim\manifo$. Then we can try to extend families locally, starting with the smallest intersections. Since we have plenty of objects in $\ksite$, that are not germ-determined (mostly due to the infinitesimal structure), we need to demand the good coverings to be finite. 

\begin{lemma}\label{ManifoldExtension} Let $\cscheme\in\ksite$, and let $\cscheme'\subseteq\cscheme$ be an asymptotic submanifold. Any $\Phi'\colon\cscheme'\rightarrow\manifo$, where $\manifo$ is a manifold admitting a finite good covering, can be extended to $\Phi\colon\opensch\rightarrow\manifo$, where $\opensch\supseteq\cscheme'$ is an open subscheme of $\cscheme$.\end{lemma}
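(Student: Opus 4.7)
The plan is to reduce the problem to extending real-valued smooth functions, using a closed embedding of $\manifo$ into Euclidean space together with a tubular neighborhood retraction. By Whitney's embedding theorem, since $\manifo$ is Hausdorff and second countable (and the finite good cover hypothesis ensures it has finite topological type), we can find a closed embedding $\iota\colon\manifo\hookrightarrow\mathbb R^N$ for some $N$, an open tubular neighborhood $T\subseteq\mathbb R^N$ of $\iota(\manifo)$, and a smooth retraction $\pi\colon T\to\manifo$ with $\pi\circ\iota=\id_{\manifo}$.

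The composite $\iota\circ\Phi'\colon\cscheme'\to\mathbb R^N$ is given by $N$ smooth functions $\phi'_1,\ldots,\phi'_N\in\cinfty(\cscheme')$. By Def.\@ \ref{DefSubmanifolds} (and the computation in the proof of Prop.\@ \ref{ClosureRegular}), asymptotic submanifolds are closed subschemes whose structure $\cinfty$-rings are quotients of $\cinfty(\cscheme)$ by ideals; in particular the restriction map $\cinfty(\cscheme)\rightarrow\cinfty(\cscheme')$ is surjective. Thus each $\phi'_i$ lifts to some $\tilde\phi_i\in\cinfty(\cscheme)$, and these assemble into a morphism $\tilde\Phi\colon\cscheme\to\mathbb R^N$ extending $\iota\circ\Phi'$.

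Finally, set $\opensch:=\tilde\Phi^{-1}(T)$; this is open in $\cscheme$ and contains $\cscheme'$ since $\tilde\Phi(\cscheme')=\iota(\Phi'(\cscheme'))\subseteq\iota(\manifo)\subseteq T$. The required extension is then $\Phi:=\pi\circ\tilde\Phi|_{\opensch}\colon\opensch\to\manifo$. On $\cscheme'$ the map $\tilde\Phi$ factors through $\iota(\manifo)$ where $\pi\circ\iota=\id$, so $\Phi|_{\cscheme'}=\Phi'$, as desired.

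The main subtlety is the surjectivity of $\cinfty(\cscheme)\rightarrow\cinfty(\cscheme')$ for asymptotic submanifolds, which must be invoked in the $\cinfty$-ring (not just topological) sense; however, this is immediate from the description of asymptotic submanifolds as principal closed subschemes obtained by quotienting by ideals generated by $\charfun-r$ or by the ideal cutting out $\clos{\locin{\cscheme}{\charfun<r}}$. The finite good cover hypothesis is used only to guarantee existence of the closed embedding into $\mathbb R^N$ and of a smooth tubular neighborhood retraction, both of which are otherwise standard.
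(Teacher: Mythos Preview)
Your proof is correct and takes a genuinely different route from the paper's. The paper works intrinsically with the finite good cover $\{\opensch_\element\}$ on $\manifo$: it pulls the cover back along $\Phi'$, extends $\Phi'$ first to the germ $\germof{\cscheme}{\cscheme'}=\bigcup_\element\germof{\cscheme}{\opensch'_\element}$ (using that empty intersections of charts force empty intersections of the corresponding germs, so the local extensions on contractible charts glue), and then invokes finite presentability of $\cinfty(\manifo)$ to pass from the germ to an honest open neighbourhood of $\cscheme'$.

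Your approach externalizes the problem instead: embed $\manifo\hookrightarrow\mathbb R^N$, extend the $N$ coordinate functions using surjectivity of $\cinfty(\cscheme)\to\cinfty(\cscheme')$, and retract via a tubular neighbourhood. This is more elementary---it avoids germs entirely and replaces the finite-presentability step with a single pullback of an open set. In fact your argument does not actually use the finite good cover hypothesis: Whitney embedding and tubular neighbourhoods exist for any second-countable Hausdorff manifold, so your remark that the hypothesis is ``used only to guarantee'' these is slightly off---it is not used at all in your proof, and your argument is therefore marginally more general than the stated lemma. The paper's intrinsic approach, by contrast, is more in the spirit of the surrounding $\cinfty$-scheme machinery and adapts more directly to the case of germs at punctures treated immediately afterward in Theorem~\ref{HomotopyFun}.
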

\begin{proof} Let $\{\opensch_\element\}_{\element\in\set}$ be a finite good covering on $\manifo$, and denote $\opensch'_\element:={\Phi}^{-1}(\opensch_\element)$. Consider the germ $\germof{\cscheme}{\cscheme'}=\underset{\element\in\set}\bigcup\germof{\cscheme}{\opensch'_\element}$. For any $\set'\subseteq\set$, if $\underset{\element\in\set'}\bigcap\opensch_\element=\emptys$, also $\underset{\element\in\set'}\bigcap\germof{\cscheme}{\opensch'_\element}=\emptys$, therefore we can extend $\Phi'$ to $\germof{\cscheme}{\cscheme'}\rightarrow\manifo$. Now using the fact that $\manifo$ is finitely presentable, we see that $\Phi'$ extends to an open neighbourhood of $\cscheme'$ in $\cscheme$.\end{proof}%

\begin{theorem}\label{HomotopyFun} For $\cscheme''\in\ksite$ $\fungro{\cscheme''}$ is a soft sheaf of Kan complexes in either one of the following cases:\begin{enumerate}[label={ (\alph*)}] 
\item $\cscheme''$ is a manifold $\manifo$ admitting a finite good covering,
\item $\cscheme''$ is the germ of $\manifo$ at a puncture, where $\manifo$ is a manifold admitting a finite good covering 
\end{enumerate}\end{theorem}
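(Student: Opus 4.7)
The plan is to verify two properties: that $\fungro{\cscheme''}$ is a sheaf of Kan complexes, and that it is soft in the sense of Definition \ref{DefSoftSheaf}.

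For the first property, I will argue that each $\smon{n}{\cscheme''}$ is a sheaf and takes values in Kan complexes. The sheaf property follows from Proposition \ref{GluingCollarings}: the smooth-simplex condition (existence of a collaring) is local on $\cscheme$, and since $\cscheme''$ is representable in $\csite$, $\homo{\zaritop}(\csimplex{n},\cscheme'')$ is already a sheaf, making its sub-pre-sheaf $\smon{n}{\cscheme''}$ a sheaf too. For the Kan property, I will invoke Proposition \ref{PropKanComplexes}: its hypothesis on $\sheaf$, namely that any finite family of morphisms into $\sheaf$ factors through a common object of $\csite$, is trivially satisfied when $\sheaf=\cscheme''$ is itself representable, which holds in both cases.

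For softness, fix an asymptotic submanifold $\cscheme'\hookrightarrow\cscheme$ and a lifting problem --- a smooth horn $H\colon\cscheme\times\chorn{n}{k}\to\cscheme''$ together with a smooth filler $F\colon\cscheme'\times\csimplex{n}\to\cscheme''$ agreeing with $H$ on $\cscheme'\times\chorn{n}{k}$ --- and I must construct a smooth $F''\colon\cscheme\times\csimplex{n}\to\cscheme''$ extending both. The plan is to reduce to Lemma \ref{BasicExtension}, which requires a filler already defined on an \emph{open} neighbourhood of $\cscheme'$ in $\cscheme$ and matching $H$ on the horn part there. In case (a), where $\cscheme''=\manifo$, I will use Lemma \ref{ManifoldExtension} to extend $F$ --- regarded as a map out of the asymptotic submanifold $\cscheme'\times\csimplex{n}\subseteq\cscheme\times\csimplex{n}$ --- to some open neighbourhood, which a tube-lemma argument based on the compactness of $\csimplex{n}$ lets me take of the form $\opensch\times\csimplex{n}$ with $\opensch\supseteq\cscheme'$ open in $\cscheme$. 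I will then deform this extension $\widetilde{F}$ near $\cscheme'$ to make it agree with $H$ exactly on $\opensch'\times\chorn{n}{k}$ for a smaller $\opensch'$ with $\clos{\opensch'}\subseteq\opensch$, after which Lemma \ref{BasicExtension} delivers the desired $F''$. In case (b), where $\cscheme''=\puncof{\manifo}{\vmani}$, a smooth family into the puncture germ is given by a compatible system of families into open neighbourhoods of $\vmani$ in $\manifo$ with the defining function of $\vmani$ inverted, so the same extension-plus-interpolation strategy runs levelwise on a presentation of the germ and yields the result.

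The main obstacle is the deformation step: the extension $\widetilde{F}$ produced by Lemma \ref{ManifoldExtension} a priori agrees with $H$ only on $\cscheme'\times\chorn{n}{k}$, not on a full open neighbourhood of it, so I must blend $\widetilde{F}$ and $H$ smoothly to force exact agreement on $\opensch'\times\chorn{n}{k}$ while preserving (i) the collarings that make the result a smooth simplex family and (ii) the restriction $F$ on $\cscheme'\times\csimplex{n}$. This is where the finite-good-cover hypothesis on $\manifo$ is essential: working chart-by-chart on $\manifo$, with each chart a copy of $\mathbb{R}^{\dim\manifo}$, I can convex-combine $\widetilde{F}$ and $H$ inside each chart using a smooth cut-off function on $\cscheme$ which vanishes on a closed thickening of $\cscheme'$ and equals $1$ outside a slightly larger open, with finiteness of the cover ensuring that the local convex combinations patch into a single smooth global deformation compatible with the collaring structure.
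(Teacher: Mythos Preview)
Your strategy for case~(a) is more laborious than needed, and the deformation step you flag as the main obstacle can be bypassed entirely. The paper applies Lemma~\ref{ManifoldExtension} not to $F$ alone but to the combined map $\mu:=H\cup F$ defined on $(\cscheme\times\chorn{n}{k})\cup(\cscheme'\times\csimplex{n})$, extending it to an open neighbourhood of this whole union inside $\cscheme\times\csimplex{n}$. The resulting extension then automatically agrees with $H$ on all of $\cscheme\times\chorn{n}{k}$, so after a tube-lemma shrink to a product $\opensch'\times\csimplex{n}$ one feeds it straight into Lemma~\ref{BasicExtension} with no blending required. Your convex-combination fix, as written, is problematic: a cut-off purely in the $\cscheme$-direction cannot force $\widetilde F$ to equal $H$ on the horn while still restricting to $F$ on $\cscheme'\times\csimplex{n}$, since $H$ is only defined on the horn and any interpolation that modifies $\widetilde F$ near the horn will in general disturb its restriction to $\cscheme'\times\csimplex{n}$ as well; the chart-by-chart patching on the target $\manifo$ does not obviously repair this.

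For case~(b) your ``run levelwise on a presentation'' suggestion has a genuine gap: performing the case-(a) construction separately for each open $\opensch_i$ in a presentation $\cscheme''\cong\bigcap_i\opensch_i$ yields extensions that need not be compatible across~$i$, so you do not obtain a single map into the germ. The paper instead composes with $\cscheme''\hookrightarrow\manifo$, applies case~(a) once to get an extension $\mu'\colon\cscheme\times\csimplex{n}\to\manifo$, and then invokes Lemma~\ref{OpenInSoft} (reducing first to compact pieces, and using that the punctured germ has no $\mathbb R$-points) to find a single open neighbourhood of $(\cscheme\times\chorn{n}{k})\cup(\cscheme'\times\csimplex{n})$ on which $\mu'$ already lands in every $\opensch_i$; Lemma~\ref{BasicExtension} then finishes as before.
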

\begin{proof}\begin{enumerate}[label={ (\alph*)}]
\item Let $n\in\mathbb N$, $0\leq m\leq n$, and suppose we are given a smooth family 
	\begin{equation*}\mu\colon(\cscheme\times\chorn{n}{m})\cup(\cscheme'\times\csimplex{n})\longrightarrow\manifo.\end{equation*}
Lemma \ref{ManifoldExtension} tells us that $\mu$ can be extended to an open neighbourhood $\opensch\subseteq\cscheme\times\csimplex{n}$ of $(\cscheme\times\chorn{n}{m})\cup(\cscheme'\times\csimplex{n})$, and then, according to Lemma \ref{BasicExtension}, we can find an extension to all of $\cscheme\times\csimplex{n}$.
\item Suppose we are given
	\begin{equation*}\mu\colon(\cscheme\times\chorn{n}{m})\cup(\cscheme'\times\csimplex{n})\longrightarrow\cscheme''
	\hookrightarrow\manifo.\end{equation*}
Using the previous part we can extend $\mu$ to 
	\begin{equation*}\mu'\colon\cscheme\times\csimplex{n}\longrightarrow\manifo.\end{equation*}
Let $\{\opensch_i\}_{i\in\diagra}$ be a regular system of open submanifolds of $\manifo$, s.t.\@ $\cscheme''\cong\underset{i\in\diagra}\bigcap\opensch_i$. We would like show that there is an open neighbourhood 
	\begin{equation*}(\cscheme\times\chorn{n}{m})\cup(\cscheme'\times\csimplex{n})\subseteq\opensch\subseteq\cscheme\times\csimplex{n},\end{equation*} 
s.t.\@ $\forall i\in\diagra$ $\mu'(\opensch)\subseteq\opensch_i$. By assumption $\cscheme''$ is locally compact, hence working with pre-images of compact pieces, we can assume that all of $\cscheme''$ is compact, i.e.\@ $\diagra$ is a sequence. Then $\{{\mu'}^{-1}(\opensch_i)\}_{i\in\diagra}$ is a regular sequence of open neighbourhoods of $(\cscheme\times\chorn{n}{m})\cup(\cscheme'\times\csimplex{n})$. Also $\cscheme\times\csimplex{n}$ is locally compact, and working on each compact piece separately, we can assume that all of $\cscheme\times\csimplex{n}$ is compact. Finally absence of $\mathbb R$-points in $\cscheme$ means that also $\cscheme\times\csimplex{n}$ cannot have $\mathbb R$-points, and we can use Lemma \ref{OpenInSoft}. Having found $\opensch$ we apply Lemma \ref{BasicExtension}.
\end{enumerate}\end{proof}%

\smallskip

Now we would like to describe the filtration of fundamental groupoids by the level of thinness that the simplices are allowed to have. 

We start with some notation. For $\sheaf\in\zaritos{\ksite}$ we have {\it the constant simplicial pre-sheaf} $\trigro{\sheaf}$, i.e.\@ $\trigro{\sheaf}$ equals $\sheaf$ in each simplicial dimension. If $\sheaf$ is a sheaf, $\trigro{\sheaf}$ is a homotopy sheaf. We will call $\trigro{\sheaf}$ {\it the trivial $\infty$-groupoid} on $\sheaf$. The unique projections $\{\csimplex{n}\rightarrow\pt\}_{n\in\noneg}$ define the obvious $\trigro{\sheaf}\hookrightarrow\fungro{\sheaf}$, whose image consists of degenerate simplices. 

We have defined $\csimplex{n}$ as the $\cinfty$-germ of $\mathbb R^n$ at the topological simplex $\simplex{n}\subseteq\mathbb R^n$. Therefore for any $\mathbb R$-point $\rpt\in\csimplex{n}$, the germ $\germ{\csimplex{n}}{\rpt}$ of $\csimplex{n}$ at $\rpt$ is isomorphic to the germ of $\mathbb R^n$ at a point. In particular for any $k\leq n$ there are many possible $\cinfty$-morphisms $\colla\colon\germ{\csimplex{n}}{\rpt}\rightarrow\germ{\csimplex{k}}{\rpt}$.

\begin{definition}\label{DefThin} Let $\cscheme\in\ksite$, $n\geq k\in\noneg$, $\sheaf\in\zaritos{\ksite}$. A smooth $\cscheme$-family $\cscheme\times\csimplex{n}\rightarrow\sheaf$ is {\it $k$-thin}, if $\forall\rpt\in\csimplex{n}$ there is a finite open covering $\cscheme=\underset{\element\in\set_\rpt}\bigcup\opensch_\element$, for each $\element\in\set_{\rpt}$ a morphism $\colla_\element\colon\opensch_{\element}\times\germ{\csimplex{n}}{\rpt}\rightarrow\opensch_\element\times\cscheme_\element$ in $\ksite/\opensch_{\element}$ with $\dim\cscheme_\element\leq k$, and a factorization
	\begin{equation*}\xymatrix{\opensch_\element\times\germ{\csimplex{n}}{\rpt}\ar@{^(->}[r]\ar[rd]_{\colla_\element} & 
	\cscheme\times\csimplex{n}\ar[r] & 
	\sheaf\\ & \opensch_\element\times\cscheme_\element.\ar[ru] &}\end{equation*}
\end{definition}
It is immediately clear that for any morphism $\cscheme'\rightarrow\cscheme$ a $k$-thin $\cscheme$-family of simplices induces a $k$-thin $\cscheme'$-family of simplices, and moreover $k$-thin simplices are stable with respect to simplicial operators.
\begin{definition}\label{ThinGro} Let $\sheaf\in\zaritop$, $k\in\noneg$, {\it the $k$-thin $\infty$-groupoid of $\sheaf$} is the sub-pre-sheaf $\thingro{k}{\sheaf}\subseteq\fungro{\sheaf}$, consisting of $k$-thin families of simplices.\end{definition}
Since we allow arbitrary finite coverings in Def.\@ \ref{DefThin}, and our topology is Zariski, it is clear that $\thingro{k}{\sheaf}$ is a sheaf of simplicial sets, whenever $\fungro{\sheaf}$ is a sheaf. In the proof of  Lemma \ref{GluingToCell} we extended smooth families of horns to smooth families of simplices by pre-composing with the flattening morphism. Therefore we immediately have the following result.
\begin{proposition} Let $\sheaf\in\zaritop$ be as in Lemma \ref{GluingToCell}, $\forall k\in\noneg$, $\forall\cscheme\in\ksite$ $\thingro{k}{\sheaf}(\cscheme)$ is a Kan complex.\end{proposition}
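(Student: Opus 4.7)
The plan is to show that the filler for $\fungro{\sheaf}$-horns constructed in Proposition \ref{PropKanComplexes} via pre-composition with the flattening morphism $\flatt{n}{m}\colon\csimplex{n}\rightarrow\csimplex{n-1}$ stays inside $\thingro{k}{\sheaf}$ when the horn does. Given a horn in $\thingro{k}{\sheaf}(\cscheme)$, the sheaf property glues the faces into a smooth $\ssim\colon\cscheme\times\chorn{n}{m}\rightarrow\sheaf$ whose restriction to each $(n-1)$-face $F_i$ of $\chorn{n}{m}$ (for $i\neq m$) is $k$-thin. Lemma \ref{GluingToCell} then produces a factorization $\ssim'\colon\cscheme\times\csimplex{n-1}\rightarrow\sheaf$, and the candidate extension is $\ssim'':=\ssim'\circ(\id\times\flatt{n}{m})$.

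The key step is to verify that $\ssim''$ is $k$-thin. Using the linear realization of Notation \ref{Flattening}, I would first check that for each $i\neq m$ the restriction $\flatt{n}{m}|_{F_i}$ is an affine diffeomorphism of $F_i$ onto the closed cone $C_i\subseteq\csimplex{n-1}$ spanned by the center of $\csimplex{n-1}$ (the image of the $m$-th vertex) together with the $(n-2)$-face of $\csimplex{n-1}$ opposite the $i$-th vertex, and that $\bigcup_{i\neq m}C_i=\csimplex{n-1}$. Writing $\sigma_i\colon C_i\rightarrow F_i$ for the inverse section, on the preimage $\flatt{n}{m}^{-1}(C_i)\subseteq\csimplex{n}$ we have the identity $\ssim''=\ssim|_{F_i}\circ(\sigma_i\circ\flatt{n}{m})$. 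Hence for any $\rpt\in\csimplex{n}$ whose image $\rqt:=\flatt{n}{m}(\rpt)$ lies in the interior of some $C_i$, composing with the smooth retraction $\sigma_i\circ\flatt{n}{m}$ transports the $k$-thin factorization of $\ssim|_{F_i}$ at $\sigma_i(\rqt)\in F_i$ into a $k$-thin factorization of $\ssim''$ on $\germ{\csimplex{n}}{\rpt}$.

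The main subtlety, which I expect to be the principal obstacle, is the case when $\rqt$ lies on $\partial C_i$, so that $\germ{\csimplex{n}}{\rpt}$ straddles $\partial\flatt{n}{m}^{-1}(C_i)$ and the retraction above is only defined on one side of $\germ{\csimplex{n}}{\rpt}$. I would address this by extending $\sigma_i$ affinely to a smooth section on a full open neighborhood of $\rqt$ in $\csimplex{n-1}$, taking values in the affine span of $F_i$; since $F_i$ is a smooth manifold and $k$-thinness is a local property at the smooth point $\sigma_i(\rqt)\in F_i$, the factorization of $\ssim|_{F_i}$ through an $\cscheme_{\element}\in\ksite$ of dimension $\leq k$ persists along this affine extension. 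Compatibility of the horn's faces on their common $(n-2)$-faces, combined with the freedom in Definition \ref{DefThin} to let the finite open cover of $\cscheme$ depend on $\rpt$, should then allow the factorizations coming from different choices of $i\neq m$ to be patched into a single $k$-thin factorization of $\ssim''$ at $\rpt$.
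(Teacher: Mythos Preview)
Your strategy coincides with the paper's: fill horns via the flattening morphism from Proposition~\ref{PropKanComplexes} and then verify that the resulting filler is $k$-thin. Your cone decomposition $\csimplex{n-1}=\bigcup_{i\neq m}C_i$ and the observation that $\flatt{n}{m}|_{F_i}$ is an affine isomorphism onto $C_i$ are correct and make explicit what the paper only indicates in one sentence.

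The boundary case you flag is a genuine issue, and the paper does not address it any more carefully than you do --- it simply asserts that the factorization through $\flatt{n}{m}$ transports $k$-thinness from the horn to $\ssim'$. Your proposed resolution, however, has a gap: Definition~\ref{DefThin} requires at each point a \emph{single} smooth morphism $\colla_\element$ from the full germ $\opensch_\element\times\germ{\csimplex{n}}{\rpt}$ into a low-dimensional $\opensch_\element\times\cscheme_\element$. Affinely extending $\sigma_i$ produces a $k$-thin factorization of $\ssim|_{F_i}\circ\tilde\sigma_i$ on the full germ, but this map need not agree with $\ssim'$ outside $C_i$ --- two smooth maps that agree on a half-germ need not agree on the whole germ. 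The phrase ``patched into a single $k$-thin factorization'' does not address this: patching factorizations from different $C_i$'s does not produce one smooth $\colla_\element$.

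The cleaner route is to invoke the collaring explicitly rather than affine extension. If $\rqt$ lies on a wall $C_i\cap C_j$, then $\sigma_i(\rqt)\in F_i\cap F_j$, and the collaring of $\ssim|_{F_i}$ at the sub-face $F_i\cap F_j$ (Definition~\ref{DefinitionCollaring}, invariance condition) forces $\ssim|_{F_i}$ on $\germ{F_i}{\sigma_i(\rqt)}$, and hence $\ssim'$ on the $C_i$-side of the germ at $\rqt$, to factor through $\ssim|_{F_i\cap F_j}$; likewise on the $C_j$-side. The required low-dimensional target at $\rqt$ is then supplied by the $k$-thinness of the common lower face $\ssim|_{F_i\cap F_j}$, and smoothness of the combined projection to that face is exactly the gluing-over-germs that already made $\ssim'$ smooth in Lemma~\ref{GluingToCell}. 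Higher-codimension walls are handled by iterating down to the minimal common sub-face.
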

\hide{%
\begin{proof} We need to show that there is always an extension from $\cscheme\times\chorn{n}{m}\rightarrow\sheaf$ to $\cscheme\times\csimplex{n}\rightarrow\sheaf$ that is $k$-thin. We claim that the extensions constructed in Prop.\@ \ref{GluingToCell} are $k$-thin. Indeed, by assumption the germ of $\chorn{n}{m}$ at every point factors through the germ of $\mathbb R^k$ at a point, and $\cscheme\times\chorn{n}{m}\rightarrow\sheaf$ factors through $\flatt{n}{m}\colon\cscheme\times\chorn{n}{m}\rightarrow\cscheme\times\csimplex{n-1}$. Therefore $\forall\rpt\in\csimplex{n-1}$ the morphism $\cscheme\times\germ{\csimplex{n-1}}{\rpt}\rightarrow\sheaf$ factors through $\cscheme\times\germ{\csimplex{k}}{\rpt}$. Then so does the composition $\cscheme\times\csimplex{n}\rightarrow\cscheme\times\csimplex{n-1}\rightarrow\sheaf$.\end{proof}

\smallskip}%

Also in the proof of Thm.\@ \ref{HomotopyFun} we have used pre-composition. Partially with the flattening map, and partially with projection on an $n-1$-simplex contained in an $n$-simplex. Hence we have the following corollary.
\begin{theorem}\label{SoftThin} Let $\sheaf$ be as in Thm.\@ \ref{HomotopyFun}, then $\thingro{k}{\sheaf}$ is a soft sheaf $\forall k\in\noneg$.\end{theorem}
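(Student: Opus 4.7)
The plan is to retrace the proof of Theorem \ref{HomotopyFun} verbatim and check that every extension step preserves $k$-thinness, exactly as the author's remark preceding the statement suggests. The sheaf property of $\thingro{k}{\sheaf}$ follows from the definition (which imposes a Zariski-local condition on $\cscheme$), and the Kan property at each object was established in the preceding proposition; what must be proved is that for every asymptotic submanifold $\cscheme'\hookrightarrow\cscheme$ in $\ksite$, any horn-filling problem
\[
\mu\colon (\cscheme\times\chorn{n}{m})\cup(\cscheme'\times\csimplex{n})\longrightarrow\sheaf
\]
with $\mu$ smooth and $k$-thin admits a smooth $k$-thin extension $\mu''\colon\cscheme\times\csimplex{n}\to\sheaf$.

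Following the proof of Theorem \ref{HomotopyFun}, I would first extend $\mu$ to a smooth map $\mu'$ on an open neighbourhood $\opensch'\subseteq\cscheme\times\csimplex{n}$ of the source using Lemma \ref{ManifoldExtension} (and Lemma \ref{OpenInSoft} for case (b)), then apply Lemma \ref{BasicExtension} to produce a global smooth extension $\mu''$ that agrees with $\mu$ on the horn and with $\mu'$ on some $\clos{\opensch}\subseteq\opensch'$ containing $\cscheme'\times\csimplex{n}$. The crucial observation is that this final extension is obtained by pre-composition $\mu''=\mu'\circ\nu$, where $\nu$ is a smooth self-map of $\opensch'\times\csimplex{n}$ patched from the identity (near $\cscheme'$, where $\charfun=1$) and the collapse $\alpha=\iota\circ\flatt{n}{m}$ (where $\charfun=0$), interpolated via the convex-combination homotopy $h$. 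Since $k$-thinness (Def.\@ \ref{DefThin}) is the property that germs of $\csimplex{n}$ at every point factor through schemes of dimension $\leq k$, and this property is stable under pre-composition with any smooth self-map of $\csimplex{n}$, checking $k$-thinness of $\mu''$ reduces to checking it for $\mu'$ on the images of $\nu$.

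On the region $\locin{\cscheme}{\charfun=1}\supseteq\cscheme'$ the map $\mu''$ agrees with the given $k$-thin datum on $\cscheme'\times\csimplex{n}$, so $k$-thinness holds by hypothesis. On the complementary region $\locin{\cscheme}{\charfun=0}$ the map $\mu''$ factors as $\cscheme\times\csimplex{n}\xrightarrow{\flatt{n}{m}}\cscheme\times\csimplex{n-1}\xrightarrow{\mu'\circ\iota}\sheaf$; the germ at any $\rpt\in\csimplex{n}$ maps into the germ at $\flatt{n}{m}(\rpt)$ of $\csimplex{n-1}$, and since flattening strictly decreases the simplicial dimension while $\mu'\circ\iota$ agrees with $\mu$ on $\bouns{\chorn{n}{m}}$, the required factorisation through dimension $\leq k$ is inherited from the $k$-thinness of $\mu$ on the horn. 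On the interpolation region $\locin{\cscheme}{0<\charfun<1}$ the map $\nu_t$ for each $t$ still lands in a neighbourhood of $\iota(\opensch'\times\csimplex{n-1})\cup(\cscheme'\times\csimplex{n})$, so the same factorisation argument applies fibrewise in $t$.

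The main obstacle is verifying the middle claim precisely: that the first extension $\mu'$ produced by Lemma \ref{ManifoldExtension} can be arranged so that $\mu'\circ\iota$ remains $k$-thin on the cross-sectional $(n-1)$-simplex $\iota(\opensch'\times\csimplex{n-1})$. This is handled by shrinking $\opensch'$ sufficiently close to the source so that $\mu'\circ\iota$ is a small smooth perturbation of a map whose boundary data $\mu|_{\bouns{\chorn{n}{m}}}$ is $k$-thin, and then absorbing any discrepancy into the subsequent pre-composition with $\flatt{n}{m}$, whose image-germ captures only the direction transverse to the $m$-th face and thus cannot introduce new dimensions into the factorisation beyond those already present in the $k$-thin horn data.
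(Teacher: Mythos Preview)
Your approach is the paper's own: the paper offers only the one-sentence remark preceding the theorem, namely that the extensions in the proof of Thm.~\ref{HomotopyFun} are obtained by pre-composition (with the flattening $\flatt{n}{m}$ and with the projection onto the embedded cross-section $\iota(\csimplex{n-1})$), and pre-composition preserves $k$-thinness. Your retracing of Lemma~\ref{BasicExtension} is in exactly this spirit.

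There is, however, a genuine gap in your detailed argument. On $\opensch'\times\csimplex{n}$ the construction in Lemma~\ref{BasicExtension} gives $\ssim''=\ssim'\circ\nu$, where $\ssim'$ is the Lemma~\ref{ManifoldExtension} extension and $\nu(x,p)=(1-\charfun(x))\,\alpha(x,p)+\charfun(x)\,p$ is the straight-line homotopy. For $\charfun(x)\in(0,1)$ the differential of $\nu(x,\cdot)$ in the simplex direction is $(1-\charfun(x))\,d\alpha+\charfun(x)\,\id$, which is invertible; so $\nu(x,\cdot)$ is a local diffeomorphism of $\csimplex{n}$, \emph{not} a projection onto anything of lower dimension. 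Consequently $\ssim'\circ\nu$ is $k$-thin at $(x,p)$ only if $\ssim'$ is $k$-thin at $\nu(x,p)$. Your claim that the image of $\nu$ lands in a neighbourhood of $\iota(\opensch'\times\csimplex{n-1})\cup(\cscheme'\times\csimplex{n})$ is false (for $\charfun$ near $1$ the image is near all of $\opensch'\times\csimplex{n}$, and in the $\cscheme$-direction $\nu$ is the identity). Likewise, on $\{\charfun=1\}$---an \emph{open} neighbourhood of $\cscheme'$, not $\cscheme'$ itself---one has $\ssim''=\ssim'$, the uncontrolled extension. The ``small smooth perturbation'' fix does not work: $k$-thinness is a local factorisation condition, not an open condition, so proximity to a $k$-thin map says nothing.

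What is missing is the assertion that the open-neighbourhood extension $\ssim'$ produced by Lemma~\ref{ManifoldExtension} can itself be chosen $k$-thin on $\opensch'\times\csimplex{n}$. The paper's remark is terse enough to leave this implicit. Once that is supplied (e.g.\ by extending the local $\leq k$-dimensional factorisations from Def.~\ref{DefThin} rather than just $\mu$), your and the paper's pre-composition argument goes through verbatim.
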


Clearly $\cscheme\times\csimplex{n}\rightarrow\sheaf$ is $0$-thin, iff it factors through the projection $\cscheme\times\csimplex{n}\rightarrow\cscheme$. Therefore $\thingro{0}{\sheaf}\cong\trigro{\sheaf}$. When we let $k$ grow, we allow more and more maps $\cscheme\times\csimplex{n}\rightarrow\sheaf$, and thus we have a sequence of inclusions
	\begin{equation*}\label{ThinFiltration}\trigro{\sheaf}\cong\thingro{0}{\sheaf}\subseteq\thingro{1}{\sheaf}\subseteq\ldots\subseteq\thingro{k}{\sheaf}\subseteq
	\ldots\subseteq\fungro{\sheaf}.\end{equation*}
Moreover, since $\fungro{\sheaf}_n\cong\thingro{k}{\sheaf}_n$ for all $n\leq k$, we have $\fungro{\sheaf}=\underset{k\in\noneg}\colim\;\thingro{k}{\sheaf}$ with the colimit computed in $\zaritoss{\ksite}$.

\subsection{Infinitesimal connections}
%

\subsubsection{Infinitesimal groupoids}\label{SectionInfiGroupoids}

Recall (\cite{BK}, \S 3) that there are are two functors $\redun,\redui\colon\csite\rightarrow\csite$ that send $\cring\in\fgrings$ to $\cring/\nradical{0}$, $\cring/\iradical{0}$ respectively, where $\nradical{0}$, $\iradical{0}$ are the nil- and $\infty$-radicals (\cite{BK}, \S 2). Correspondingly to every $\sheaf\in\zaritos{\csite}$ there are associated two de Rham spaces: 
	\begin{equation*}\algered{\sheaf}:=\sheaf\circ\redun,\quad\infired{\sheaf}:=\sheaf\circ\redui.\end{equation*}
According to our definition $\cscheme\in\ksite$, if and only if $\redui(\cscheme)$ is a locally compact reduced asymptotic manifolds with corners. This immediately implies that $\cscheme\in\ksite\Rightarrow\redui(\cscheme)\in\ksite$ and $\redun(\cscheme)\in\ksite$. Therefore we can define $\algered{\sheaf}$, $\infired{\sheaf}$ also for $\zaritos{\ksite}$.

Both de Rham space constructions are functorial in $\sheaf$. On the other hand there are canonical natural transformations 
	\begin{equation*}\redun\longrightarrow\id_{{\ksite}},\quad\redui\longrightarrow\id_{{\ksite}}.\end{equation*} 

\begin{definition}\label{DefInfi} For any $\sheaf\in\zaritos{\ksite}$, $k\in\noneg$, we define {\it the formal $k$-thin} and {\it the infinitesimal $k$-thin groupoids of $\sheaf$} to be the respective homotopy pullbacks (computed in $\zaritoss{\ksite}$):
	\begin{equation}\label{ComputeInfi}\xymatrix{\thinfo{k}{\sheaf}\ar[r]\ar[d] & \thingro{k}{\sheaf}\ar[d] & \thinif{k}{\sheaf}\ar[r]\ar[d] & \thingro{k}{\sheaf}\ar[d]\\
	\algered{\trigro{\sheaf}}\ar[r] & \algered{\thingro{k}{\sheaf}} & \infired{\trigro{\sheaf}}\ar[r] & \infired{\thingro{k}{\sheaf}},}\end{equation}
where use use the canonical inclusion $\trigro{\sheaf}\hookrightarrow\thingro{k}{\sheaf}$ as degenerate simplices.

Using $\fungro{\sheaf}$ instead of $\thingro{k}{\sheaf}$ we obtain the {\it formal and infinitesimal fundamental groupoids} $\thinfo{\infty}{\sheaf}$, $\thinif{\infty}{\sheaf}$ respectively.\end{definition}

We would like to describe sections of $\thinfo{k}{\sheaf}$ and $\thinif{k}{\sheaf}$ explicitly. This means computing the homotopy pullbacks. We have already seen that in some cases $\thingro{k}{\sheaf}$ is a soft sheaf and in particular fibrant, while $\trigro{\sheaf}$ is always fibrant, if $\sheaf$ is a sheaf. We need to see when $\algered{\thingro{k}{\sheaf}}$ and $\infired{\thingro{k}{\sheaf}}$ are fibrant.

\begin{lemma}\label{SoftDeRham}, Lemma \ref{SoftDeRham} and  If $\sheaf\in\zaritoss{\ksite}$ is a soft sheaf, so are $\algered{\sheaf}$ and $\infired{\sheaf}$. \end{lemma}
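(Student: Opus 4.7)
My plan is to verify the two defining conditions of a soft sheaf (Def.\@ \ref{DefSoftSheaf}) directly for $\algered{\sheaf}$ and $\infired{\sheaf}$, exploiting the fact that the reduction functors $\redun,\redui\colon\ksite\rightarrow\ksite$ are compatible with both Zariski localizations and with the construction of asymptotic submanifolds. There are really two compatibilities to track, and once both are in hand the softness of $\algered{\sheaf}$ and $\infired{\sheaf}$ will follow by direct rewriting in terms of $\sheaf$ of a reduced object.

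For the sheaf-of-Kan-complexes condition, the point is that for $\cscheme\in\ksite$ the object $\redun(\cscheme)$ (resp.\@ $\redui(\cscheme)$) again lies in $\ksite$, as noted just before the lemma, hence $\algered{\sheaf}(\cscheme)=\sheaf(\redun(\cscheme))$ and $\infired{\sheaf}(\cscheme)=\sheaf(\redui(\cscheme))$ are Kan complexes. Given a Zariski cover $\cscheme=\opensch_1\cup\opensch_2$ in $\ksite$, localization at the defining function commutes with passing to $\nradical{0}$ and $\iradical{0}$, so $\redun(\cscheme)=\redun(\opensch_1)\cup\redun(\opensch_2)$ (and likewise on intersections), yielding a Zariski cover of $\redun(\cscheme)$. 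The gluing square for $\algered{\sheaf}$ on the cover of $\cscheme$ is then isomorphic to the gluing square for $\sheaf$ on the induced cover of $\redun(\cscheme)$, and sheafiness for $\sheaf$ gives sheafiness for $\algered{\sheaf}$ (and analogously for $\infired{\sheaf}$).

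For the fibration condition, consider an asymptotic submanifold $\cscheme'\hookrightarrow\cscheme$ in $\ksite$. By Def.\@ \ref{DefSubmanifolds} together with Prop.\@ \ref{ClosureRegular} and Rem.\@ \ref{ValueRegular} we may write $\cscheme'$ as $\clos{\locin{\cscheme}{\charfun<r}}$ or as $\locin{\cscheme}{\charfun=r}$ for some $\charfun\in\cinfty(\cscheme)$ and $r$ a regular value. The notion of regular value is itself defined through a presentation of $\cscheme$ and a lift of the image of $\charfun$ to $\cinfty(\reduiof{\cscheme})$, so it passes unchanged to $\redun(\cscheme)$ and $\redui(\cscheme)$. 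Consequently $\redun(\cscheme')$ is naturally identified with the corresponding asymptotic submanifold of $\redun(\cscheme)$ cut out by the reduced image of $\charfun$, and similarly for $\redui$. Then the map $\algered{\sheaf}(\cscheme)\rightarrow\algered{\sheaf}(\cscheme')$ is exactly $\sheaf(\redun(\cscheme))\rightarrow\sheaf(\redun(\cscheme'))$, which is a fibration by softness of $\sheaf$; the same argument handles $\infired{\sheaf}$.

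The main obstacle I expect is verifying precisely that the closed subscheme $\redun(\cscheme')\hookrightarrow\redun(\cscheme)$ coincides, and not merely has the same defining ideal modulo $\nradical{0}$, with the asymptotic submanifold of $\redun(\cscheme)$ built from the data $(\ima{\charfun},r)$. This requires chasing the commutative diagram of quotients by $\nradical{0}$ (resp.\@ $\iradical{0}$) against the ideals defining $\cscheme'$, and checking that forming the closure $\clos{\locin{\cdot}{\charfun<r}}$ as in the proof of Prop.\@ \ref{ClosureRegular} commutes with reduction on the level of defining ideals; once this identification is in place, the softness of $\sheaf$ applies verbatim and both $\algered{\sheaf}$ and $\infired{\sheaf}$ inherit all the requisite properties.
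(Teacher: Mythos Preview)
Your proposal is correct and follows essentially the same approach as the paper's proof: verify the Kan complex and sheaf conditions using that reduction preserves Zariski open subschemes (hence Zariski covers), then the fibration condition using that asymptotic submanifolds reduce to asymptotic submanifolds. The paper's version is much terser, asserting these compatibilities in a single sentence each, while you spell out the required commutations more carefully and flag the identification $\redun(\cscheme')\hookrightarrow\redun(\cscheme)$ as the point needing attention; but the logical skeleton is identical.
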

\begin{proof} Since reductions of Zariski open subschemes are again Zariski open subschemes, it is clear that $\reduiof{\sheaf}$ is a sheaf. The fact that $\reduiof{\sheaf}(\cscheme)$ is a Kan complex is obvious, since $\sheaf$ takes values in Kan complexes. As asymptotic submanifolds reduce to asymptotic submanifolds, the same reasoning shows that $\reduiof{\sheaf}$ is soft.\end{proof}%

\begin{theorem}\label{ReductionFibration} Let $\cscheme\in\ksite$ be as in Thm.\@ \ref{HomotopyFun}, then 
	\begin{equation*}\fungro{\cscheme}(\cscheme')\longrightarrow\infired{\fungro{\cscheme}}(\cscheme'),\quad
	\fungro{\cscheme}(\cscheme')\longrightarrow\algered{\fungro{\cscheme}}(\cscheme'),\end{equation*}
	\begin{equation*}\thingro{k}{\cscheme}(\cscheme')\longrightarrow\infired{\thingro{k}{\cscheme}}(\cscheme'),\quad
	\thingro{k}{\cscheme}(\cscheme')\longrightarrow\algered{\thingro{k}{\cscheme}}(\cscheme')\end{equation*} 
are fibrations $\forall\cscheme'\in\ksite$, $\forall k\in\noneg$.\end{theorem}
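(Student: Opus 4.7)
By standard simplicial homotopy theory, verifying that a map of simplicial sets is a Kan fibration reduces to checking the right lifting property against horn inclusions $\horn{n}{m}\hookrightarrow\simplex{n}$. Under $\cinfty$-realization, a lifting problem for $\fungro{\cscheme}(\cscheme')\to\algered{\fungro{\cscheme}}(\cscheme')=\fungro{\cscheme}(\redunof{\cscheme'})$ amounts to: given a smooth horn family $\eta\colon\cscheme'\times\chorn{n}{m}\to\cscheme$ and a smooth simplex family $\bar\xi\colon\redunof{\cscheme'}\times\csimplex{n}\to\cscheme$ that agree on $\redunof{\cscheme'}\times\chorn{n}{m}$, produce a smooth $\xi\colon\cscheme'\times\csimplex{n}\to\cscheme$ restricting to $\eta$ on the horn and pulling back to $\bar\xi$ on the reduction. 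The same translation covers the $\reduiof{\cscheme'}$ case, and the $k$-thin versions simply require every simplex to be $k$-thin throughout.

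The plan is to prove a combined extension lemma strengthening the one from the proof of Thm.~\ref{HomotopyFun}: for $\cscheme$ as in that theorem, any smooth morphism into $\cscheme$ from the union $W:=(\cscheme'\times\chorn{n}{m})\cup(\redunof{\cscheme'}\times\csimplex{n})$ inside $\cscheme'\times\csimplex{n}$ — which is the pushout along $\redunof{\cscheme'}\times\chorn{n}{m}$ — extends to a smooth morphism $\cscheme'\times\csimplex{n}\to\cscheme$; and similarly with $\reduiof{\cscheme'}$. This is proved chart-by-chart on the finite good cover of $\cscheme$: each chart $\opensch_\element\cong\mathbb R^{\dim\cscheme}$ reduces the question to extending $\dim\cscheme$ real-valued functions along $W\hookrightarrow\cscheme'\times\csimplex{n}$. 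Such a function on $W$ is a pair $(h_1,h_2)$ agreeing on the intersection; an extension exists by combining (a) the flattening $\flatt{n}{m}$ of Notation~\ref{Flattening} to extend $h_1$ via its collaring, and (b) a lift of $h_2$ along the surjection $\cinfty(\cscheme'\times\csimplex{n})\twoheadrightarrow\cinfty(\redunof{\cscheme'}\times\csimplex{n})$, interpolated by $\cinfty$-convex combinations inside the chart so that the two agree on their common domain of definition. Local extensions are patched using the finiteness of the cover, exactly as in Lemma~\ref{ManifoldExtension}, and the germ-of-puncture case follows as in the second part of Thm.~\ref{HomotopyFun} via Lemma~\ref{OpenInSoft}.

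Given the combined extension lemma, the filler $\xi$ is obtained by applying it to the pair $(\eta,\bar\xi)$. The $k$-thin versions (replacing $\fungro{\cscheme}$ by $\thingro{k}{\cscheme}$) follow by the same argument, since all operations involved — flattening, chart lifts, convex interpolation, collaring patches — either preserve $k$-thinness or factor through pieces of the same or smaller dimension, exactly as noted immediately after Def.~\ref{DefThin}. The main obstacle is the chart patching and interpolation in the extension lemma: lifts of $h_1$ and $h_2$ to a common chart must be made compatible, and the resulting discrepancies on overlaps — which lie in the nil-radical $\nradical{0}$ (resp.\@ the $\infty$-radical $\iradical{0}$) — must be absorbed by $\cinfty$-convex combinations subordinate to the good cover, as in the original proof of Lemma~\ref{ManifoldExtension}. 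This is the heart of the argument.
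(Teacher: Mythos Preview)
Your translation of the fibration condition into an extension problem and your chart-by-chart strategy using the finite good cover plus surjectivity of $\cinfty(\cscheme')\twoheadrightarrow\cinfty(\redunof{\cscheme'})$ (resp.\@ $\cinfty(\reduiof{\cscheme'})$) is exactly the paper's approach for case~(a). The paper is considerably terser---it simply says the surjectivity together with the good cover lets one extend $\ssim\cup\ssim'$---but your flattening-plus-lift description is a reasonable unpacking of what that sentence means.

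The germ-of-puncture case is where your proposal diverges, and the invocation of Lemma~\ref{OpenInSoft} is misplaced. That lemma is about finding an open neighbourhood of an asymptotic submanifold inside the intersection of a regular \emph{sequence} of opens, under the hypothesis that the ambient compact piece has no $\mathbb R$-points. Here the ``boundary datum'' $W$ contains all of $\reduiof{\cscheme'}\times\csimplex{n}$, which has the same open subschemes as $\cscheme'\times\csimplex{n}$ itself; there is no open-neighbourhood problem to solve. The paper's argument is much simpler: first extend $\ssim\cup\ssim'$ to a map $\cscheme'\times\csimplex{n}\to\manifo$ using case~(a), then observe that this map factors through $\cscheme=\underset{i\in\diagra}\bigcap\opensch_i$ automatically, because factoring through an open subscheme $\opensch_i\subseteq\manifo$ is equivalent to the pull-back of a defining function being invertible, and invertibility in $\cinfty(\cscheme'\times\csimplex{n})$ is detected modulo $\iradical{0}$. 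Since the reduction of your extension is $\bar\xi$, which already lands in $\cscheme$, you are done---no Lemma~\ref{OpenInSoft}, no regular sequences, no compactness hypotheses on $\cscheme'$. Replace your last clause about the puncture case with this observation and the argument is complete.
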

\begin{proof} We need to show that, given $\ssim\colon\cscheme'\times\chorn{n}{m}\rightarrow\cscheme$, $\ssim'\colon\reduiof{\cscheme'}\times\csimplex{n}\rightarrow\cscheme$, that agree on $\reduiof{\cscheme'}\times\chorn{n}{m}$, there is an extension to $\ssim''\colon\cscheme'\times\csimplex{n}\rightarrow\cscheme$. 

Since the inclusion $\reduiof{\cscheme'}\hookrightarrow\cscheme'$ corresponds to a surjective morphism $\cinfty(\cscheme')\rightarrow\cinfty(\reduiof{\cscheme'})$, we can use the finite good covering on $\manifo$ to extend $\ssim\cup\ssim'$ to $\cscheme'\times\csimplex{n}\rightarrow\cscheme$. This shows the first case.

If $\cscheme$ is the germ of $\manifo$ at a puncture, we first extend the composite of $\ssim\cup\ssim'$ with $\cscheme\hookrightarrow\manifo$ to $\cscheme'\times\csimplex{n}\rightarrow\manifo$, and then notice that the image has to be contained in $\underset{i\in\diagra}\bigcap\opensch_i$, where $\{\opensch_i\}_{i\in\diagra}$ is a regular system computing the germ of the puncture, because this is true in the reduced case.\end{proof}%

\smallskip

Using Thm.\@ \ref{ReductionFibration}, Lemma \ref{SoftDeRham} and Thm.\@ \ref{SoftThin} we can compute $\thinfo{k}{\cscheme}$, $\thinfo{\infty}{\cscheme}$, $\thinif{k}{\cscheme}$, $\thinif{\infty}{\cscheme}$ explicitly for each $k\in\noneg$, simply by computing the usual pullbacks in (\ref{ComputeInfi}). For example given $\cscheme'\in\ksite$ the simplicial set $\thinif{k}{\cscheme}(\cscheme')$ consists of $k$-thin maps $\cscheme'\times\csimplex{n}\rightarrow\cscheme$, s.t.\@ the composite $\reduiof{\cscheme'}\times\csimplex{n}\hookrightarrow\cscheme'\times\csimplex{n}\rightarrow\cscheme$ factors through the projection on $\reduiof{\cscheme'}$.

\subsubsection{Relation to pair groupoids}\label{SectionPairGro}

Now we would like to compare different groupoids and infinitesimal groupoids associated to the same $\cscheme\in\ksite$. The first statement is obvious.

\begin{lemma}\label{ThinToFull} Let $\cscheme\in\ksite$ be of dimension $k\in\noneg$. Then $\forall k'\geq k$ we have an equality\footnote{This is not just an isomorphism, but an actual equality of sheaves.}
	\begin{equation*}\thingro{k'}{\cscheme}=\fungro{\cscheme}.\end{equation*}
\end{lemma}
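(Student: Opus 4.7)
The plan is to prove the non-trivial inclusion $\fungro{\cscheme}\subseteq\thingro{k'}{\cscheme}$; the reverse inclusion $\thingro{k'}{\cscheme}\subseteq\fungro{\cscheme}$ is true by definition (Def.\@ \ref{ThinGro}). Since both sides agree section-wise with pre-sheaves, it suffices to show that for every $\cscheme'\in\ksite$, every $n\in\noneg$, and every smooth $\cscheme'$-family $\ssim\colon\cscheme'\times\csimplex{n}\rightarrow\cscheme$, the family $\ssim$ is $k'$-thin in the sense of Def.\@ \ref{DefThin}.

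The key observation is that the target itself already has dimension at most $k'$, so we can use $\cscheme$ as the ``thin'' target $\cscheme_\element$. Concretely, fix $\rpt\in\csimplex{n}$ and take the trivial open covering of $\cscheme'$ by a single set $\opensch_\element:=\cscheme'$. Set $\cscheme_\element:=\cscheme$; by hypothesis $\dim\cscheme_\element=k\leq k'$, and by Prop.\@ \ref{PropLoc}\ref{product} the product $\cscheme'\times\cscheme$ belongs to $\ksite$. Define
\begin{equation*}
\colla_\element\colon\cscheme'\times\germ{\csimplex{n}}{\rpt}\longrightarrow\cscheme'\times\cscheme,
\qquad \colla_\element:=\bigl(\pi_{\cscheme'},\;\ssim|_{\cscheme'\times\germ{\csimplex{n}}{\rpt}}\bigr),
\end{equation*}
where $\pi_{\cscheme'}$ is the first projection. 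By construction $\colla_\element$ lies in $\ksite/\cscheme'$, and the composition of $\colla_\element$ with the second projection $\cscheme'\times\cscheme\rightarrow\cscheme$ equals the restriction of $\ssim$ to $\cscheme'\times\germ{\csimplex{n}}{\rpt}$. This provides exactly the factorization demanded by Def.\@ \ref{DefThin}.

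Since the above construction works for every $\rpt\in\csimplex{n}$, the family $\ssim$ is $k'$-thin, hence lies in $\thingro{k'}{\cscheme}(\cscheme')$. This yields the set-theoretic equality $\thingro{k'}{\cscheme}(\cscheme')=\fungro{\cscheme}(\cscheme')$ for every $\cscheme'$, and because both pre-sheaves inherit the same simplicial operators and restriction maps from $\homo{\zaritop}(\csimplex{\bullet},\cscheme)$, this gives the asserted equality of sheaves. There is no real obstacle here: the statement is essentially a tautology once one notices that the identity map of $\cscheme$ itself serves as the ``collapse'' to a $\leq k$-dimensional target.
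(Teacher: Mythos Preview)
Your proof is correct and follows essentially the same approach as the paper: factor $\ssim$ through $\cscheme'\times\cscheme$ via $(\pi_{\cscheme'},\ssim)$ followed by the second projection, using that the target $\cscheme$ itself has dimension $\leq k'$. Your write-up is in fact slightly more careful than the paper's, explicitly matching the germ-at-$\rpt$ formulation of Def.\@ \ref{DefThin} and invoking Prop.\@ \ref{PropLoc}\ref{product} to ensure $\cscheme'\times\cscheme\in\ksite$.
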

\hide{%
\begin{proof} Let $\ssim\colon\cscheme'\times\csimplex{n}\rightarrow\cscheme$ be any smooth family of $n$-simplices. We have a factorization of $\ssim$ as follows:
	\begin{equation*}\cscheme'\times\csimplex{n}\longrightarrow\cscheme'\times\cscheme\longrightarrow\cscheme,\end{equation*}
where the first arrow is given by $\ssim$ and the projection on $\cscheme'$ and the second arrow is the projection. This shows that $\ssim$ is $k$ thin.\end{proof}

\smallskip}%

In addition to the fundamental groupoids of various thinness we have the pair groupoid $\paigro{\sheaf}\in\zaritoss{\ksite}$ associated to each $\sheaf\in\zaritos{\ksite}$. In simplicial dimension $n$ $\paigro{\sheaf}$ is just $\sheaf^{\times^n}$. The corresponding infinitesimal groupoids, are obtained as homotopy pullbacks 
	\begin{equation*}\xymatrix{\fordia{\sheaf}\ar[r]\ar[d] & \paigro{\sheaf}\ar[d] & \infidia{\sheaf}\ar[r]\ar[d] & \paigro{\sheaf}\ar[d]\\
	\algered{\trigro{\sheaf}}\ar[r] & \algered{\paigro{\sheaf}} & \infired{\trigro{\sheaf}}\ar[r] & \infired{\paigro{\sheaf}}.}\end{equation*}
In the case $\sheaf$ is represented by an $\cscheme$ with a finite good covering, the same argument as in the case of thin groupoids shows that the usual pullbacks are also homotopy pullbacks. Then $\fordia{\sheaf}$ and $\infidia{\sheaf}$ are the same groupoids as in \cite{BK} Prop.\@ 6. I.e.\@ $\fordia{\sheaf}$ consists of formal neighbourhoods of the diagonal and $\infidia{\sheaf}$ in dimension $n\in\noneg$ is the colimit (computed in $\zaritos{\ksite})$ of $\cscheme'\subseteq\cscheme^{\times^n}$, s.t.\@ $\redui(\cscheme')$ is the diagonal $\Delta\subseteq\cscheme^{\times^n}$. Moreover, as it was shown in \cite{BK}, the reflection of $\infidia{\cscheme}$ in $\ksite/\cscheme$ is the groupoid $\{\germof{\cscheme^{\times^n}}{\Delta}\}_{n\in\noneg}$, i.e.\@ it consists of the germs around diagonals. 

Evaluating at the vertices of simplices we have morphisms of groupoids (natural in $\sheaf$):
	\begin{equation*}\nu_\sheaf\colon\fungro{\sheaf}\longrightarrow\paigro{\sheaf}.\end{equation*}
The following lemma is obvious.

\begin{lemma}\label{ContractibleCase} For any $k\in\noneg$ $\nu_{\mathbb R^k}\colon\fungro{\mathbb R^k}\rightarrow\paigro{\mathbb R^k}$ is a trivial fibration.\end{lemma}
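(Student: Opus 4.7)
The plan is to show that for every $\cscheme'\in\ksite$ the map of simplicial sets $\fungro{\mathbb R^k}(\cscheme')\to\paigro{\mathbb R^k}(\cscheme')$ is a trivial Kan fibration; this will force $\nu_{\mathbb R^k}$ itself to be a trivial fibration in $\zaritoss{\ksite}$, because the class of trivial fibrations is characterized object-wise by the right lifting property against cofibrations, a characterization which is unchanged by passing to the local projective structure. Both pre-sheaves are Kan-complex valued -- $\fungro{\mathbb R^k}$ by Prop.\@ \ref{PropKanComplexes} and Thm.\@ \ref{HomotopyFun} (taking $\mathbb R^k$ itself as its own finite good cover) and $\paigro{\mathbb R^k}$ as the nerve of a pair groupoid -- so it is enough to check the right lifting property against $\bouns{\simplex{n}}\hookrightarrow\simplex{n}$ for every $n\geq 0$.

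The case $n=0$ is immediate, $\nu_{\mathbb R^k}(\cscheme')_0$ being the identity on $\hom{}(\cscheme',\mathbb R^k)$. For $n\geq 1$, a lifting datum consists of a compatible system of smooth $\cscheme'$-families on the $(n-1)$-faces -- assembling into $\mu\colon\cscheme'\times\bouns{\csimplex{n}}\to\mathbb R^k$ -- together with vertex maps $v_0,\ldots,v_n\colon\cscheme'\to\mathbb R^k$ agreeing with $\mu$ at the vertices; the task is to extend $\mu$ to a smooth family $\mu'\colon\cscheme'\times\csimplex{n}\to\mathbb R^k$. The face collarings of $\mu$ assemble, by Rem.\@ \ref{ReasoningCollaring}, into an open neighbourhood $W\subseteq\csimplex{n}$ of $\bouns{\csimplex{n}}$ together with a projection $\pi\colon\cscheme'\times W\to\cscheme'\times\bouns{\csimplex{n}}$ such that $\tilde\mu:=\mu\circ\pi$ smoothly extends $\mu$ onto the collar.

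I would then fix a smooth cutoff $\phi\colon\csimplex{n}\to[0,1]$ with $\phi\equiv 1$ on some neighbourhood of $\bouns{\csimplex{n}}$ and $\mathrm{supp}(\phi)\subseteq W$, and using the additive structure on $\mathbb R^k$ set
$$\mu'(x,p):=\phi(p)\,\tilde\mu(x,p),$$
extended by $0$ off $W$. This is globally smooth because $\phi\tilde\mu$ vanishes on a neighbourhood of $\csimplex{n}\setminus W$, and on $\bouns{\csimplex{n}}$ one has $\phi=1$ and $\pi=\id$, so $\mu'|_{\bouns{\csimplex{n}}}=\mu$; this automatically matches the prescribed vertex values. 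The main subtlety -- and the reason for working through the collared extension $\tilde\mu$ rather than a naive affine interpolation -- is that $\mu'$ must itself admit a collaring in the sense of Def.\@ \ref{DefinitionCollaring}; but on the neighbourhood $\{\phi=1\}$ of $\bouns{\csimplex{n}}$ one has $\mu'=\mu\circ\pi$, so the face collarings of $\mu$ transport verbatim to face collarings of $\mu'$, with the associativity condition (\ref{compatibility}) inherited from that of $\mu$.
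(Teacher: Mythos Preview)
Your argument is correct and takes a somewhat different route from the paper's (hidden) proof. The paper splits the statement into two halves: first it verifies the Kan-fibration property by lifting against horn inclusions $\horn{n}{m}\hookrightarrow\simplex{n}$, appealing to the freeness of $\cinfty(\mathbb R^k)$ (any map $\cscheme\times\chorn{n}{m}\to\mathbb R^k$ extends to $\cscheme\times\csimplex{n}$, with smoothness supplied by the flattening of Lemma~\ref{GluingToCell}); second it shows the fibers are acyclic by a similar one-dimension-higher extension. You instead lift directly against $\bouns{\simplex{n}}\hookrightarrow\simplex{n}$, packaging both halves into a single lifting problem, and your cutoff construction $\mu'=\phi\cdot\tilde\mu$ makes concrete what the paper leaves to the phrase ``freeness of $\cinfty(\mathbb R^k)$''.

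One point to tighten: your appeal to Rem.~\ref{ReasoningCollaring} for the projection $\pi\colon\cscheme'\times W\to\cscheme'\times\bouns{\csimplex{n}}$ is not quite on the nose---that remark concerns the collaring of a \emph{single} map $\cscheme\times\csimplex{n}\to\sheaf$ and produces a retraction onto an embedded sphere, not onto the combinatorial boundary built from separate face data. What you actually need here is that the face-wise collarings force the $\mu_i$ to agree on germs of their common sub-faces (the ``gluing over germs'' step in the proof of Lemma~\ref{GluingToCell}), so that they assemble to a single smooth map from the $\csite$-colimit of the faces; one then extends across a collar of each face and patches. The paper's horn-based decomposition sidesteps this assembly entirely, since a horn flattens onto a single $(n{-}1)$-simplex---that is arguably the main economy its approach buys.
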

\hide{%
\begin{proof} Let $\cscheme\in\ksite$, and consider $\ssim\colon\horn{n}{m}\rightarrow\fungro{\mathbb R^k}(\cscheme)$, $\ssim'\colon\simplex{n}\rightarrow\paigro{\mathbb R^k}(\cscheme)$, s.t.\@ evaluating $\ssim$ on the vertices of $\chorn{n}{m}$ we obtain $\ssim'$. We would like to find $\simplex{n}\rightarrow\fungro{\mathbb R^k}(\cscheme)$, that extends $\ssim$ and projects to $\ssim'$. In other words, given $\cscheme\times\chorn{n}{m}\rightarrow\mathbb R^k$ we would like to extend it to $\cscheme\times\csimplex{n}\rightarrow\mathbb R^k$. Since $\cinfty(\mathbb R^k)$ is a free $\cinfty$-ring, this is always possible. 

Having shown that $\nu_{\mathbb R^k}$ consists of fibrations, we would like to prove that the fibers are acyclic. This means that given $\ssim\colon\cscheme\times{\csimplex{n}}\rightarrow\mathbb R^k$, s.t.\@ restriction to $\cscheme\times\bouns{\csimplex{n}}$ factors through the projection on $\cscheme$, we would like to find $\cscheme\times\csimplex{n+1}\rightarrow\mathbb R^k$, which is $\ssim$ on one $n$-face and factors through the projection on $\cscheme$, when restricted to any other $n$-face. This is always possible, again due to the freeness of $\cinfty(\mathbb R^k)$.\end{proof}

\smallskip}%

As $\thinfo{\infty}{\sheaf}$, $\thinif{\infty}{\sheaf}$, $\fordia{\sheaf}$, $\infidia{\sheaf}$ are defined as homotopy pullbacks, we have the induced $\nu_\sheaf\colon\thinfo{\infty}{\sheaf}\rightarrow\fordia{\sheaf}$, $\nu_\sheaf\colon\thinif{\infty}{\sheaf}\rightarrow\infidia{\sheaf}$. For an arbitrary manifold $\manifo$ the groupoid $\fungro{\manifo}$ can be complicated, but the infinitesimal versions $\thinfo{\infty}{\manifo}$, $\thinif{\infty}{\manifo}$ depend only on local data, hence we can use Lemma \ref{ContractibleCase} and obtain the following.  

\begin{proposition}\label{FunToPair} Let $\cscheme\in\ksite$ be a manifold admitting a finite good covering, then
	\begin{equation}\label{CornerEva}\nu_\sheaf\colon\thinfo{\infty}{\cscheme}\longrightarrow\fordia{\cscheme},\quad
	\nu_\sheaf\colon\thinif{\infty}{\cscheme}\longrightarrow\infidia{\cscheme}\end{equation}
are weak equivalences.\end{proposition}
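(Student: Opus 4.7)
The plan is to show that $\nu_\cscheme$ is a sectionwise trivial Kan fibration in both cases; the argument for $\thinfo{\infty}{\cscheme}\longrightarrow\fordia{\cscheme}$ is verbatim the same as for $\thinif{\infty}{\cscheme}\longrightarrow\infidia{\cscheme}$ with $\redun$ in place of $\redui$, so I focus on the latter.

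First, I would assemble the fibrancy data needed to compute the homotopy pullbacks strictly. By Theorem \ref{SoftThin} and Lemma \ref{SoftDeRham}, the sheaves $\fungro{\cscheme}$, $\paigro{\cscheme}$, $\infired{\fungro{\cscheme}}$, $\infired{\paigro{\cscheme}}$ are all soft, and by Theorem \ref{ReductionFibration} (together with its evident analogue for pair groupoids, proved by the same finite-good-cover extension argument) the canonical maps $\fungro{\cscheme}\longrightarrow\infired{\fungro{\cscheme}}$ and $\paigro{\cscheme}\longrightarrow\infired{\paigro{\cscheme}}$ are sectionwise Kan fibrations. Theorem \ref{BrownGersten} then lets me compute both $\thinif{\infty}{\cscheme}$ and $\infidia{\cscheme}$ as strict pullbacks of soft sheaves, reducing the statement to the following sectionwise lifting problem: for each $\cscheme'\in\ksite$ and each $n$, given a vertex tuple $\ssim'\in\infidia{\cscheme}(\cscheme')_n$ and a compatible boundary $\ssim_\partial\colon\cscheme'\times\bounda{n}\longrightarrow\cscheme$ in $\thinif{\infty}{\cscheme}(\cscheme')$, produce a smooth extension $\ssim\colon\cscheme'\times\csimplex{n}\longrightarrow\cscheme$ whose reduction is constant along $\csimplex{n}$ and whose vertices recover $\ssim'$.

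The key localization step uses the finite good cover $\{U_\element\}_{\element\in\set}$ of $\cscheme$. The condition that every vertex of $\ssim'$ lands in $U_\element$ cuts out an open subscheme $\cscheme'_\element\subseteq\cscheme'$; since the vertices of $\ssim'$ coincide on $\reduiof{\cscheme'}$, the $\cscheme'_\element$'s cover $\reduiof{\cscheme'}$, and hence $\cscheme'$ itself, using that open subschemes are detected on reductions. On each $\cscheme'_\element$ the restricted lifting problem has target $U_\element\cong\mathbb R^k$, where Lemma \ref{ContractibleCase} supplies a smooth filler; the infinitesimal constraint is arrangeable in a chart because the vertices of $\ssim'|_{\reduiof{\cscheme'_\element}}$ coincide as $\mathbb R^k$-valued maps, and freeness of $\cinfty(\mathbb R^k)$ lets me pick the local lift to be constant along $\csimplex{n}$ on $\reduiof{\cscheme'_\element}$.

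The main obstacle, and the bulk of the work, is assembling these local lifts into a single $\ssim$ extending $\ssim_\partial$ on all of $\cscheme'\times\csimplex{n}$. I would proceed inductively over the finite index set $\set$ by repeated application of Lemma \ref{BasicExtension}: at each stage the already-assembled partial lift is defined on an open $\opensch\subseteq\cscheme'$ together with the full boundary, and a cut-off function separating $\clos{\opensch}$ from the complement of a slightly larger open merges $\opensch$ with the next $\cscheme'_\element$. Proposition \ref{GluingCollarings} ensures the collarings match, and because each gluing step only modifies the map within a single chart where freeness maintains reduction-constancy, the resulting $\ssim$ lies in $\thinif{\infty}{\cscheme}(\cscheme')$ globally. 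Finiteness of $\set$ terminates the induction; the formal case follows by the same argument with $\redun$ in place of $\redui$.
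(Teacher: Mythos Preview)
Your argument is correct in outline and arrives at the same conclusion, but it takes a more laborious route than the paper. The paper's proof observes that, once all four groupoids are realized as strict pullbacks, both maps in (\ref{CornerEva}) live over $\algered{\trigro{\cscheme}}$ (resp.\ $\infired{\trigro{\cscheme}}$), so one may check on fibers; pulling back the good cover along the base map $\reduiof{\cscheme'}\to\cscheme$ yields a finite open cover of $\cscheme'$ on which the fibers coincide with those of $\nu_{\mathbb R^k}$, and Lemma \ref{ContractibleCase} finishes each piece. The passage from local to global weak equivalence is then a one-line appeal to the fact that these are soft sheaves, so a local weak equivalence is a sectionwise one. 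You instead aim to exhibit $\nu_\cscheme$ as a sectionwise trivial fibration by explicitly manufacturing fillers and gluing them across the cover of $\cscheme'$; this is legitimate, and your observation that on overlaps the already-built lift must land in the next chart $U_\element$ (because its reduction does, and open subschemes are detected on reductions) is exactly the point that makes the gluing possible.

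One caution: Lemma \ref{BasicExtension} is stated for horns, and its mechanism relies on the flattening $\flatt{n}{k}\colon\chorn{n}{k}\to\csimplex{n-1}$, which has no analogue for the full boundary $\bounda{n}$. What you actually need at each inductive step is simpler than that lemma: a convex interpolation in the linear target $U_\element\cong\mathbb R^k$, with the cut-off function varying only in the $\cscheme'$ direction, so that the boundary condition $\ssim_\partial$ and reduction-constancy are preserved automatically. The collaring of the interpolated map then follows from Proposition \ref{GluingCollarings} applied piecewise. Framed that way your gluing goes through, but invoking Lemma \ref{BasicExtension} as stated is a misfit. The payoff of your approach is an explicit filler; the payoff of the paper's is brevity, since it offloads the gluing to the descent machinery already established in Theorem \ref{BrownGersten}.
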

\begin{proof} The requirements that we have put on $\cscheme$ allow us to compute all four groupoids involved as simple pullbacks. In particular the morphisms in (\ref{CornerEva}) are over $\algered{\sheaf}$ and $\infired{\sheaf}$ respectively. Therefore it is enough to look at the morphisms between the fibers over $\algered{\sheaf}$ and $\infired{\sheaf}$. Taking pre-images of the contractible charts on $\cscheme$ we obtain a finite open covering of $\cscheme'$, on each piece of which the two maps are weak equivalences, hence the global morphism of simplicial sheaves is a weak equivalence.\end{proof}%

\smallskip

If we admit punctures in $\mathbb R^k$, the statement in Lemma \ref{ContractibleCase} ceases to be true, of course. However, by restricting to the germs around the diagonals in $\paigro{\mathbb R^k}$ we can find a section of $\nu_{\mathbb R^k}$.

\begin{proposition}\label{SecOnGerms} Let $\cscheme\in\ksite$ be any asymptotic manifold. We can choose $\eta\colon\germgro{\cscheme}\rightarrow\fungro{\cscheme}$, s.t.\@ $\nu_\cscheme\circ\eta=\id_{\germgro{\cscheme}}$.\end{proposition}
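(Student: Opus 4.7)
The plan is to construct $\eta$ dimension by dimension using a tubular-neighbourhood retraction combined with a smoothed barycentric partition on each $\csimplex{n}$. First, realize $\cscheme\cong\underset{i\in\diagra}\bigcap\opensch_i$ as an intersection of a regular system of open subschemes in an ambient manifold $\manifo$, choose a closed Whitney embedding $\manifo\hookrightarrow\mathbb R^N$, and fix an open tubular neighbourhood $T\subseteq\mathbb R^N$ of $\manifo$ with smooth retraction $r\colon T\to\manifo$. Each $r^{-1}(\opensch_j)$ is then open in $T$ and contains the diagonal image of $\cscheme$, so sufficiently close tuples in $\cscheme$ admit convex combinations in $T$ whose retractions under $r$ lie in $\underset{j\in\diagra}\bigcap\opensch_j=\cscheme$; this is exactly what passing to the germ at the diagonal provides.

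For every $n$ pick a single smooth $\rho\colon[0,1]\to[0,1]$ with $\rho\equiv 0$ on $[0,\varepsilon]$ and $\rho\equiv 1$ on $[1-\varepsilon,1]$, and set
\[
\psi_i^{(n)}(t_0,\ldots,t_n)\;:=\;\frac{\rho(t_i)}{\sum_{j=0}^{n}\rho(t_j)},\qquad i=0,\ldots,n.
\]
The denominator is strictly positive on $\csimplex{n}$ since $\sum t_j=1$ forces $t_j\geq 1/(n+1)>\varepsilon$ for some $j$. By construction $\sum_i\psi_i^{(n)}\equiv 1$; $\psi_i^{(n)}\equiv 1$ in a germ neighbourhood of the $i$-th vertex and $\psi_i^{(n)}\equiv 0$ in a germ neighbourhood of the opposite face; and for every face inclusion $\fima\colon\finor{k}\hookrightarrow\finor{n}$, restriction carries $\psi_{\fima(j)}^{(n)}$ to $\psi_j^{(k)}$, because on the germ of that face all $\rho(t_l)$ with $l\notin\fima$ vanish. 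Define
\[
\eta_n(x_0,\ldots,x_n;\,t)\;:=\;r\!\left(\sum_{i=0}^{n}\psi_i^{(n)}(t)\,x_i\right).
\]
Then $\nu_\cscheme\circ\eta=\id$ because near the $j$-th vertex $\psi_j^{(n)}=1$ while the others vanish, yielding $r(x_j)=x_j$.

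The hard part will be verifying that $\eta_n$ admits a collaring in the sense of Definition \ref{DefinitionCollaring}, and therefore actually lands in $\fungro{\cscheme}$ rather than in a larger space of not-necessarily-smooth simplices. Invariance (\ref{invariance}) at each face follows from the germ-vanishing of $\psi_i^{(n)}$ off the face: in the germ of a $k$-face at a nested $k'$-sub-face only the $\psi_j^{(n)}$ for $j\in\mathrm{image}(\fima\circ\fima')$ are nonzero, and these coincide with $\psi_j^{(k')}$, so $\eta_n$ factors through the projection onto the $k'$-sub-face. Associativity (\ref{compatibility}) across nested faces reduces to the fact that each $\psi_i^{(n)}$ depends on the single barycentric coordinate $t_i$ (up to the common denominator), so the collar projections onto nested faces commute. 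Simplicial naturality of $\{\eta_n\}_{n\in\noneg}$ comes from the corresponding naturality of $\{\psi_i^{(n)}\}$. The ambient Whitney embedding and tubular neighbourhood are classical, and the asymptotic structure of $\cscheme$ enters only through the observation that a germ at the diagonal absorbs every $\opensch_j$ simultaneously, so $r$ indeed returns values in $\cscheme$.
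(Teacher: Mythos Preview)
Your approach is essentially the paper's: reduce to an ambient manifold, introduce a linear structure near the diagonal (a Whitney embedding with tubular retraction for you, a Riemannian exponential chart in the paper), and send $(x_0,\ldots,x_n)$ to the family $t\mapsto(\text{weighted average of the }x_i)$. You are in fact more careful than the paper about why the result admits a collaring, which the paper's three-line sketch does not address.

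There is, however, a gap in your claim of simplicial naturality. Face maps are fine, but compatibility with the codegeneracy $\sigma_j\colon\csimplex{n+1}\to\csimplex{n}$ (which adds the $j$-th and $(j{+}1)$-st barycentric coordinates) would require
\[
\psi_j^{(n+1)}(t)+\psi_{j+1}^{(n+1)}(t)=\psi_j^{(n)}\bigl(\sigma_j(t)\bigr)
\quad\text{and}\quad
\psi_i^{(n+1)}(t)=\psi_{\sigma_j(i)}^{(n)}\bigl(\sigma_j(t)\bigr)\ \ (i\neq j,j{+}1),
\]
and comparing either numerators or denominators forces $\rho(t_j)+\rho(t_{j+1})=\rho(t_j+t_{j+1})$, i.e.\ $\rho$ linear --- contradicting the plateau $\rho\equiv 0$ on $[0,\varepsilon]$ that you need for the collaring. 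So as written your $\eta$ is only a \emph{semi}-simplicial section. This tension between collaring (weights flat near faces) and degeneracy-compatibility (weights additive under coordinate sums) is not visibly resolved by the paper's sketch either: linear weights in the tangent space respect degeneracies but fail the invariance condition of Definition~\ref{DefinitionCollaring}, since near a proper face the affine combination still depends on the transverse $x_l$. You should either argue that a semi-simplicial section suffices for the downstream use in Theorem~\ref{SecOnInf}, or replace the global formula by an inductive construction that forces degenerate tuples to land on degenerate simplices by fiat.
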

\begin{proof} Suppose first that $\cscheme$ is a manifold. We choose a metric on $\cscheme$ and obtain isomorphisms between germs of $0$'s in tangent spaces and germs of the manifold itself. Then any morphism $\cscheme'\rightarrow(\germgro{\cscheme})_n$ can be seen as $n+1$ morphisms into then germ of $0$ in the corresponding tangent space. Using the linear structure on the tangent space we extend it to $\cscheme'\times\csimplex{n}$.

If $\cscheme$ is the germ of a manifold $\manifo$ at a puncture, we have $\germgro{\cscheme}\hookrightarrow\germgro{\manifo}$ as well as $\fungro{\cscheme}\hookrightarrow\fungro{\manifo}$. Since our extensions to simplices never leave germs of points, it is clear that any $\eta$ as above for $\manifo$ maps $\germgro{\cscheme}\rightarrow\fungro{\cscheme}$.\end{proof}%

\smallskip

Looking at what happens if we go from $\cscheme'$ to $\reduiof{\cscheme'}$ in the proof of Prop.\@ \ref{SecOnGerms}, we see that the infinitesimal part of $\paigro{\cscheme}$ lifts to the infinitesimal part of $\fungro{\cscheme}$. Of course, if we want to conclude something about $\thinfo{\infty}{\cscheme}$ of $\thinif{\infty}{\cscheme}$, we should be careful that our constructions do compute the required homotopy pullback. Hence the assumptions in the following.

\begin{theorem}\label{SecOnInf} Let $\cscheme$ be as in Thm.\@ \ref{HomotopyFun}. Then we can choose sections of
	\begin{equation*}\nu_\cscheme\colon\thinfo{\infty}{\cscheme}\longrightarrow\fordia{\cscheme},\quad
	\nu_\cscheme\colon\thinif{\infty}{\cscheme}\longrightarrow\infidia{\cscheme}.\end{equation*}
In each case any two possible choices are homotopically equivalent.\end{theorem}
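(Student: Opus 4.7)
The plan is to reduce the problem to an ordinary pullback computation and then use the section $\eta$ of Proposition \ref{SecOnGerms} as the input. Under the hypotheses of Theorem \ref{HomotopyFun}, combining Lemma \ref{SoftDeRham}, Theorem \ref{SoftThin} and Theorem \ref{ReductionFibration}, all four homotopy pullbacks defining $\thinfo{\infty}{\cscheme}$, $\thinif{\infty}{\cscheme}$, $\fordia{\cscheme}$, $\infidia{\cscheme}$ can be computed as ordinary fiber products of simplicial sheaves, because $\fungro{\cscheme}\rightarrow \algered{\fungro{\cscheme}}$ and $\fungro{\cscheme}\rightarrow\infired{\fungro{\cscheme}}$ (and likewise for $\paigro{\cscheme}$) are fibrations between soft sheaves. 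Therefore it suffices to produce strict morphisms of simplicial sheaves lifting the identities on $\fordia{\cscheme}$ and $\infidia{\cscheme}$.

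For the existence of the section, I would exploit the recalled description of $\fordia{\cscheme}$ and $\infidia{\cscheme}$: in simplicial degree $n$ their sections over $\cscheme'\in\ksite$ are represented by morphisms $\cscheme'\rightarrow\cscheme^{\times(n+1)}$ whose reduction lands in the diagonal, so by the reflection identity mentioned after Prop.\@ \ref{FunToPair} they factor through the germ $\germof{\cscheme^{\times(n+1)}}{\Delta}$, i.e.\@ through $\germgro{\cscheme}_n$. Composing with $\eta\colon\germgro{\cscheme}\rightarrow\fungro{\cscheme}$ from Proposition \ref{SecOnGerms} gives candidate sections. The verification that the resulting smooth families land in $\thinfo{\infty}{\cscheme}$ respectively $\thinif{\infty}{\cscheme}$ is the content of the commuting-over-the-base condition: $\eta$ is constructed via the metric identification of diagonal germs with germs of zero-sections in tangent bundles and by linear interpolation in tangent fibers, so if the reduction of the original simplex of vertices is constant along $\cscheme'$ (which is exactly what membership in $\fordia$ or $\infidia$ means) then the reduction of the interpolated simplex is also constant along $\cscheme'\times\csimplex{n}$. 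For the puncture case, exactly as in Proposition \ref{SecOnGerms}, linear interpolation stays inside germs of points, hence inside the puncture germ.

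For the uniqueness clause, given two sections $s_0,s_1$ I would construct a homotopy by convex combination in the tangent bundle. More precisely, for each $\cscheme'$-family $\ssim$ in $\fordia{\cscheme}(\cscheme')_n$ (resp.\@ $\infidia{\cscheme}(\cscheme')_n$), both $s_0(\ssim)$ and $s_1(\ssim)$ are smooth families $\cscheme'\times\csimplex{n}\rightarrow\cscheme$ agreeing on vertices and taking values in the metric-germ of the vertex images; using the linear structure on the corresponding tangent fibers I would form $s_t(\ssim):=(1-t)s_0(\ssim)+t\,s_1(\ssim)$ for $t\in\csimplex{1}$ and assemble this into a simplicial homotopy $\csimplex{1}\otimes s_0\simeq s_1$, applying Lemma \ref{GluingToCell} to package the extra coordinate as an additional smooth simplicial direction compatible with collarings. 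The main obstacle will be checking that this linear interpolation remains compatible with the reduction projection, so that the homotopy actually lifts to $\thinfo{\infty}$ respectively $\thinif{\infty}$, and that in the puncture case the interpolated families never exit the germ $\germof{\manifo}{\vmani}$. Both points reduce to the same observation as above: linear combinations of maps vanishing on the reduction still vanish on the reduction, and, thanks to the metric identification, the interpolation is trapped in the germ of the diagonal throughout the homotopy.
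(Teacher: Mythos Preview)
Your existence argument agrees with the paper: both use the section $\eta$ from Proposition~\ref{SecOnGerms} and then observe that, because the reduction of a point in $\fordia{\cscheme}$ or $\infidia{\cscheme}$ lands on the diagonal, the $\eta$-produced simplex has constant reduction and therefore sits in $\thinfo{\infty}{\cscheme}$ respectively $\thinif{\infty}{\cscheme}$.

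The uniqueness clause is where you diverge. The paper does not build any explicit homotopy at all; instead it argues that each $\nu_\cscheme$ is a \emph{trivial fibration}. The point is that an infinitesimal (formal or $\infty$-) simplex is forced to lie inside a single $\mathbb R^k$-chart of $\cscheme$, so one reduces to Lemma~\ref{ContractibleCase}, and finiteness of the good covering lets one conclude globally. Once $\nu_\cscheme$ is a trivial fibration between fibrant objects, the homotopy equivalence of any two sections is a formal consequence, with no further construction needed. Your linear-interpolation route can be made to work, but it costs you several verifications the paper avoids entirely: compatibility of the convex combination with all simplicial faces and degeneracies (so that you really get a morphism of simplicial presheaves, not just a levelwise homotopy), compatibility with collarings, and---most delicately---the fact that an \emph{arbitrary} section, not one manufactured via $\eta$, still takes values in the tangent-germ model where your linear structure is available. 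This last point is recoverable (any infinitesimal simplex lands in a germ of a point, hence in a chart), but it is exactly the observation the paper leverages to short-circuit the whole computation.
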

\begin{proof} The only part we have not explained yet is why any two choices of sections in each case should be homotopically equivalent. This would immediately follow from $\nu_{\cscheme}$ being a weak equivalence in each case. Since infinitesimal simplices (of both kinds) have to be contained in $\mathbb R^k$-charts of $\cscheme$, and we have only finitely many of these charts, it is clear that each $\nu_{\cscheme}$ consists of trivial fibrations.\end{proof}%

\subsubsection{Connections and integrability}\label{SectionIntegrability}

\begin{definition}\label{DefConnections} Let $\Phi\colon\sheaf\rightarrow\sheaf'$ be a morphism in $\zaritos{\ksite}$ and let $k\in\noneg$ or $k=\infty$. {\it A $k$-flat connection on $\Phi$} is a factorization of $\Phi$ as follows
	\begin{equation*}\xymatrix{\sheaf\ar[rr]^{\Phi}\ar[d] && \sheaf'\\
	\thingro{k}{\sheaf}.\ar[rru] &&}\end{equation*}
{\it An infinitesimal $k$-flat connection on $\Phi$} is a factorization of $\Phi$ as follows
	\begin{equation*}\xymatrix{\sheaf\ar[rr]^{\Phi}\ar[d] && \sheaf'\\
	\thinif{k}{\sheaf}.\ar[rru] &&}\end{equation*}
\end{definition}
Instead of $\thinif{k}{\sheaf}$ we could have used $\thinfo{k}{\sheaf}$, which contains strictly less information, in general. In that case we would use the adjective {\it formal} instead of {\it infinitesimal}. Since we are mostly interested in the infinitesimal connections, we will suppress the formal ones from now on. 

For some $\sheaf'$ it is natural to expect a $k$-connection for any morphism into $\sheaf'$, for example if $\sheaf'=\classin{k}$ for an abelian group in $\zaritos{\ksite}$. We will not address this question here, but simply assume that we can start with a given connection.

However, we would like to single out {\it integrable} infinitesimal connections. Recall that $\thinif{k}{\sheaf}$ is defined as the pullback of $\thingro{k}{\sheaf}$ over $\infired{\sheaf}\rightarrow\infired{\thingro{k}{\sheaf}}$. In particular there is the canonical embedding 
	\begin{equation}\label{Integrability}\thinif{k}{\sheaf}\longrightarrow\thingro{k}{\sheaf}.\end{equation}
\begin{definition}\label{DefIntegrable} An infinitesimal $k$-flat connection $\thinif{k}{\sheaf}\rightarrow\sheaf'$ is {\it integrable}, if it factors through (\ref{Integrability}).\end{definition}

In the Section \ref{SectionPairGro} we have seen that, if $\sheaf$ is representable by an asymptotic manifold admitting a finite good covering, an integrable infinitesimal $k$-connection on $\sheaf\rightarrow\sheaf'$ will factor through the groupoid $\germgro{\sheaf}$ consisting of germs around the diagonals in $\{\sheaf^{\times^n}\}_{n\in\noneg}$. Moreover, from \cite{BK} we know that such factorization is unique, if it exists. 

In the next section we assume that the infinitesimal connections we work with are always integrable, allowing us to use germs of diagonals. This is a somewhat lazy approach. Why use infinitesimal structure at all, if we have integrable connections from the start? However, we would like to argue that infinitesimal $k$-flat connections are interesting in their own right, with or without the integrability assumption. In this paper we make this assumption, for the lack of space, if not anything else. 

In general there are two questions that come to mind immediately: what is the cohomology theory governing obstructions to integrability? The corresponding deformation theory should be quite unusual, as it should describe obstructions for all orders of vanishing, not just the polynomial ones.

The second question is whether we can integrate non integrable infinitesimal connections in a wider class of functions than just the $\cinfty$-functions. Even more interesting would be to find solutions in classes that still have the $\cinfty$-ring structure.

\section{Beilinson--Drinfeld Grassmannians and factorization algebras}

\subsection{The problem of constructing factorizable line bundles}\label{SectionProblem}

Recall that $\msite\subset\ksite\subset\gsite$ denote the sites of the usual manifolds, the locally compact asymptotic manifolds with corners, and all asymptotic manifolds with corners, equipped with {\it Zariski topologies}. We denote by $\zaritos{\msite}$, $\zaritos{\ksite}$, $\zaritos{\gsite}$, $\zaritoss{\msite}$, $\zaritoss{\ksite}$, $\zaritoss{\gsite}$ the corresponding categories of pre-sheaves with values is $\sets$ and $\ssets$ respectively. 

\smallskip

Let $\abgro$ be an abelian group-object in $\zaritoss{\gsite}$. We assume that $\abgro$ and all of its classifying spaces $\classin{k}$, $k\in\mathbb N$ are obtained by a left Kan extension from an abelian group object over ${\msite}$, equipped with the usual (not Zariski) topology. In particular $\forall\cscheme\in\gsite$ any $\cscheme\rightarrow\classin{k}$ factors through an object in $\msite$. This assumption is automatically satisfied, if $\abgro$ is representable by an abelian finite dimensional Lie group. 

\hide{%
Indeed, for $\abgro$ itself the condition is clearly satisfied, for $\classin{k}$ we proceed as follows: using the Yoneda embedding we view $\abgro$ as an object in $\zaritoss{\gsite}$ and take the usual iterated bar construction $\barco{k}{\abgro}$ of $\abgro$, i.e.\@ each time it is a bar construction followed by taking the diagonal. The resulting $\barco{k}{\abgro}$ is a simplicial manifold, but it is not necessarily fibrant as an object in $\zaritoss{\gsite}$. 

To obtain a fibrant object with respect to either the local projective or the local injective model structure, we use the fact that both of these model structures are simplicial. Taking the homotopy colimit of this diagram of simplicial manifolds we obtain a fibrant object (in either of the model structures), that we denote by $\classin{k}$. 

By construction, $\forall n\in\noneg$ $(\classin{k})_n$ is a colimit of a diagram of sheaves representable by objects of $\msite$. Since $\cinfty$-rings of functions on manifolds are finitely presented, for any $\cscheme\in\gsite$ any $\cscheme\rightarrow(\classin{k})_n$ factors through $\cscheme\hookrightarrow\manifo$, where $\{\manifo,\{\opensch_i\}_{i\in\diagra}\}$ is a presentation of $\cscheme$ (Def.\@ \ref{SmoothCell}).

\smallskip}%

We fix a $k\geq 2$ and a $k$-dimensional $\manifold\in\msite$ admitting finite good covering. Given a group object $\smoup\in\zaritos{\ksite}$ and a multiplicative $k$-gerbe with integrable infinitesimal $k$-flat connection 
	\begin{equation*}\gerco\colon\thinfo{k}{\classi{\smoup}}\longrightarrow\thingro{k}{\smoup}\longrightarrow\classin{k+1},\end{equation*} 
{\it we would like to construct an $\abgro$-torsor on the Beilinson--Drinfeld Grassmannian corresponding to $\manifold$ and $\smoup$, that has the factorization property}.

\smallskip

First we need to explain what we mean by Beilinson--Drinfeld Grassmannians in our setting. We follow \cite{Ga13} \S0.5.3. Let $\fsets$ be the category of finite non-empty sets and surjections. For every $\set\in\fsets$ we write $\produ{\manifold}{\set}$ for the $\set$-fold direct product. Every $\pi\colon\set_1\rightarrow\set_2$ in $\fsets$ defines $\produ{\manifold}{\set_2}\rightarrow\produ{\manifold}{\set_1}$ by mapping the copy of $\manifold$ corresponding to $\element\in\set_2$ diagonally into $\produ{\manifold}{\pi^{-1}(\element)}$. In this way we obtain an $\fsets^{\op}$-diagram $\diadia{\manifold}$ in $\msite$, and using Yoneda embedding we consider it as a diagram in $\zaritos{\msite}$.

\begin{definition} The Ran-space $\Ran{\manifold}\in\zaritos{\msite}$ corresponding to $\manifold$ is the colimit of $\diadia{\manifold}$ computed in $\zaritos{\msite}$.\end{definition}
Given $\vmani\in\msite$, a morphism $\Phi\colon\vmani\rightarrow\Ran{\manifold}$ is an equivalence class of sets of maps $\{\Phi_i\colon\vmani\rightarrow\manifold\}_{i=1}^n$ with arbitrary $n\in\mathbb N$, where the equivalence is given by renumbering. We write $\gra{\Phi_i}$ for the graph of $\Phi_i$ in $\vmani\times\manifold$, and $\gra{\Phi}:=\underset{1\leq i\leq n}\bigcup\gra{\Phi_i}\subseteq\vmani\times\manifold$. Clearly $\gra{\Phi}$ is independent of the numbering. Now we take the germs
	\begin{equation*}\gegra{\Phi}:=\germof{(\vmani\times\manifold)}{\gra{\Phi}},\quad
	\pugra{\Phi}:=\germof{(\vmani\times\manifold\setminus\gra{\Phi})}{\gra{\Phi}}.\end{equation*}
These are objects in $\ksite$, and we define pre-sheaves on $\msite$:
	\begin{equation*}\ggrass{\manifold}{\smoup}{}(\vmani):=\underset{\Phi\in\Ran{\manifold}(\vmani)}\bigsqcup\hom{}(\gegra{\Phi},\smoup),\quad
	\pgrass{\manifold}{\smoup}{}(\vmani):=\underset{\Phi\in\Ran{\manifold}(\vmani)}\bigsqcup\hom{}(\pugra{\Phi},\smoup).\end{equation*}
The obvious projections $\ggrass{\manifold}{\smoup}{}\rightarrow\Ran{\manifold}$, $\pgrass{\manifold}{\smoup}{}\rightarrow\Ran{\manifold}$ have groups as fibers, and the former are subgroups of the latter. 

\begin{definition} {\it The Beilinson--Drinfeld Grassmannian} corresponding to $\manifold$, $\smoup$ is $\grass{\manifold}{\smoup}:=\pgrass{\manifold}{\smoup}{}/\ggrass{\manifold}{\smoup}{}$.\end{definition}
Recall (e.g.\@ \cite{FBZ04} \S20.3) that $\grass{\manifold}{\smoup}$ is the moduli space of principal $\smoup$-bundles on $\manifold$, trivialized away from finite sets of points. Indeed, without dividing by $\ggrass{\manifold}{\smoup}{}$ we obtain bundles on $\manifold$ glued out of trivial bundles on the germs and on the complements. Dividing leaves a chosen trivialization only on the complements.

\subsection{The solution}\label{LineBundle}

The strategy for constructing an $\abgro$-torsor on $\grass{\manifold}{\smoup}$ is by constructing a bundle on $\pgrass{\manifold}{\smoup}{}$, that restricts to a trivial bundle on $\ggrass{\manifold}{\smoup}{}$. We would like not just any bundle on $\pgrass{\manifold}{\smoup}{}$, but a central extension (relative to $\Ran{\manifold}$), hence we need $\pgrass{\manifold}{\classi{\smoup}}{}\in\zaritoss{\msite}$ defined as follows
	\begin{equation*}\pgrass{\manifold}{\classi{\smoup}}{}(\vmani):=
	\underset{\Phi\in\Ran{\manifold}(\vmani)}\bigsqcup\homs(\pugra{\Phi},\classi{\smoup}).
	\footnote{Recall that $\homs$ denotes the simplicial set of maps.}\end{equation*}
Fixing a $\Phi\colon\vmani\rightarrow\Ran{\manifold}$ we have a simplicial set $\classi{\smoup}(\pugra{\Phi})$. We would like to promote this simplicial set to a pre-sheaf of simplicial sets on $\ksite/\vmani$:
	\begin{equation*}\forall\cscheme\rightarrow\vmani\quad\pgrass{\manifold}{\classi{\smoup}}{\Phi}(\cscheme):=
	\homs_{\vmani}(\cscheme\underset{\vmani}{\times}\pugra{\Phi},\classi{\smoup}\times\vmani),\end{equation*}
where we use the composition $\pugra{\Phi}\hookrightarrow\vmani\times\manifold\rightarrow\vmani$ to realize $\pugra{\Phi}$ in $\ksite/\vmani$, and the subscript in $\homs_{\vmani}$ means computing the mapping space in $\zaritoss{\ksite/\vmani}$. Now we have the evaluation morphism
	\begin{equation*}\eva\colon\pugra{\Phi}\underset{\vmani}\times\pgrass{\manifold}{\classi{\smoup}}{\Phi}\longrightarrow
	\classi{\smoup}\times\vmani.\end{equation*}
Just as with $\classi{\smoup}$ we lift $\gerco$ to $\gercos{\vmani}\colon\thinfo{k}{\classi{\smoup}}\times\vmani\rightarrow\classin{k+1}\times\vmani$, and composing with the evaluation morphism we have
	\begin{equation}\label{ConEva}(\thinfo{k}{\pugra{\Phi}/\vmani})\underset{\vmani}\times\pgrass{\manifold}{\classi{\smoup}}{\Phi}\longrightarrow
	\thinfo{k}{\classi{\smoup}}\times\vmani\overset{\gerco}\longrightarrow
	\classin{k+1}\times\vmani.\end{equation}
Since $\gerco$ is integrable, the composite morphism in (\ref{ConEva}) factors through $(\thingro{k}{\pugra{\Phi}/\vmani})\underset{\vmani}\times\pgrass{\manifold}{\classi{\smoup}}{\Phi}$, and therefore (Section \ref{SectionIntegrability}) through 
	\begin{equation*}\germgro{\pugra{\Phi}}\underset{\vmani}\times\pgrass{\manifold}{\classi{\smoup}}{\Phi}\longrightarrow
	\classin{k+1}\times\vmani,\end{equation*}
where $\germgro{\pugra{\Phi}}$ is the groupoid consisting of germs around diagonals in direct powers of $\pugra{\Phi}$. We would like to rewrite this map slightly differently:
	\begin{equation}\label{EntirePuncture}\pgrass{\manifold}{\classi{\smoup}}{\Phi}\longrightarrow
	\Homo{\vmani}(\germgro{\pugra{\Phi}},\classin{k+1}\times\vmani).\footnote{Recall that $\Homo{}$ denotes the simplicial pre-sheaf of mapping spaces.}		\end{equation}
Now we are ready to perform transgression.
\begin{proposition}\label{ExistenceOfTheMap} There is a morphism
	\begin{equation}\label{TheMap}\Homo{\vmani}(\germgro{\pugra{\Phi}},\classin{k+1}\times\vmani)\longrightarrow\classin{2}\times\vmani,\end{equation}
that is uniquely defined up to a unique homotopy equivalence.
\end{proposition}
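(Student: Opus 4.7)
The strategy is transgression along a $(k-1)$-sphere bundle surrounding the graph $\gra{\Phi}\subset\vmani\times\manifold$. The dimension count is the expected one: removing a point from a $k$-dimensional manifold leaves something with the homotopy type of $S^{k-1}$, so a class in $\classin{k+1}$ on the germ of the puncture transgresses to a class in $\classin{(k+1)-(k-1)}=\classin{2}$. Uniqueness up to homotopy reflects the fact that all choices (of metric, tubular radius, basepoint) live in a contractible space.

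First I would produce the sphere bundle. Choose a Riemannian metric on $\manifold$ and a sufficiently small regular value $\epsilon>0$ for the squared distance function to $\gra{\Phi}$ (Prop.\@ \ref{RegularValues}), and let $\Sigma_\Phi\subset\vmani\times\manifold$ be the resulting $\epsilon$-level set. By Prop.\@ \ref{ClosureRegular}, $\Sigma_\Phi\to\vmani$ is an asymptotic submanifold whose geometric fibres are finite disjoint unions of smooth $(k-1)$-spheres. Because $\epsilon$ can be taken arbitrarily small, the inclusion factors through every open neighbourhood appearing in the regular system that defines $\pugra{\Phi}$, realizing $\Sigma_\Phi$ as an asymptotic submanifold of $\pugra{\Phi}$ over $\vmani$; here Lemma \ref{OpenInSoft} and the local compactness of the relevant objects let us control the regular system. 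Restriction along $\Sigma_\Phi\hookrightarrow\pugra{\Phi}$ extends to the groupoids of germs of diagonals and yields a map
\begin{equation*}
\Homo{\vmani}(\germgro{\pugra{\Phi}},\classin{k+1}\times\vmani)\longrightarrow
\Homo{\vmani}(\germgro{\Sigma_\Phi/\vmani},\classin{k+1}\times\vmani).
\end{equation*}

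Next I would transgress on the sphere-bundle side. The Kan extension hypothesis on $\abgro$, combined with softness (Thm.\@ \ref{HomotopyFun}) and Brown--Gersten descent (Thm.\@ \ref{BrownGersten}), gives the homotopy equivalences $\Omega^j\classin{k+1}\simeq\classin{k+1-j}$ in $\zaritoss{\ksite}$ for $0\le j\le k+1$, reducing these identifications to the classical statement about iterated deloopings of an abelian Lie group over ordinary manifolds. Choosing a smooth section $s\colon\vmani\to\Sigma_\Phi$ (a basepoint in each spherical fibre) and integrating over $S^{k-1}$ fibrewise produces
\begin{equation*}
\Homo{\vmani}(\germgro{\Sigma_\Phi/\vmani},\classin{k+1}\times\vmani)\longrightarrow
\classin{2}\times\vmani,
\end{equation*}
whose composition with the restriction map above is (\ref{TheMap}).

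For the uniqueness clause, the space of admissible auxiliary data (Riemannian metric, regular radius $\epsilon$, basepoint section $s$) is convex, hence contractible. Any two instances of the above construction are therefore joined by a canonical homotopy obtained by running the construction over the path of data, and higher coherences are obtained by applying the same recipe to paths of paths.

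The principal obstacle is the first step: checking that $\Sigma_\Phi$ truly embeds as an asymptotic submanifold of $\pugra{\Phi}$ and that this embedding induces the required map on groupoids of germs of diagonals. Controlling how the $\epsilon$-tube threads through the regular system of open neighbourhoods defining the germ $\pugra{\Phi}$, and ensuring compatibility with $\germgro{-}$, is the main technical point; a secondary difficulty is justifying $\Omega^{k-1}\classin{k+1}\simeq\classin{2}$ in our sheaf category, which is where the Kan extension hypothesis on $\abgro$ and Brown--Gersten descent are essential.
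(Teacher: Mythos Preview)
Your sphere-bundle approach has a genuine gap at the very first step. The graph $\gra{\Phi}=\bigcup_i\gra{\Phi_i}$ is a \emph{union} of graphs, and nothing prevents the $\Phi_i$ from agreeing at some points of $\vmani$ (indeed, the whole point of $\Ran{\manifold}$ is to allow such collisions). Over a point where two or more of the $\Phi_i$ coincide, the $\epsilon$-level set of the squared distance to $\gra{\Phi}$ is \emph{not} a disjoint union of $(k-1)$-spheres: it is a single sphere around the multiple point, while nearby fibres (where the points have separated) are several disjoint spheres. Hence your $\Sigma_\Phi\to\vmani$ is not a sphere bundle, the number of components jumps, and neither the basepoint section $s$ nor the ``convex space of data'' uniqueness argument makes sense globally. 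Proposition~\ref{ClosureRegular} only gives you that $\Sigma_\Phi$ is an asymptotic submanifold of $\vmani\times\manifold$; it says nothing about the fibrewise topology.

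The paper resolves exactly this difficulty by a different construction. Working locally over small $\opensch\subseteq\vmani$ where each connected component of $\gra{\Phi}$ sits in a single chart, it \emph{connects the punctures in each component by curves} in the fibres, obtaining a subset $\overline{\gra{\Phi}}$ whose fibres are disjoint unions of contractible sets. The germ ${\bf S}$ of the complement of $\overline{\gra{\Phi}}$ then has the homotopy type over $\vmani$ of a disjoint union of $(k-1)$-spheres, with one sphere per connected component of $\gra{\Phi}$ rather than one per puncture; this count is stable under collisions. Transgression is then performed by breaking ${\bf S}$ into contractible pieces and using the group structure on $\classin{2}$ to multiply. The independence of the connecting curves is argued by filling pairs of curve-systems with surfaces, then filling pairs of surfaces with $3$-folds, and so on up to dimension $k$; this inductive filling argument, together with abelianness of $\abgro$ (so that summing over individual punctures agrees with integrating around the combined blob), is what actually yields the ``unique up to unique homotopy'' clause. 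Your convexity shortcut does not see any of this.
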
	
\begin{proof} To perform transgression we need to integrate a connection around a cycle. To do this we would like to substitute $\pugra{\Phi}$ with an asymptotic manifold, whose {\it homotopy type over $\vmani$ is that of a disjoint union of $k-1$-spheres}. By this we mean an asymptotic manifold, that can be broken into  contractible pieces over $\vmani$ (i.e.\@ germs of manifolds contractible to $\vmani$), s.t.\@ the diagram of the pieces is that of a disjoint union of $k-1$-spheres.

We choose a metric on $\manifold$, and obtain a geodesic coordinate neighbourhood around each point. For any $\rpt\in\vmani$ we can choose a small enough open $\opensch\ni\rpt$, s.t.\@ over $\opensch$ each connected component of $\gra{\Phi}$ is contained in the geodesic neighbourhood around the image of $\rpt$. Therefore, restricting to $\opensch$ we can assume that $\vmani$ is connected and there is a chosen coordinate system around each component of $\gra{\Phi}$.

We choose a numbering of the copies of $\vmani$ in each connected component of $\gra{\Phi}$. Then, within each connected component, we connect by curves consecutive punctures in the fibers of $\manifold\times\vmani\rightarrow\vmani$.\footnote{For example to choose the shortest path from the $i$-th puncture to the previous $i-1$ punctures.} We denote the resulting subset of $\manifold\times\vmani$ by $\overline{\gra{\Phi}}$. Clearly fibers of $\overline{\gra{\Phi}}\rightarrow\vmani$ are disjoint unions of contractible topological spaces. Let $\bf S$ be the germ of $\manifold\times\vmani$ at $(\manifold\times\vmani)\setminus\overline{\gra{\Phi}}$. As $\manifold$ is $k$-dimensional, it is clear that ${\bf S}/\vmani$ has the homotopy type of a disjoint union of $k-1$-spheres.

\smallskip

The inclusion ${\bf S}\hookrightarrow\pugra{\Phi}$ gives us 
	\begin{equation*}\Homo{\vmani}(\germgro{\pugra{\Phi}},\classin{k+1}\times\vmani)\longrightarrow
	\Homo{\vmani}(\germgro{\bf S},\classin{k+1}\times\vmani).\end{equation*}
Now we use the assumption on $\abgro$, that it is a left Kan extension of a sheaf on $\msite$. This implies that we can regard ${\bf S}$ as $k$-dimensional manifold, and instead of $\germgro{\bf S}$ we have a simplicial manifold $\opensch_\bullet=\{\opensch_n\}_{n\in\noneg}$ consisting of open neighbourhoods of the diagonals in $\{{\bf S}^{\times^n}\}_{n\in\noneg}$.

For any $\cscheme\in\ksite$ an $n$-simplex in $\Homo{\vmani}(\opensch_\bullet,\classin{k+1}\times\vmani)$ over $\cscheme$ is a morphism
	\begin{equation*}\cscheme\underset{\vmani}\times\opensch_\bullet\rightarrow(\classin{k+1})^{\simplex{n}}\times\vmani,\end{equation*}
i.e.\@ it is a morphism $\cscheme\times{\bf S}\rightarrow(\classin{k+1})^{\simplex{n}}\times\vmani$ together with a chosen trivialization on $\cscheme\times\softchart\rightarrow\cscheme$ for each contractible $\softchart\subseteq{\bf S}$. Breaking ${\bf S}$ into contractible pieces over $\vmani$ and choosing one point in each piece, we obtain
	\begin{equation*}\Homo{\vmani}(\germgro{\bf S},\classin{k+1}\times\vmani)\longrightarrow
	\Homo{\vmani}(\underset{1\leq i\leq m}\bigsqcup\mathbb S^{k-1},\classin{k+1}\times\vmani)\cong
	\underset{1\leq i\leq m}\prod\classin{2}\times\vmani,\end{equation*}
where $m$ is the number of $k-1$-spheres in the disjoint union. Now, using the group structure on $\classin{2}$, we obtain altogether 
	\begin{equation*}\Homo{\vmani}(\germgro{\pugra{\Phi}},\classin{k+1}\times\vmani)\longrightarrow\classin{2}\times\vmani.\end{equation*}
We need to show that this morphism is independent of the choices that we have made, up to a homotopy equivalence. We chose four things: the numbering of copies of $\vmani$ in $\gra{\Phi}$, curves that connected punctures, decomposition of ${\bf S}$ into contractible pieces over $\vmani$, points in each of the contractible piece.

Changing the latter two choices clearly results in a unique connecting homotopy equivalence, since the trivializations are uniquely defined on each contractible piece of ${\bf S}$.

Having two different choices of the curves, connecting the punctures, and the corresponding two different ${\bf S}$, ${\bf S}'$, we can choose surfaces connecting the curves, and then take the complement ${\bf S}''$ of the resulting union of contractible pieces. Clearly ${\bf S''}\subseteq{\bf S}\cap{\bf S}'$, and the corresponding three asymptotic manifolds have the same homotopy type over $\vmani$. In this way ${\bf S}''$ provides the required homotopy equivalence. 

This homotopy equivalence is dependent on the choice of the connecting surfaces, but choosing a filling of dimension $3$, we obtain a $2$-homotopy, etc.\footnote{The ability to always choose a filling exists because every connected component of $\gra{\Phi}$ lies within an $\mathbb R^k\times\vmani$-chart on $\manifold\times\vmani$.} The process stops when we reach the dimension of $\manifold$.

\smallskip

Finally we need to show that our construction is independent of $\rpt\in\vmani$ and the neighbourhood $\opensch$ around it that we have chosen. In other words we need to show that the morphism into $\classin{2}$ over a set of distinct punctures (restricting to a subset of $\opensch$) is the sum of morphisms over individual punctures (computing the morphism over the subset itself). 

The difference between the two is in using of connecting curves. I.e.\@ integrating the connection separately around each puncture and then adding up, or connecting them all and integrating around the resulting contractible space. But adding up the results of integration around two punctures is the same as choosing a trivialization over the disjoint union of two contractible pieces, each at a different puncture. Since $\abgro$ is abelian, any such choice of trivialization produces the same result (the conjugation is trivial). Working with connected components over the entire $\opensch$ just provides one possible choice.\end{proof}%

\smallskip

If we restrict (\ref{TheMap}) to the image of $\Homo{\vmani}(\germgro{\gegra{\Phi}},\classin{k+1}\times\vmani)$, given by the inclusion $\pugra{\Phi}\hookrightarrow\gegra{\Phi}$, we clearly obtain trivial gerbes, because the entire $\gegra{\Phi}/\vmani$ is a disjoint union of contractible pieces. The factorization property is obvious from the construction.

\subsection{Taking global sections}

Constructing factorization algebras from factorizable line bundles on Beilinson--Drinfeld Grassmannians is fairly straightforward in differential geometry. The reason for this is that the fibers of $\grass{\manifold}{\smoup}\rightarrow\Ran{\manifold}$ become affine, if we allow all convenient algebras, and not just $\cinfty$-rings.

Let $\Phi\colon\vmani\rightarrow\Ran{\manifold}$ be represented by an $n$-tuple $\{\Phi_i\colon\vmani\rightarrow\manifold\}_{1\leq i\leq n}$, and let $\pugra{\Phi}\subseteq\manifold\times\vmani$ be the punctured germ around the union $\gra{\Phi}\subseteq\manifold\times\vmani$ of the graphs of $\{\Phi_i\}_{1\leq i\leq n}$. For any $\vmani'\in\msite$ the value of $\pgrass{\manifold}{\smoup}{\Phi}$ on $\vmani'\times\vmani$ is $\hom{\ksite}(\pugra{\Phi}\times\vmani',\smoup)$. By definition $\pugra{\Phi}$ is a limit of a system $\{U_\alpha\}$, where  $U_\alpha=V_\alpha\setminus\gra{\Phi}$ and $V_\alpha$ is an open neighbourhood of $\gra{\Phi}$ in $\manifold\times\vmani$. Assuming that $\smoup$ is a finite dimensional Lie group we immediately see that
	\begin{equation}\label{LimLim}\hom{\ksite}(\pugra{\Phi}\times\vmani',\smoup)\cong\underset{\alpha}\lim\;\hom{\msite}(U_\alpha\times\vmani',\smoup).\end{equation}
Suppose that each $\homo{\msite}(U_\alpha,\smoup)$ is representable, i.e.\@ it is the spectrum of some ring, then the right hand side of (\ref{LimLim}) is also representable, if our category of rings has enough limits. This is of course not true for the category of finitely generated $\cinfty$-rings. Also $\homo{\msite}(U_\alpha,\smoup)$ are not representable in this category in general. We need to consider much larger categories.

\smallskip

Let $Born$ be the category of complete bornological spaces over $\mathbb R$, and let $\mathcal R(Born)$ be the category of commutative, associative unital algebras in $Born$, considered together with the usual (projective) tensor product. As for any manifold the ring $\cinfty(U_\alpha)$ has a natural bornology, given by the canonical Fr\'echet topology on $\cinfty(U_\alpha)$, i.e.\@ we have $\cinfty(U_\alpha)\in\mathcal R(Born)$. In particular we obtain the category of quasi-coherent sheaves on $U_\alpha$ as the category of modules in $Born$ over the algebra $\cinfty(U_\alpha)$.

The natural bornologies on rings of smooth functions on finite dimensional manifolds are very special. They are the \emph{topological bornologies}, these spaces are often called \emph{convenient}. One of the properties of the subcategory $Conv\subset Born$ of convenient spaces, is that there are two tensor products present, and they agree, if one of the factors is nuclear. It so happens that the rings of smooth functions on finite dimensional manifolds are nuclear. So in our case we can substitute $\cinfty(U_\alpha)\widehat\otimes\cinfty(\vmani')$ with the injective tensor product. This tensor product preserves all limits, and therefore, using the special adjunction theorem, we conclude the following

\begin{proposition}\label{ExistenceAdjoint} The functor
	\begin{equation*}\homo{}(\cinfty(\smoup),\cinfty(\pugra{\Phi})\widehat\otimes-)\colon\msite^\op\longrightarrow\sets\end{equation*}
is representable by a commutative ring ${\rm\bf co}\homo{}(\cinfty(\smoup),\cinfty(\pugra{\Phi}))$ in the category of convenient spaces over $\mathbb R$. A similar statement holds for $\gegra{\Phi}$.\end{proposition}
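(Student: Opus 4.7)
The plan is to apply the special adjoint functor theorem (SAFT) in the category $\mathcal R(Conv)$ of commutative unital convenient algebras, where the representing object ${\rm\bf co}\homo{}(\cinfty(\smoup),\cinfty(\pugra{\Phi}))$ is sought. I first extend the functor from $\msite^{\op}$ to $\mathcal R(Conv)^{\op}$ via $A\mapsto F(A):=\hom{\mathcal R(Conv)}(\cinfty(\smoup),\cinfty(\pugra{\Phi})\widehat\otimes A)$; if this extension is corepresentable by some $R\in\mathcal R(Conv)$, then restriction to $A=\cinfty(\vmani')$ gives the proposition.

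The first step is a reduction to the finite-dimensional case. Writing $\cinfty(\pugra{\Phi})\cong\underset{\alpha}\colim\,\cinfty(U_\alpha)$ with each $U_\alpha\in\msite$, and combining this with the manipulation already used to derive (\ref{LimLim}), one has
\begin{equation*}
F(A)\cong\underset{\alpha}\lim\,F_\alpha(A),\qquad F_\alpha(A):=\hom{\mathcal R(Conv)}(\cinfty(\smoup),\cinfty(U_\alpha)\widehat\otimes A).
\end{equation*}
A limit of corepresentable functors is corepresentable by the colimit of the corepresenting objects in $\mathcal R(Conv)$, so it is enough to corepresent each $F_\alpha$ by an algebra $R_\alpha$, and then set ${\rm\bf co}\homo{}(\cinfty(\smoup),\cinfty(\pugra{\Phi})):=\underset{\alpha}\colim\,R_\alpha$.

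The second step is to verify that each $F_\alpha$ preserves small limits. The key input is nuclearity: since $U_\alpha$ is a finite-dimensional manifold, $\cinfty(U_\alpha)$ is a nuclear Fr\'echet algebra, hence on $Conv$ the projective tensor product $\cinfty(U_\alpha)\widehat\otimes(-)$ coincides with the injective tensor product, and the injective tensor product preserves all limits in each variable. Post-composing with the continuous functor $\hom{\mathcal R(Conv)}(\cinfty(\smoup),-)$ then shows $F_\alpha$ is continuous. The third step applies SAFT: $\mathcal R(Conv)$ is complete (limits are computed on the underlying convenient spaces and inherit the algebra structure), locally small, well-powered, and admits a small cogenerating set, so a corepresenting object $R_\alpha$ exists for each $F_\alpha$. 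The construction for $\gegra{\Phi}$ is identical, replacing the punctured germ by the germ itself.

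The main obstacle I anticipate is the careful verification of the SAFT hypotheses in $\mathcal R(Conv)$ — in particular pinning down an explicit small cogenerating set inside this functional-analytic category — and the bookkeeping required to ensure that the colimit $\underset{\alpha}\colim R_\alpha$ is genuinely taken in $\mathcal R(Conv)$ rather than in $\fgrings$, the point being that the indexing system is in general uncountable and the colimit need not exist, let alone be finitely generated, as an ordinary $\cinfty$-ring.
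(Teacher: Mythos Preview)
Your proposal is correct and follows essentially the same route as the paper: the paper does not give a separate proof of this proposition but rather sketches it in the paragraph immediately preceding the statement, invoking exactly the reduction to the $U_\alpha$'s via (\ref{LimLim}), the nuclearity of $\cinfty(U_\alpha)$ to pass from the projective to the injective tensor product, limit-preservation of the injective tensor product, and then ``the special adjunction theorem''. Your write-up is in fact more careful than the paper's sketch --- you correctly note that one needs a \emph{colimit} of the corepresenting rings $R_\alpha$ (the paper's phrase ``enough limits'' is at best loose here), and your concern about verifying the SAFT hypotheses in $\mathcal R(Conv)$ is precisely the point the paper glosses over.
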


Notice that we have three different levels: our site $\msite$ (as well as $\ksite$) consists of finitely generated $\cinfty$-rings, but we also consider convenient rings, which are not in this site, on the other hand the modules over all of our rings can be be any bornological spaces, not necessarily convenient.

\medskip

Going back to line bundles over the Beilinson--Drinfeld Grassmannians we can view $\pgrass{\manifold}{\classi\smoup}{\Phi}$ as the spectrum of a (simplicial) convenient ring $\mathfrak P_\bullet$. This ring is obtained using Prop.\@ \ref{ExistenceAdjoint} and then tensoring with $\cinfty(\vmani)$, hence it is a $\cinfty(\vmani)$-algebra. In a similar fashion let $\mathfrak G_\bullet$ be the simplicial convenient ring whose spectrum is $\ggrass{\manifold}{\classi\smoup}{\Phi}$. A morphism $\spec(\mathfrak P_\bullet)\rightarrow\classin{2}$ defines a module $\mathfrak M_\bullet$ over $\mathfrak P_\bullet$. As the two fibers over $\Phi$ are both groups, the rings $\mathfrak P_0$, $\mathfrak G_0$ have comultiplications, and since $\mathfrak M_\bullet$ is defined over the nerve of $\pgrass{\manifold}{\smoup}{}$, $\mathfrak M_0$ is a bimodule over $\mathfrak P_0$. Consider the following diagram:
	\begin{equation}\label{Comu}\mathfrak M_0\rightrightarrows\mathfrak M_0\widehat\otimes\mathfrak P_0\rightarrow\mathfrak M_0\widehat\otimes\mathfrak G_0,\end{equation}
where the first two arrows are the comultiplication and the trivial comultiplication, and the last arrow is given by the restriction $\ggrass{\manifold}{\smoup}{}\rightarrow\pgrass{\manifold}{\smoup}{}$. Let $\mathfrak K$ be the equalizer of the two composite maps in (\ref{Comu}). This is the space of global sections of the line bundle over the fiber of $\grass{\manifold}{\smoup}{}\rightarrow\Ran{\manifold}$ over $\Phi$. It carries a natural $\cinfty(\vmani)$-module structure, so altogether we obtain the following

\begin{theorem}
	Any factorizable line bundle on the Beilinson--Drinfeld Grassmannian $\grass{\manifold}{\smoup}{}$, constructed in section \ref{LineBundle}, induces a factorization algebra on $\manifold$.
\end{theorem}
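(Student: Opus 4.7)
The plan is to package the construction of $\mathfrak{K}$ into a functor on the category of finite subsets (or, more precisely, a sheaf on $\Ran{\manifold}$ valued in $\cinfty$-modules) and then verify the factorization axiom directly from the factorization property of the line bundle built in Section \ref{LineBundle}. The key observation is that the equalizer construction leading to $\mathfrak{K}$ is natural in $\Phi\colon\vmani\rightarrow\Ran{\manifold}$, because $\mathfrak P_\bullet$, $\mathfrak G_\bullet$, and $\mathfrak M_\bullet$ are all obtained by applying the representing functor of Prop.\@ \ref{ExistenceAdjoint} to $\pugra{\Phi}$ and $\gegra{\Phi}$, which vary functorially in $\Phi$.

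First I would verify that $\Phi\mapsto\mathfrak K_\Phi$ descends to a quasi-coherent sheaf on $\Ran{\manifold}$. Since the Ran space is the colimit of $\diadia{\manifold}$ in $\zaritos{\msite}$, it suffices to produce a compatible system of $\cinfty(\produ{\manifold}{\set})$-modules for $\set\in\fsets^{\op}$, together with restriction maps under surjections $\set_1\twoheadrightarrow\set_2$. Each such surjection induces the corresponding diagonal $\produ{\manifold}{\set_2}\hookrightarrow\produ{\manifold}{\set_1}$, and on punctured germs induces a map whose pullback intertwines the comultiplications defining (\ref{Comu}). The naturality of equalizers then produces the required restriction on $\mathfrak{K}$.

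Next I would prove the factorization axiom. Suppose $\Phi=\Phi_1\sqcup\Phi_2$ with $\gra{\Phi_1}\cap\gra{\Phi_2}=\emptyset$, so that on a suitable open subscheme of the base the punctured germ decomposes as $\pugra{\Phi}\cong\pugra{\Phi_1}\underset{\vmani}\times\gegra{\Phi_2}\;\cup\;\gegra{\Phi_1}\underset{\vmani}\times\pugra{\Phi_2}$ (a Mayer--Vietoris cover whose overlap is $\gegra{\Phi_1}\underset{\vmani}\times\gegra{\Phi_2}$, which contributes trivially since it corresponds to gerbes on contractible pieces). Applying $\homo{}(\cinfty(\smoup),-)$ and Prop.\@ \ref{ExistenceAdjoint} turns this decomposition into a product of convenient rings $\mathfrak P_0^\Phi\cong\mathfrak P_0^{\Phi_1}\widehat\otimes\mathfrak P_0^{\Phi_2}$, and similarly for $\mathfrak G_0$. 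By the explicit construction of the transgression map in Prop.\@ \ref{ExistenceOfTheMap}, the integration of $\gerco$ over the union of disjoint $k-1$-spheres is the sum of the individual integrations (using commutativity of $\abgro$); consequently $\mathfrak M_0^\Phi\cong\mathfrak M_0^{\Phi_1}\widehat\otimes\mathfrak M_0^{\Phi_2}$ as a bimodule. Since the injective tensor product preserves equalizers (nuclearity of $\cinfty$-function rings), we obtain $\mathfrak K_\Phi\cong\mathfrak K_{\Phi_1}\widehat\otimes\mathfrak K_{\Phi_2}$, which is exactly the factorization axiom.

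The main obstacle I anticipate is the careful bookkeeping around the tensor products: one has to ensure that the equalizer defining $\mathfrak K$ commutes with the convenient tensor product appearing in the factorization identity, and this is where nuclearity and the agreement of projective and injective tensor products on the subcategory $Conv\subset Born$ become essential. The other, more technical subtlety is the compatibility of the restriction maps coming from diagonals $\produ{\manifold}{\set_2}\hookrightarrow\produ{\manifold}{\set_1}$ with the decomposition of $\pugra{\Phi}$; this requires checking that collapsing punctures in the limit of systems $\{U_\alpha\}$ used in (\ref{LimLim}) still yields the correct transgressed class, which follows because both the line bundle and the trivialization over $\ggrass{\manifold}{\smoup}{}$ were constructed naturally in $\Phi$ in Section \ref{LineBundle}.
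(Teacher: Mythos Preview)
Your proposal is in the same spirit as the paper, which in fact gives no proof block at all: the theorem is stated immediately after the equalizer construction of $\mathfrak K$ as a $\cinfty(\vmani)$-module, with the words ``so altogether we obtain the following.'' Everything you write about naturality in $\Phi$ and about passing from the factorizable line bundle to a factorization of $\mathfrak K$ is the content the paper leaves implicit, and your use of Prop.\@ \ref{ExistenceAdjoint} together with nuclearity to commute the equalizer with $\widehat\otimes$ is exactly the mechanism the paper has set up.

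One point to tighten: your Mayer--Vietoris style decomposition of $\pugra{\Phi}$ is more complicated than necessary and not quite the right picture. When $\gra{\Phi_1}$ and $\gra{\Phi_2}$ are disjoint closed subsets of $\vmani\times\manifold$, the germ at their union splits as a \emph{disjoint union} over $\vmani$,
\[
\pugra{\Phi}\;\cong\;\pugra{\Phi_1}\;\underset{\vmani}{\sqcup}\;\pugra{\Phi_2},
\]
because germs at disjoint closed sets decouple. Applying $\hom(-,\smoup)$ turns this into a direct product of sets, and on the level of the representing convenient rings from Prop.\@ \ref{ExistenceAdjoint} this is the tensor product $\mathfrak P_0^{\Phi}\cong\mathfrak P_0^{\Phi_1}\widehat\otimes\mathfrak P_0^{\Phi_2}$ you want, with the analogous statement for $\mathfrak G_0$. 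From there your argument proceeds correctly: the additivity of the transgression in Prop.\@ \ref{ExistenceOfTheMap} over disjoint collections of $k{-}1$-spheres gives $\mathfrak M_0^{\Phi}\cong\mathfrak M_0^{\Phi_1}\widehat\otimes\mathfrak M_0^{\Phi_2}$, and exactness of the injective tensor with nuclear factors yields factorization of $\mathfrak K$.
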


\appendix

\section{Subschemes of $\cinfty$-schemes}\label{SectionBasicGeometry}


In this appendix we collect some definitions and statements, used throughout the paper. All these constructions are rather standard, either in differential or in algebraic geometry. The only original input that we put here is by using the $\infty$-radicals from \cite{BK}. They allow us to translate many standard algebraic-geometric results into differential geometry.

\subsection{Urysohn's lemma}

\begin{definition}\label{DefinitionLocallyClosed}\begin{enumerate}\item Let $\cscheme\in\csite$, $\charfun\in\cinfty(\cscheme)$, {\it an open subscheme of $\cscheme\in\csite$ defined by $\charfun$} is $\locin{\cscheme}{\charfun\neq 0}:=\spec(\cinfty(\cscheme)\{\charfun^{-1}\})$. 
\item {\it A closed subscheme of $\cscheme$} is $\specof{\cinfty(\cscheme)/\ideal}$ for some ideal $\ideal\leq\cinfty(\cscheme)$. Given two closed subschemes $\cscheme_1,\cscheme_2\subseteq\cscheme$, defined by $\ideal_1,\ideal_2\leq\cinfty(\cscheme)$, {\it their union} is $\cscheme_1\cup\cscheme_2:=\specof{\cinfty(\cscheme)/(\ideal_1+\ideal_2)}$.
\item A morphism $\cmor\colon\cscheme'\rightarrow\cscheme$ is {\it a locally closed subscheme}, if there is an open $\opensch\subseteq\cscheme$, s.t.\@ $\cmor$ factors through a closed embedding $\cscheme\hookrightarrow\opensch$.
\item {\it The closed image of a morphism} $\cmor\colon\cscheme'\rightarrow\cscheme$ is $\closim{\cmor}{\cscheme'}:=\specof{\cinfty(\cscheme)/\kerim{\cmor}}$, where $\kerim{\cmor}$ is the kernel of $\cinfty(\cscheme)\rightarrow\cinfty(\cscheme')$.\end{enumerate}\end{definition}
An open subscheme is equipped with the canonical embedding into $\cscheme$, that we will denote by $\prinop{\cscheme}{\charfun}\subseteq\cscheme$. We will write $\prinop{\cscheme}{\charfun_1}\subseteq\prinop{\cscheme}{\charfun_2}$, if $\prinop{\cscheme}{\charfun_1}\subseteq\cscheme$ factors through $\prinop{\cscheme}{\charfun_2}\subseteq\cscheme$. Similarly for closed subschemes. The closed image of an open subscheme $\opensch\subseteq\cscheme$ will be denoted by $\clos{\opensch}$.

\begin{remark} Let $\opensch\subseteq\cscheme$, $\opensch'\subseteq\opensch$ be open subschemes, the composite $\opensch'\hookrightarrow\cscheme$ is an open subscheme, i.e.\@ $\exists\charfun\in\cinfty(\cscheme)$, s.t.\@ $\opensch'\cong\prinop{\cscheme}{\charfun}$ (e.g.\@ \cite{MR86} Thm.\@ 1.4(i)).

Given $\prinop{\cscheme}{\charfun_1},\prinop{\cscheme}{\charfun_2}\subseteq\cscheme$ and an open subscheme $\opensch\subseteq\cscheme$ the set $\{\prinop{\cscheme}{\charfun_1},\prinop{\cscheme}{\charfun_2}\}$ is a Zariski cover of $\opensch$, iff $\opensch\cong\prinop{\cscheme}{\charfun_1}\cup\prinop{\cscheme}{\charfun_2}:=\prinop{\cscheme}{\charfun_1^2+\charfun_2^2}$ (e.g.\@ \cite{MR91} Lemma VI.1.2).\end{remark}

\begin{lemma}\label{OpensRadicals} Let $\cscheme\in\csite$, and let $\charfun_1,\charfun_2\in\cinfty(\cscheme)$. Then
	\begin{equation*}\prinop{\cscheme}{\charfun_1}\subseteq\prinop{\cscheme}{\charfun_2}\Leftrightarrow
	\iradical{\igen{\charfun_1}}\leq\iradical{\igen{\charfun_2}}.\footnote{The $\infty$-radical $\iradical{\quad}$ was defined in \cite{BK} Def.\@ 1.}\end{equation*}
\end{lemma}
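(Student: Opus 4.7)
The strategy is to reduce both implications to the single intermediate condition $\charfun_1\in\iradical{\igen{\charfun_2}}$. Writing $\cring:=\cinfty(\cscheme)$, I would first prove the key equivalence
\begin{equation*}\charfun_1\in\iradical{\igen{\charfun_2}}\Longleftrightarrow\prinop{\cscheme}{\charfun_1}\subseteq\prinop{\cscheme}{\charfun_2}.\end{equation*}
Unpacking the definition of the $\infty$-radical from \cite{BK}, the left-hand side says that the image of $\charfun_1$ in $\cring/\igen{\charfun_2}$ is $\infty$-nilpotent. Using the characterization that inverting an $\infty$-nilpotent element collapses a $\cinfty$-ring to the zero ring, together with exactness of $\cinfty$-localization, this becomes
\begin{equation*}\cring\{\charfun_1^{-1}\}/\igen{\charfun_2}\cong(\cring/\igen{\charfun_2})\{\overline{\charfun_1}^{-1}\}=0,\end{equation*}
i.e.\@ $\charfun_2$ is already a unit in $\cring\{\charfun_1^{-1}\}$. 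The universal property of $\cinfty$-ring localization then produces a unique $\cring$-algebra morphism $\cring\{\charfun_2^{-1}\}\to\cring\{\charfun_1^{-1}\}$, which is precisely the defining datum of $\prinop{\cscheme}{\charfun_1}\subseteq\prinop{\cscheme}{\charfun_2}$.

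Granting this equivalence, both directions of the lemma are formal. For $(\Leftarrow)$, $\iradical{\igen{\charfun_1}}\leq\iradical{\igen{\charfun_2}}$ gives $\charfun_1\in\iradical{\igen{\charfun_1}}\subseteq\iradical{\igen{\charfun_2}}$, and the key equivalence delivers the inclusion of opens. For $(\Rightarrow)$, the same equivalence yields $\charfun_1\in\iradical{\igen{\charfun_2}}$, hence $\igen{\charfun_1}\leq\iradical{\igen{\charfun_2}}$; applying $\iradical{\,\cdot\,}$ to both sides and using its monotonicity and idempotence (established in \cite{BK}) gives $\iradical{\igen{\charfun_1}}\leq\iradical{\iradical{\igen{\charfun_2}}}=\iradical{\igen{\charfun_2}}$.

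The main obstacle is the characterization that an element $a$ of a $\cinfty$-ring $B$ is $\infty$-nilpotent if and only if $B\{a^{-1}\}=0$. In the classical commutative-algebra case this is the elementary computation $a^n=0\Rightarrow 1=a^n\cdot a^{-n}=0$ together with its converse, but in the $\cinfty$-setting inversion is produced by composition with $\cinfty$-functions rather than by formal inversion, so the argument must pass through the flatness of bump functions at the origin---precisely the material that the $\infty$-radical is designed to encode in \cite{BK}. Once this characterization is invoked, the remainder of the proof is a direct translation of the classical algebraic-geometric equivalence $D(f_1)\subseteq D(f_2)\Leftrightarrow\sqrt{(f_1)}\subseteq\sqrt{(f_2)}$.
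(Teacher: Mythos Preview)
Your proof is correct and follows essentially the same approach as the paper: both reduce to the intermediate condition $\charfun_1\in\iradical{\igen{\charfun_2}}$, establish it via the isomorphism $(\cring/\igen{\charfun_2})\{\overline{\charfun_1}^{-1}\}\cong(\cring\{\charfun_1^{-1}\})/\igen{\charfun_2}$ together with the characterization of $\infty$-nilpotents by vanishing of the localization, and then invoke monotonicity and idempotence of $\iradical{\,\cdot\,}$ from \cite{BK}. The only cosmetic difference is that you isolate the key equivalence first and then derive both directions from it, whereas the paper treats $\Rightarrow$ and $\Leftarrow$ separately; note also that what you call ``unpacking the definition'' of $\charfun_1\in\iradical{\igen{\charfun_2}}$ as $\infty$-nilpotence modulo $\igen{\charfun_2}$ is in the paper a citation to \cite{BK} Prop.\@ 3 rather than the raw definition.
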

\begin{proof}\begin{itemize}\item[$\Rightarrow$] Consider $\cmorf\colon\cinfty(\cscheme)\rightarrow\cinfty(\cscheme)/\igen{\charfun_2}$, and let $\ima{\charfun_1}:=\cmorf(\charfun_1)$. The $\cinfty$-ring $(\cinfty(\cscheme)/\igen{\charfun_2})\{\ima{\charfun_1}^{-1}\}$ is a solution to two universal problems: inverting $\charfun_1\in\cinfty(\cscheme)$ and killing $\igen{\charfun_2}\leq\cinfty(\cscheme)$. Therefore
	\begin{equation}\label{TwoVersions}(\cinfty(\cscheme)/\igen{\charfun_2})\{\ima{\charfun_1}^{-1}\}\cong
	(\cinfty(\cscheme)\{\charfun_1^{-1}\})/\igen{\ima{\charfun_2}},\end{equation}
where $\ima{\charfun_2}\in\cinfty(\cscheme)\{\charfun_1^{-1}\}$ is the image of $\charfun_2$. By assumption $\charfun_2$ becomes invertible in $\cinfty(\cscheme)\{\charfun_1^{-1}\}$, therefore the $\cinfty$-rings in (\ref{TwoVersions}) are trivial, i.e.\@ $\ima{\charfun_1}\in\iradical{0}\leq\cinfty(\cscheme)/\igen{\charfun_2}$, implying that $\charfun_1\in\iradical{\igen{\charfun_2}}$ (\cite{BK} Prop.\@ 3). Then $\igen{\charfun_1}\leq\iradical{\igen{\charfun_2}}$ and hence $\iradical{\igen{\charfun_1}}\leq\iradical{\igen{\charfun_2}}$ (\cite{BK} Lemmas 2,4).
\item[$\Leftarrow$] Since $\charfun_1\in\iradical{\igen{\charfun_2}}$, the ideal of $\cinfty(\cscheme)\{\charfun_1^{-1}\}$ generated by the image of $\charfun_2$ contains $1$ (see (\ref{TwoVersions})), i.e.\@ $\prinop{\cscheme}{\charfun_1}\subseteq\prinop{\cscheme}{\charfun_2}$\end{itemize}\end{proof}%

\begin{definition}\label{Inequalities} Let $\cscheme\in\csite$, $\charfun\in\cinfty(\cscheme)$. For $r\in\mathbb R$ let $\alpha_r\in\cinfty(\mathbb R)$ be any function s.t.\@ $\vset{\rpt}{\mathbb R}{\alpha_r=0}=(-\infty,r]$. Then we define 
	\begin{equation*}\locin{\cscheme}{\charfun\leq r}:=\spec(\cinfty(\cscheme)/\iradical{\igen{\alpha_r(\charfun)}}),\quad
	\locin{\cscheme}{\charfun>r}:=\spec(\cinfty(\cscheme)\{\alpha_r(\charfun)^{-1}\}),\end{equation*}
similarly we define $\locin{\cscheme}{\charfun\geq r}$, $\locin{\cscheme}{\charfun<r}$, and, taking intersections, $\locin{\cscheme}{r_1\leq\charfun\leq r_2}$, etc.
\end{definition}

\begin{lemma}\label{InequalitiesWellDefined} Let $\cscheme$, $\charfun$ be as in Def.\@ \ref{Inequalities}. Then $\locin{\cscheme}{\charfun\leq r}$, $\locin{\cscheme}{\charfun>r}$ do not depend on the choice of $\alpha_r$.\end{lemma}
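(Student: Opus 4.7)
The key observation is that both inequalities in Def.\@ \ref{Inequalities} should ultimately depend only on the open subscheme $(r,\infty) \hookrightarrow \mathbb R$, pulled back along $\charfun \colon \cscheme \to \mathbb R$. So my plan is to first verify the claim inside $\cinfty(\mathbb R)$ and then transfer to $\cscheme$ by functoriality of localization.

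First I would show that if $\alpha_r, \alpha'_r \in \cinfty(\mathbb R)$ are any two smooth functions whose zero sets coincide with $(-\infty, r]$, then there is a canonical isomorphism $\cinfty(\mathbb R)\{\alpha_r^{-1}\} \cong \cinfty((r,\infty)) \cong \cinfty(\mathbb R)\{{\alpha'_r}^{-1}\}$. This is a standard fact (e.g.\@ \cite{MR86} Thm.\@ 1.4, or \cite{MR91} Prop.\@ I.1.6): localizing a $\cinfty$-ring of the form $\cinfty(\manifo)$ at a function is the same as restricting to the open locus where that function is nonzero, and here both loci equal $(r, \infty)$. Consequently the two open subschemes $\prinop{\mathbb R}{\alpha_r}, \prinop{\mathbb R}{\alpha'_r} \subseteq \mathbb R$ are equal. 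By Lemma \ref{OpensRadicals} (applied in both directions), this equality is equivalent to $\iradical{\igen{\alpha_r}} = \iradical{\igen{\alpha'_r}}$ in $\cinfty(\mathbb R)$.

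Next I would transfer this to $\cscheme$. The function $\charfun \in \cinfty(\cscheme)$ is the same datum as a morphism $\charfun \colon \cscheme \to \mathbb R$ in $\csite$, and for a principal open $\prinop{\mathbb R}{g} \subseteq \mathbb R$ the pullback $\cscheme \times_{\mathbb R} \prinop{\mathbb R}{g}$ in $\csite$ is $\prinop{\cscheme}{g(\charfun)}$ (by the universal property of localization of $\cinfty$-rings). Applying this with $g = \alpha_r$ and $g = \alpha'_r$ gives
\begin{equation*}
\prinop{\cscheme}{\alpha_r(\charfun)} \;=\; \cscheme \times_{\mathbb R} \prinop{\mathbb R}{\alpha_r} \;=\; \cscheme \times_{\mathbb R} \prinop{\mathbb R}{\alpha'_r} \;=\; \prinop{\cscheme}{\alpha'_r(\charfun)},
\end{equation*}
so $\locin{\cscheme}{\charfun > r}$ is independent of the choice of $\alpha_r$. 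Applying Lemma \ref{OpensRadicals} once more gives $\iradical{\igen{\alpha_r(\charfun)}} = \iradical{\igen{\alpha'_r(\charfun)}}$ in $\cinfty(\cscheme)$, hence $\locin{\cscheme}{\charfun \leq r}$ is also well defined.

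The only genuine point to verify is the first step, which relies on the appropriate localization lemma for $\cinfty$-rings that deals with functions vanishing on a closed set of positive measure (including to infinite order along the boundary). Once that is in hand, the rest is a formal consequence of Lemma \ref{OpensRadicals} and functoriality of pullbacks in $\csite$.
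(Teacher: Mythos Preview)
Your argument is correct and follows the same overall strategy as the paper: reduce to a manifold where localization at a function depends only on its zero set, then transfer to $\cscheme$ and invoke Lemma~\ref{OpensRadicals} for the closed complement. The only difference is in the transfer step. The paper chooses a surjective presentation $\cinfty(\mathbb R^\set)\twoheadrightarrow\cinfty(\cscheme)$, lifts $\charfun$ to $\preimage{\charfun}\in\cinfty(\mathbb R^\set)$, observes that $\alpha_r(\preimage{\charfun})$ and $\alpha'_r(\preimage{\charfun})$ have the same zero set in $\mathbb R^\set$, and then descends via \cite{MR86} Prop.~1.2. You instead work directly on $\mathbb R$ and pull back along $\charfun\colon\cscheme\to\mathbb R$, using that $\prinop{\cscheme}{g(\charfun)}\cong\cscheme\times_{\mathbb R}\prinop{\mathbb R}{g}$ by the universal property of localization. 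Your route is slightly more direct since it avoids choosing a presentation; the paper's route makes the dependence on the ambient manifold more explicit. Both rely on the same key input (localization of $\cinfty(\manifo)$ at a function depends only on the zero set) and the same Lemma~\ref{OpensRadicals}.
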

\begin{proof} Choosing generators we find a surjective $\cinfty(\mathbb R^\set)\rightarrow\cinfty(\cscheme)$ for some set $\set$. Let $\preimage{\charfun}\in\cinfty(\mathbb R^\set)$ be any pre-image of $\charfun$. Let $\alpha_r$, $\alpha'_r\in\cinfty(\mathbb R)$ be any two choices as in Def.\@ \ref{Inequalities}. Since $\alpha_r$, $\alpha'_r$ have the same $0$-sets in $\mathbb R$, also $\alpha_r(\preimage{\charfun})$, $\alpha'_r(\preimage{\charfun})$ have the same $0$-sets. Then $\cinfty(\mathbb R^\set)\{\alpha_r(\preimage{\charfun})^{-1}\}=\cinfty(\mathbb R^\set)\{\alpha'_r(\preimage{\charfun})^{-1}\}$. Using \cite{MR86} Prop.\@ 1.2 we immediately conclude that $\prinop{\cscheme}{\alpha_r(\charfun)}=\prinop{\cscheme}{\alpha'_r(\charfun)}$. This implies that also $\locin{\cscheme}{\charfun\leq r}$ is well defined (Lemma \ref{OpensRadicals}).\end{proof}%

\begin{proposition}\label{ZariskiByFunction} Let $\cscheme\in\csite$, $\cscheme\ncong\emptys$, and let $\opensch_1,\opensch_2\subseteq\cscheme$ be open subschemes, s.t.\@ $\cscheme=\opensch_1\cup\opensch_2$. Then there are $\charfun\in\cinfty(\cscheme)$, $r_2< r_1\in\mathbb R$ s.t.\@
	\begin{equation*}\opensch_1=\locin{\cscheme}{\charfun<r_1},\quad\opensch_2=\locin{\cscheme}{\charfun>r_2}.\end{equation*}
\end{proposition}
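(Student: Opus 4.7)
First I would realize each open as a principal open: $\opensch_i = \prinop{\cscheme}{\charfun_i}$ for some $\charfun_i \in \cinfty(\cscheme)$. The hypothesis that $\{\opensch_1, \opensch_2\}$ covers $\cscheme$ translates, via the Remark following Def.\@ \ref{DefinitionLocallyClosed}, into the statement that $h := \charfun_1^2 + \charfun_2^2$ is invertible in $\cinfty(\cscheme)$. I would then take
\begin{equation*}
\charfun := \charfun_2^2 \, h^{-1}, \qquad r_1 := 1, \qquad r_2 := 0,
\end{equation*}
so that $1 - \charfun = \charfun_1^2 \, h^{-1}$, and claim $\opensch_2 = \locin{\cscheme}{\charfun > 0}$ and $\opensch_1 = \locin{\cscheme}{\charfun < 1}$.

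Next, I would unfold each of these equalities, using Def.\@ \ref{Inequalities} and Lemma \ref{OpensRadicals}, into an identity of $\infty$-radical ideals. For the first, choosing $\alpha_0 \in \cinfty(\mathbb R)$ with zero set $(-\infty, 0]$, the claim becomes $\iradical{\igen{\charfun_2}} = \iradical{\igen{\alpha_0(\charfun)}}$. The inclusion $\supseteq$ is immediate: modulo $\charfun_2$ the image of $\charfun$ is $0$, so $\alpha_0(\charfun) \mapsto \alpha_0(0) = 0$. For the converse I must show that $\alpha_0(\charfun)$ becomes invertible in the localization $\cinfty(\opensch_2) = \cinfty(\cscheme)\{\charfun_2^{-1}\}$. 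The $\opensch_1$ case reduces symmetrically, by setting $\beta(u) := \tilde\alpha_1(1-u)$ (with $\tilde\alpha_1$ vanishing on $[1,\infty)$) so that $\beta$ again has zero set $(-\infty,0]$, and using $1 - \charfun = \charfun_1^2 h^{-1}$.

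The main obstacle is exactly this invertibility claim: in a general $\cinfty$-ring one cannot naively pass from ``$\charfun_2$ invertible'' to ``$\alpha_0(\charfun_2^2 h^{-1})$ invertible'', because $\alpha_0$ vanishes along an entire ray, not just a point. The key is to exploit that $h$ is not merely invertible but is an \emph{invertible sum of squares}. Concretely, the $\cinfty$-ring map $\cinfty(\mathbb R^2) \to \cinfty(\cscheme)$ given by $(x,y) \mapsto (\charfun_1, \charfun_2)$ sends $x^2 + y^2$ to the invertible element $h$, so it factors through $\cinfty(\mathbb R^2 \setminus \{0\})$. Hence the pair $(\charfun_2, h)$ is the image of $(y,\,x^2+y^2)$, whose values lie in $\mathbb R \times \mathbb R_{>0}$, and so smooth functions on $\mathbb R \times \mathbb R_{>0}$ may be evaluated at $(\charfun_2, h)$. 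I would take $\phi(s,t) := \alpha_0(s^2/t) \in \cinfty(\mathbb R \times \mathbb R_{>0})$, whose zero set on $\mathbb R \times \mathbb R_{>0}$ is precisely $\{0\} \times \mathbb R_{>0}$. After further inverting $\charfun_2$ to pass into $\cinfty(\opensch_2)$, the underlying map factors through $\cinfty((\mathbb R \setminus \{0\}) \times \mathbb R_{>0})$, on which $\phi$ has empty zero set and is therefore invertible; consequently $\alpha_0(\charfun) = \phi(\charfun_2, h)$ is invertible in $\cinfty(\opensch_2)$, closing the argument. The same device, applied to $(\charfun_1, h)$ with $\phi(s,t) := \beta(s^2/t)$, yields $\opensch_1 = \locin{\cscheme}{\charfun < 1}$.
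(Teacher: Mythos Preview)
Your proof is correct, and it takes a genuinely different route from the paper's. The paper invokes a presentation result (Moerdijk--Reyes, Lemma~VI.1.2) to realize $\cscheme$ as a quotient of $\cinfty(\openset)$ for an honest open $\openset\subseteq\mathbb R^n$, lifts $\charfun_1,\charfun_2$ to $\openset$, and then applies the smooth Urysohn lemma on $\openset$ to produce a function separating the two closed complements; the image of this Urysohn function in $\cinfty(\cscheme)$ is the desired $\charfun$, with $r_1=1$, $r_2=0$.

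By contrast you work intrinsically in the $\cinfty$-ring, writing down the explicit ratio $\charfun=\charfun_2^2/(\charfun_1^2+\charfun_2^2)$ and then verifying the two equalities of opens by factoring the canonical map $\cinfty(\mathbb R^2)\to\cinfty(\cscheme)$ through $\cinfty(\mathbb R^2\setminus\{0\})$ and its further localization along $\{y\neq 0\}$ (resp.\ $\{x\neq 0\}$). This avoids appealing to a global presentation of $\cscheme$ and makes the argument entirely self-contained in terms of the $\cinfty$-ring axioms and Lemma~\ref{OpensRadicals}. In effect you are performing the Urysohn construction by hand via the standard partition-of-unity formula, whereas the paper outsources it to the classical statement on $\openset$. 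Your approach is slightly longer to write out but conceptually more direct; the paper's is shorter because the classical Urysohn lemma absorbs the analytic content.
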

\begin{proof} According to \cite{MR91} Lemma VI.1.2 we can find $n\in\noneg$, an open set $\openset\subseteq\mathbb R^n$, a surjective morphism $\cinfty(\openset)\rightarrow\cinfty(\cscheme)$, and $\charfun_1,\charfun_2\in\cinfty(\openset)$, s.t.\@ $1\in\igen{\charfun_1,\charfun_2}$ and $\opensch_1=\prinop{\cscheme}{\ima{\charfun_1}}$, $\opensch_2=\prinop{\cscheme}{\ima{\charfun_2}}$, where $\ima{\charfun_1},\ima{\charfun_2}\in\cinfty(\cscheme)$ are the images of $\charfun_1$, $\charfun_2$. Let $\openset_1$, $\openset_2\subseteq\openset$ be the open subschemes defined by $\charfun_1$, $\charfun_2$. Then $\openset_1\cup\openset_2=\openset$, $(\openset\setminus\openset_1)\cap(\openset\setminus\openset_2)=\emptyset$. Using the smooth version of Urysohn's lemma we can find $\charfun\colon\openset\rightarrow[0,1]$, s.t.\@ $\openset\setminus\openset_1=\charfun^{-1}(\{1\})$, $\openset\setminus\openset_2=\charfun^{-1}(\{0\})$. Put $r_2:=0$, $r_1:=1$ and take the image of $\charfun$ in $\cinfty(\cscheme)$.\end{proof}%

\subsection{Complements and germs}
\hide{%
\begin{lemma} Let $\cscheme'\in\csite$ and $\cscheme\subseteq\cscheme'$ a closed subscheme (Def.\@ \ref{DefinitionLocallyClosed}). For any open subschemes $\opensch_1\subseteq\opensch_2\subseteq\cscheme$, s.t.\@ $\clos{\opensch_1}\subseteq\opensch_2$, there are open subschemes $\opensch'_1,\opensch'_2\subseteq\cscheme'$, s.t.\@ $\opensch_1=\opensch'_1\cap\cscheme$, $\opensch_2=\opensch'_2\cap\cscheme$ and $\clos{\opensch'_1}\subseteq\opensch'_2$.\end{lemma}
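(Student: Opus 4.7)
My plan is to realize $\opensch'_1, \opensch'_2$ as principal opens, obtaining $\opensch'_2$ trivially by lifting a defining function of $\opensch_2$, and constructing $\opensch'_1$ by cutting off a lift of a defining function of $\opensch_1$ by a smooth bump function that is compactly supported inside $\opensch'_2$ and identically $1$ on $\clos{\opensch_1}$. First, write $\opensch_i = \prinop{\cscheme}{\charfun_i}$ for $i=1,2$; since $\cscheme \hookrightarrow \cscheme'$ is a closed embedding, $\cinfty(\cscheme') \twoheadrightarrow \cinfty(\cscheme)$ is surjective and I can pick lifts $\preimage{\charfun}_1, \preimage{\charfun}_2 \in \cinfty(\cscheme')$. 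Declare $\opensch'_2 := \prinop{\cscheme'}{\preimage{\charfun}_2}$; then $\opensch'_2 \cap \cscheme = \prinop{\cscheme}{\charfun_2} = \opensch_2$ by the universal property of localization applied to the closed embedding.

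To construct the bump, use that $\cscheme' \in \csite$ has a finitely generated $\cinfty$-ring, so I can choose a surjection $\cinfty(\mathbb R^N) \twoheadrightarrow \cinfty(\cscheme')$, presenting $\cscheme' \hookrightarrow \mathbb R^N$ as a closed embedding, and lift $\preimage{\charfun}_2$ further to $\charfun''_2 \in \cinfty(\mathbb R^N)$. The closed subsets $\clos{\opensch_1}$ and $\vset{\rpt}{\mathbb R^N}{\charfun''_2=0}$ are disjoint: a point of $\clos{\opensch_1}$ lies in $\cscheme$ and satisfies $\charfun_2 \neq 0$, so its image under $\charfun''_2$ is non-zero. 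The smooth version of Urysohn's lemma (cf.\@ the argument used to prove Prop.\@ \ref{ZariskiByFunction}) then produces $\tilde\psi \in \cinfty(\mathbb R^N)$ with $\tilde\psi \equiv 1$ on an open neighborhood of $\clos{\opensch_1}$ and $\tilde\psi \equiv 0$ on an open neighborhood of $\vset{\rpt}{\mathbb R^N}{\charfun''_2=0}$, arranged so that the support of $\tilde\psi$ is contained in a closed $K \subseteq \mathbb R^N$ still satisfying $K \cap \vset{\rpt}{\mathbb R^N}{\charfun''_2=0} = \emptyset$. Let $\psi \in \cinfty(\cscheme')$ be the image of $\tilde\psi$, set $\preimage{\charfun'}_1 := \psi \preimage{\charfun}_1$ and $\opensch'_1 := \prinop{\cscheme'}{\preimage{\charfun'}_1}$.

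The verification that $\opensch'_1 \cap \cscheme = \opensch_1$ is straightforward: the restriction $\preimage{\charfun'}_1|_\cscheme = \psi|_\cscheme \cdot \charfun_1$, and since $\psi \equiv 1$ near $\clos{\opensch_1} \supseteq \opensch_1$ the image of $\psi|_\cscheme$ is invertible in $\cinfty(\cscheme)\{\charfun_1^{-1}\}$, so Lemma \ref{OpensRadicals} yields $\prinop{\cscheme}{\psi|_\cscheme\charfun_1} = \prinop{\cscheme}{\charfun_1} = \opensch_1$. The main obstacle is the scheme-theoretic inclusion $\clos{\opensch'_1} \subseteq \opensch'_2$, which is strictly stronger than the set-theoretic statement and must be checked in $\cinfty(\cscheme')$ modulo the kernel $\ideal$ of the localization $\cinfty(\cscheme') \to \cinfty(\cscheme')\{\preimage{\charfun'}_1^{-1}\}$. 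To handle it, choose an open neighborhood $V \supseteq K$ in $\mathbb R^N$ with $V \cap \vset{\rpt}{\mathbb R^N}{\charfun''_2=0} = \emptyset$, so that $\preimage{\charfun}_2$ admits a smooth local inverse on the image of $V$ in $\cscheme'$; cutting this local inverse off by a further bump that is $1$ near the support of $\psi$ and $0$ outside $V$ produces $g \in \cinfty(\cscheme')$ with $g\preimage{\charfun}_2 - 1$ vanishing on an open neighborhood of the support of $\psi$, which contains $\opensch'_1$. This forces $g\preimage{\charfun}_2 - 1 \in \ideal$, rendering $\preimage{\charfun}_2$ invertible in $\cinfty(\clos{\opensch'_1}) = \cinfty(\cscheme')/\ideal$, which is exactly the desired factorization $\clos{\opensch'_1} \hookrightarrow \opensch'_2$.
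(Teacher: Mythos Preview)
There is a genuine gap in your argument, precisely at the step where you infer that $\psi|_\cscheme$ is invertible in $\cinfty(\cscheme)\{\charfun_1^{-1}\}$ from the fact that $\tilde\psi\equiv 1$ on an open neighbourhood of $\clos{\opensch_1}$. When you write ``$\clos{\opensch_1}$'' as a closed subset of $\mathbb R^N$ you can only mean its set of $\mathbb R$-points, and the Urysohn construction sees nothing else. But objects of $\csite$ need not be point-determined; indeed, the whole point of this paper is to work with $\cinfty$-schemes (germs at punctures, quotients by ideals of compactly supported functions, etc.) that may have no $\mathbb R$-points whatsoever. In that case your conditions on $\tilde\psi$ become vacuous and you may legitimately take $\tilde\psi=0$, giving $\opensch'_1=\emptys$ even when $\opensch_1\neq\emptys$. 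Concretely: take $\cscheme'=\mathbb R$, let $\cscheme=\spec(\cinfty(\mathbb R)/I)$ with $I$ the ideal of compactly supported functions, and $\opensch_1=\opensch_2=\cscheme$; then $\clos{\opensch_1}$ has no $\mathbb R$-points, $\{\charfun''_2=0\}=\emptyset$, and nothing in your recipe prevents $\tilde\psi=0$.

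The paper's proof avoids this by never appealing to $\mathbb R$-points of $\clos{\opensch_1}$. Instead it encodes the hypothesis $\clos{\opensch_1}\subseteq\opensch_2$ algebraically as an identity $\charfun'_2\charfun_2=1+\alpha$ with $\alpha$ in the kernel of localisation at $\charfun_1$, lifts this identity to $\cinfty(\mathbb R^n)$ picking up a correction $\beta$ in the ideal of $\cscheme$, and then applies Urysohn to the zero-set of $\preimage{\alpha}^2+\beta^2$ versus the zero-set of $\preimage{\charfun_2}$. These are zero-sets of explicit functions in $\cinfty(\mathbb R^n)$, so a bump $\tilde\psi$ that is identically $1$ near the first set will satisfy $(1-\tilde\psi)(\preimage{\alpha}^2+\beta^2)=0$ as an \emph{equation in} $\cinfty(\mathbb R^n)$, and this purely ring-theoretic identity is what survives passage to the quotient and yields the required invertibilities. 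Your final step ($\clos{\opensch'_1}\subseteq\opensch'_2$) is in fact handled correctly by exactly this kind of mechanism (you use $(\rho-1)\tilde\psi=0$), so the fix for the earlier step is to replace ``the $\mathbb R$-points of $\clos{\opensch_1}$'' by a zero-set of lifted functions witnessing $\clos{\opensch_1}\subseteq\opensch_2$.
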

\begin{proof} Let $\ideal_1:=\kernel{\cinfty(\cscheme)\rightarrow\cinfty(\cscheme)\{\charfun_1^{-1}\}}$, since $\clos{\opensch_1}\subseteq\opensch_2$ there are $\charfun'_2\in\cinfty(\cscheme)$, $\alpha\in\ideal_1$, s.t.\@ $\charfun'_2\charfun_2=1+\alpha$. As $\cinfty(\cscheme)\in\fgrings$, we can find a surjective morphism of $\cinfty$-rings $\cinfty(\mathbb R^n)\rightarrow\cinfty(\cscheme)$ for some $n\in\noneg$, let $\ideal\leq\cinfty(\mathbb R^n)$ be the kernel of this morphism. Let $\preimage{\charfun_2},\preimage{\charfun'_2},\preimage{\alpha}\in\cinfty(\mathbb R^n)$ be any pre-images of $\charfun_2$, $\charfun'_2$, $\alpha$. Then $\exists\beta\in\ideal$ s.t.\@ $\preimage{\charfun_2}\preimage{\charfun'_2}=1+\preimage{\alpha}+\beta$. Let $\closedset$ be the $0$-set of $\beta^2+\preimage{\alpha}^2$, and let $\closedset_2$ be the $0$-set of $\preimage{\charfun_2}$. Let $\charfun'\in\cinfty(\mathbb R^n,[0,1])$ have value $0$ on $\closedset$ and value $1$ on $\closedset_2$, and let $\charfun\in\cinfty(\cscheme)$ be the image of $\charfun'$. 

Then inverting $\charfun_1$ implies dividing by $\igen{\charfun}$, and dividing by $\igen{\charfun_2}$ implies inverting $\charfun$. 

Let $\charfun_1,\charfun_2\in\cinfty(\cscheme)$ be s.t.\@ $\opensch_1=\prinop{\cscheme}{\charfun_1}$, $\opensch_2=\prinop{\cscheme}{\charfun_2}$. Let $\preimage{\charfun_1}, \preimage{\charfun_2}\in\cinfty(\mathbb R^n)$ be any of their pre-images, and let $\openset_1:=\{\preimage{\charfun_1}\neq 0\}$, $\openset_2:=\{\preimage{\charfun_2}\neq 0\}$ be the corresponding open subsets in $\mathbb R^n$. Let $\closedset$ be the zero-set of $\ideal:=\kernel{\cinfty(\mathbb R^n)\rightarrow\cinfty(\cscheme)}$, then $\forall\rpt\in\clos{\openset_1}\setminus\closedset$ there is $\charfun_{\rpt}\in\ideal$, s.t.\@ $\charfun_{\rpt}(\rpt)\neq 0$. We can choose these $\{\charfun_{\rpt}\}$ to be locally finite on $\mathbb R^n$, s.t.\@ their supports cover $\clos{\openset_1}\setminus\closedset$. We define $\charfun'_2:=\preimage{\charfun_2}^2+\underset{\rpt}\sum\,\charfun_{\rpt}^2$. Let $\charfun''\in\cinfty(\mathbb R^n)$ be s.t.\@ it vanishes exactly on $\clos{\openset_1}$, then $\igen{\charfun'_2,\charfun''}=\cinfty(\mathbb R^n)$. The image of $\charfun''$ in $\cinfty(\cscheme)$ defines an open subscheme $\opensch$, s.t.\@ $\opensch\cup\opensch_2=\cscheme'$. Using Prop.\@ \ref{ZariskiByFunction} we can find $\charfun\in\cinfty(\cscheme)$, s.t.\@ $\opensch=\locin{\cscheme}{\charfun< r_1}$, $\opensch_2=\locin{\cscheme}{\charfun> r_2}$ for some $r_2< r_1\in\mathbb R$. Let $\preimage{\charfun}\in\cinfty(\cscheme')$ be any pre-image of $\charfun$. We define
	\begin{equation*}\opensch'_1:=\locin{\cscheme'}{\preimage{\charfun}<r_1},\quad\opensch'_2:=\locin{\cscheme'}{\preimage{\charfun}>r_2}.\end{equation*}
\end{proof}}%

Lemma \ref{OpensRadicals} implies that the following definition makes sense.

\begin{definition}\label{ClosedComplement} Let $\cscheme\in\csite$, $\charfun\in\cinfty(\cscheme)$. {\it The complement $\comple{\cscheme}{\prinop{\cscheme}{\charfun}}$ of $\prinop{\cscheme}{\charfun}$} is $\spec(\cinfty(\cscheme)/\iradical{\igen{\charfun}})$. {\it The boundary of $\prinop{\cscheme}{\charfun}$ in $\cscheme$} is $\bouns{\prinop{\cscheme}{\charfun}}:=\comple{\clos{\prinop{\cscheme}{\charfun}}}{\prinop{\cscheme}{\charfun}}$.\end{definition}

\begin{example} Let $\opensch\cong\spec(\cinfty(\manifo)\{\charfun^{-1}\})$ be an open subscheme of a manifold $\manifo$, s.t.\@ $0$ is a regular value of $\charfun\in\cinfty(\manifo)$, then $\bouns{\opensch}$ is the submanifold $\vset{\rpt}{\manifo}{\charfun=0}$. Indeed, regularity of $0$ implies that $\igen{\charfun}\leq\cinfty(\manifo)$ is $\mathbb R$-Jacobson radical (\cite{BK} Def.\@ 1). \hide{%
Indeed, let $\charfun'\in\cinfty(\manifo)$ be any function that vanishes at every point of $\vset{\rpt}{\manifo}{\charfun=0}$. We can choose an open cover of an open neighbourhood of this closed subset, s.t.\@ in each element of the cover the restriction of $\charfun$ can be completed to a coordinate system, and hence the restriction of $\charfun'$ is a multiple of the restriction of $\charfun$. Choosing a partition of unity, subordinate to this open cover, we realize $\charfun'$ as a multiple of $\charfun$. }%
In particular $\iradical{\igen{\charfun}}=\igen{\charfun}$ (\cite{BK} Prop.\@ 1).\end{example}

\begin{definition} Let $\cscheme'\hookrightarrow\cscheme$ be a closed subscheme (i.e.\@ $\cmorf\colon\cinfty(\cscheme)\rightarrow\cinfty(\cscheme')$ is surjective), it is {\it a principal closed subscheme}, if $\exists\charfun\in\cinfty(\cscheme)$, s.t.\@ 
	\begin{equation}\label{PrincipalClosed}\iradical{\kernel{\cmorf}}=\iradical{\igen{\charfun}}.\end{equation}\end{definition}
\noindent We do not require $\cscheme'$ to be reduced (the radical is on both sides of (\ref{PrincipalClosed})).

\begin{proposition}\label{BasicFacts} Let $\cscheme\in\csite$.\begin{enumerate}
\item Let $\cscheme'\subseteq\cscheme$ be a principal closed closed subscheme. For any open $\opensch\subseteq\cscheme'$ there is an open $\fullex{\opensch}\subseteq\cscheme$, s.t.\@ $\fullex{\opensch}\cap\cscheme'=\opensch$ and $\forall\,\opensch'\subseteq\cscheme$ with this property we have $\opensch'\subseteq\fullex{\opensch}$. If $\opensch\cong\emptys$, we will write $\comple{\cscheme}{\cscheme'}:=\fullex{\emptys}$.
\item Let $\opensch_1,\opensch_2\subseteq\cscheme$ be open subschemes, then $\comple{\opensch_1}{(\opensch_1\cap\opensch_2)}=\opensch_1\cap(\comple{\cscheme}{\opensch_2})$, where $\cap$ means the fiber product over $\cscheme$.\end{enumerate}\end{proposition}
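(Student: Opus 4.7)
The plan is to translate both parts into identities of $\infty$-radical ideals via Lemma~\ref{OpensRadicals}, and then manipulate these ideals using the defining surjection $\cmorf\colon\cinfty(\cscheme)\to\cinfty(\cscheme')$ and basic properties of localization.

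For part 1, fix $\charfun\in\cinfty(\cscheme)$ with $\iradical{\kernel{\cmorf}}=\iradical{\igen{\charfun}}$, write $\opensch=\prinop{\cscheme'}{\bar g}$ for some $\bar g\in\cinfty(\cscheme')$, pick a lift $g\in\cinfty(\cscheme)$ of $\bar g$, and propose $\fullex{\opensch}:=\prinop{\cscheme}{g^2+\charfun^2}$. I would first check that $\fullex{\opensch}\cap\cscheme'=\opensch$: applying $\cmorf$ gives $\bar g^2+\cmorf(\charfun)^2$, and since $\charfun\in\iradical{\kernel{\cmorf}}$ forces $\cmorf(\charfun)\in\iradical{0}\leq\cinfty(\cscheme')$, a short two-sided inclusion (using that any $\iradical{}$-ideal contains $\iradical{0}$) shows $\iradical{\igen{\bar g^2+\cmorf(\charfun)^2}}=\iradical{\igen{\bar g}}$, and Lemma~\ref{OpensRadicals} delivers the intersection.

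For the maximality claim, any $\opensch''=\prinop{\cscheme}{h}$ with $\opensch''\cap\cscheme'=\opensch$ forces $\cmorf(h)\in\iradical{\igen{\bar g}}$, i.e.\@ $h\in\cmorf^{-1}(\iradical{\igen{\bar g}})$. The key algebraic step is the chain $\cmorf^{-1}(\iradical{\igen{\bar g}})=\iradical{\igen{g}+\kernel{\cmorf}}=\iradical{\igen{g,\charfun}}=\iradical{\igen{g^2+\charfun^2}}$, where the first equality uses commutation of $\iradical{}$ with preimages along the surjection together with $\cmorf^{-1}(\igen{\bar g})=\igen{g}+\kernel{\cmorf}$, the second absorbs $\iradical{\igen{\charfun}}$ via the identity $\iradical{\ideal_1+\iradical{\ideal_2}}=\iradical{\ideal_1+\ideal_2}$, and the third is the standard $\cinfty$-geometric fact $\prinop{\cscheme}{g^2+\charfun^2}=\prinop{\cscheme}{g}\cup\prinop{\cscheme}{\charfun}$. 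So $h\in\iradical{\igen{g^2+\charfun^2}}$, and Lemma~\ref{OpensRadicals} concludes $\opensch''\subseteq\fullex{\opensch}$.

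For part 2, writing $\opensch_i=\prinop{\cscheme}{\charfun_i}$, the left-hand side equals $\spec(\cinfty(\cscheme)\{\charfun_1^{-1}\}/\iradical{\igen{\bar\charfun_2}})$ where $\bar\charfun_2$ denotes the image of $\charfun_2$, while the right-hand side, as a fiber product over $\cscheme$, is $\spec(\cinfty(\cscheme)\{\charfun_1^{-1}\}\ctensor_{\cinfty(\cscheme)}\cinfty(\cscheme)/\iradical{\igen{\charfun_2}})$. Using the universal property of localization and the commutation of principal localizations of $\cinfty$-rings with both quotient and $\infty$-radical, both sides reduce to $(\cinfty(\cscheme)/\iradical{\igen{\charfun_2}})\{\bar\charfun_1^{-1}\}$, proving the equality.

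The main obstacle I anticipate is the clean justification of the two compatibility properties of $\iradical{}$ used above: commutation with preimages under surjective morphisms of $\cinfty$-rings, and commutation with principal localizations. Both should follow from the characterization of $\iradical{\ideal}$ in \cite{BK} as the preimage of $\iradical{0}$ in the quotient $\cring/\ideal$, combined with the good behaviour of $\infty$-nilpotent elements under the corresponding ring maps; once these are extracted as preliminary lemmas, the rest of the argument is formal bookkeeping with ideals.
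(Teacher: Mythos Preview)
Your proposal is correct and follows essentially the same route as the paper: both define $\fullex{\opensch}=\prinop{\cscheme}{g^2+\charfun^2}$, and both reduce Part~2 to the fact that principal localization commutes with the $\infty$-radical (equivalently, that localizations of reduced $\cinfty$-rings stay reduced).

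The only noteworthy difference is in the maximality step of Part~1. You package the argument as a chain of radical-ideal identities, invoking the commutation of $\iradical{\,}$ with preimages under the surjection $\cmorf$ (\cite{BK} Prop.~3). The paper instead localizes at $h$, thereby reducing to the case $\opensch''=\cscheme$, and then argues directly: since $\bar g$ becomes invertible in $\cinfty(\cscheme')$, one writes $g g_1+g_2=1$ with $g_2\in\iradical{\igen{\charfun}}$, and Lemma~\ref{OpensRadicals} gives that $\{\prinop{\cscheme}{g},\prinop{\cscheme}{\charfun}\}$ covers $\cscheme$. The paper's variant is slightly more elementary in that it avoids explicitly citing the preimage-commutation property, while yours is more uniformly algebraic; either is fine.
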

\hide{%
\begin{proof}\begin{enumerate}
\item Since $\cinfty(\cscheme)\rightarrow\cinfty(\cscheme')$ is surjective, we can choose an $\charfun'\in\cinfty(\cscheme)$ s.t.\@ $\prinop{\cscheme}{\charfun'}\cap\cscheme'=\opensch$. We define $\fullex{\opensch}:=\prinop{\cscheme}{\charfun'}\cup\prinop{\cscheme}{\charfun}$. Let $\charfun''\in\cinfty(\cscheme)$ be any element, s.t.\@ $\prinop{\cscheme}{\charfun''}\cap\cscheme'=\opensch$, we claim $\prinop{\cscheme}{\charfun''}\subseteq\fullex{\opensch}$. This means that inverting $\charfun''$ implies inverting ${\charfun'}^2+\charfun^2$. Inverting $\charfun''$ is equivalent to pulling everything to $\prinop{\cscheme}{\charfun''}$, i.e.\@ we can assume that $\opensch=\cscheme'$ and $\charfun''=1$. Then the image of $\charfun'$ in $\cinfty(\cscheme')$ is invertible and we can choose $\charfun_1\in\cinfty(\cscheme)$, $\charfun_2\in\iradical{\igen{\charfun}}$, s.t.\@ $\charfun'\charfun_1+\charfun_2=1$. Since $\charfun_2\in\iradical{\igen{\charfun}}$, $\prinop{\cscheme}{\charfun_2}\subseteq\prinop{\cscheme}{\charfun}$ (Lemma \ref{OpensRadicals}), and hence $\{\prinop{\cscheme}{\charfun'},\prinop{\cscheme}{\charfun}\}$ is a Zariski cover of $\cscheme$.
\item Let $\charfun_1,\charfun_2\in\cinfty(\cscheme)$ be the elements that define $\opensch_1$, $\opensch_2$. By definition 
	\begin{equation*}\comple{\opensch_1}{(\opensch_1\cap\opensch_2)}=
	\spec(\cinfty(\cscheme)\{\charfun_1^{-1}\}/\iradical{\igen{\ima{\charfun_2}}}),\end{equation*}
where $\ima{\charfun_2}\in\cinfty(\cscheme)\{\charfun_1^{-1}\}$ is the image of $\charfun_2$. On the other hand
	\begin{equation*}\opensch_1\cap(\comple{\cscheme}{\opensch_2})=
	\spec(\cinfty(\cscheme)\{\charfun_1^{-1}\}/\igen{\ima{\iradical{\igen{\charfun_2}}}})\cong\end{equation*}
	\begin{equation*}\cong\spec((\cinfty(\cscheme)/{{\iradical{\igen{\charfun_2}}}})\{\ima{\charfun_1}^{-1}\})\end{equation*}
Clearly $\igen{\ima{\iradical{\igen{\charfun_2}}}}\leq\iradical{\igen{\ima{\charfun_2}}}$, and since localization of a reduced $\cinfty$-ring is still reduced, we are done.\end{enumerate}\end{proof}}%

\begin{remark}\label{DifDif} Proposition \ref{BasicFacts} immediately implies that for any open subscheme $\opensch\subseteq\cscheme$ we have $\comple{\cscheme}{(\comple{\cscheme}{\opensch})}=\opensch$. On the other hand, given an open subscheme $\opensch'\subseteq\comple{\cscheme}{\opensch}$, we have $\comple{(\comple{\cscheme}{\opensch})}{\opensch'}=\comple{\cscheme}{\fullex{\opensch'}}$. \hide{%
Indeed, let $\charfun$, $\charfun'$ be as above, i.e.\@ $\fullex{\opensch'}$ is defined by $\charfun^2+{\charfun'}^2$. Clearly $\charfun^2+{\charfun'}^2$ is in the kernel of $\cinfty(\cscheme)\rightarrow\cinfty(\comple{(\comple{\cscheme}{\opensch})}{\opensch'})$. Therefore it is enough to show that $\iradical{\igen{\charfun^2+{\charfun'}^2}}$ contains this kernel. Since $\prinop{\cscheme}{\charfun}\subseteq\prinop{\cscheme}{\charfun^2+{\charfun'}^2}$, $\charfun\in\iradical{\igen{\charfun^2+{\charfun'}^2}}$ (Lemma \ref{OpensRadicals}). Therefore also $\charfun'\in\iradical{\igen{\charfun^2+{\charfun'}^2}}$.}%
\end{remark}

\begin{lemma}\label{AvoidingCell} Let $\cmor\colon\cscheme'\rightarrow\cscheme$ be a morphism in $\csite$, and let $\opensch\subseteq\cscheme$ be an open subscheme, s.t.\@ $\pullba{\cmor}{\opensch}=\emptys$. Then the composite morphism $\reduiof{\cscheme'}\rightarrow\cscheme'\rightarrow\cscheme$ factors through $\comple{\cscheme}{\opensch}$.\end{lemma}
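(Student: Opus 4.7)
The plan is to translate the geometric hypothesis into an algebraic statement about $\infty$-nilpotents and then to use the universal property of the complement.

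First, write $\opensch = \prinop{\cscheme}{\charfun}$ for some $\charfun\in\cinfty(\cscheme)$, so that by Def.\@ \ref{ClosedComplement} we have $\comple{\cscheme}{\opensch}\cong\spec(\cinfty(\cscheme)/\iradical{\igen{\charfun}})$. Let $\cmor^*\colon\cinfty(\cscheme)\rightarrow\cinfty(\cscheme')$ denote the map of $\cinfty$-rings corresponding to $\cmor$, and denote by $\ima{\charfun}:=\cmor^*(\charfun)\in\cinfty(\cscheme')$ the image of $\charfun$.

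The hypothesis $\pullba{\cmor}{\opensch}=\emptys$ says that $\spec(\cinfty(\cscheme')\{\ima{\charfun}^{-1}\})\cong\emptys$, i.e.\@ $\cinfty(\cscheme')\{\ima{\charfun}^{-1}\}\cong 0$. By \cite{BK} Prop.\@ 3 this is equivalent to $\ima{\charfun}\in\iradical{0}\leq\cinfty(\cscheme')$, so $\ima{\charfun}$ maps to $0$ in $\cinfty(\reduiof{\cscheme'})=\cinfty(\cscheme')/\iradical{0}$.

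Now consider the composite
\begin{equation*}
\cinfty(\cscheme)\xrightarrow{\cmor^*}\cinfty(\cscheme')\twoheadrightarrow\cinfty(\reduiof{\cscheme'}).
\end{equation*}
By the previous step $\charfun$ is sent to $0$, so this composite kills $\igen{\charfun}$ and hence factors through $\cinfty(\cscheme)/\igen{\charfun}$. Since $\cinfty(\reduiof{\cscheme'})$ is reduced (i.e.\@ has no non-zero $\infty$-nilpotents), the induced map $\cinfty(\cscheme)/\igen{\charfun}\rightarrow\cinfty(\reduiof{\cscheme'})$ further kills every $\infty$-nilpotent, hence factors through $\cinfty(\cscheme)/\iradical{\igen{\charfun}}=\cinfty(\comple{\cscheme}{\opensch})$ by \cite{BK} Lemmas 2, 4. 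Applying $\spec$ yields the desired factorization $\reduiof{\cscheme'}\rightarrow\comple{\cscheme}{\opensch}\hookrightarrow\cscheme$. No real obstacle is expected; the whole argument is a bookkeeping exercise with $\iradical{\quad}$, and the only mildly delicate point is the appeal to \cite{BK} Prop.\@ 3 that $\cring\{\charfun^{-1}\}=0\iff\charfun\in\iradical{0}$.
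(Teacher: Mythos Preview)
Your proposal is correct and follows essentially the same route as the paper's own (hidden) proof: choose a defining function $\charfun$ for $\opensch$, translate $\pullba{\cmor}{\opensch}=\emptys$ into $\cmor^*(\charfun)\in\iradical{0}$ via \cite{BK} Prop.\@ 3, and conclude that the composite to $\cinfty(\reduiof{\cscheme'})$ kills $\iradical{\igen{\charfun}}$. The only cosmetic difference is that the paper phrases the last step as $\charfun\in\iradical{\kernel{\cmor^*}}\Rightarrow\iradical{\igen{\charfun}}\leq\iradical{\kernel{\cmor^*}}$, whereas you argue via reducedness of the target; both are the same observation.
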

\hide{%
\begin{proof} Let $\charfun\in\cinfty(\cscheme)$ be any element, s.t.\@ $\prinop{\cscheme}{\charfun}=\opensch$. The condition $\pullba{\cmor}{\opensch}=\emptys$ means $\cinfty(\cscheme')\{\cmorf(\charfun)^{-1}\}=0$, where $\cmorf\colon\cinfty(\cscheme)\rightarrow\cinfty(\cscheme')$ corresponds to $\cmor$. This means $\cmorf(\charfun)\in\iradical{0}\leq\cinfty(\cscheme')$, and therefore $\charfun\in\iradical{\kernel{\cmorf}}$ (\cite{BK}, Prop.\@ 3). Then $\iradical{\igen{\charfun}}\leq\iradical{\kernel{\cmorf}}$, and hence $\cinfty(\cscheme)\rightarrow\cinfty(\cscheme')/\iradical{0}$ factors through $\cinfty(\cscheme)/\iradical{\igen{\charfun}}$.\end{proof}}%

\begin{definition} A morphism $\cmor\colon\cscheme'\rightarrow\cscheme$ in $\csite$ is {\it a principal locally closed subscheme of $\cscheme$}, if there is an open subscheme $\opensch\subseteq\cscheme$, s.t.\@ $\cmor$ factors through $\cscheme'\rightarrow\opensch$, which is a principal closed subscheme.\end{definition}

\begin{definition}\label{DefinitionOfGerm} Let $\cmor\colon\cscheme'\rightarrow\cscheme$ be a morphism in $\csite$, {\it the germ of $\cscheme$ at the image of $\cmor$} is $\germof{\cscheme}{\cscheme'}:=\specof{\cinfty(\cscheme)/\gerim{\cmor}}$, where $\gerim{\cmor}$ is {\it the ideal of $0$-germs} at the image of $\cmor$, i.e.\@
	\begin{equation*}\gerim{\cmor}:=\{\celement\in\cinfty(\cscheme)\,|\,\exists\celement'\in\cinfty(\cscheme)\text{ s.t.\@ }\celement\celement'=0
	\text{ and }\exists{\celement'}^{-1}\in\cinfty(\cscheme')\}.\end{equation*}
\end{definition}
In some cases, e.g.\@ that of a locally closed subscheme, we write $\gima{\cscheme'}$ to mean the germ of $\cscheme$ at the image of the inclusion $\cscheme'\hookrightarrow\cscheme$, and we write $\gerim{\cscheme,\cscheme'}$ for the corresponding ideal of $0$-germs.

\begin{proposition}  Let $\cscheme'\hookrightarrow\cscheme$ be a principal locally closed subscheme, and let $\opensch(\cscheme')$ be the set of open subschemes of $\cscheme$ containing $\cscheme'$. Then
	\begin{equation*}\gima{\cscheme'}\cong\underset{\opensch\in\opensch(\cscheme')}\lim\opensch.\end{equation*}
\end{proposition}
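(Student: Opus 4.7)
I plan to prove the proposition via Yoneda by showing that the universal properties of $\gima{\cscheme'}=\specof{\cinfty(\cscheme)/\gerim{\cscheme,\cscheme'}}$ and of $\underset{\opensch\in\opensch(\cscheme')}\lim\,\opensch=\specof{\underset{\opensch}\colim\,\cinfty(\opensch)}$ coincide on $R$-points. Since every open subscheme is principal (Prop.\@ \ref{ZariskiByFunction}), a $\cinfty$-ring map $\cmorf\colon\cinfty(\cscheme)\to R$ extends to the colimit iff $\cmorf(g)$ is invertible in $R$ for every $g\in\cinfty(\cscheme)$ whose image in $\cinfty(\cscheme')$ is invertible; it extends to $\cinfty(\cscheme)/\gerim{\cscheme,\cscheme'}$ iff $\cmorf$ kills $\gerim{\cscheme,\cscheme'}$. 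The task reduces to verifying these two conditions on $\cmorf$ are equivalent.

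The implication ``invertibility of all such $g$'' $\Rightarrow$ ``killing $\gerim{\cscheme,\cscheme'}$'' is immediate from the definition: for $a\in\gerim{\cscheme,\cscheme'}$ with witness $a'$ (so $aa'=0$ and $a'|_{\cscheme'}$ invertible), the hypothesis makes $\cmorf(a')$ invertible, whence $\cmorf(a)=0$. The converse rests on a key lemma which I plan to establish: \emph{every $g\in\cinfty(\cscheme)$ with $g|_{\cscheme'}$ invertible is already invertible in $\cinfty(\cscheme)/\gerim{\cscheme,\cscheme'}$.} Granting this, any $\cmorf$ killing $\gerim{\cscheme,\cscheme'}$ factors through $\cinfty(\cscheme)/\gerim{\cscheme,\cscheme'}$ and so sends $g$ to the image of an invertible class, hence to an invertible element of $R$.

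To prove the key lemma I exploit the principal locally closed hypothesis: pick $\opensch_0\subseteq\cscheme$ with $\cscheme'\hookrightarrow\opensch_0$ a principal closed subscheme, so that $\opensch:=\opensch_0\cap\locin{\cscheme}{g\neq 0}$ still contains $\cscheme'$ and $g^{-1}\in\cinfty(\opensch)$ makes sense. Using a finite presentation $\cinfty(\mathbb R^n)\twoheadrightarrow\cinfty(\cscheme)$, lift $g$ and $\opensch$ to $\tilde g\in\cinfty(\mathbb R^n)$ and an open $\tilde\opensch\subseteq\mathbb R^n$ with $\tilde\opensch\cap\cscheme=\opensch$. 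Apply the smooth Urysohn lemma on $\mathbb R^n$ (as in the proof of Prop.\@ \ref{ZariskiByFunction}) to build $\chi\in\cinfty(\mathbb R^n,[0,1])$ which is identically $1$ on a smaller open neighborhood $\tilde\opensch'$ of the image of $\cscheme'$ and supported in $\tilde\opensch$. Then $\tilde h:=\chi\cdot\tilde g^{-1}$, extended by zero outside $\tilde\opensch$, defines an element of $\cinfty(\mathbb R^n)$ whose image $h\in\cinfty(\cscheme)$ satisfies $gh-1=\chi-1$, which vanishes on the open $\opensch':=\tilde\opensch'\cap\cscheme\in\opensch(\cscheme')$; choosing any $a'\in\cinfty(\cscheme)$ with $\opensch'=\locin{\cscheme}{a'\neq 0}$ (so $a'|_{\cscheme'}$ is invertible because $\cscheme'\subseteq\opensch'$) witnesses $gh-1\in\gerim{\cscheme,\cscheme'}$. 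The main obstacle is precisely this bump-function construction: it cannot be replaced by algebraic manipulation of $\infty$-radicals — for instance $\iradical{(f)}\not\subseteq\gerim{\cscheme,\cscheme'}$ in general, as witnessed by $e^{-1/x^2}\in\iradical{(x)}\setminus\gerim{\mathbb R,\{0\}}$ — and the principal locally closed hypothesis is precisely what permits the localized smooth construction to be carried out inside an ambient open where cutoffs are available.
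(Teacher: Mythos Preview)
Your proposal is correct and follows essentially the same route as the paper. Both arguments reduce the statement to the key lemma that any $g\in\cinfty(\cscheme)$ with $g|_{\cscheme'}$ invertible becomes invertible modulo $\gerim{\cscheme,\cscheme'}$, and both prove this via a smooth Urysohn/bump-function construction exploiting the principal locally closed hypothesis. The only difference is packaging: the paper stays inside $\cinfty(\cscheme)$ by invoking Prop.\@ \ref{ZariskiByFunction} to produce a single function $\charfun''$ separating $\{g\neq 0\}$ from $\{\charfun'\neq 0\}$ and then works with the level set $\{\charfun''\leq r\}$, whereas you lift to $\cinfty(\mathbb R^n)$ and build the cutoff $\chi$ by hand. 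Your explicit lift has the minor advantage that the product $(gh-1)\cdot a'$ is genuinely zero in $\cinfty(\mathbb R^n)$ (disjoint supports), while the paper's corresponding step must pass through $\infty$-radicals; conversely, your assertion that $\cscheme'\subseteq\opensch'$ follows from $\tilde\opensch'$ containing the $\mathbb R$-point image of $\cscheme'$ deserves one more sentence when $\cscheme'$ carries infinitesimal structure --- but the paper's claim that the relevant kernel lies in $\gerim{\cscheme'}$ is stated with the same brevity.
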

\begin{proof} Let $\cscheme''\rightarrow\{\opensch\}$ be a cone on the diagram of open subschemes of $\cscheme$, containing $\cscheme'$. Since this diagram is filtered, there is a well defined morphism $\cmor\colon\cscheme''\rightarrow\cscheme$. Since this morphism factors through each $\opensch$, for every $\charfun\in\cinfty(\cscheme)$, that becomes invertible in $\cinfty(\cscheme')$, $\cmorf(\charfun)$ is invertible. This immediately implies that $\cmorf$ maps $\gerim{\cscheme'}$ to $0$, i.e.\@ $\cmor$ factors through $\gima{\cscheme'}$.

We need to show that $\gima{\cscheme'}\rightarrow\cscheme$ factors through each $\opensch$ containing $\cscheme'$. This is equivalent to showing that each $\charfun\in\cinfty(\cscheme)$, that becomes invertible in $\cinfty(\cscheme')$, is also invertible in $\cinfty(\gima{\cscheme'})$. Since $\cscheme'$ is principal locally closed, taking intersections with an open neighbourhood of $\cscheme'$, we can assume that $\cscheme'\subseteq\cscheme$ is a principal closed subscheme, i.e.\@ we have $\charfun'\in\cinfty(\cscheme)$, s.t.\@ $\comple{\cscheme}{\cscheme'}=\prinop{\cscheme}{\charfun'}$. Let $\prinop{\cscheme}{\charfun}\supseteq\cscheme'$, then $\prinop{\cscheme}{\charfun}\cup\prinop{\cscheme}{\charfun'}=\cscheme$. Using Prop.\@ \ref{ZariskiByFunction} we can find $\charfun''\in\cinfty(\cscheme)$, s.t.\@ $\prinop{\cscheme}{\charfun}=\locin{\cscheme}{\charfun''<r_1}$, $\prinop{\cscheme}{\charfun'}=\locin{\cscheme}{\charfun''>r_2}$ for some $r_1,r_2\in\mathbb R$, and then $\charfun$ becomes invertible on $\locin{\cscheme}{\charfun''\leq\frac{r_1+r_2}{2}}\supseteq\cscheme'$. The kernel of $\cinfty(\cscheme)\rightarrow\cinfty(\locin{\cscheme}{\charfun''\leq\frac{r_1+r_2}{2}})$ is contained in $\gerim{\cscheme'}$.\end{proof}%

\begin{lemma} Let $\cscheme\in\csite$, and let $\cscheme',\cscheme''\subseteq\cscheme$ two principal closed subschemes. Then $\cscheme'\cup\cscheme''\subseteq\cscheme$ is a principal closed subscheme.\end{lemma}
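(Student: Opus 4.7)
The approach is to exhibit an explicit generator. Choose $\charfun_1,\charfun_2\in\cinfty(\cscheme)$ with $\iradical{\ideal'}=\iradical{\igen{\charfun_1}}$ and $\iradical{\ideal''}=\iradical{\igen{\charfun_2}}$, where $\ideal',\ideal''\leq\cinfty(\cscheme)$ are the defining ideals of $\cscheme'$ and $\cscheme''$. By Def.\@ \ref{DefinitionLocallyClosed} the union $\cscheme'\cup\cscheme''$ has defining ideal $\ideal'+\ideal''$. My candidate generator is
\begin{equation*}
g:=\charfun_1^2+\charfun_2^2\in\cinfty(\cscheme),
\end{equation*}
so the goal reduces to proving $\iradical{\ideal'+\ideal''}=\iradical{\igen{g}}$.

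First I would reduce from arbitrary $\ideal',\ideal''$ to the principal ideals $\igen{\charfun_1},\igen{\charfun_2}$. The general identity $\iradical{\ideal'+\ideal''}=\iradical{\iradical{\ideal'}+\iradical{\ideal''}}$ (which follows from the fact that $\iradical{-}$ is a closure operator on the lattice of ideals, as in \cite{BK} Lemmas 2 and 4) then reduces the claim to showing $\iradical{\igen{\charfun_1,\charfun_2}}=\iradical{\igen{g}}$.

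Next I would prove both inclusions of $\iradical{\igen{\charfun_1,\charfun_2}}=\iradical{\igen{g}}$ by applying Lemma \ref{OpensRadicals}, i.e.\@ by comparing the principal open subschemes $\prinop{\cscheme}{\charfun_1^2+\charfun_2^2}$ and $\prinop{\cscheme}{\charfun_1},\prinop{\cscheme}{\charfun_2}$. The inclusion $\iradical{\igen{g}}\leq\iradical{\igen{\charfun_1,\charfun_2}}$ is immediate since $g\in\igen{\charfun_1,\charfun_2}$. For the reverse inclusion I must show $\charfun_i\in\iradical{\igen{g}}$, i.e.\@ $\prinop{\cscheme}{\charfun_i}\subseteq\prinop{\cscheme}{g}$ for $i=1,2$. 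Inside $\cinfty(\cscheme)\{\charfun_1^{-1}\}$ we have the factorization
\begin{equation*}
g=\charfun_1^2\bigl(1+(\charfun_2/\charfun_1)^2\bigr),
\end{equation*}
and the second factor is invertible in any $\cinfty$-ring because the $\cinfty$-function $1+x^2$ is strictly positive on $\mathbb R$ and hence has a $\cinfty$-inverse that can be applied to $\charfun_2/\charfun_1$; thus $g$ is a unit after inverting $\charfun_1$, giving $\prinop{\cscheme}{\charfun_1}\subseteq\prinop{\cscheme}{g}$. The symmetric argument handles $\charfun_2$.

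The only real subtlety is the reduction step $\iradical{\ideal'+\ideal''}=\iradical{\iradical{\ideal'}+\iradical{\ideal''}}$ -- once this standard property of the $\infty$-radical is cited from \cite{BK}, the remainder is a direct computation exploiting that $1+x^2$ is invertible in every $\cinfty$-ring, which is precisely the feature distinguishing $\cinfty$-algebraic geometry from the purely algebraic setting and makes $\charfun_1^2+\charfun_2^2$ (rather than, say, $\charfun_1\charfun_2$) the correct generator of the union.
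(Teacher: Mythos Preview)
Your argument is internally correct but proves the wrong statement, because you took the formula in Def.~\ref{DefinitionLocallyClosed} at face value. That definition contains a typo: the union of closed subschemes defined by $\ideal_1,\ideal_2$ must be $\specof{\cinfty(\cscheme)/(\ideal_1\cap\ideal_2)}$, not $\specof{\cinfty(\cscheme)/(\ideal_1+\ideal_2)}$. The sum of ideals cuts out the \emph{intersection} $\cscheme'\cap\cscheme''$, and this is exactly what your generator $g=\charfun_1^2+\charfun_2^2$ detects (over $\mathbb R$, $g$ vanishes iff both $\charfun_i$ do). The paper's own proof of this lemma silently corrects the typo and works with $\kernel{\cmorf'}\cap\kernel{\cmorf''}$.

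With the correct defining ideal $\ideal'\cap\ideal''$, the right generator is precisely the one you dismissed in your last paragraph: $\charfun_1\charfun_2$. The paper's route is: invoke \cite{BK} Lemma~5 to get $\iradical{\kernel{\cmorf'}\cap\kernel{\cmorf''}}=\iradical{\kernel{\cmorf'}}\cap\iradical{\kernel{\cmorf''}}=\iradical{\igen{\charfun_1}}\cap\iradical{\igen{\charfun_2}}$, then show this equals $\iradical{\igen{\charfun_1\charfun_2}}$. One containment is trivial since $\charfun_1\charfun_2\in\igen{\charfun_1}\cap\igen{\charfun_2}$; for the other, any $a\in\igen{\charfun_1}\cap\igen{\charfun_2}$ has the property that inverting $a$ inverts both $\charfun_i$ and hence $\charfun_1\charfun_2$, so $a\in\iradical{\igen{\charfun_1\charfun_2}}$ by Lemma~\ref{OpensRadicals}. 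Your computation that $1+x^2$ is a unit, and the reduction via the closure-operator property of $\iradical{-}$, are perfectly good ingredients --- they just establish that the \emph{intersection} of two principal closed subschemes is again principal closed, which is a true companion statement but not the lemma at hand.
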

\begin{proof} Let $\cmorf'\colon\cinfty(\cscheme)\rightarrow\cinfty(\cscheme')$, $\cmorf''\colon\cinfty(\cscheme)\rightarrow\cinfty(\cscheme'')$ be the corresponding morphisms of $\cinfty$-rings. Then by definition 
	\begin{equation*}\cscheme'\cup\cscheme'':=\specof{\cinfty(\cscheme)/(\kernel{\cmorf'}\cap\kernel{\cmorf''})}.\end{equation*}
According to \cite{BK} Lemma 5 $\iradical{\kernel{\cmorf'}\cap\kernel{\cmorf''}}=\iradical{\kernel{\cmorf'}}\cap\iradical{\kernel{\cmorf''}}$. By assumption $\exists\charfun_1,\charfun_2\in\cinfty(\cscheme)$, s.t.\@ $\iradical{\kernel{\cmorf'}}=\iradical{\igen{\charfun_1}}$, $\iradical{\kernel{\cmorf''}}=\iradical{\igen{\charfun_2}}$. Clearly inverting an element of $\igen{\charfun_1}\cap\igen{\charfun_2}$ implies inverting $\charfun_1\charfun_2$, i.e.\@ $\igen{\charfun_1}\cap\igen{\charfun_2}\subseteq\iradical{\igen{\charfun_1\charfun_2}}$. Hence $\iradical{\igen{\charfun_1}}\cap\iradical{\igen{\charfun_2}}=\iradical{\igen{\charfun_1\charfun_2}}$.\end{proof}%

\end{document}